\newtheorem{thm}{Theorem}[section]
\newtheorem{prop}[thm]{Proposition}
\newtheorem{lemma}[thm]{Lemma}
\newtheorem{cor}[thm]{Corollary}
\theoremstyle{definition}
\newtheorem{definition}[thm]{Definition}
\newtheorem{remark}[thm]{Remark}
\title{Holomorphic Lagrangian branes correspond to perverse sheaves}
\author[Xin Jin]{Xin Jin}
\email{xjin@math.berkeley.edu}
\address{Department of Mathematics, University of California, Berkeley}
\subjclass[2000]{}
\date{}
\keywords{}
\numberwithin{equation}{section}
\begin{document}
\begin{abstract}
Let $X$ be a compact complex manifold, $D_c^b(X)$ be the bounded derived category of constructible sheaves on $X$, and $Fuk(T^*X)$ be the Fukaya category of $T^*X$. A Lagrangian brane in $Fuk(T^*X)$ is holomorphic if the underlying Lagrangian submanifold is complex analytic in $T^*X_{\mathbb{C}}$, the holomorphic cotangent bundle of $X$. We prove that under the quasi-equivalence
between $D^b_c(X)$ and $DFuk(T^*X)$ established in \cite{NZ} and \cite{Nadler2}, holomorphic Lagrangian branes with appropriate grading correspond to perverse sheaves.
\end{abstract}

\maketitle
\tableofcontents

\section{Introduction}
For a real analytic manifold $X$, one could consider two invariants that encode the local/global analytic and topological structure of $X$, one is the derived category $D^b_c(X)$ of constructible sheaves on $X$, and the other is the Fukaya category $Fuk(T^*X)$ of its cotangent bundle $T^*X$. Roughly speaking,  $D^b_c(X)$ is generated by locally constant sheaves supported on submanifolds of $X$, which we will call (co)standard sheaves. The morphism spaces between these sheaves are naturally identified with relative singular cohomology of certain subsets of $X$ taking values in local systems. On the other hand, $Fuk(T^*X)$ is a realm of studying exact Lagrangian submanifolds of $T^*X$ and the intersection theory of them. Here we use the infinitesimal Fukaya category from \cite{NZ}, where Lagrangian branes are allowed to be noncompact and should have controlled behavior near infinity.

In \cite{NZ}, Nadler and Zaslow established a canonical quasi-embedding 
$$H^0(\mu_X): D_c^b(X)\hookrightarrow DFuk(T^*X)$$ induced from $\mu_X$, which is called the microlocal functor, between the $A_\infty$-version of these two categories. Later on, Nadler \cite{Nadler2} proved that $\mu_X$ is actually a quasi-equivalence of categories, hence $H^0(\mu_X)$ is an equivalence. The key ingredient in the construction of $\mu_X$ is to associate each standard or costandard sheaf  a Lagrangian brane in $T^*X$ which lives over the submanifold and asymptotically approaches the singular support of the sheaf near infinity, so that the Floer cohomologies for these branes match with the morphisms on the sheaf side. One could view this as a way of quantizing the singular support of a sheaf by a Lagrangian brane. 

In the complex setting, when $X$ is a complex manifold, one could also study $\mathcal{D}$-modules on $X$. The Riemann-Hilbert correspondence equates the derived category of regular holonomic $\mathcal{D}$-modules $D_{rh}^b(\mathcal{D}_X\text{-mod})$ with $D_c^b(X)$. There are also physical interpretions of the relation of branes (including coisotropic branes) with $\mathcal{D}$-modules, see \cite{Kapustin}, \cite{KW}. These relations together with $\mu_X$ connect different approaches to quantizing conical Lagrangians in $T^*X$.  

In this paper, we investigate the special role of holomorphic Lagrangian branes in $Fuk(T^*X)$ in the complex setting, via the Nadler-Zaslow correspondence. For the notion of holomorphic, we have used the complex structure on $T^*X$ induced from that on $X$. Recall there is an abelian category sitting inside $D_c^b(X)$, the category of perverse sheaves, which is the image of the standard abelian category (single $\mathcal{D}$-modules) in $D_{rh}^b(\mathcal{D}_X\text{-mod})$ under the Riemann-Hilbert correspondence. Our main result is the following:

\begin{thm}\label{main}
Let $X$ be a compact complex manifold and $H^0(\mu_X)^{-1}$ denote the inverse functor of $H^0(\mu_X)$. Then for any holomorphic Lagrangian brane $L$ in $T^*X$, $H^0(\mu_X)^{-1}(L)$ is a perverse sheaf in $D^b_c(X)$ up to a shift. Equivalently, $L$ gives rise to a single holonomic $\mathcal{D}$-module on $X$. 
\end{thm}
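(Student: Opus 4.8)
The plan is to show that $F:=H^0(\mu_X)^{-1}(L)$, after a global shift by an integer $s$ depending only on the (constant, see below) grading of $L$, lies in the heart of the perverse $t$-structure on $D^b_c(X)$; equivalently, that $F[s]$ satisfies the support and cosupport conditions. Both are checked locally, using that the Nadler--Zaslow correspondence computes costalks and stalks of $F$ as morphism spaces in $Fuk(T^*X)$: for $x\in X$ one has $i_x^!F\simeq \mathrm{RHom}_{Fuk(T^*X)}(\delta_x,L)$ (up to a fixed shift), where $\delta_x=\mu_X(i_{x*}\mathbb{C})$ is the cotangent fibre $T^*_xX$ with its standard grading, while $i_x^*F$ is the colimit over shrinking polydiscs $B\ni x$ of $\mathrm{RHom}_{Fuk(T^*X)}(\mu_X(\mathbb{C}_B),L)$. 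The crucial point is that $\delta_x=T^*_xX$ is itself a \emph{holomorphic} Lagrangian --- a complex fibre of $T^*X_{\mathbb{C}}$ --- so the costalk of $F$ at $x$ is a Floer complex between two complex analytic Lagrangian submanifolds of $T^*X_{\mathbb{C}}$.

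The key lemma to establish is that the grading (phase function) of a holomorphic Lagrangian brane is locally constant; that is, holomorphic Lagrangians are special Lagrangian for the structure defining $Fuk(T^*X)$. Near the zero section this follows from the Feix--Kaledin hyperkähler metric on a neighbourhood of $X\subset T^*X_{\mathbb{C}}$, together with the standard fact that in a hyperkähler manifold a submanifold which is complex for one complex structure is special Lagrangian for the Kähler form of a rotated one; away from the zero section one argues pointwise (all complex Lagrangian subspaces of the fibre model $T^*\mathbb{C}^n$ have the same phase, being limits of conormals to complex subspaces, which are $U(n)$-related) together with homotopy invariance of the phase and the fact that near infinity a brane is conic, hence a union of conormals $\overline{T^*_{S_\alpha}X}$ to complex strata, for which constancy of the grading is classical. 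Granting this, graded Lagrangian Floer theory confines the generators of $\mathrm{RHom}_{Fuk}(\delta_x,L)$ to a window of cohomological degrees determined by $\tilde\theta_L-\tilde\theta_{\delta_x}$ and the local intersection geometry; a local computation in a Darboux/hyperkähler chart adapted to the stratum $S$ through $x$ identifies this window, after the shift $s$, with degrees $\ge -\dim_{\mathbb{C}}S$, and shows that the locus where $\mathcal{H}^k(i_x^!F)\neq 0$ lies in the union of strata of complex dimension $\le k$ --- precisely the cosupport condition for $F[s]$.

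The support condition for $F[s]$ is then obtained by Verdier duality: $D_X$ corresponds under $\mu_X$ to a geometric operation on branes (the fibrewise antipodal map $(x,\xi)\mapsto(x,-\xi)$, which is $\mathbb{C}$-linear on fibres, together with a regrading) which preserves the class of holomorphic Lagrangian branes, so applying the cosupport result to $D_XF\simeq H^0(\mu_X)^{-1}$ of the dual brane yields the support condition for $F[s]$. This gives $F[s]\in\mathrm{Perv}(X)$, and the equivalence with ``a single holonomic $\mathcal{D}_X$-module'' is the Riemann--Hilbert correspondence. I expect the main obstacle to be the key lemma together with the accompanying degree bookkeeping: proving constancy of the phase for \emph{noncompact} holomorphic Lagrangian branes that run off to infinity, where no hyperkähler metric is available, and matching the resulting degree window stratum by stratum with the perverse bounds. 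An alternative for this step would be to bypass Floer theory and instead verify Kashiwara's microlocal characterization of perversity for $F$ directly, using positivity of intersection of complex analytic subvarieties to force the microstalk/vanishing-cycle complexes of $F$ into a single degree.
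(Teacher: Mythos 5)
Your overall strategy is genuinely different from the paper's: you propose to verify the support/cosupport conditions directly, identifying costalks $i_x^!F$ with $\mathrm{RHom}_{Fuk}(T^*_xX,L)$ and deriving the support bounds from Verdier duality, whereas the paper works through the \emph{microlocal} characterization of perversity, constructing a ``local Morse brane'' $L_{x,F}$ that quasi-represents the vanishing-cycle functor $M_{x,F}$ and then showing $HF^\bullet(L_{x,F},L)$ is concentrated in degree zero. Your closing alternative --- verify Kashiwara's microlocal criterion using positivity of complex intersections --- is in fact exactly what the paper does; in a sense you discovered the paper's route as an afterthought.

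Both routes hinge on essentially the same key input, but your account of it has problems. You claim the phase of a holomorphic Lagrangian brane is locally constant via a Feix--Kaledin hyperk\"ahler metric near the zero section. This is not the right framework: the grading in $Fuk(T^*X)$ is defined relative to the Sasaki (or conical) almost complex structure $J_{Sas}$, which is generally distinct from any hyperk\"ahler structure, and the paper instead verifies the claim (Proposition~\ref{grading}) by an explicit linear-algebra computation showing that $\Omega(T_p\phi(L))^2>0$ in flat Darboux coordinates, then patches over $X$. You also assert that ``near infinity a brane is conic, hence a union of conormals $\overline{T^*_{S_\alpha}X}$'': branes in the infinitesimal Fukaya category are \emph{not} conic at infinity, only asymptotic to the conical Legendrian $\Lambda_{\mathcal{S}}^\infty$, so the ``classical'' constancy you invoke there does not apply.

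The more serious gap is the degree bookkeeping you postpone. Your strategy sets up $i_x^!F\simeq\mathrm{RHom}_{Fuk}(T^*_xX,L)$, observes that $T^*_xX$ is itself a complex fibre, and appeals to constancy of phase to get a degree window. But $T^*_xX$ and $L$ generically meet \emph{non}-transversally for $x$ in positive-codimension strata (and may meet the unbounded part of $L$), so the Floer complex must be computed after perturbation, and the perturbed fibre is no longer holomorphic --- the degree-concentration mechanism (Proposition~\ref{degree}) only applies to \emph{transverse} intersections of holomorphic Lagrangians with integer gradings. Estimating the resulting degree spread, stratum by stratum, to match the perverse bounds is exactly the ``main obstacle'' you flag; the paper circumvents it by building $L_{x,F}$ so that the test triple already guarantees transversality to $\Lambda_{\mathcal{S}}$ (Lemma~\ref{local morse}) and hence, after dilation, to $t\cdot L$ (Lemmas~\ref{transverse1}, \ref{transverse2}), with all intersections occurring in the holomorphic portion of $L_{x,F}$. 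Until you can carry out that stratum-by-stratum degree estimate, your primary route is incomplete; the alternative you sketch at the end (pass to vanishing cycles and use positivity of complex intersection) is the one that closes the loop and is what the paper executes.
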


In the remainder of the introduction, we discuss the motivation and the proof of our result from two aspects: the symplectic geometry and the microlocal geometry. In the symplectic geometry part, we will summarize the Floer cohomology calculations we have for certain classes of Lagrangian branes. Then in the microlocal geometric side, we will introduce the microlocal approach to perverse sheaves and explain why the Floer calculations imply our main theorem. All of the functors below are derived and we will always omit the derived notation $R$ or $L$ unless otherwise specified.
 
\subsection{Floer complex calculations}\label{HF}

We calculate the Floer complex for two pairs of Lagrangian branes in the cotangent bundle $T^*X$ of a complex manifold $X$. It involves three kinds of Lagrangians which we briefly describe. Firstly, we have a (exact) holomorphic Lagrangian brane $L$ with grading $-\dim_\mathbb{C}X$ (see Lemma \ref{grading}). One could dilate $L$ using the $\mathbb{R}_+$-action on the cotangent fibers and take limit to get a conical Lagrangian
\begin{equation}\label{ConicIntro}
\text{Conic}(L):=\lim\limits_{t\rightarrow 0+}t\cdot L.
\end{equation}
Then for each smooth point $(x,\xi)\in \text{Conic}(L)$, we define a Lagrangian brane, which we will call a \emph{local Morse brane}, depending on the following data. We choose a generic holomorphic function $F$ near $x$ which vanishes at $x$ and has $d\Re(F)_x=\xi$. By the word``generic", we mean the graph $\Gamma_{d\Re(F)}$ should intersect $\text{Conic}(L)$ at $(x,\xi)$ in a transverse way. Then the local Morse brane, denoted as $L_{x,F}$, is defined by extending $\Gamma_{d\Re(F)}$ in an appropriate way, so that $L_{x,F}$ lives over a small neighborhood of $x$, and $L_{x,F}$ has certain behavior near infinity. Note that the construction of $L_{x,F}$ is completely local; it only knows the local geometry (actually the microlocal geometry) around $x$. The last kind of Lagrangian we consider is the brane corresponding to a standard sheaf associated to an open set $V$ under the microlocal functor $\mu_X$. The construction is very easy. Take a function $m$ on $X$ with $m=0$ on $\partial V$ and $m>0$ on $V$, then the Lagrangian is the graph $\Gamma_{d\log m}$, which lives over $V$. We will call such a brane a \emph{standard brane} and denote it by $L_{V,m}$. We have been mixing up the terminology Lagrangian and Lagrangian brane freely, since the Lagrangians $L_{x,F}$ and $L_{V,m}$ will be equipped with canonical brane structures. 
Our Floer complex calculations show the following: 
\begin{thm}
Under certain assumptions on the boundary of $V$, we have
\begin{equation}\label{HF1} 
HF(L_{x,F}, L_{V,m})\simeq (\Omega (B_\epsilon(x)\cap V, B_\epsilon(x)\cap V\cap\{\Re(F)<0\}),d),
\end{equation}
\begin{equation}\label{HF2}
HF^\bullet(L_{x,F},L)=0\text{ for }\bullet\neq 0,
\end{equation}
where $B_\epsilon(x)$ is a small ball around $x$ and the first identification is a canonical quasi-isomorphism. 
\end{thm}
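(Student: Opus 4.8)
Both identities are proved by reducing the Floer complex of these explicitly described Lagrangians to a finite-dimensional (Morse-theoretic) model and then reading off the answer. For \eqref{HF1}, the point is that over the overlap $B_\epsilon(x)\cap V$ both $L_{x,F}$ and $L_{V,m}$ are exact graphs, of $d\Re(F)$ and of $d\log m$ respectively, and away from $T^*(B_\epsilon(x)\cap V)$ they are disjoint: $L_{V,m}$ runs off to fibrewise infinity over $\partial V$, while the extension of $L_{x,F}$ near $\partial B_\epsilon(x)$ is chosen generically. After a small transverse perturbation the generators of $CF(L_{x,F},L_{V,m})$ are therefore exactly the critical points of $h:=\Re(F)-\log m$ on $B_\epsilon(x)\cap V$, and the Floer-to-Morse comparison for graph Lagrangians already used in \cite{NZ} identifies the whole complex, differential included, with the Morse cochain complex of $h$. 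Now $h\to+\infty$ along $\partial V$, so that end drops out, while near $\partial B_\epsilon(x)$ the prescribed behaviour of $L_{x,F}$ forces $h\to+\infty$ where $\Re(F)>0$ and $h\to-\infty$ where $\Re(F)<0$. By the Morse theory of a function with prescribed behaviour at its ends, the Morse complex of $h$ computes the cohomology of $B_\epsilon(x)\cap V$ relative to its $\{h=-\infty\}$-end, and, using the (suitably modified) gradient flow of $\Re(F)$, this end deformation retracts onto $B_\epsilon(x)\cap V\cap\{\Re(F)<0\}$. To obtain the asserted \emph{canonical} chain-level quasi-isomorphism rather than an abstract one, I would compose the canonical identification $CF(L_{x,F},L_{V,m})\xrightarrow{\sim}CM^\bullet(h)$ with the Harvey--Lawson finite-volume-flow map, i.e.\ integration of relative de Rham forms over unstable manifolds, which is a natural quasi-isomorphism from $\bigl(\Omega(B_\epsilon(x)\cap V,\,B_\epsilon(x)\cap V\cap\{\Re(F)<0\}),d\bigr)$ onto $CM^\bullet(h)$.

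For \eqref{HF2}, $L$ is no longer a graph, but both $L$ and $\text{Conic}(L)$ are complex analytic, and this is the crucial input. The brane $L_{x,F}$ is built to probe $\text{Conic}(L)$ at the smooth point $(x,\xi)$, where $\Gamma_{d\Re(F)}$ meets it transversally; using the conical control of $L$ at infinity (the rescaling \eqref{ConicIntro}) one reduces $HF^\bullet(L_{x,F},L)$ to a finite count of transverse intersection points whose local models are such crossings. The key point is that each such crossing contributes in Floer degree $0$. Indeed, at a crossing the Floer degree is the difference of the grading of $L$, which equals $-\dim_\mathbb{C}X$ by Lemma \ref{grading}, and the canonical grading of $L_{x,F}$, corrected by the Maslov index of the pair consisting of a (complex) Lagrangian tangent plane of $\text{Conic}(L)$ and the plane $T\Gamma_{d\Re(F)}$ — the graph of the real Hessian of the holomorphic $F$, a real quadratic form of signature $(\dim_\mathbb{C}X,\dim_\mathbb{C}X)$ by the complex Morse lemma. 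A linear-algebra computation shows this index is pinned to one value, and the normalization $-\dim_\mathbb{C}X$ is precisely the one making it $0$ — the linear-algebraic shadow of the fact that complex Morse theory is concentrated in the middle dimension. With every generator of $CF(L_{x,F},L)$ in degree $0$ the differential vanishes for degree reasons, whence $HF^\bullet(L_{x,F},L)=0$ for $\bullet\neq0$.

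The technical core of \eqref{HF1} is the bookkeeping at the two ends: showing that the $\partial V$-end of $h$ genuinely drops out — this is exactly what the ``assumptions on the boundary of $V$'' are for, forcing $d\log m$ to be controlled (asymptotically conormal) along $\partial V$ so that no stray intersection points appear — and that the $\partial B_\epsilon(x)$-end produces precisely the relative term $\{\Re(F)<0\}$, all while upgrading the Floer/Morse comparison to a \emph{canonical} chain-level quasi-isomorphism in this noncompact, infinitesimal setting. For \eqref{HF2} the crux is the index computation: one must check, uniformly over all crossings, that a complex Lagrangian plane and the graph of the real part of a nondegenerate holomorphic Hessian always cross in the middle index, and that the brane data together with the grading $-\dim_\mathbb{C}X$ convert this into Floer degree $0$. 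It is here, and only here, that the holomorphicity of $L$ — not merely constructibility of $\mu_X^{-1}(L)$ — enters, and it is this positivity that ultimately forces $\mu_X^{-1}(L)$ to be perverse.
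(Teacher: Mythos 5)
Your proposal follows essentially the same strategy as the paper: for \eqref{HF1} you reduce the Floer complex to a Morse complex (Fukaya--Oh plus Harvey--Lawson/Kontsevich--Soibelman) and then identify it topologically with the stated relative de Rham complex, matching Lemma~\ref{2branes} and Lemma~\ref{pairs}; for \eqref{HF2} you observe that transverse crossings of two holomorphic Lagrangians are pinned to a single (middle) Maslov degree, which with the grading normalization $-\dim_\mathbb{C}X$ forces all generators into degree $0$ --- this is exactly Proposition~\ref{degree} combined with Lemmas~\ref{transverse1} and~\ref{transverse2}. One small imprecision to flag: the relevant Morse function is not $\Re(F)-\log m$ --- neither term blows up on $\partial B_\epsilon(x)$ --- but rather involves the full defining function $f=b\circ\Re(F)+c\circ\mathrm{r}+d\circ m_{\widetilde{W}_0}+e\circ m_{\widetilde{W}_1}$ of $L_{x,F}$, whose cutoff terms are what produce the $\pm\infty$ end behaviour you correctly describe; likewise the ``pinned index'' linear algebra you sketch for \eqref{HF2} does not reduce to nondegenerate Hessian signature alone but is the eigenvalue-pairing argument ($\lambda\mapsto-\bar\lambda$ for matrices anticommuting with $J$) carried out in the proof of Proposition~\ref{degree}, valid without any nondegeneracy hypothesis.
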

An illustrating picture \footnote{Of course $X=\mathbb{R}$ is not in a complex manifold, but it will become clear that the construction of local Morse brane generalizes to the real setting; also see Section \ref{LMGF}} for the branes $L_{U,m}$ and $L_{x,F}$ in the case of $X=\mathbb{R}$ is presented in Figure \ref{morse}, where $V=(a,b)$, $F_1=x-b$ and $F_2=b-x$. The standard brane $L_{V,m}$ corresponds to the sheaf $i_*\mathbb{C}_V$, and one can check that (\ref{HF1}) holds and compare it with (\ref{Mstandard}).

\begin{figure}
\centering
  \begin{overpic}[width=7cm, height=6cm]{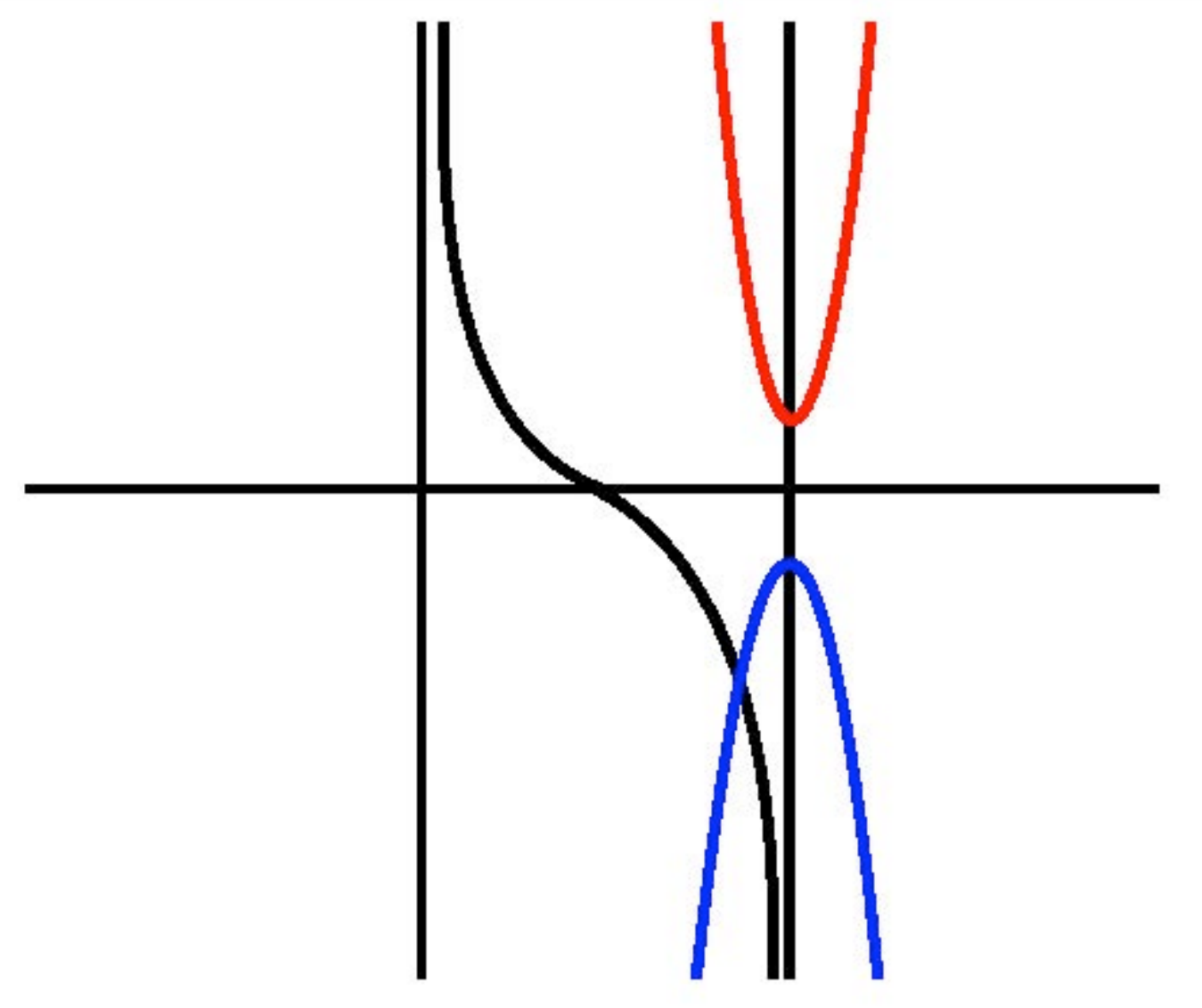}
  \put(74,74){\large \color{red}$L_{b,F_1}$}
  \put(47,46){\large $L_{V,m}$}
  \put(74,16){\large \color{blue}$L_{b,F_2}$}
  \put(30,36){\large $a$}
  \put(61,35){\large $b$}
  \put(87,35){\large $\mathbb{R}$}
  \end{overpic}
\caption{A picture illustrating a standard brane, two local Morse branes and their Floer cohomology for $X=\mathbb{R}$.}
\label{morse}
\end{figure}

\subsection{Microlocal Geometry}
There are roughly two characterizations of perverse sheaves. One is characterized by the vanishing degrees of the cohomological (co)stalks of sheaves. The other is the microlocal (or Morse theoretic) approach using vanishing property of the microlocal stalks (or local Morse groups) of a sheaf. These are due to Beilinson-Bernstein-Deligne\cite{BBD}, Kashiwara-Schapira\cite{Kashiwara} and Goresky-MacPherson\cite{Goresky}. In this paper, we will mainly adopt the latter one. We also include a path from (co)stalk characterization to the microlocal characterization in Section \ref{LMG}. 

The microlocal stalk of a sheaf is a measurement of the change of sections of the sheaf when propagating along the direction determined by a given covector in $T^*X$. More precisely, let $\mathcal{F}$ be a sheaf whose cohomology sheaf is constructible with respect to some  stratification $\mathcal{S}$. There is the standard conical Lagrangian $\Lambda_\mathcal{S}$ in $T^*X$ associated to $\mathcal{S}$, which is the union of all the conormals to the strata. Now pick a smooth point $(x,\xi)$ in $\Lambda_{\mathcal{S}}$, and choose a sufficiently generic holomorphic function $F$ near $x$ with $F(x)=0$ and $dF_x=\xi$ (this is exactly the same condition we put on $F$ when we construct $L_{x,F}$ in \ref{HF}). The microlocal stalk (or local Morse group) $M_{x,F}(\mathcal{F})$ of $\mathcal{F}$ is defined to be
\begin{align}
M_{x,F}(\mathcal{F})=\Gamma(B_\epsilon(x), B_\epsilon(x)\cap \{\Re(F)<0\},\mathcal{F})
\end{align}
for sufficiently small ball $B_\epsilon(x)$. In particular if $\mathcal{F}$ is $i_*\mathbb{C}_V$, the standard sheaf associated an open embedding $i: V\hookrightarrow X$, then one gets
\begin{equation}\label{Mstandard}
M_{x,F}(i_*\mathbb{C}_V)=\Gamma(B_\epsilon(x)\cap V, B_\epsilon(x)\cap V\cap \{\Re(F)<0\}, \mathbb{C}).
\end{equation}

Recall that $H^0(\mu_X)$ sends $i_*\mathbb{C}_V$ to $L_{V,m}$, and standard sheaves associated to open sets generate the category $D_c^b(X)$. So comparing (\ref{Mstandard}) with (\ref{HF1}), one almost sees that the functor $HF(L_{x,F},-)$ on $DFuk(T^*X)$ is equivalent to the functor $M_{x,F}(-)$ on $D_c^b(X)$ under the Nadler-Zaslow correspondence. This is confirmed by studying composition maps on the $A_\infty$-level.

With the same assumptions as above plus the further assumption that $\mathcal{S}$ is a complex stratification, the microlocal characterization of a perverse sheaf is very simple. It says that $\mathcal{F}$ is a perverse sheaf if and only if the cohomology of the microlocal stalk $M_{x,F}(\mathcal{F})$ is concentrated in degree 0 for all choices of $(x,\xi)$. For a holomorphic Lagrangian brane $L$, it is not hard to prove that $H^0(\mu_X)^{-1}(L)$ is a sheaf whose cohomology sheaf is constructible with respect to a complex stratification. Now it is easy to see that (\ref{HF2}) directly implies our main theorem (Theorem \ref{main}).

\subsection{Organization} The preliminaries are included in the Appendices. We first collect basic material on analytic-geometric categories, since this is a reasonable setting for stratification theory (hence for constructible sheaves) and for Lagrangian branes. Then we give a short account of $A_\infty$-categories, which is the algebra basics for Fukaya category. Lastly, we give an overview of the defininition of infinitesimal Fukaya categories, with some specific account for $Fuk(T^*X)$ to supplement the main content. 

Section \ref{LMG} starts from basic definitions and properties of constructible sheaves and perverse sheaves, then heads towards the microlocal characterization of a perverse sheaf. Section \ref{NZCorr} gives an overview of Nadler-Zaslow correspondence, with detailed discussion on several aspects, including Morse trees and the use of Homological Perturbation Lemma, since similar techniques will be applied in the later sections. Section \ref{Repr M_{x,F}} devotes to the construction of the local Morse brane $L_{x,F}$ and the proof that it corresponds to the local Morse group functor $M_{x,F}$ on the sheaf side. In section \ref{SectionHF}, we  show the proof of (\ref{HF2}) and conclude with our main theorem, some consequences and generalizations. 

\subsection*{Acknowledgements} I would like to express my deep gratitude to my advisor, Prof. David Nadler, for suggesting this problem to me and for numerous invaluable discussions throughout this project. I am indebted to Prof. Denis Auroux, Cheol-Hyun Cho, Si Li and Eric Zaslow for several helpful discussions. I would also like to thank Penghui Li, Zack Sylvan and Hiro Tanaka for useful conversations. \\

\section{Perverse sheaves and the local Morse group functor}\label{LMG}
\subsection{Constructible sheaves}\label{sheaves}
Let $X$ be an analytic manifold. Throughout the paper, we will always work in a fixed analytic-geometric setting, and all the stratifications we consider are assumed to be Whitney stratifications  (see Appendix \ref{agc}). A sheaf of $\mathbb{C}$-vector spaces is \emph{constructible} if there exists a stratification $\mathcal{S}=\{S_\alpha\}_{\alpha\in\Lambda}$ such that $i_\alpha^*\mathcal{F}$ is a locally constant sheaf, where $i_\alpha$ is the inclusion $S_\alpha\hookrightarrow X$. Let $D_c^b(X)$ denote the bounded derived category of complexes of sheaves whose cohomomology sheaves are all constructible. In the following, we simply call such a complex a sheaf. $D_c^b(X)$ has a natural differential graded enrichment, denoted as $Sh(X)$. The morphism space between two sheaves $\mathcal{F}, \mathcal{G}$ is the complex
$RHom(\mathcal{F},\mathcal{G}),$
where $RHom(\mathcal{F}, \cdot )$ is the right derived functor of the usual $Hom(\mathcal{F}, \cdot)$ functor (by taking global section of the sheaf $\mathcal{H}om(\mathcal{F}, \cdot)$). Similarly, we denote by $Sh_\mathcal{S}(X)$ the subcategory of $Sh(X)$ consisting of sheaves constructible with respect a fixed stratification $\mathcal{S}$.

There are the standard four functors between $Sh(X)$ and $Sh(Y)$ associated to a map $f: X\rightarrow Y$, namely $f_*, f_!, f^*$ and $f^!$. Here and after, all functors are derived, though we omit the derived notation. $f_*, f^*$ are right and left adjoint functors, similar for the pair $f^!$ and $f_!$. More explicitly, we have for $\mathcal{F}\in Sh(X), \mathcal{G}\in Sh(Y)$, 
$$\mathcal{H}om(\mathcal{G}, f_*\mathcal{F})\simeq f_*\mathcal{H}om(f^*\mathcal{G}, \mathcal{F}), f_*\mathcal{H}om(\mathcal{F}, f^!\mathcal{G})\simeq \mathcal{H}om(f_!\mathcal{F}, \mathcal{G})$$
$$Hom(\mathcal{G}, f_*\mathcal{F})\simeq Hom(f^*\mathcal{G}, \mathcal{F}), 
Hom(\mathcal{F}, f^!\mathcal{G})\simeq Hom(f_!\mathcal{F}, \mathcal{G}).$$

And there is the Verdier duality $\mathbb{D}: Sh(X)\rightarrow Sh(X)^{op}$, which gives the relation $\mathbb{D}f_*=f_!\mathbb{D}, \mathbb{D}f^*=f^!\mathbb{D}$. Let $i: U\hookrightarrow X$ be an open inclusion and $j: Y=X-U\hookrightarrow X$ be the closed inclusion of the complement of $U$, then $i^*=i^!$ and $j_*=j_!$. There are two standard exact triangles, taking global sections of which gives the long exact sequences for the relative hypercohomology of $\mathcal{F}$ for the pair $(X, Y)$ and $(X,U)$ respectively,
\begin{equation}\label{rel_tr}
 j_!j^!\mathcal{F}\rightarrow\mathcal{F}\rightarrow i_*i^*\mathcal{F}\overset{[1]}{\rightarrow},\  i_!i^!\mathcal{F}\rightarrow\mathcal{F}\rightarrow j_*j^*\mathcal{F}\overset{[1]}{\rightarrow}.
\end{equation}

The \emph{stalk} of $\mathcal{F}$ at $x\in X$ will mean the complex $i_x^*\mathcal{F}$, where $i_x: \{x\}\hookrightarrow X$ is the inclusion. The $i$-th cohomology sheaf of a complex $\mathcal{F}$ will be denoted as $\mathcal{H}^i(\mathcal{F})$. Note that the stalk of $\mathcal{H}^i(\mathcal{F})$ at $x$ is isomorphic to $H^i(i_x^*\mathcal{F})$. Also let $\mathrm{supp}(\mathcal{F}):=\overline{\{x\in X:\mathcal{H}^j(\mathcal{F})_x\neq 0 \text{ for some }j\}}$. 

According to \cite{NZ}, the \emph{standard} objects, i.e. sheaves of the form $i_*\mathbb{C}_U$, where $i: U\hookrightarrow X$ is an open inclusion, generate $Sh(X)$. The argument goes like the following. It suffices to prove the statement for the subcategory $Sh_\mathcal{S}(X)$ for any stratification $\mathcal{S}=\{S_\alpha\}_{\alpha\in\Lambda}$. Without loss of generality, we can assume each stratum of $\mathcal{S}$ is connected and is a cell. Let $\mathcal{S}_{\leq k}$, $0\leq k\leq n=\dim X$, denote the union of all strata in $\mathcal{S}$ of dimension less than or equal to $k$. Let $\mathcal{S}_{>k}=X-\mathcal{S}_{\leq k}$ and $\mathcal{S}_{k}=\mathcal{S}_{\leq k}-\mathcal{S}_{\leq k-1}$. Denote by $i_{k}, i_{>k}, j_{\leq k}$ the inclusion of $S_{\bullet}$ with corresponding subscripts.
The standard exact triangle on the left of (\ref{rel_tr}) for a sheaf $\mathcal{G}$ supported on $\mathcal{S}_{\leq k}$ gives, 
\begin{equation}\label{ex_tr}
 j_{\leq k-1!}j^!_{\leq k-1}\mathcal{G}\rightarrow\mathcal{G}\rightarrow i_{k*}i_{k}^*\mathcal{\mathcal{G}}=i_{>k-1*}i_{>k-1}^*\mathcal{\mathcal{G}}\overset{[1]}{\rightarrow}.
\end{equation}
We start from $\mathcal{G}_n=\mathcal{F}\in Sh_{\mathcal{S}}(X)$, then use (\ref{ex_tr}) inductively for $\mathcal{G}_{k-1}= j_{\leq k-1!}j^!_{\leq k-1}\mathcal{\mathcal{G}}_k= j_{\leq k-1!}j^!_{\leq k-1}\mathcal{F}$ from $k=n$ through $k=1$, and get that $\mathcal{F}$ can be obtained by taking interated mapping cones of shifts of $i_{\alpha*}\mathbb{C}_{S_\alpha}$, $S_\alpha\in\mathcal{S}$. Let $\mathcal{U}_\mathcal{S}=\{X, O_\alpha=X-\overline{S_\alpha}, O'_\alpha=X-\partial S_{\alpha}: \alpha\in\Lambda\}$. Now the claim is $i_{\alpha*}\mathbb{C}_{S_\alpha}$ can be generated by $i_{U*}\mathbb{C}_U, U\in\mathcal{U}_{\mathcal{S}}$. This follows from similar argument. Putting $\mathcal{F}=\mathbb{C}_X$, $i=i_{O_\alpha}$ or $i=i_{O'_\alpha}$ on the left of (\ref{rel_tr}), we get the generation statement for $ j_{\overline{S}_\alpha!}j^!_{\overline{S}_\alpha}\mathbb{C}_X,  j_{\partial S_\alpha!}j^!_{\partial S_\alpha}\mathbb{C}_X$. Then letting $\mathcal{G}=j_{\overline{S}_\alpha!}j^!_{\overline{S}_\alpha}\mathbb{C}_X$ in (\ref{ex_tr}) for $k=\dim S_\alpha$, and identifitying $j_{\leq k-1!}j^!_{\leq k-1}\mathcal{G}$ with $ j_{\partial S_\alpha!}j^!_{\partial S_\alpha}\mathcal{G}$, $i_{k*}i_{k}^*\mathcal{\mathcal{G}}$ with $i_{\alpha*}\mathbb{C}_{S_\alpha}$, we get the generation statement for $i_{\alpha*}\mathbb{C}_{S_\alpha}$.

Since $Sh(X)$ is a dg category, it suffices to study the morphisms between any two standard objects associated to open sets, and the composition maps for a triple of standard objects. 
\begin{prop}[Lemma 4.4.1 \cite{NZ}]\label{open_hom}
Let $i_0: U_0\hookrightarrow X$ and $i_1: U_1\hookrightarrow X$ be the inclusion of two open submanifolds of $X$. Then there is a natural quasi-isomorphism
$$Hom(i_{0*}\mathbb{C}_{U_0}, i_{1*}\mathbb{C}_{U_1})\simeq (\Omega(\overline{U}_0\cap U_1, \partial U_0\cap U_1), d).$$
Furthermore, for a triple of open inclusions $i_k: U_k\hookrightarrow X$, $k=0,1,2$, the composition map 
$$Hom(i_{1*}\mathbb{C}_{U_1}, i_{2*}\mathbb{C}_{U_2})\otimes Hom(i_{0*}\mathbb{C}_{U_0}, i_{1*}\mathbb{C}_{U_1})\rightarrow Hom(i_{0*}\mathbb{C}_{U_0}, i_{2*}\mathbb{C}_{U_2})$$
is natually identified with the wedge product on (relative) deRham complexes
$$(\Omega(\overline{U}_1\cap U_2, \partial U_1\cap U_2), d)\otimes (\Omega(\overline{U}_0\cap U_1, \partial U_0\cap U_1), d)\rightarrow (\Omega(\overline{U}_0\cap U_2, \partial U_0\cap U_2), d)$$
\end{prop}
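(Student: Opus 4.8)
The plan is to compute $RHom(i_{0*}\mathbb{C}_{U_0}, i_{1*}\mathbb{C}_{U_1})$ by adjunction, reducing to a relative hypercohomology group, and then identify the latter with a relative de Rham complex. First I would use the $(i_0^*, i_{0*})$-adjunction (in the form $Hom(i_{0*}\mathbb{C}_{U_0}, \mathcal{G})\simeq Hom(\mathbb{C}_{U_0}, i_0^!\mathcal{G})$ — wait, that's the wrong adjunction; I actually want to move the source). Since $i_0$ is an open inclusion, $i_0^* = i_0^!$, so $i_{0*} = i_{0!}$ is both adjoints' partner in the appropriate sense; concretely, $RHom(i_{0!}\mathbb{C}_{U_0}, i_{1*}\mathbb{C}_{U_1}) \simeq RHom(\mathbb{C}_{U_0}, i_0^! i_{1*}\mathbb{C}_{U_1}) = R\Gamma(U_0, i_0^* i_{1*}\mathbb{C}_{U_1})$. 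Now $i_0^* i_{1*}\mathbb{C}_{U_1}$ is computed on $U_0$: it is the (derived) pushforward to $U_0$ of the constant sheaf on $U_0\cap U_1$ along the open inclusion, which is $(i')_* \mathbb{C}_{U_0\cap U_1}$ for $i'\colon U_0\cap U_1 \hookrightarrow U_0$, since base change for open inclusions is strict. Hence the $Hom$-complex is $R\Gamma(U_0, (i')_*\mathbb{C}_{U_0\cap U_1}) = R\Gamma(U_0\cap U_1, \mathbb{C})$. That is \emph{not} yet the claimed relative group, so a second step is needed: one must account for the support/closure carefully, replacing $U_0$ by its closure $\overline{U}_0$ and recognizing that sections over $U_0\cap U_1$ of the pushforward from $U_0\cap U_1$ into $X$ pick up boundary contributions along $\partial U_0 \cap U_1$. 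The correct statement is $R\Gamma(U_0, i_0^* i_{1*}\mathbb{C}_{U_1}) \simeq R\Gamma(\overline{U}_0 \cap U_1,\, \partial U_0 \cap U_1;\, \mathbb{C})$, which follows from the excision/long-exact-sequence comparison between the pair $(\overline U_0\cap U_1, \partial U_0\cap U_1)$ and the open set $U_0\cap U_1$, using that $\overline{U}_0\cap U_1$ deformation retracts onto a neighborhood of $U_0\cap U_1$ rel boundary in the analytic-geometric setting.

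Once the $Hom$-complex is identified with relative singular cohomology $H^\bullet(\overline U_0\cap U_1, \partial U_0\cap U_1; \mathbb{C})$, the passage to the relative de Rham complex $(\Omega(\overline U_0\cap U_1, \partial U_0\cap U_1), d)$ is the de Rham theorem for manifolds with (analytic-geometric) corners/boundary, applied to the pair; here $\Omega(A, B)$ denotes forms on $A$ restricting to zero on $B$. I would set up the quasi-isomorphism explicitly via integration of forms over smooth singular chains, so that it is manifestly natural in the data — this naturality is what makes the second half of the proposition (compatibility with composition) essentially formal.

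For the composition statement, the point is that under the chain of identifications above, the composition $Hom(i_{1*}\mathbb{C}_{U_1}, i_{2*}\mathbb{C}_{U_2}) \otimes Hom(i_{0*}\mathbb{C}_{U_0}, i_{1*}\mathbb{C}_{U_1}) \to Hom(i_{0*}\mathbb{C}_{U_0}, i_{2*}\mathbb{C}_{U_2})$ becomes the cup product on relative cohomology $H^\bullet(\overline U_1\cap U_2, \partial U_1\cap U_2) \otimes H^\bullet(\overline U_0\cap U_1, \partial U_0 \cap U_1) \to H^\bullet(\overline U_0\cap U_2, \partial U_0\cap U_2)$, restricted along $\overline U_0\cap U_1\cap U_2 \hookrightarrow$ each factor; on de Rham complexes cup product is the wedge product, giving the claim. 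I would verify this by unwinding the adjunction isomorphisms and the definition of composition in $Sh(X)$ (Yoneda composition of $RHom$-complexes), checking that the canonical map $i_0^* i_{1*}\mathbb{C}_{U_1} \otimes i_0^* i_{1*} i_{1}^* i_{2*}\mathbb{C}_{U_2} \to \cdots$ realizes restriction-then-wedge.

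The main obstacle I anticipate is the second step of the first part — correctly producing the \emph{relative} pair $(\overline U_0\cap U_1, \partial U_0 \cap U_1)$ out of the naive answer $R\Gamma(U_0\cap U_1, \mathbb{C})$, and justifying the deformation retraction rel boundary — which requires the analytic-geometric (Whitney) regularity hypotheses on the $U_\alpha$ in an essential way: without control on how $\partial U_0$ meets $U_1$, neither the closure nor the boundary behaves well, and the de Rham comparison for the pair can fail. Making the retraction and the resulting quasi-isomorphism \emph{canonical} (not depending on auxiliary choices of metric or retraction) is the delicate technical core; everything downstream, including the composition compatibility, then follows by functoriality. Since this is Lemma 4.4.1 of \cite{NZ}, I would largely follow that treatment, supplying the analytic-geometric details in the present fixed o-minimal setting.
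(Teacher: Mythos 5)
There is a genuine error in the first step that propagates and makes the argument fail. You assert ``Since $i_0$ is an open inclusion, $i_0^* = i_0^!$, so $i_{0*} = i_{0!}$.'' The first half is correct (for an open inclusion $i^* = i^!$), but the implication is false: for an \emph{open} inclusion $i_{0*}\neq i_{0!}$ in general (it is for a \emph{closed} inclusion that $j_*=j_!$). In fact $i_{0!}\mathbb{C}_{U_0}$ is the extension by zero and $i_{0*}\mathbb{C}_{U_0}$ carries nontrivial stalks along $\partial U_0$; they are (up to shift and orientation) Verdier dual, not equal. Consequently the object you actually compute, $RHom(i_{0!}\mathbb{C}_{U_0}, i_{1*}\mathbb{C}_{U_1}) \simeq R\Gamma(U_0,\, i_0^*i_{1*}\mathbb{C}_{U_1}) \simeq R\Gamma(U_0\cap U_1,\mathbb{C})$, is \emph{not} the morphism complex in question and is \emph{not} quasi-isomorphic to $\Omega(\overline U_0\cap U_1,\partial U_0\cap U_1)$. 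Your proposed ``second step'' — that $R\Gamma(U_0\cap U_1,\mathbb{C}) \simeq R\Gamma(\overline U_0\cap U_1, \partial U_0\cap U_1;\mathbb{C})$ via a deformation retraction — asserts an equality that is false. A minimal counterexample: $X=\mathbb{R}$, $U_0=(1,2)$, $U_1=(0,3)$. Then $R\Gamma(U_0\cap U_1,\mathbb{C}) = \mathbb{C}$ in degree $0$, whereas $H^\bullet(\overline U_0\cap U_1,\partial U_0\cap U_1) = H^\bullet([1,2],\{1,2\}) = \mathbb{C}$ in degree $1$, and the correct answer $RHom(\mathbb{C}_{[1,2]},\mathbb{C}_{[0,3]}) = \mathbb{C}[-1]$ agrees with the latter. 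The two complexes you try to identify are Poincaré--Lefschetz dual to one another (up to shift), not the same.

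The fix is to dualize rather than to conflate $i_{0*}$ with $i_{0!}$. Using the sheaf-level adjunction $\mathcal{H}om(\mathcal{G}, f_*\mathcal{F})\simeq f_*\mathcal{H}om(f^*\mathcal{G},\mathcal{F})$ on the $i_1$ side and then Verdier duality on $U_1$ (with $\mathbb{D}i_{0*} = i_{0!}\mathbb{D}$), one finds
$\mathcal{H}om(i_{0*}\mathbb{C}_{U_0}, i_{1*}\mathbb{C}_{U_1}) \simeq i_{1*}\,i_1^*\,i_{0!}\mathbb{C}_{U_0}$,
so that $RHom(i_{0*}\mathbb{C}_{U_0}, i_{1*}\mathbb{C}_{U_1}) \simeq R\Gamma(U_1,\, i_{0!}\mathbb{C}_{U_0}|_{U_1})$ — genuinely the relative hypercohomology of the pair $(U_1, U_1-U_0)$, which after excision in the tame setting is $R\Gamma(\overline U_0\cap U_1,\,\partial U_0\cap U_1;\mathbb{C})$. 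This is exactly the sheaf $i_{1*}i_1^*i_{0!}\mathbb{C}_{U_0}$ whose $c$-soft de Rham resolution $\Omega^\bullet_{\overline U_0\cap U_1,\,\partial U_0\cap U_1}$ is invoked in the proof of Lemma~\ref{wedge}, and that resolution is also what makes the compatibility with composition (the wedge-product statement) work at the chain level. Your overall outline for the second half of the statement is reasonable once the correct identification of $RHom$ is in place, but as written the first half computes the wrong object and then asserts a false quasi-isomorphism to paper over the discrepancy.
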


\cite{NZ} introduced how to perturb $U_0$ and $U_1$ to have transverse boundary intersection, and to use the perturbed open sets to calculate $Hom(i_{0*}\mathbb{C}_{U_0}, i_{1*}\mathbb{C}_{U_1})$. Let $m_i$ be a semi-defining function of $U_i$ for $i=0,1$ (see Remark \ref{def fcn}). There exists a finged set $R\subset \mathbb{R}_+^2$ (see Definition \ref{fringed}) such that $m_1\times m_0: X\rightarrow\mathbb{R}^2$ has no critical value in $R$ (by Corollary \ref{fringed_cor}). In particular, for $(t_1, t_0)\in R$, $X_{m_0=t_0}$ and $X_{m_1=t_1}$ intersect transversely. Then there is a compatible collection of identifications
$$(\Omega(\overline{U}_0\cap U_1, \partial U_0\cap U_1), d)\simeq 
(\Omega(X_{m_0\geq t_0}\cap X_{m_1>t_1}, X_{m_0=t_0}\cap X_{m_1>t_1}), d).$$

\subsection{Perverse sheaves}
Let $X$ be a complex analytic manifold of dimension $n$. In this section, we review some basic definitions and properties of
the perverse $t$-structure $(^p{\!
D}_c^{\leq 0}(X), ^p{\! D}_c^{\geq 0}(X))$ (with respect to the
``middle perversity"). The exposition is following Section 8.1 of \cite{HTT}.

\begin{definition}
Define the full subcategories $^p{\! D}_c^{\leq 0}(X)$ and $^p{\!
D}_c^{\geq 0}(X)$ in $D_c^b(X)$ as follows. A sheaf $\mathcal{F}\in {^p}{\!D}_c^{\leq 0}(X)$ if
$$\dim \{\mathrm{Supp}(\mathcal{H}^j(\mathcal{F}))\}\leq -j,\text{ for all }j\in\mathbb{Z} $$
and $\mathcal{F}\in {^p}{\! D}_c^{\geq 0}(X)$ if
$$\dim
\{\text{Supp}(\mathcal{H}^j(\mathbb{D}\mathcal{F}))\}\leq -j, \text{ for all
}j\in\mathbb{Z}.$$
\end{definition}

An object of its heart $Perv(X)={^p}{\! D}^{\leq 0}(X)\cap {^p}{\!
D}^{\geq 0}(X)$ is called a \emph{perverse sheaf}. Let ${^p}{\!
\tau}^{\leq k}: D_c^b(X)\rightarrow {^p}{\! D}^{\leq k}(X):={^p}{\! D}^{\leq 0}(X)[-k]$, ${^p}{\!
\tau}^{\geq k}: D_c^b(X)\rightarrow {^p}{\! D}^{\geq k}(X):= {^p}{\! D}^{\geq 0}(X)[-k]$ be the
corresponding truncation functors.
Let ${^p}{\! H}^k={^p}{\!\tau}^{\geq k}{^p}{\!\tau}^{\leq k}[k]:
D_c^b(X)\rightarrow Perv(X)$ be the $k$-th perverse cohomology functor.

Here are several properties of perverse $t$-structures.
\begin{prop}\label{perverse}
Let $\mathcal{F}\in D_c^b(X)$ and $\mathcal{S}=\{S_\alpha\}_{\alpha\in\Lambda}$ be
a complex stratification of $X$ consisting of connected strata with respect to which $H^j(\mathcal{F})$ are constructible. Then \\
(i) $\mathcal{F}\in D_c^{\leq 0}(X)$ if and only if $H^j(i_{S_\alpha}^*\mathcal{F})=0$
for all $j>-\dim S_\alpha$;\\
(ii) $\mathcal{F}\in D_c^{\geq} 0(X)$ if and only if $H^j(i_{S_\alpha}^!\mathcal{F})=0$
for all $j<-\dim S_\alpha$.
\end{prop}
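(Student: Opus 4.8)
\textbf{Proof proposal for Proposition \ref{perverse}.}

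The plan is to reduce everything to the standard stalk/costalk characterization of the perverse $t$-structure (Proposition 8.1.something in \cite{HTT}) and then to translate the condition ``$\dim \mathrm{Supp}(\mathcal{H}^j(\mathcal{F}))\leq -j$'' into a stratum-wise vanishing statement using that $\mathcal{F}$ is constructible with respect to the fixed complex stratification $\mathcal{S}$. I will only argue (i) in detail; (ii) follows by applying (i) to $\mathbb{D}\mathcal{F}$, since Verdier duality exchanges $i_{S_\alpha}^*$ with $i_{S_\alpha}^!$ (up to the shift by $2\dim S_\alpha = 2\dim_{\mathbb{C}}S_\alpha$ coming from the orientation/dualizing sheaf of the complex, hence even-dimensional, stratum $S_\alpha$), and exchanges $D_c^{\leq 0}$ with $D_c^{\geq 0}$ by definition.

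For (i), first I would observe that since $H^j(\mathcal{F})$ is locally constant on each connected stratum $S_\alpha$, the support $\mathrm{Supp}(\mathcal{H}^j(\mathcal{F}))$ is a union of closures $\overline{S_\alpha}$ over those $\alpha$ for which $\mathcal{H}^j(\mathcal{F})|_{S_\alpha}\neq 0$, equivalently for which $H^j(i_{S_\alpha}^*\mathcal{F})\neq 0$. Hence $\dim\mathrm{Supp}(\mathcal{H}^j(\mathcal{F})) = \max\{\dim S_\alpha : H^j(i_{S_\alpha}^*\mathcal{F})\neq 0\}$ (with the convention $\max\emptyset=-\infty$, i.e.\ the support is empty). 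From this the equivalence is immediate: the inequality $\dim\mathrm{Supp}(\mathcal{H}^j(\mathcal{F}))\leq -j$ holding for all $j$ is exactly the statement that whenever $H^j(i_{S_\alpha}^*\mathcal{F})\neq 0$ one has $\dim S_\alpha\leq -j$, i.e.\ $j\leq -\dim S_\alpha$, i.e.\ $H^j(i_{S_\alpha}^*\mathcal{F})=0$ for all $j>-\dim S_\alpha$. No deep input is needed here beyond local constancy along strata and the elementary fact about supports of constructible cohomology sheaves; the closures $\overline{S_\alpha}$ contribute no extra dimension since $\dim\overline{S_\alpha}=\dim S_\alpha$.

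The one point that needs a little care — and the place I expect a reader to want more detail — is the passage from the abstract definition of $^{p}D_c^{\leq 0}(X)$ via dimensions of supports to the costalk characterization used for (ii), and in particular making sure the shift conventions match the \emph{complex} dimension. Concretely: $\mathbb{D}$ interchanges the two subcategories in the definition, so $\mathcal{F}\in {^{p}}D_c^{\geq 0}(X)$ iff $\mathbb{D}\mathcal{F}\in{^{p}}D_c^{\leq 0}(X)$ iff (by (i) applied to $\mathbb{D}\mathcal{F}$) $H^j(i_{S_\alpha}^*\mathbb{D}\mathcal{F})=0$ for $j>-\dim S_\alpha$; and then one uses $i_{S_\alpha}^*\mathbb{D}\mathcal{F}\simeq \mathbb{D}_{S_\alpha}(i_{S_\alpha}^!\mathcal{F})$ together with $\mathbb{D}_{S_\alpha}\simeq \mathcal{R}\mathcal{H}om(-,\omega_{S_\alpha})$ and $\omega_{S_\alpha}\simeq \mathbb{C}_{S_\alpha}[2\dim S_\alpha]$ on the (oriented, even-real-dimensional) complex manifold $S_\alpha$, to convert $H^j(i_{S_\alpha}^*\mathbb{D}\mathcal{F})$ into $H^{-j-2\dim S_\alpha}$ of the dual of $i_{S_\alpha}^!\mathcal{F}$. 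Chasing the indices, $j>-\dim S_\alpha$ becomes $-j-2\dim S_\alpha < -\dim S_\alpha$, which is exactly the condition $H^k(i_{S_\alpha}^!\mathcal{F})=0$ for $k<-\dim S_\alpha$ in the statement. So the main (mild) obstacle is bookkeeping the duality and the factor $2\dim_{\mathbb{C}}S_\alpha$ correctly — there is no serious analytic or categorical difficulty, this being essentially a repackaging of \cite[\S 8.1]{HTT}.
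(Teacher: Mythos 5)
The paper states Proposition \ref{perverse} without proof, simply following Section 8.1 of \cite{HTT}, so there is no internal proof to compare against. Your argument is correct and is the standard one. Part (i) is an immediate translation once one observes that for a connected stratum $S_\alpha$ the restriction $\mathcal{H}^j(\mathcal{F})|_{S_\alpha}\simeq \mathcal{H}^j(i_{S_\alpha}^*\mathcal{F})$ is a local system, hence either zero or everywhere nonzero, and that in a Whitney stratification the frontier of a stratum consists of lower-dimensional strata, so taking closures does not increase dimension. Part (ii) correctly reduces to (i) by applying it to $\mathbb{D}\mathcal{F}$, using $i_{S_\alpha}^*\mathbb{D}\mathcal{F}\simeq\mathbb{D}_{S_\alpha}(i_{S_\alpha}^!\mathcal{F})$ and $\omega_{S_\alpha}\simeq\mathbb{C}_{S_\alpha}[2\dim_{\mathbb{C}}S_\alpha]$ (the complex stratum is canonically oriented). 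The one place where a careful reader would want a sentence more is the step from $H^j(\mathbb{D}_{S_\alpha}\mathcal{G})$ to ``$H^{-j-2\dim S_\alpha}$ of the dual of $\mathcal{G}$'' for $\mathcal{G}=i_{S_\alpha}^!\mathcal{F}$: this is justified because $\mathcal{G}$ has local-system cohomology sheaves, so on a contractible chart $\mathcal{G}$ splits as a direct sum of shifted constant sheaves and the Verdier dual is computed termwise; all one needs is the resulting equivalence of vanishing loci, which gives precisely the index shift you wrote. With that observation supplied, the bookkeeping of degrees is correct.
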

\begin{lemma}\label{perverse cohomology}
(1) A sheaf $\mathcal{F}\in D_c^b(X)$ is isomorphic to zero if and
only if ${^p}{\! H}^k(\mathcal{F})=0$
for all $k\in\mathbb{Z}$.\\
(2) A morphism $f: \mathcal{F}\rightarrow \mathcal{G}$ in $D_c^b(X)$
is an isomorphism if and only if the induced map ${^p}{\! H}^k(f):\
^p{\! H}^k(\mathcal{F})\rightarrow{^p}{\! H}{^k}(\mathcal{G})$ is an
isomorphism for all $k\in\mathbb{Z}$.
\end{lemma}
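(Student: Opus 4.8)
The plan is to deduce both parts from the standard formalism of bounded $t$-structures; nothing here is special to perverse sheaves beyond two facts about the perverse $t$-structure on $D_c^b(X)$: that it is bounded, and that the perverse cohomology functors ${^p}H^k$ carry distinguished triangles to long exact sequences in $Perv(X)$. I would isolate these two inputs first and then let both statements fall out formally.

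First I would record boundedness. Given $\mathcal{F}\in D_c^b(X)$, its ordinary cohomology sheaves $\mathcal{H}^j(\mathcal{F})$ vanish for $j$ outside a finite interval and each has support of complex dimension at most $n=\dim_{\mathbb C}X$; by the stalk description of ${^p}\tau^{\leq 0}$ in Proposition \ref{perverse}, shifting $\mathcal{F}$ sufficiently far to the right puts it in ${^p}D^{\leq 0}(X)$, and applying the same to $\mathbb{D}\mathcal{F}$ together with $\mathbb{D}\circ[k]=[-k]\circ\mathbb{D}$ shows that shifting sufficiently far to the left puts it in ${^p}D^{\geq 0}(X)$. Hence $\mathcal{F}\in{^p}D^{\leq b}(X)\cap{^p}D^{\geq a}(X)$ for some $a\le b$. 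Next I would invoke the general fact that, since $Perv(X)$ is abelian and ${^p}H^0={^p}\tau^{\geq 0}{^p}\tau^{\leq 0}$ is a cohomological functor, every triangle $A\to B\to C\overset{[1]}{\rightarrow}$ yields a long exact sequence
$$\cdots\to{^p}H^k(A)\to{^p}H^k(B)\to{^p}H^k(C)\to{^p}H^{k+1}(A)\to\cdots$$
in $Perv(X)$; this is the one piece of homological algebra to be cited (e.g. \cite{BBD}, or Section 8.1 of \cite{HTT}) rather than reproved.

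For part (1), the forward implication is trivial, so I would assume ${^p}H^k(\mathcal{F})=0$ for all $k$ and pick $a\le b$ with $\mathcal{F}\in{^p}D^{\leq b}(X)\cap{^p}D^{\geq a}(X)$. Then I would induct on $k$ from $k=a-1$ up to $k=b$ using the truncation triangles ${^p}\tau^{\leq k-1}\mathcal{F}\to{^p}\tau^{\leq k}\mathcal{F}\to{^p}H^k(\mathcal{F})[-k]\overset{[1]}{\rightarrow}$: the base case ${^p}\tau^{\leq a-1}\mathcal{F}=0$ holds because $\mathcal{F}\in{^p}D^{\geq a}(X)$, and at each step the right-hand term vanishes, forcing ${^p}\tau^{\leq k}\mathcal{F}=0$; at $k=b$ this gives $\mathcal{F}={^p}\tau^{\leq b}\mathcal{F}=0$ since $\mathcal{F}\in{^p}D^{\leq b}(X)$.

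For part (2), I would complete $f$ to a distinguished triangle $\mathcal{F}\overset{f}{\rightarrow}\mathcal{G}\to C\overset{[1]}{\rightarrow}$, so that $f$ is an isomorphism if and only if $C\simeq 0$; by part (1) the latter is equivalent to ${^p}H^k(C)=0$ for all $k$, and feeding the triangle into the long exact sequence above shows this holds exactly when every ${^p}H^k(f)$ is an isomorphism. The only thing resembling an obstacle is assembling the $t$-structure machinery — boundedness of the perverse $t$-structure and the long exact sequence of perverse cohomology — but both are entirely standard and use nothing about $X$ beyond the finiteness already built into $D_c^b(X)$; once they are in place, the two statements are immediate formal corollaries.
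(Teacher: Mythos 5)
Your proof is correct and follows essentially the same argument as the paper: both rely on boundedness of the perverse $t$-structure and an induction on the truncation triangles (the paper runs the induction downward from $b$, you run it upward from $a-1$, which is the same idea). Your spelled-out proof of part (2) via the long exact sequence of perverse cohomology applied to the cone of $f$ is exactly the "easy consequence of (1)" the paper has in mind.
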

\begin{proof}
(1) Since $\mathcal{F}\in D^b_c(X)$, we have $\mathcal{F}\in {^p}{\!
D}^{\geq a}(X)\cap{^p}{\! D}^{\leq b}(X)$ for some $a,b\in\mathbb{Z}$. By
the distinguished triangle,
$${^p}{\! \tau}^{\leq b-1}\mathcal{F}\rightarrow {^p}{\! \tau}^{\leq b}\mathcal{F}\simeq \mathcal{F}\rightarrow
{^p}{\! \tau}^{\leq b}{^p}{\! \tau}^{\geq b}\mathcal{F}\simeq
0\overset{[1]}{\rightarrow},$$ we get $\mathcal{F}\in {^p}{\!
D}^{\leq b-1}(X)$. Inductively, we conclude that $\mathcal{F}\simeq 0$.

(2) is an easy consequence of (1).
\end{proof}
\begin{prop}\label{perverse t-structure}
$\mathcal{F}\in D_c^b(X)$ is perverse $\Leftrightarrow$ ${^p}{\!
H}{^*}(\mathcal{F})$ is concentrated in degree $0$.
\end{prop}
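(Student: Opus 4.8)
The plan is to derive this purely formally from the structure of the perverse $t$-structure, using only the truncation triangles ${}^p\tau^{\leq k}\mathcal{F}\rightarrow\mathcal{F}\rightarrow{}^p\tau^{\geq k+1}\mathcal{F}\overset{[1]}{\rightarrow}$ and the definition ${}^pH^k={}^p\tau^{\geq k}{}^p\tau^{\leq k}[k]$ --- exactly the ingredients already exploited in the proof of Lemma \ref{perverse cohomology}. The one external input, imported from \cite{BBD} and \cite{HTT}, is that $({}^p{\! D}_c^{\leq 0}(X),{}^p{\! D}_c^{\geq 0}(X))$ really is a bounded $t$-structure on $D_c^b(X)$; granting that, the statement is the general fact that an object of a bounded $t$-structure lies in its heart if and only if all of its $t$-cohomologies except the $0$-th vanish.

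For the forward direction, suppose $\mathcal{F}\in Perv(X)$. From $\mathcal{F}\in{}^p{\! D}^{\leq 0}(X)$ we get ${}^p\tau^{\leq k}\mathcal{F}\simeq\mathcal{F}$ for all $k\geq 0$, and for $k\geq 1$ also $\mathcal{F}\in{}^p{\! D}^{\leq k-1}(X)$, so the truncation triangle forces ${}^p\tau^{\geq k}\mathcal{F}\simeq 0$ and hence ${}^pH^k(\mathcal{F})=({}^p\tau^{\geq k}\mathcal{F})[k]=0$. Dually, $\mathcal{F}\in{}^p{\! D}^{\geq 0}(X)\subseteq{}^p{\! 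D}^{\geq k+1}(X)$ for $k\leq-1$ gives ${}^p\tau^{\leq k}\mathcal{F}\simeq 0$, so ${}^pH^k(\mathcal{F})=0$; and ${}^pH^0(\mathcal{F})\simeq{}^p\tau^{\geq 0}{}^p\tau^{\leq 0}\mathcal{F}\simeq\mathcal{F}$. For the converse, assume ${}^pH^k(\mathcal{F})=0$ for all $k\neq 0$; by boundedness of the $t$-structure, $\mathcal{F}\in{}^p{\! D}^{\geq a}(X)\cap{}^p{\! D}^{\leq b}(X)$ for some $a\leq b$. A descending induction shows $\mathcal{F}\in{}^p{\! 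D}^{\leq n}(X)$ whenever ${}^pH^k(\mathcal{F})=0$ for all $k>n$: if $b>n$, then in the triangle ${}^p\tau^{\leq b-1}\mathcal{F}\rightarrow\mathcal{F}\rightarrow{}^p\tau^{\geq b}\mathcal{F}\overset{[1]}{\rightarrow}$ the last term equals ${}^p\tau^{\geq b}{}^p\tau^{\leq b}\mathcal{F}={}^pH^b(\mathcal{F})[-b]=0$ (using ${}^p\tau^{\leq b}\mathcal{F}\simeq\mathcal{F}$), so $\mathcal{F}\in{}^p{\! D}^{\leq b-1}(X)$, and one repeats down to $n$. The mirror-image induction on the triangles ${}^p\tau^{\leq a}\mathcal{F}\rightarrow\mathcal{F}\rightarrow{}^p\tau^{\geq a+1}\mathcal{F}\overset{[1]}{\rightarrow}$, with ${}^p\tau^{\leq a}\mathcal{F}={}^pH^a(\mathcal{F})[-a]=0$ whenever $a<n$, gives $\mathcal{F}\in{}^p{\! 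D}^{\geq n}(X)$ whenever ${}^pH^k(\mathcal{F})=0$ for all $k<n$. Taking $n=0$ in both yields $\mathcal{F}\in{}^p{\! D}^{\leq 0}(X)\cap{}^p{\! D}^{\geq 0}(X)=Perv(X)$.

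There is no genuine obstacle here: the whole argument is bookkeeping with truncation triangles, the same machinery already used above. The only points that require care are correctly tracking the index shifts built into ${}^pH^k={}^p\tau^{\geq k}{}^p\tau^{\leq k}[k]$, and the fact --- which we take from \cite{HTT} rather than reprove --- that the perverse $t$-structure has bounded amplitude on $D_c^b(X)$, which is what makes the two inductions terminate.
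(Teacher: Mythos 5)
Your proof is correct, and it is a modest but genuine variant of the paper's. The paper proves the nontrivial direction by taking the long exact sequence of perverse cohomology attached to the triangle ${}^p\tau^{\leq -1}\mathcal{F}\to\mathcal{F}\to{}^p\tau^{\geq 0}\mathcal{F}$, deducing that ${}^pH^k({}^p\tau^{\leq -1}\mathcal{F})$ vanishes for all $k$, and then invoking Lemma \ref{perverse cohomology}(1) to kill ${}^p\tau^{\leq -1}\mathcal{F}$; a second pass with the upper truncation then exhibits $\mathcal{F}\simeq{}^pH^0(\mathcal{F})$ as objects of $D^b_c(X)$. You instead prove containment in ${}^p{\!D}^{\leq 0}(X)$ and ${}^p{\!D}^{\geq 0}(X)$ directly, by a descending (resp.\ ascending) induction on the bound $b$ (resp.\ $a$), each step using only the truncation triangle and the identity ${}^p\tau^{\geq b}{}^p\tau^{\leq b}\mathcal{F}={}^pH^b(\mathcal{F})[-b]$. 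This bypasses both the long exact sequence of perverse cohomology and Lemma \ref{perverse cohomology} entirely, so it is more self-contained; on the other hand it is exactly the same kind of truncation bookkeeping that the paper's Lemma \ref{perverse cohomology}(1) itself rests on, so the total argument content is comparable. One small point worth keeping explicit in your version: the step ${}^p\tau^{\leq a}\mathcal{F}={}^p\tau^{\leq a}{}^p\tau^{\geq a}\mathcal{F}={}^pH^a(\mathcal{F})[-a]$ uses the standard commutation ${}^p\tau^{\leq a}{}^p\tau^{\geq a}\simeq{}^p\tau^{\geq a}{}^p\tau^{\leq a}$, which is a theorem about $t$-structures rather than a definition; it is safe to use, but you are implicitly leaning on it. Your appeal to boundedness of the perverse $t$-structure on $D^b_c(X)$ is exactly the same input the paper uses in Lemma \ref{perverse cohomology}.
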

\begin{proof}
$``\Rightarrow" $ is clear.\\
$``\Leftarrow"$: Consider the exact triangle
$${^p}{\!\tau}{^{\leq -1}}\mathcal{F}\rightarrow\mathcal{F}
\rightarrow {^p}{\!\tau}{^{\geq
0}}\mathcal{F}\overset{[1]}{\rightarrow}.$$ It gives rise to a long
exact sequence in $Perv(X)$
$$\rightarrow{^p}{\! H}{^k}({^p}{\!\tau}{^{\leq -1}}\mathcal{F})\rightarrow
{^p}{\! H}{^k}(\mathcal{F})\rightarrow{^p}{\!
H}{^k}({^p}{\!\tau}{^{\geq 0}}\mathcal{F}) \rightarrow{^p}{\!
H}{^{k+1}}({^p}{\!\tau}{^{\leq -1}}\mathcal{F})\rightarrow. $$ Since
${^p}{\! H}{^*}(\mathcal{F})$ is concentrated in degree $0$,
 we have ${^p}{\! H}{^k}({^p}{\!\tau}{^{\leq -1}}\mathcal{F})=0$ for
all $k\in\mathbb{Z}$. From Lemma \ref{perverse cohomology},
we get $\mathcal{F}$ is isomorphic to ${^p}{\!\tau}{^{\geq 0}}\mathcal{F}$.

Similarly, if we consider the exact triangle
$${^p}{\!\tau}{^{\leq 0}}{^p}{\!\tau}{^{\geq 0}}\mathcal{F}\rightarrow
{^p}{\!\tau}{^{\geq 0}}\mathcal{F}\rightarrow {^p}{\!\tau}{^{\geq
1}}\mathcal{F}\overset{+1}{\rightarrow}$$ and get a long exact
sequence in $Perv(X)$, then we can conclude that
${^p}{\!\tau}{^{\geq 0}}\mathcal{F}$ is isomorphic to
 ${^p}{\!\tau}{^{\leq 0}}{^p}{\!\tau}{^{\geq 0}}\mathcal{F}={^p}{\! H}{^0}(\mathcal{F})$. Therefore,
 $\mathcal{F}\simeq {^p}{\! H}{^0}(\mathcal{F})$ in $D_c^b(X)$.
\end{proof}
Fix a complex stratification $\mathcal{S}=\{S_\alpha\}_{\alpha\in\Lambda}$ of $X$ with
each stratum connected. The perverse
$t$-structure on $D_c^b(X)$ induces the perverse $t$-structure on
$D^b_\mathcal{S}(X)$.

Let $\Lambda_\mathcal{S}:=\bigcup\limits_{\alpha\in\Lambda}T^*_{S_\alpha}X\subset T^*X$ be
the standard conical Lagrangian associated to $\mathcal{S}$. 
For each $S_\alpha\in\mathcal{S}$, let 
$D_{S_\alpha}^*X=T_{S_\alpha}^*X\cap (\bigcup\limits_{\alpha\neq\beta\in\Lambda}\overline{T^*_{S_\beta}X})$. Then the smooth locus in $\Lambda_\mathcal{S}$ is the union 
$\bigcup\limits_{\alpha\in\Lambda}(T_{S_\alpha}^*X-D_{S_\alpha}^*X)$.

\subsection{Local Morse group functor $M_{x,F}$ on $D^b_\mathcal{S}(X)$}\label{LMGF} 
$Perv(X)$ is an abelian subcategory in $D_c^b(X)$. An exact sequence $0\rightarrow \mathcal{F}\rightarrow\mathcal{G}\rightarrow\mathcal{H}\rightarrow 0$ in $Perv(X)$, though corresponds to an exact triangle in $D_c^b(X)$, \emph{does not} give an exact sequence on the stalks. The correct ``stalk" to take in $Perv(X)$, in the sense that it gives an exact sequence, is the microlocal stalk. We now introduce the microlocal stalk under the its another name, local Morse group functor. 

Let $(x,\xi)\in
\Lambda_\mathcal{S}$ be a smooth
point. Fix a local holomorphic coordinate $\mathbf{z}$ around $x$ with origin at $x$, and let $\mathrm{r}(\mathbf{z})=\|\mathbf{z}\|^2$ be the standard distance squared function. Let $F$ be a germ of holomorphic function on $X$, i.e. defined on some small open ball $B_{2\epsilon}(x)=\{\mathbf{z}: \mathrm{r}(\mathbf{z})<(2\epsilon)^2\}\subset X$, such that
$F(x)=0,\ d\Re(F)_x=\xi$ and the graph $\Gamma_{d\Re(F)}$ is transverse to
$\Lambda_\mathcal{S}$ at $(x,\xi)$. We also assume $\epsilon$ small enough so that $x$ is the only $\Lambda_\mathcal{S}$
-critical points of $\Re(F)$. In the following, we will call such a triple $(x,\xi,F)$ a \emph{test triple}.

Let $\phi_{x, F}: D_\mathcal{S}(B_\epsilon(x))\rightarrow
D_c^b(F^{-1}(0)\cap B_\epsilon(x))$ be the vanishing cycle functor
associated to $F$ (see $\S$3 in \cite{Massey} for the definition of nearby and vanishing cycle functors). Note that for any $\mathcal{F}\in
D_\mathcal{S}(X)$, $\phi_{x,F}(\mathcal{F})$ is supported on $x$.

\begin{definition}\label{local Morse group}
Given $(x, \xi, F)$, define the
\emph{local Morse group functor}
$M_{x,F}:=j_{x}^!\phi_{x,F}[-1]l^*= j_{x}^*\phi_{x,F}[-1]l^*:
D^b_\mathcal{S}(X)\rightarrow D^b(\mathbb{C}),$ where $l:
B_\epsilon(x)\hookrightarrow X$ and $j_{x}: \{x\}\hookrightarrow
F^{-1}(0)\cap B_\epsilon(x)$ are the inclusions.
\end{definition}

It's a standard 
fact that on $D^b_\mathcal{S}(X)$, 
\begin{equation}\label{SMT}
M_{x,F}(\mathcal{F})\simeq 
\Gamma(B_\epsilon(x), B_\epsilon(x)\cap F^{-1}(t),
\mathcal{F})\simeq \Gamma(B_\epsilon(x), B_\epsilon(x)\cap \{\Re(F)<\mu\}, \mathcal{F}),
\end{equation} 
where $t$ is any complex number with
$0<|t|<<\epsilon$, and $\mu\leq 0$ with $|\mu|<<\epsilon$.

More generally, let $X$ be a real analytic manifold with a Riemannian metric, and let $\mathcal{S}=\{S_\alpha\}_{\alpha\in\Lambda}$
be a (real) stratification. 
 Assume a function $g: X\rightarrow \mathbb{R}$ satisfies similar conditions of $\Re(F)$ 
at a given point $x\in S_\alpha$, with Morse index of $g|_{S_\alpha}$ equal to $\lambda$. Then given a sheaf $\mathcal{F}$ in $D^b_\mathcal{S}(X)$,
the hypercohomology groups 
\begin{equation}\label{SS(F)}
\mathbb{H}^i(B_\epsilon(x), \{g<0\}\cap B_\epsilon(x),\mathcal{F}[\lambda]), i\in\mathbb{Z}
\end{equation}
are independent of the choices of $g$ and $x$ for $(x,dg_x)$ staying in a fixed connected component
of $T_{S_\alpha}^*X-D_{S_\alpha}^*X$. For more details, see \cite{Massey2} Theorems 2.29, 2.31 and the references therein. In the complex setting, $T_{S_\alpha}^*X-D_{S_\alpha}^*X$ is always connected, so $M_{x,F}(\mathcal{F})$ are quasi-isomorphic for different choices of $(x,\xi)$ in it (but not in a canonical way since there may be monodromies).

One then defines the \emph{singular support} $SS(\mathcal{F})$ of $\mathcal{F}$ as the closure of the covectors in $\Lambda_{\mathcal{S}}$ with the relative hypercohomology groups in $(\ref{SS(F)})$ not all equal to 0.

\begin{lemma}\label{local Morse t-exact}
$M _{x,F}: D^b_\mathcal{S}(X)\rightarrow D^b(\mathbb{C})$ is
$t$-exact. It commutes with the Verdier duality.
\end{lemma}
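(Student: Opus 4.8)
The plan is to establish the two assertions of Lemma \ref{local Morse t-exact} by reducing them to the standard-object description of $M_{x,F}$, since standard sheaves generate $D^b_\mathcal{S}(X)$. For $t$-exactness, recall from Proposition \ref{perverse t-structure} that it suffices to check that $M_{x,F}$ sends ${^p}{\!D}^{\leq 0}_\mathcal{S}(X)$ to $D^{\leq 0}(\mathbb{C})$ and ${^p}{\!D}^{\geq 0}_\mathcal{S}(X)$ to $D^{\geq 0}(\mathbb{C})$. Using the expression $M_{x,F}=j_x^!\phi_{x,F}[-1]l^*$ together with the known perverse exactness of the vanishing cycle functor $\phi_{x,F}[-1]$ (this is classical; see \cite{HTT}, \cite{Massey}), and the fact that $l^*$ is an open restriction (hence right $t$-exact, and left $t$-exact as well since $l$ is an open embedding between spaces of the same dimension, so $l^* = l^!$ up to the dimension shift which is zero here), the only remaining input is that $j_x^!$ applied to a perverse sheaf supported at the point $x$ lands in degree $0$. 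But a perverse sheaf supported on a point is a vector space in degree $0$, and for the inclusion $j_x$ of a point we have $j_x^! = j_x^*$ on sheaves supported at $x$, so this is immediate. Chaining these together gives $t$-exactness.

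First I would spell out the perverse exactness of $\phi_{x,F}[-1]$ in the form I need: if $\mathcal F \in {^p}{\!D}^{\leq 0}$, then $\phi_{x,F}[-1]l^*\mathcal F$, a complex supported at $x$, lies in ${^p}{\!D}^{\leq 0}$ of a point, i.e. in degree $\leq 0$; dually for ${^p}{\!D}^{\geq 0}$. Since on a point the perverse $t$-structure coincides with the standard one, $j_x^!$ (equivalently $j_x^*$) changes nothing, and we conclude $M_{x,F}(\mathcal F)$ is in degree $\leq 0$, resp. $\geq 0$. Combining the two halves yields that $M_{x,F}$ is $t$-exact, hence in particular restricts to an exact functor $Perv_\mathcal{S}(X) \to \mathrm{Vect}_{\mathbb{C}}$, which also re-proves the claim made earlier that short exact sequences of perverse sheaves induce long exact sequences of local Morse groups.

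For the compatibility with Verdier duality, the plan is to use the identity $M_{x,F}(\mathbb{D}\mathcal F) \simeq (M_{x,F}(\mathcal F))^\vee$ (dual complex of $\mathbb{C}$-vector spaces). I would derive this from the self-duality properties of the vanishing cycle functor: there is a canonical isomorphism $\mathbb{D}\circ \phi_{x,F}[-1] \simeq \phi_{x,F}[-1]\circ \mathbb{D}$ (again classical, e.g. \cite{HTT} \S8.6, \cite{Massey}), together with $\mathbb{D} l^* = l^! \mathbb{D} = l^* \mathbb{D}$ for the open inclusion $l$, and $\mathbb{D} j_x^! = j_x^* \mathbb{D}$ for the point inclusion. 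Composing, $\mathbb{D} M_{x,F} = \mathbb{D} j_x^! \phi_{x,F}[-1] l^* \simeq j_x^* \phi_{x,F}[-1] l^* \mathbb{D} = M_{x,F}\mathbb{D}$, where the last equality uses the alternative expression $M_{x,F} = j_x^* \phi_{x,F}[-1] l^*$ from Definition \ref{local Morse group}. Since Verdier duality on $D^b(\mathbb{C})$ is $(-)^\vee$, this says $M_{x,F}(\mathbb{D}\mathcal F) \simeq M_{x,F}(\mathcal F)^\vee$, as desired.

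The main obstacle I anticipate is bookkeeping the shifts and making sure the normalization of $\phi_{x,F}$ used here (with the $[-1]$ built in, so that it is perverse $t$-exact rather than shifting degree) matches the conventions of the cited references; different sources place the shift differently. I would therefore state precisely which normalization of nearby/vanishing cycles makes $\phi_{x,F}[-1]$ (equivalently, the "perverse vanishing cycle" $^p\phi_F$) exact for the perverse $t$-structure, cite it, and check it against the explicit formula \eqref{SMT} and against the standard-brane computation \eqref{Mstandard} in the introduction as a sanity check. Beyond that, each individual step is a standard property of the six functors and of vanishing cycles, so no genuinely new computation is required.
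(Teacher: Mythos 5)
Your proof is correct and takes essentially the same route as the paper: both rest on the perverse $t$-exactness of $\phi_{x,F}[-1]$ and $l^*$, the observation that the output is supported at $\{x\}$ (so the perverse and ordinary $t$-structures coincide and $j_x^! = j_x^*$), and the commutation of $\mathbb D$ with these functors. The paper phrases the $t$-exactness as the identity $M_{x,F}({^p}H^k\mathcal F)\simeq H^k(M_{x,F}\mathcal F)$, whereas you check the two containments ${^p}D^{\leq 0}\to D^{\leq 0}$ and ${^p}D^{\geq 0}\to D^{\geq 0}$ directly; these are equivalent. One small remark: your opening sentence about reducing to standard objects is a red herring — $t$-exactness is not a property naturally checked on a generating family (standard sheaves are not perverse), and indeed your argument does not use that reduction; the body of the proof proceeds directly and is fine as written.
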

\begin{proof}
It's standard that $\phi_{x,F}[-1]$ and $l^*$ are perverse
$t$-exact. Since $\phi_{x,F}[-1]l^*(\mathcal{F})$
is supported on $x$, we have
$$M_{x,F}({^p}{\!H}^k(\mathcal{F}))=j^!_{\{x\}}\phi_{x,F}[-1]l^*({^p}{\!H}^k(\mathcal{F}))\simeq
H^k(j^!_{\{x\}}\phi_{x,F}[-1]l^*(\mathcal{F}))=H^k(M_{x,F}(\mathcal{F})).$$
$\phi_{x,F}[-1]$ and $l^*$ commute with $\mathbb{D}$, so it's
easy to see that
 $$M_{x,F}\mathbb{D}=j_{\{x\}}^!\phi_{x,F}[-1]l^*
 \mathbb{D}\simeq \mathbb{D}j_{\{x\}}^*\phi_{x,F}[-1]l^*=\mathbb{D}M_{x,F}.$$
\end{proof}

\begin{lemma}
For each stratum $S_\alpha\in\mathcal{S}$, choose one test triple $(x_\alpha, \xi_\alpha, F_\alpha)$ with $(x_\alpha, \xi_\alpha)\in \Lambda_{S_\alpha}$.  If $M_{x_\alpha,F_\alpha}(\mathcal{F})\simeq 0$ for all $S_\alpha\in\mathcal{S}$, then $\mathcal{F}\simeq
0$.
\end{lemma}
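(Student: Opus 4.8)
The plan is to show that the family $\{M_{x_\alpha,F_\alpha}\}_{S_\alpha\in\mathcal{S}}$ jointly detects nonzero objects, by analyzing what the local Morse group along a conormal direction computes for a top stratum of the support. So I would argue by contradiction: suppose $\mathcal{F}\not\simeq 0$. Each $\mathcal{H}^j(\mathcal{F})$ is $\mathcal{S}$-constructible and the strata are connected, so I may choose a stratum $S_\alpha$ of maximal complex dimension $d:=\dim_\mathbb{C}S_\alpha$ on which $\mathcal{F}$ does not vanish; in particular $i_{x_\alpha}^*\mathcal{F}\not\simeq 0$. By the Whitney frontier condition, $x_\alpha$ lies in the closure only of $S_\alpha$ and of strata of dimension $>d$, and $\mathcal{F}$ vanishes on the latter by maximality of $d$; hence for a sufficiently small ball $B:=B_\epsilon(x_\alpha)$ (also small enough for the definition of the test triple) the restriction $\mathcal{F}|_B$ is supported on $Z:=S_\alpha\cap B$, which for $\epsilon$ small is closed in $B$, connected and contractible. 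Writing $a\colon Z\hookrightarrow B$ for the closed inclusion, $\mathcal{F}|_B\simeq a_*\mathcal{G}$ with $\mathcal{G}:=a^*\mathcal{F}$, and since $\mathcal{G}$ has locally constant cohomology on the contractible $Z$ it is quasi-isomorphic to the constant complex with stalk $i_{x_\alpha}^*\mathcal{F}$.

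Then I would compute the local Morse group. By (\ref{SMT}) it equals $\Gamma(B,\,B\cap\{\Re F_\alpha<\mu\};\,\mathcal{F})$ for $\mu<0$ with $|\mu|$ small; using $\mathcal{F}|_B\simeq a_*\mathcal{G}$ with $\mathcal{G}$ constant, and that $a$ is a closed immersion (excision), this becomes $\Gamma\big(Z,\,Z\cap\{\Re F_\alpha<\mu\};\,\mathbb{C}\big)\otimes_\mathbb{C} i_{x_\alpha}^*\mathcal{F}$. Since $(x_\alpha,\xi_\alpha)\in T^*_{S_\alpha}X$, the covector $\xi_\alpha=d\Re(F_\alpha)_{x_\alpha}$ annihilates $T_{x_\alpha}S_\alpha$, so $\Re(F_\alpha|_{S_\alpha})$ has a critical point at $x_\alpha$ with critical value $0$; the transversality of $\Gamma_{d\Re F_\alpha}$ to $\Lambda_\mathcal{S}$ at $(x_\alpha,\xi_\alpha)$ makes it nondegenerate, and as the real part of a nondegenerate holomorphic function of $d$ complex variables it has real Morse index exactly $d$. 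Since $x_\alpha$ is the only $\Lambda_\mathcal{S}$-critical point of $\Re F_\alpha$ in $B$, ordinary Morse theory gives $H^\bullet\big(Z,\,Z\cap\{\Re F_\alpha<\mu\};\,\mathbb{C}\big)\cong H^\bullet(D^d,\partial D^d;\mathbb{C})$, concentrated in degree $d$. Hence $M_{x_\alpha,F_\alpha}(\mathcal{F})\simeq i_{x_\alpha}^*\mathcal{F}[-d]\not\simeq 0$, contradicting the hypothesis; so $\mathcal{F}\simeq 0$.

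The step I expect to be the main obstacle is the local identification $M_{x_\alpha,F_\alpha}(\mathcal{F})\simeq i_{x_\alpha}^*\mathcal{F}[-d]$: one must invoke the frontier condition to shrink $B$ so that $\mathcal{F}$ is supported on the single stratum $S_\alpha$ there, justify the excision that replaces the relative section complex on $B$ by the one on $Z$, and identify the Morse index of $\Re(F_\alpha)|_{S_\alpha}$ with $\dim_\mathbb{C}S_\alpha$ — this last point is where being in the complex-analytic setting is used. A shorter but less self-contained route is to note that the hypothesis, together with the connectedness of $T^*_{S_\alpha}X-D^*_{S_\alpha}X$ in the complex setting, forces $SS(\mathcal{F})=\varnothing$, whence $\mathcal{F}\simeq 0$ by the standard theorem that a constructible complex with empty singular support vanishes; one could also first reduce to perverse sheaves via Lemma \ref{local Morse t-exact} and Lemma \ref{perverse cohomology} and then run the same local computation.
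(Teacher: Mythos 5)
Your proof is correct, but it takes a genuinely different route from the paper's. You argue by contradiction: pick a stratum $S_\alpha$ of maximal dimension in $\operatorname{supp}(\mathcal{F})$, use the frontier condition to reduce $\mathcal{F}|_{B_\epsilon(x_\alpha)}$ to the pushforward of a constant complex from $Z=S_\alpha\cap B_\epsilon(x_\alpha)$, and then compute $M_{x_\alpha,F_\alpha}(\mathcal{F})\simeq i_{x_\alpha}^*\mathcal{F}[-d]$ by excision plus the Andreotti--Frankel fact that $\Re(F_\alpha|_{S_\alpha})$ has Morse index $d=\dim_\mathbb{C}S_\alpha$; connectedness of strata and constructibility guarantee the stalk at the given $x_\alpha$ is nonzero. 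The paper instead runs a downward induction on dimension: first kill $\mathcal{F}$ on the open strata, then use the adjunction exact triangle and a base-change formula for vanishing cycles (together with Verdier duality and the stated independence of the local Morse group on the choice of test triple) to push the problem to lower-dimensional skeleta. Your argument is more direct and avoids base change, but it leans explicitly on the complex-Morse-index computation; the paper's version fits the devissage formalism used throughout the paper (and incidentally states ``index $0$'' for the top stratum, which looks like a slip for $n$, though the conclusion $i_n^*\mathcal{F}\simeq 0$ is insensitive to the actual index). One small caveat about your ``shorter route'': with the paper's Morse-theoretic definition of $SS(\mathcal{F})$, the implication $SS(\mathcal{F})=\varnothing\Rightarrow\mathcal{F}\simeq 0$ is essentially a reformulation of the lemma being proved, so that route only becomes non-circular after identifying this $SS$ with the Kashiwara--Schapira microsupport and citing their vanishing theorem.
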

\begin{proof}
From the previous discussion, $M_{x_\alpha,F_\alpha}(\mathcal{F})\simeq 0$ for one choice of $(x_\alpha, \xi_\alpha, F_\alpha)$ is equivalent to $M_{x_\alpha,F_\alpha}(\mathcal{F})\simeq 0$ for all possible choices of $(x_\alpha, \xi_\alpha, F_\alpha)$.

Again, let $\mathcal{S}_{\leq k}$, $0\leq k\leq n=\dim_\mathbb{C} X$, denote the union of all strata in $\mathcal{S}$ of dimension less than or equal to $k$. Let $\mathcal{S}_{>k}=X-\mathcal{S}_{\leq k}$ and $\mathcal{S}_{k}=\mathcal{S}_{\leq k}-\mathcal{S}_{\leq k-1}$. Denote by $i_{k}, i_{>k}, j_{\leq k}$ the inclusion of $S_{\bullet}$ with corresponding subscripts.

For any test triple $(x,0, F)$ with $x\in\mathcal{S}_n$, $x$ is a Morse singularity of $F$
with index 0. By basic Morse theory, 
$M_{x,F}(\mathcal{F})\simeq 0$ implies $i_n^*\mathcal{F}\simeq
0$. By the adjunction exact triangle (\ref{rel_tr}), $\mathcal{F}\simeq
j_{\leq n-1!}j_{\leq n-1}^!\mathcal{F}$.

 In the following we will only look at
$B_\epsilon(x)$, and omit functors related to the open inclusion $l:
B_\epsilon(x)\hookrightarrow X$. Note for $x\in\mathcal{S}_{n-1}$,
by base change formula,
$\phi_{x,F}(j_{\leq n-1!}j_{\leq n-1}^!\mathcal{F})\simeq
\hat{j}_{\leq n-1!}\phi_{x,F_{n-1}}(j_{\leq n-1}^!\mathcal{F})$, where $F_{n-1}$
is the restriction of $F$ to $\mathcal{S}_{n-1}$, and $\hat{j}_{\leq n-1}$
is the inclusion of $F^{-1}(0)\cap \mathcal{S}_{\leq n-1}$ into
$F^{-1}(0)$. Therefore $M_{x,F}(j_{\leq n-1!}j_{\leq n-1}^!\mathcal{F})\simeq
M_{x,F_{n-1}}(j_{\leq n-1}^!\mathcal{F})$. Since $x$ is a Morse singularity of 
$F_{n-1}$ with index 0 on $\mathcal{S}_{n-1}$, by previous argument,
$j_{n-1}^! \mathcal{F}\simeq 0$. By Verdier duality, 
$j_{n-1}^* \mathcal{F}\simeq 0$ as well. Applying the adjunction
exact triangle again to the open set $\mathcal{S}_{\geq n-1}$, we get
$\mathcal{F}\simeq j_{\leq n-2!}j^!_{\leq n-2}\mathcal{F}$, and by
induction, we get $\mathcal{F}\simeq 0$.
\end{proof}

Combining the two lemmas, we immediately get the following.

\begin{prop}[Microlocal characterization of perverse sheaves] \label{M_{x,F}-Test}

For each stratum $S_\alpha\in\mathcal{S}$, choose a test triple $(x_\alpha, \xi_\alpha, F_\alpha)$ with $(x_\alpha, \xi_\alpha)\in \Lambda_{S_\alpha}$.
Then $\mathcal{F}\in D_\mathcal{S}^b(X)$ is perverse if and only 
$M_{x_\alpha,F_\alpha}(\mathcal{F})$ has cohomology group concentrated in degree $0$ for all $S_\alpha\in\mathcal{S}$.
\end{prop}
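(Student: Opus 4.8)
The plan is to deduce Proposition~\ref{M_{x,F}-Test} by directly combining Lemma~\ref{local Morse t-exact} and the two-lemma ``detection'' principle, using the characterization of perversity in Proposition~\ref{perverse t-structure} that $\mathcal{F}$ is perverse if and only if ${}^p\!H^*(\mathcal{F})$ is concentrated in degree $0$.

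\medskip

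\emph{Forward direction.} Suppose $\mathcal{F}$ is perverse, i.e. $\mathcal{F}\in Perv(X)$. Then for each $\alpha$, I would apply $t$-exactness of $M_{x_\alpha,F_\alpha}$ (Lemma~\ref{local Morse t-exact}): since $M_{x_\alpha,F_\alpha}=j_x^!\phi_{x,F}[-1]l^*$ is the composition of the perverse $t$-exact functors $l^*$ and $\phi_{x,F}[-1]$ with $j_x^!$ applied to a complex supported at the point $x$ (where $j_x^!$ on a point-supported complex is just taking cohomology), it sends the heart to the heart of $D^b(\mathbb{C})$, i.e. $M_{x_\alpha,F_\alpha}(\mathcal{F})$ has cohomology concentrated in degree $0$. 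This is the easy half and is essentially immediate from the lemma.

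\medskip

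\emph{Reverse direction.} Suppose $M_{x_\alpha,F_\alpha}(\mathcal{F})$ is concentrated in degree $0$ for every $\alpha$. By Proposition~\ref{perverse t-structure} it suffices to show ${}^p\!H^k(\mathcal{F})=0$ for all $k\neq 0$. Fix $k\neq 0$ and set $\mathcal{G}={}^p\!H^k(\mathcal{F})\in Perv(X)$; note $\mathcal{G}$ is again constructible with respect to (a refinement of) $\mathcal{S}$, so the chosen test triples still apply to it. Using the computation in Lemma~\ref{local Morse t-exact} that $M_{x_\alpha,F_\alpha}({}^p\!H^k(\mathcal{F}))\simeq H^k(M_{x_\alpha,F_\alpha}(\mathcal{F}))$, the hypothesis that $M_{x_\alpha,F_\alpha}(\mathcal{F})$ is concentrated in degree $0$ forces $M_{x_\alpha,F_\alpha}(\mathcal{G})\simeq 0$ for every $\alpha$. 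By the second lemma (detection of the zero object) this gives $\mathcal{G}\simeq 0$, i.e. ${}^p\!H^k(\mathcal{F})=0$. Since $k\neq 0$ was arbitrary, ${}^p\!H^*(\mathcal{F})$ is concentrated in degree $0$, hence $\mathcal{F}$ is perverse by Proposition~\ref{perverse t-structure}.

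\medskip

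The only genuine subtlety — hardly an ``obstacle'', since both inputs are already in hand — is the bookkeeping around which stratification the relevant complexes are constructible with respect to, and checking that the same finite set of test triples detects both $\mathcal{F}$ and each ${}^p\!H^k(\mathcal{F})$; in the complex setting the connectedness of $T^*_{S_\alpha}X - D^*_{S_\alpha}X$ noted after \eqref{SS(F)} makes the choice of representative $(x_\alpha,\xi_\alpha)$ within a stratum immaterial up to quasi-isomorphism, so one test triple per stratum suffices. With those points observed, the proposition follows formally by ``combining the two lemmas'' exactly as announced before the statement.
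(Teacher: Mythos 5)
Your proof is correct and follows exactly the path the paper intends: the forward direction is the $t$-exactness of $M_{x,F}$ from Lemma~\ref{local Morse t-exact}, and the reverse direction applies the identity $M_{x,F}({}^p\!H^k(\mathcal{F}))\simeq H^k(M_{x,F}(\mathcal{F}))$ to reduce vanishing of perverse cohomologies to the detection lemma. The paper merely compresses this to ``combining the two lemmas,'' and you have unpacked that compression accurately, including the remark that ${}^p\!H^k(\mathcal{F})$ remains $\mathcal{S}$-constructible so the same test triples apply.
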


\subsection{$M_{x,F}$ as a functor on the dg-category $Sh_\mathcal{S}(X)$}
$M_{x,F}$ can be natually viewed as a dg-functor from $Sh_\mathcal{S}(X)$ to $Ch$, where $Ch$ denotes the dg-category of cochain complexes of vector spaces. And we have a natual identification
$$M_{x,F}\simeq\Gamma(B_\epsilon(x), B_\epsilon(x)\cap \{\Re(F)<\mu\}, -)$$ 
for sufficiently
small $\epsilon>0$ and $-\epsilon<<\mu\leq 0$. 

To make future calculations easier, we refine $\mathcal{S}$ into a new (real) stratification $\widetilde{\mathcal{S}}$ with each stratum a cell, and view $Sh_\mathcal{S}(X)$ as a subcategory of $Sh_{\widetilde{\mathcal{S}}}(X)$. $M_{x,F}$ is obviously extended to $Sh_{\widetilde{\mathcal{S}}}(X)$, as long as $x$ is not lying in any newly added stratum, and the   microlocal characterization for perverse sheaves (Proposition \ref{M_{x,F}-Test}) still applies for $Sh_\mathcal{S}(X)$. In the following, to simplify notation, we still denote $\widetilde{\mathcal{S}}$ by $\mathcal{S}$.

We have seen in Section \ref{sheaves} that $Sh_{\mathcal{S}}(X)$ is generated by $i_*\mathbb{C}_U$ for $U\in \mathcal{U}_\mathcal{S}=\{X, O_\alpha=X-\overline{S_\alpha}, O'_\alpha=X-\partial S_{\alpha}:  S_{\alpha}\in \mathcal{S}\}$. So to understand $M_{x,F}$, it suffices to understand its interaction with these standard generators. It is easy to see that $M_{x,F}$ is only nontrivial on the finite subcolletion of $i_*\mathbb{C}_U$, where 
\begin{equation}\label{U_S_x}
U\in \mathcal{U}_{\mathcal{S}, x}:=\{V\in\mathcal{U}_{\mathcal{S}}: x\in\overline{V}\}.
\end{equation}

\begin{lemma}\label{wedge}
For each $i_V: V\hookrightarrow X$ open, consider the dg functor $\Gamma(V,-): Sh(X)\rightarrow Ch$.
For any two open embeddings
$i_0:U_0\hookrightarrow X, i_1: U_1\hookrightarrow X$, the
composition map
\begin{equation}\label{hom}
\text{Hom}_{\text{Sh}(X)}(i_{0*}\mathbb{C}_{U_0},i_{1*}\mathbb{C}_{U_1})\otimes \Gamma(V,
i_{0*}\mathbb{C}_{U_0})\rightarrow
\Gamma(V, i_{1*}\mathbb{C}_{U_1})
\end{equation}
is canonically identified with the
wedge product on the deRham complexes
$$(\Omega (\overline{U}_0\cap U_1, \partial U_0\cap U_1), d)\otimes (\Omega(U_0\cap V), d) \rightarrow (\Omega(U_1\cap V), d).$$
\end{lemma}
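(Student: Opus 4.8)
The plan is to reduce the statement to the already-established Proposition \ref{open_hom} together with a functoriality/compatibility argument, since $\Gamma(V,-)$ is visibly a module over the dg-category generated by standard objects. First I would fix semi-defining functions $m_0, m_1$ for $U_0, U_1$ and a semi-defining function for $V$, and use Corollary \ref{fringed_cor} (as invoked after Proposition \ref{open_hom}) to pick a fringed set $R\subset\mathbb{R}_+^3$ along which the relevant level sets of $m_0$, $m_1$ and $m_V$ meet pairwise transversely; this lets me model all three Hom/section complexes simultaneously by relative de Rham complexes on the corresponding "thickened" intersections, compatibly with the maps in (\ref{hom}). Concretely, $\Gamma(V, i_{0*}\mathbb{C}_{U_0})$ is computed by $\Gamma(U_0\cap V)$, which de Rham-theoretically is $(\Omega(U_0\cap V),d)$ (no boundary term, since restricting an open-extension sheaf to $V$ and taking sections over $V$ only sees $U_0\cap V$); and $\Gamma(V, i_{1*}\mathbb{C}_{U_1})$ is $(\Omega(U_1\cap V),d)$.

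The second step is to identify the composition map itself. A class in $\mathrm{Hom}(i_{0*}\mathbb{C}_{U_0}, i_{1*}\mathbb{C}_{U_1})$ is represented, under the identification of Proposition \ref{open_hom}, by a relative form $\omega\in\Omega(\overline U_0\cap U_1,\partial U_0\cap U_1)$; acting on a section of $i_{0*}\mathbb{C}_{U_0}$ over $V$, represented by a form $\eta\in\Omega(U_0\cap V)$, one restricts $\omega$ to $\overline U_0\cap U_1\cap V$ and wedges with $\eta$. The only point needing care is that the product lands in $\Omega(U_1\cap V)$ with no spurious boundary conditions: the boundary condition of $\omega$ along $\partial U_0\cap U_1$ is exactly what is needed for the wedge to extend across $\partial U_0$ inside $U_1\cap V$, matching the classical local computation in \cite{NZ} of the action of a standard morphism on sections. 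I would set this up by the same Čech/partition-of-unity or perturbed-level-set model used there, so that the wedge formula is literally the chain-level composition, not merely a cohomology-level identification.

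The main obstacle — really the only subtlety — is book-keeping of the boundary/support conditions and checking that one fringed set can be chosen to make the triple $(U_0, U_1, V)$ simultaneously transverse while remaining compatible with the two-at-a-time transversality already used for $\mathrm{Hom}(i_{0*}\mathbb{C}_{U_0}, i_{1*}\mathbb{C}_{U_1})$; this is handled exactly as in the passage following Proposition \ref{open_hom}, enlarging the ambient fringed set from $\mathbb{R}_+^2$ to $\mathbb{R}_+^3$ via Corollary \ref{fringed_cor}. Everything else is the naturality of the de Rham model under wedge product and restriction, which is formal. Once the chain-level identification is in place, the compatibility with the composition in $\mathrm{Sh}(X)$ follows because both sides are computed from the same wedge product on the same de Rham complexes, so the diagram commutes on the nose.
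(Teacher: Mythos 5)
Your conclusion is correct, but your route differs from the paper's and imports machinery that is not needed for this purely sheaf-theoretic statement. The paper's proof is direct: it identifies the composition map~(\ref{hom}) as the global sections of the sheaf-level map
$$i_{1*}i_1^*i_{0!}\mathbb{C}_{U_0}\otimes i_{V*}i_V^*i_{0*}\mathbb{C}_{U_0} \rightarrow i_{V*}i_V^*i_{1*}\mathbb{C}_{U_1},$$
and then resolves each of the three sheaves by $c$-soft de~Rham complexes ($\Omega^{\bullet}_{\overline{U}_0\cap U_1,\partial U_0\cap U_1}$, $\Omega^{\bullet}_{V\cap U_0}$, and $\Omega^{\bullet}_{V\cap U_1}$ respectively), under which the sheaf map is literally the wedge product; taking global sections gives the claim with no perturbation of any boundary at all. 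Your proposal, by contrast, reaches for the fringed-set / transverse-level-set technology (Corollary~\ref{fringed_cor} and the passage after Proposition~\ref{open_hom}) to build a ``simultaneously transverse'' model for $(U_0, U_1, V)$. That machinery is designed for a later purpose --- matching the de~Rham complexes with Morse complexes so that $Open(X)$ can be compared with $Mor(X)$ and ultimately with Floer theory --- and is irrelevant here: the identifications $\Gamma(V, i_{0*}\mathbb{C}_{U_0})\simeq(\Omega(U_0\cap V),d)$, $\Gamma(V, i_{1*}\mathbb{C}_{U_1})\simeq(\Omega(U_1\cap V),d)$, and $\mathrm{Hom}(i_{0*}\mathbb{C}_{U_0}, i_{1*}\mathbb{C}_{U_1})\simeq(\Omega(\overline{U}_0\cap U_1, \partial U_0\cap U_1),d)$ already hold on the nose via $c$-soft resolutions, without any transversality hypothesis on the boundaries. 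What you do get right, and it is the genuine crux, is the observation that the support condition on $\omega$ near $\partial U_0\cap U_1$ is precisely what allows $\omega\wedge\eta$ to extend by zero across $\partial U_0$ into $\Omega(U_1\cap V)$; that is exactly the content of the paper's choice of $c$-soft model $\Omega^{\bullet}_{\overline{U}_0\cap U_1,\partial U_0\cap U_1}$. In short, your answer is right but you have solved a harder problem than necessary; the paper's argument shows the statement is a one-line consequence of choosing the right flabby/soft resolution, with transversality playing no role.
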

\begin{proof}
The map (\ref{hom}) is coming from the induced map on (derived) global sections of the following sheaf map
\begin{equation}\label{sheaf hom}
 i_{1*}i_1^*i_{0!}\mathbb{C}_{U_0}\otimes i_{V*}i_V^*i_{0*}\mathbb{C}_{U_0} \rightarrow
i_{V*}i_V^*i_{1*}\mathbb{C}_{U_1}.
\end{equation}
Resolve the sheaves by $c$-soft sheaves (see Definition 2.5.5 in \cite{Kashiwara}) as $i_{V*}i_V^*i_{0*}\mathbb{C}_{U_0}\simeq \Omega^{\bullet}_{V\cap U_0}$, 
$i_{1*}i_1^*i_{0!}\mathbb{C}_{U_0}\simeq \Omega^{\bullet}_{\overline{U}_0\cap U_1,\partial U_0\cap U_1}$, and $i_{V*}i_V^*i_{1*}\mathbb{C}_{U_1}\simeq \Omega^{\bullet}_{V\cap U_1}$, where $\Omega^{\bullet}_A$ of a set $A$ means the sheaf of smooth differential forms on $A$, and $\Omega^{\bullet}_{A,B}$ means the subsheaf of $\Omega^{\bullet}_A$, whose local sections have support away from $B$. The map (\ref{sheaf hom}) is the wedge product on differential forms, which induces wedge product on the global sections. \\
\end{proof}

\begin{cor}
(1) The functor $M_{x,F}(-)$ on $Sh_{\mathcal{S}}(X)$ fits into the
exact triangle $ M_{x,F}(-)\longrightarrow
\Gamma(B_\epsilon(x),-)\longrightarrow\Gamma(B_\epsilon(x)\cap
\{\Re(F)<0\},-)\overset{[1]}{\longrightarrow}$. \\
(2) Given $U_0, U_1$ open in $X$, the composition map 
$$
Hom_{Sh(X)}(i_{0*}\mathbb{C}_{U_0},i_{1*}\mathbb{C}_{U_1})\otimes M_{x,F}(i_{0*}\mathbb{C}_{U_0})\rightarrow
M_{x,F}(i_{1*}\mathbb{C}_{U_1})$$ is canonically given by the wedge
product on the deRham complexes:
\begin{align*}
&(\Omega(\overline{U_0}\cap U_1,\partial
U_0\cap U_1),d)\otimes (\Omega(U_0\cap B_\epsilon(x), U_0\cap B_\epsilon(x)\cap\{\Re(F)<0\}),d)\\
&\rightarrow (\Omega(U_1\cap B_\epsilon(x), U_1\cap B_\epsilon(x)\cap
\{\Re(F)<0\}),d).
\end{align*}
\end{cor}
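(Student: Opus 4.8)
Both parts follow formally from the identification
$M_{x,F}(-)\simeq\Gamma(B_\epsilon(x),B_\epsilon(x)\cap\{\Re(F)<0\},-)$ recorded in
(\ref{SMT}) (taking $\mu=0$), together with Lemma \ref{wedge}; there is no new
geometric input beyond bookkeeping.

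For (1), recall that for an open $A$ and an open $B\subset A$ the relative hypercohomology
functor $\Gamma(A,B,-)$ is by definition the shifted mapping cone of the restriction
$\Gamma(A,-)\to\Gamma(B,-)$, and this cone is functorial in the sheaf argument; hence it sits in
the tautological exact triangle $\Gamma(A,B,-)\to\Gamma(A,-)\to\Gamma(B,-)\overset{[1]}{\to}$.
Applying this with $A=B_\epsilon(x)$ and $B=B_\epsilon(x)\cap\{\Re(F)<0\}$ and substituting the
quasi-isomorphism of (\ref{SMT}) gives the asserted triangle of functors on $Sh_\mathcal{S}(X)$.
One only needs to observe that the identification of $M_{x,F}$ (defined via
$j_x^!\phi_{x,F}[-1]l^*$ in Definition \ref{local Morse group}) with
$\Gamma(B_\epsilon(x),B_\epsilon(x)\cap\{\Re(F)<0\},-)$ is a natural transformation of dg functors,
so the triangle lives at the dg level.

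For (2), apply Lemma \ref{wedge} twice, once with $V=B_\epsilon(x)$ and once with
$V=B_\epsilon(x)\cap\{\Re(F)<0\}$, using $\Gamma(V,i_{0*}\mathbb{C}_{U_0})\simeq\Gamma(V\cap U_0,\mathbb{C})$
and the de Rham resolutions of that lemma (so that $M_{x,F}(i_{0*}\mathbb{C}_{U_0})$ is the relative
de Rham complex $(\Omega(U_0\cap B_\epsilon(x),U_0\cap B_\epsilon(x)\cap\{\Re(F)<0\}),d)$, matching
(\ref{Mstandard})). This identifies the composition maps on $\Gamma(B_\epsilon(x),-)$ and on
$\Gamma(B_\epsilon(x)\cap\{\Re(F)<0\},-)$ with the wedge products
$(\Omega(\overline{U}_0\cap U_1,\partial U_0\cap U_1),d)\otimes(\Omega(U_0\cap B_\epsilon(x)),d)\to(\Omega(U_1\cap B_\epsilon(x)),d)$
and its analogue with $B_\epsilon(x)$ replaced by $B_\epsilon(x)\cap\{\Re(F)<0\}$. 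Since wedging with a
fixed form commutes with restriction of forms, these two wedge maps form a morphism of the
restriction maps $\Omega^\bullet(U_i\cap B_\epsilon(x))\to\Omega^\bullet(U_i\cap B_\epsilon(x)\cap\{\Re(F)<0\})$;
passing to shifted cones — which by part (1) is exactly the operation producing the composition
map on $M_{x,F}$ — and identifying the cone with the relative de Rham complex of forms supported
away from $\{\Re(F)<0\}$, we get the composition map on $M_{x,F}$. Because wedging with a form in
$\Omega^\bullet(\overline{U}_0\cap U_1,\partial U_0\cap U_1)$ preserves this support condition, the
induced map is precisely the wedge product on relative de Rham complexes displayed in the statement;
the same computation with $V=X$ recovers the composition map of Proposition \ref{open_hom}, which is
a useful consistency check.

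\emph{Main obstacle.} The only point that needs genuine care is the last compatibility: one must
check that the $c$-soft (de Rham) resolutions used in Lemma \ref{wedge} can be chosen coherently for
$B_\epsilon(x)$ and for $B_\epsilon(x)\cap\{\Re(F)<0\}$ — which they can, the smaller de Rham complex
being the restriction of the larger — and that under these resolutions the relative hypercohomology
triangle of part (1) is realized by the corresponding sequence of (relative) de Rham complexes, so
that the wedge product, which manifestly respects that sequence, descends to the relative complexes.
Here the genericity of $F$ and the analytic-geometric setting guarantee the relevant subsets of
$X$ are tame enough for de Rham theory; everything else is threading the three instances of
Lemma \ref{wedge} through the cone construction.
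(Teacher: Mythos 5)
Your proposal is correct, and it is the argument the paper intends (the paper gives no explicit proof of this corollary, treating it as immediate from the identification $M_{x,F}\simeq\Gamma(B_\epsilon(x),B_\epsilon(x)\cap\{\Re(F)<\mu\},-)$ from (\ref{SMT}) and from Lemma \ref{wedge}). You correctly identify the only point needing care — that the two instances of Lemma \ref{wedge} (for $V=B_\epsilon(x)$ and $V=B_\epsilon(x)\cap\{\Re(F)<0\}$) fit into a compatible pair under restriction, so that the wedge products descend to the quasi-isomorphic relative de Rham complexes of forms supported away from $\{\Re(F)<0\}$ — and your resolution of it is the right one.
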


\section{The Nadler-Zaslow Correspondence}\label{NZCorr}

\subsection{Two categories: $Open(X)$ and $Mor(X)$}\label{OpenMor}
Let $Open(X)$ be the dg category whose objects are open subsets in $X$ with a semi-defining function (see Remark \ref{def fcn}). For two objects $\mathfrak{U}_0=(U_0, m_0), \mathfrak{U}_1=(U_1,m_1)$, define
$$Hom_{Open(X)}(\mathfrak{U}_0, \mathfrak{U}_1):=Hom(i_{0*}\mathbb{C}_{U_0}, i_{1*}\mathbb{C}_{U_1})\simeq (\Omega(\overline{U}_0\cap U_1, \partial U_0\cap U_1), d).$$
The composition for a triple is the wedge product on deRham complexes as in Proposition \ref{open_hom}. 
From previous discussions, $Sh(X)$ is a triangulated envelope of $Open(X)$. 

Define another $A_\infty$-category, denoted as $Mor(X)$ with the same objects as $Open(X)$. The morphism between two objects $\mathfrak{U}_i=(U_i,m_i), i=1,2$ is defined by the Morse complex calculation of $Hom(i_{0*}\mathbb{C}_{U_0}, i_{1*}\mathbb{C}_{U_1})$, using perturbation to smooth transverse boundaries similar to the process at the end of Section \ref{sheaves}. Let $f_i=\log m_i$ for $i=0,1$. Pick a stratification $\mathcal{T}$ compatible with $\partial U_0$. There is $\bar{t}_1>0$ such that $m_1$ has no $\Lambda_\mathcal{T}$-critical value in $(0,\bar{t}_1)$. Fix $t_1\in(0,\bar{t}_1)$. Let $W$ be a small neighborhood of $\partial U_0\cap X_{m_1=t_1}$ on which $df_0$ and $df_1$ are linearly independent. Since $X_{m_1=t_1}\cap U_0-W$ is compact, one could dilate $df_0$ by $\epsilon>0$ so that $|df_1|>2\epsilon |df_0|$ on $X_{m_1=t_1}\cap U_0-W$. There is $\bar{t}_0>0$ such that for any $t\in (0,\bar{t}_0)$, $X_{m_0=t}$ intersects $X_{m_1=t_1}$ transversally. Choose $t_0\in (0,\bar{t}_0)$ such that $|\epsilon\cdot df_0|>2|df_1|$ on $X_{m_0=t_0}\cap X_{m_1\geq t_1}-W$. Such a $t_0$ always exists, since $df_1$ is bounded on $X_{m_1\geq t_1}$. There is also a convex space of choices of Riemannian metric $g$ in a neighborhood of $X_{m_0\geq t_0}\cap X_{m_1\geq t_1}$, with respect to which the gradiant vector field $\nabla (f_1-\epsilon f_0)$ is pointing outward along $X_{m_0=t_0}\cap X_{m_1\geq t_1}$, and inward along $X_{m_0\geq t_0}\cap X_{m_1= t_1}$. After small perturbations, one can perturb the function $f_1-\epsilon f_0$ to be Morse, and the pair $(f_1-\epsilon f_0, g)$ to be Morse-Smale. 

Let $M$ be an $n$-dimensional \emph{manifold with corners}. By definition, for every point $y\in\partial M$, there is a local chart $\phi_y: U_y\rightarrow \mathbb{R}^n$ identifying an open neighborhood $U_y$ of $y$ with an open subset of a quadrant $\{x_{i_1}\geq 0,...,x_{i_k}\geq 0\}$. We will say a function $f$ on $X$ is \emph{directed}, if (1) $\phi_{y*}f$ can be extended to be a smooth function on an open neighborhood of $\phi_y(U_y)$, and with respect to some Riemannian metric $g$, the gradient vector field of the resulting function is pointing either strictly outward or strictly inward along every face of  $\phi_y(U_y)$, (2) $f$ is a Morse function on $M$ and the pair $(f, g)$ is Morse-Smale. We will also call $(f,g)$ a \emph{directed pair}. From the above discussion, $f_1-\epsilon f_0$ is directed on the manifold with corners $X_{m_0\geq t_0}\cap X_{m_1\geq t_1}$.

Now define
$$Hom_{Mor(X)}(\mathfrak{U}_0, \mathfrak{U}_1):= Mor^*(X_{m_0> t_0}\cap X_{m_1>t_1}, f_1-\epsilon f_0), $$ 
where $Mor^*(X_{m_0>t_0}\cap X_{m_1>t_1}, f_1-\epsilon f_0)$ is the usual Morse complex associated to the function $f_1-\epsilon f_0$ (after small perturbations when necessary).  
It is clear from the above description that the definition essentially doesn't depend on the choices of $t_0, t_1, \epsilon$ and $g$. There are compatible quasi-isomorphisms between the complexes with different choices.

The (higher) compositions are defined by counting Morse trees as follows.  
\begin{definition}
A \emph{based metric Ribbon tree} $T$ is an embedded tree into the unit disc consisting of the following data. \\
1. Vertices: there are $n+1$ points on the boundary of the unit disc in $\mathbb{R}^2$ labeled counterclockwise by $v_0,...,v_n$, where $v_0$ is referred as the \emph{root vertex}, and others are referred as \emph{leaf vertices}. There is a finite set of points in the interior of the disc, which are referred as \emph{interior vertices}. \\
2. Edges: there are straight line segments referred as \emph{edges} connecting the vertices. An edge $e$ connecting to the root or a leaf is called an \emph{exterior} edge, otherwise it is called an \emph{interior} edge. We will use $e_i$ to denote the unique exterior edge attaching to $v_i$, and $e_{in}$ to denote an interior edge. The resulting graph of vertices and edges should be a connected embedded stable tree in the usual sense, i.e. the edges do not intersect each other in the interior, there are no cycles in the graph, and each interior vertex has at least 3 edges.\\
3. Metric and orientation: the tree is oriented from the leaves to the root, in the direction of the shortest path (measured by the number of passing edges). Each interior edge $e_{in}$ is given a \emph{length} $\lambda(e_{in})>0$. One could parametrize the edges as follows, but the parametrization is not part of the data. Each $e_{in}$ is parametrized by the bounded interval $[0, \lambda(e_{in})]$ respecting the orientation. Every $e_i-\{v_i\}, i\neq 0$ is parametrized by $(-\infty, 0]$ and $e_0-\{v_0\}$ is parametrized by $[0,\infty)$.\\
4. Equivalence relation: two based metric Ribbon trees are considered the same if there is an isotopy of the closed unit disc which identifies the above data.
\end{definition}

Let $\mathfrak{U}_i=(U_i,f_i), i\in\mathbb{Z}/(k+1)\mathbb{Z}$ be a sequence of objects in $Mor(X)$ (when we compare the magnitude of two indices, we think of them as natural numbers ranging from $0$ to $k$). We can apply the perturbation process as before to produce a \emph{directed sequence} $\tilde{\mathfrak{U}}_i=(\tilde{U}_i,\tilde{f}_i)$, where $\partial \tilde{U}_i$'s are all smooth and transversely intersect with each other, and $\tilde{f}_j-\tilde{f}_i$ is directed on $\overline{\tilde{U}}_i\cap \overline{\tilde{U}}_j$ for $j>i$ (the boundary on which $\nabla(\tilde{f}_j-\tilde{f}_i)$ is pointing outward is understood to be $\partial \tilde{U}_i\cap \overline{\tilde{U}}_j$). A \emph{Morse tree} is a continuous map $\phi: T\rightarrow X$ such that\\
1. $\phi(v_i)\in Cr(\tilde{U}_{i-1}\cap \tilde{U}_i, \tilde{f}_{i}-\tilde{f}_{i-1})$, for $i\in\mathbb{Z}/(k+1)\mathbb{Z}$;\\
2. The tree divides the disc into several connected components, and we label these components counterclockwise starting from $0$ on the left-hand-side of $e_0$ (with respect to the given orientation). Let $\ell(e)$ and $r(e)$ denote the label on the left and right-hand-side of an edge $e$ respectivley. Then we require that $\phi|_{e}$ is $C^1$ and under some parametrization of the edges, we have
\begin{align*}
&\frac{d\phi(t)}{dt}|_{e_{in}}=\nabla(\tilde{f}_{\ell(e_{in})}-\tilde{f}_{r(e_{in})}), \text{ for }t\in(0,\lambda(e_{in})),\\
&\frac{d\phi(t)}{dt}|_{e_i}=\nabla(\tilde{f}_{\ell(e_i)}-\tilde{f}_{r(e_i)}), \text{ for }t\in(-\infty, 0) \text{ if }i\neq 0 \text{ and }t\in (0,\infty) \text{ if }i=0.
\end{align*}

For $a_i\in Cr(\tilde{U}_{i}\cap\tilde{U}_{i+1}, \tilde{f}_{i+1}-\tilde{f}_{i}), i\in\mathbb{Z}/(k+1)\mathbb{Z}$, let $\mathcal{M}(T; \tilde{f}_0,...,\tilde{f}_{k-1}; a_0,...,a_{k})$ denote the moduli space of Morse trees with $\phi(v_i)=a_{i-1}$. After a small perturbation of the functions, this moduli space is regular, and the signed count of the 0-dimensional part $\mathcal{M}(T; \tilde{f}_0,...,\tilde{f}_{k}; a_0,...,a_{k})^{0\text{-d}}$ gives the higher compositions
$$m^k_{Mor(X)}(a_{k-1}, a_{k-2},...,a_0)=\sum\limits_{b_k\in Cr(\tilde{U}_0\cap\tilde{U}_k, \tilde{f}_{k}-\tilde{f}_{0})}\sharp\mathcal{M}(T; \tilde{f}_0,...,\tilde{f}_{k}; a_0,...,a_{k-1}, b_{k})^{0\text{-d}} \cdot b_k.$$

\subsection{$Open(X)\simeq Mor(X)$ via Homological Perturbation Lemma }\label{Op.eq.Mor}

Let's recall the Homological Perturbation Lemma summarized in \cite{Seidel}. Assume we are given an $A_\infty$-category $\mathcal{A}$ and a
collection of chain maps $F, G$ on $Hom_\mathcal{A}(X_1,X_2)$ for
each pair of objects $X_1,X_2$ such that 
$$G\circ F=\text{Id},\
F\circ G-\text{Id}=m_\mathcal{A}^1\circ H+H\circ m_\mathcal{A}^1,$$ where $H$ is a map on
$Hom_\mathcal{A}(X_1,X_2)$ of degree $-1$.

\begin{thm}
There exists an $A_\infty$-category $\mathcal{B}$ with the same
objects, morphism spaces  and $m^1$ as $\mathcal{A}$. This comes with $A_\infty$-morphisms
$\mathcal{F}: \mathcal{B}\rightarrow \mathcal{A},\ \mathcal{G}:\mathcal{A}\rightarrow \mathcal{B}$ which are identity on objects and $\mathcal{F}^1=F,
\mathcal{G}^1=G$. There is also a homotopy $\mathcal{H}$ between $\mathcal{F}\circ\mathcal{G}$ and $Id_\mathcal{A}$ such that $\mathcal{H}^1=H$. 
\end{thm}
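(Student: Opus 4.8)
The plan is to deduce the statement from the classical homological perturbation lemma of homological algebra, transported through the bar construction. Recall that an $A_\infty$-structure on $\mathcal{A}$ is the same datum as a square-zero coderivation $b_\mathcal{A}$ on the tensor coalgebra $\mathcal{T}=\bigoplus_{k\geq 1}\bigoplus_{X_0,\dots,X_k}\mathrm{Hom}_\mathcal{A}(X_{k-1},X_k)[1]\otimes\cdots\otimes\mathrm{Hom}_\mathcal{A}(X_0,X_1)[1]$, that an $A_\infty$-functor is the same as a dg-coalgebra morphism of such coalgebras that is the identity on objects, and that a homotopy of $A_\infty$-functors is a degree $-1$ coderivation along the pair of morphisms. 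Write $b_\mathcal{A}=\hat b_1+\beta$, where $\hat b_1$ is the coderivation generated by $m^1_\mathcal{A}$ alone and $\beta=\sum_{j\geq 2}\widehat{m^j_\mathcal{A}}$ collects the higher products. The idea is to keep $(\mathcal{T},\hat b_1)$ as the unperturbed complex and to regard $\beta$ as a perturbation transported along the contraction built from $F$, $G$ and $H$.

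First I would upgrade $(F,G,H)$ to a \emph{special} deformation retract, i.e.\ arrange in addition the side conditions $HF=0$, $GH=0$, $HH=0$; this is the standard trick of replacing $H$ by $(m^1_\mathcal{A}H+Hm^1_\mathcal{A})\,H\,(m^1_\mathcal{A}H+Hm^1_\mathcal{A})$, iterated finitely often, which leaves $m^1_\mathcal{A}$, $F$, $G$ and the two given relations untouched. Then I would extend to $\mathcal{T}$: let $\hat F,\hat G\colon\mathcal{T}\to\mathcal{T}$ act factorwise by $F$ and $G$, and let $\hat H$ be the standard tensor homotopy, $\hat H|_{\mathcal{T}_\ell}=\sum_{p+q+1=\ell}(FG)^{\otimes p}\otimes H\otimes\mathrm{Id}^{\otimes q}$. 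Using that $m^1_\mathcal{A}$ generates a coderivation of the deconcatenation coproduct, one checks directly that $(\hat F,\hat G,\hat H)$ is a special deformation retract of $(\mathcal{T},\hat b_1)$ onto itself: $\hat G\hat F=\mathrm{Id}$, $\hat F\hat G-\mathrm{Id}=\hat b_1\hat H+\hat H\hat b_1$, and $\hat H\hat F=\hat G\hat H=\hat H\hat H=0$.

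Now I would apply the perturbation lemma with perturbation $\beta$. What makes this unconditional here is that $\beta$ \emph{strictly lowers word length} on $\mathcal{T}$ — each $m^j_\mathcal{A}$ with $j\geq 2$ merges at least two tensor factors into one — while $\hat H$ preserves word length; hence $(\beta\hat H)^n$ vanishes on words of length $\leq n$, so $A:=\beta(\mathrm{Id}-\hat H\beta)^{-1}=(\mathrm{Id}-\beta\hat H)^{-1}\beta$ is a finite sum on each word length and no completeness hypothesis is needed. The tensor-coalgebra form of the perturbation lemma then produces a square-zero coderivation
\[
b_\mathcal{B}:=\hat b_1+\hat G\,A\,\hat F
\]
on $\mathcal{T}$, defining the $A_\infty$-category $\mathcal{B}$; a dg-coalgebra morphism $\widehat{\mathcal{F}}:=\hat F+\hat H A\hat F\colon(\mathcal{T},b_\mathcal{B})\to(\mathcal{T},b_\mathcal{A})$; a dg-coalgebra morphism $\widehat{\mathcal{G}}:=\hat G+\hat G A\hat H\colon(\mathcal{T},b_\mathcal{A})\to(\mathcal{T},b_\mathcal{B})$; and, after the routine check that it is a coderivation along $\widehat{\mathcal{F}}\widehat{\mathcal{G}}$ and $\mathrm{Id}$, a homotopy $\widehat{\mathcal{H}}:=\hat H+\hat H A\hat H$. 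The relations $b_\mathcal{B}^2=0$, compatibility of $\widehat{\mathcal{F}},\widehat{\mathcal{G}}$ with the codifferentials, $\widehat{\mathcal{G}}\widehat{\mathcal{F}}=\mathrm{Id}$, and $\widehat{\mathcal{F}}\widehat{\mathcal{G}}-\mathrm{Id}=b_\mathcal{A}\widehat{\mathcal{H}}+\widehat{\mathcal{H}}b_\mathcal{A}$ are formal consequences of the lemma, the last two relying on the side conditions; in components these say exactly that $\mathcal{G}\circ\mathcal{F}=\mathrm{Id}_\mathcal{B}$ and that $\mathcal{H}$ is a homotopy between $\mathcal{F}\circ\mathcal{G}$ and $\mathrm{Id}_\mathcal{A}$. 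Finally, since $A$ has arity $\geq 2$, the arity-one parts are unchanged, giving $m^1_\mathcal{B}=m^1_\mathcal{A}$, $\mathcal{F}^1=F$, $\mathcal{G}^1=G$, $\mathcal{H}^1=H$.

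Unwinding these geometric series reproduces the familiar tree-sum formulas for homotopy transfer: $m^k_\mathcal{B}$ is a signed sum over stable planar rooted trees with $k$ leaves carrying $F$, internal vertices carrying the higher products $m^j_\mathcal{A}$, internal edges carrying $H$, and the root carrying $G$, while $\mathcal{F}^k$ is the analogous sum with $H$ at the root and $\mathcal{G}^k,\mathcal{H}^k$ have similar expansions; for each fixed $k$ only finitely many stable trees contribute, which is the tree-level shadow of the finiteness above. I expect the only genuine difficulty to be bookkeeping rather than conceptual: pinning down the Koszul signs in the factorwise extensions $\hat F,\hat G,\hat H$ and in the dictionary between $A_\infty$-data and coderivations, and verifying that the three side conditions survive the passage to $\mathcal{T}$. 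Once these are in place, every assertion of the theorem is a purely formal corollary of the perturbation lemma.
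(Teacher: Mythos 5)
The paper gives no proof of this statement---it is quoted verbatim from Seidel's book, where the transferred products $m^k_\mathcal{B}$, the morphisms $\mathcal{F}^k$, $\mathcal{G}^k$, and the homotopy $\mathcal{H}^k$ are written down directly as signed sums over stable planar rooted trees (with the higher $m^j_\mathcal{A}$ at internal vertices, $H$ on internal edges, $F$ at leaves, $G$ at the root), and the $A_\infty$-relations are checked inductively. You instead transport the problem through the bar construction and derive everything from the classical chain-level perturbation lemma, with the right observation that $\beta=\sum_{j\geq 2}\widehat{m^j_\mathcal{A}}$ strictly lowers word length while $\hat H$ preserves it, so the geometric series $(\mathrm{Id}-\hat H\beta)^{-1}$ is locally finite and no convergence or completeness hypothesis is needed; the tree formulas then fall out by expansion rather than being posited in advance. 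These are two standard routes to the same answer. Yours trades Seidel's inductive verification of the $A_\infty$-relations for a once-and-for-all appeal to the coalgebra version of the perturbation lemma (Huebschmann--Kadeishvili), which you do at least flag as a nontrivial input when you call it ``the tensor-coalgebra form'' of the lemma rather than just the lemma applied degree by degree.

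There is one place where your argument, as written, does not quite yield the stated conclusion. You preprocess $H$ into a modified homotopy $H'$ satisfying the side conditions $H'F=0$, $GH'=0$, $H'H'=0$, and then run the perturbation lemma with $H'$. The output then has $\mathcal{H}^1=H'$, not $\mathcal{H}^1=H$ as the theorem asserts. However, the side conditions are only needed for the conclusion $\widehat{\mathcal{G}}\widehat{\mathcal{F}}=\mathrm{Id}$, which is \emph{not} part of the theorem: the statement asks only for $A_\infty$-morphisms $\mathcal{F}$, $\mathcal{G}$ with the prescribed first-order terms and a homotopy between $\mathcal{F}\circ\mathcal{G}$ and $\mathrm{Id}_\mathcal{A}$. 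The facts that $b_\mathcal{B}^2=0$, that $\widehat{\mathcal{F}}$ and $\widehat{\mathcal{G}}$ intertwine the codifferentials, and that $\widehat{\mathcal{F}}\widehat{\mathcal{G}}-\mathrm{Id}=b_\mathcal{A}\widehat{\mathcal{H}}+\widehat{\mathcal{H}}b_\mathcal{A}$ are already consequences of the basic perturbation lemma in Brown's original form, without side conditions. So you should simply drop the side-condition preprocessing: then $\mathcal{H}^1=H$ on the nose, and you give up only the exact identity $\mathcal{G}\circ\mathcal{F}=\mathrm{Id}_\mathcal{B}$ (replaced by a homotopy on the $\mathcal{B}$-side), which the theorem never claims. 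If you do want the exact identity for other purposes, then modifying $H$ is unavoidable and the correct claim is $\mathcal{H}^1=H'$; keeping both statements as you have them creates a small but real inconsistency.
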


\begin{remark}\label{HPL}
In this paper, all $A_\infty$-categories and $A_\infty$-functors are assumed to be $c$-unital. The Homological Perturbation Lemma generalizes to
left $A_\infty$-modules, namely, in addition to the above data, let
$\mathcal{M}: \mathcal{A}\rightarrow Ch$ be a left
$A_\infty$-module over $\mathcal{A}$ and assume for each object $X$, there
are chain maps $\tilde{F}, \tilde{G}$ and a homotopy $\tilde{H}$ on $\mathcal{M}(X)$ satisfying
$$\tilde{G}\circ \tilde{F}=\text{Id},\
\tilde{F}\circ \tilde{G}-\text{Id}=d\circ\tilde{ H}+\tilde{H}\circ d,$$ 
then one can construct a left $A_\infty$-module $\mathcal{N}$ over $\mathcal{B}$ such that
$$\mathcal{N}^1=\tilde{G}\circ \mathcal{M}^1\circ F,$$
and there are module homomorphisms
$$t: \mathcal{N}\rightarrow \mathcal{F}^*\mathcal{M},\  s: \mathcal{M}\rightarrow\mathcal{G}^*\mathcal{N}.$$
Then we have the following composition
$$T=(\mathcal{R}_\mathcal{G}(t))\circ s: \mathcal{M}\rightarrow\mathcal{N}
\circ\mathcal{G}\rightarrow\mathcal{M}\circ\mathcal{F}\circ\mathcal{G},$$
where $\mathcal{R}_\mathcal{G}$ is taking composition with $\mathcal{G}$ on the right.
Using the homotopy between $\mathcal{F}\circ\mathcal{G}$ and $Id_\mathcal{A}$, we get a composition of morphisms between the induced cohomological functors
$$H(T): H(\mathcal{M})\xrightarrow{H(s)} H(\mathcal{N}
\circ\mathcal{G})\xrightarrow{H(\mathcal{R}_{\mathcal{G}}(t))} H(\mathcal{M})=H(\mathcal{M}\circ\mathcal{F}\circ\mathcal{G}).$$
Then it is easy to check that $H(s)\circ H(\mathcal{R}_{\mathcal{G}}(t))=id$ and $H(T)$ is an isomorphism, so 
$$H(s): H(\mathcal{M})\xrightarrow{\sim}H(\mathcal{G}^*\mathcal{N}).$$
\end{remark}

Here we will use the version where $G$ is an idempotent, $Hom_{\mathcal{B}}(X_1,X_2)$ is the
image of $G$ and $F: Hom_{\mathcal{B}}(X_1,X_2)\rightarrow Hom_{\mathcal{A}}(X_1,X_2)$ is the inclusion; see \cite{KS}. The same for $\tilde{G}$ and $\tilde{F}$.

Let $(X, g)$ be a Riemannian manifold with corners. Let
$(f,g)$ be a directed pair with $\varphi_t$ the
gradient flow of $f$. Denote by $H_0\subset \partial X$ the
hypersurface where $\nabla f$ is pointing outward, and
$H_1\subset\partial X$ the hypersurface where $\nabla f$ is
pointing inward. Let $D'(X-H_0,H_1), D'(X-H_1,H_0)$, called \emph{relative currents}, be the dual of
$\Omega(X-H_1,H_0)$ and $\Omega(X-H_0,H_1)$ respectively.

In the following, we briefly recall the idempotent functor on $\Omega(X, H_0)$ constructed in \cite{HL} and \cite{KS}
and used by \cite{NZ} in the manifold-with-corners setting. 
Consider the functor
\begin{align}\label{Proj}
P: \Omega(X-H_1, H_0)&\rightarrow D'(X-H_1,
H_0)\\
\alpha &\mapsto \sum\limits_{x\in Cr(f)}(\int_{\mathscr{U}_x}\alpha)[\mathscr{S}_x],\nonumber
\end{align}
where $Cr(f)$ is the set of critical points of $f$, $\mathscr{U}_x$ is the unstable manifold associated to $x$ and $\mathscr{S}_x$
is the stable manifold associated to $x$. 

There is a homotopy functor $T$ between $P$ and
the inclusion $I: \Omega(X-H_1, H_0)\hookrightarrow D'(X-H_1,
H_0)$ given by the current $\{(\varphi_t(y), y): t\in \mathbb{R}_{\geq 0}\}\subset
X\times X$ in $D'(X\times X)$. To construct a real idempotent functor on $\Omega(X-H_1,H_0)$, one composes it with a smoothing
functor. For readers interested in further details, see \cite{KS}.

A consequence of the functor $P$ is the Morse theory for manifolds with corners:
\begin{equation}\label{MTMC}
\Omega(X-H_1, H_0)\simeq Mor^*(X, f).
\end{equation}
Following the notations in Section \ref{OpenMor}, this implies the following canonical quasi-isomorphisms
\begin{align*} 
&Hom_{Mor(X)}(\mathfrak{U}_0, \mathfrak{U}_1)\simeq Mor^*(X_{m_0>t_0}\cap X_{m_1>t_1}, f_1-\epsilon f_0)\\
\simeq& (\Omega(X_{m_0\geq t_0}\cap X_{m_1>t_1}, X_{m_0=t_0}\cap X_{m_1>t_1}), d)\simeq  (\Omega(\overline{U}_0\cap U_1, \partial U_0\cap U_1), d)\\
=&Hom_{Open(X)}(\mathfrak{U}_0, \mathfrak{U}_1).
\end{align*}

Applying Homological Perturbation Lemma to the dg category $Open(X)$ through the functors $P,\ I$ and $H$ for each pair of objects, one can show that $Mor(X)$ is exactly the $A_\infty$-category $\mathcal{B}$ in Theorem \ref{HPL} constructed out of these data. Using the set-up for defining higher morphisms in $Mor(X)$ of Section \ref{OpenMor}, there is a nice description of $\mathcal{M}(T; \tilde{f}_0,...,\tilde{f}_{k-1}; a_0,...,a_{k})$ in terms of intersections of the stable manifold of $a_i$ for  $i\neq k$ and the unstable manifold of $a_k$, which also involves the functors $P$ and $I$. After a smoothing functor, one replaces intersection of currents by wedge product on differential forms, then compare this with the formalism of Homological Perturbation Lemma to get the assertion. For more details about the argument, see \cite{KS}. We will use the same idea in Section \ref{simeq} for left $A_\infty$-modules.  

\subsection{The microlocalization $\mu_X: Sh(X)\xrightarrow\sim Tw Fuk(T^*X)$ }\label{microlocal}
For any $\mathfrak{U}=(U, m)\in Mor(X)$, we can associate the standard brane $L_{U,m}$ in $Fuk(T^*X)$ (see Appendix \ref{basT*X} (b)), and in this way $Mor(X)$ is naturally identified with the $A_\infty$-subcategory of $Fuk(T^*X)$ generated by these standard branes. 

Roughly speaking, one does a series of appropriate perturbations and dilations to the branes $L_{U_i, m_i}$, $i=1,...,k$, so that 
(1) after further variable dilations (see Appendix \ref{basT*X} (c)), one can use the monotonicity properties (Proposition \ref{EB}, Remark \ref{U.E.B}) to get that all holomorphic discs bounding the (dilating family of) the new branes $\epsilon\cdot L_i, i=1,...,k$ have boundary lying in the partial graphs $\epsilon\cdot L_i|_{\tilde{U}_i}=\epsilon\cdot \Gamma_{d\tilde{f}_i}, i=1,...,k$, where $\tilde{U}_i$ is a small perturbation of $U_i$ and $\tilde{f}_i: \tilde{U}_i\rightarrow \mathbb{R}$ is some function; (2) the sequence $(\tilde{U}_i, \tilde{f}_i), i=1,...,k$ is directed, and hence one could use $(\tilde{U}_i, \tilde{f}_i)$ as representatives in the calculation of (higher) morphisms involving $\mathfrak{U}_i=(U_i, m_i), i=1,...,k$ in $Mor(X)$. Since we will use the same technique in Section \ref{df, L_V} and \ref{simeq}, we defer the details there.

Recall Fukaya-Oh's theorem. 
\begin{thm}[\cite{Fukaya}]\label{FOthm}
For a compact Riemannian manifold $(X,g)$, and a generic sequence of functions $f_1,...,f_k$ on $X$, there is an orientation preserving diffeomorphism between moduli space of holomorphic discs (with respec to the Sasaki almost complex structure) bounding the sequence of graphs $\epsilon\cdot \Gamma_{df_i}, i=1,...,k$ and the moduli space of Morse trees for the sequence $(X,\epsilon f_i), i=1,...,k$, for all $\epsilon>0$ sufficiently small. 
\end{thm}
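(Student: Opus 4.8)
The statement in question is Theorem~\ref{FOthm}, Fukaya--Oh's theorem comparing the moduli space of holomorphic discs bounding a sequence of scaled graphs $\epsilon\cdot\Gamma_{df_i}$ with the moduli space of gradient Morse trees for $(X,\epsilon f_i)$, for $\epsilon$ small. This is a citation to \cite{Fukaya}, so a proof here would be a reconstruction of the adiabatic-limit argument; let me sketch how I would prove it.

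\medskip

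\noindent\textbf{Proof plan for Theorem~\ref{FOthm}.} The plan is to set up an adiabatic (scaling) limit $\epsilon\to 0$ and show that the Cauchy--Riemann equation for discs in $T^*X$ degenerates, at leading order, into the gradient-tree equations on $X$, then upgrade this into a diffeomorphism of moduli spaces by a Newton/implicit-function-theorem argument with $\epsilon$-uniform estimates.

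\emph{Step 1 (rescaling and the target equation).} Using the Sasaki almost complex structure $J_g$ on $T^*X$ induced by the metric $g$, a $J_g$-holomorphic disc $u=(q,p):D\to T^*X$ with boundary on $\epsilon\cdot\Gamma_{df_i}$ over the $i$-th boundary arc satisfies a first-order system in which the horizontal part $q$ and the vertical (fibre) part $p$ are coupled. Rescale the fibre coordinate by $p=\epsilon\,\tilde p$; then the boundary condition becomes $\tilde p = df_i(q)$ along the $i$-th arc, uniformly in $\epsilon$, while the PDE acquires an explicit $\epsilon$-dependence. One computes the energy and shows that a sequence of such discs with bounded energy has $q$ converging, after a conformal rescaling of the domain (inserting long necks/strip-like pieces at the interior vertices dictated by the combinatorics of the tree), to a piecewise-gradient-flow map, i.e.\ to a Morse tree for the pairwise differences $f_j-f_i$; the vertical component is then forced to be $\tilde p = df$ along the flow. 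This is the a priori ``one direction'' (Gromov-type) compactness statement.

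\emph{Step 2 (gluing / surjectivity of the comparison map).} Conversely, given a rigid Morse tree $\phi:T\to X$ for the sequence, I would construct an approximate $J_g$-holomorphic disc $u_\epsilon^{\mathrm{app}}$ by pregluing: over each edge use the graph of $df$ along a reparametrised gradient trajectory of appropriate length $\sim 1/\epsilon$ (or length controlled by $\lambda(e_{in})$), over each interior vertex insert a small holomorphic model patch, and damp out the mismatch with cutoff functions. Then show $\bar\partial_{J_g} u_\epsilon^{\mathrm{app}}$ is $O(\epsilon^{c})$ in a suitable $W^{1,p}$ norm, that the linearised operator $D_{u_\epsilon^{\mathrm{app}}}$ is uniformly (in $\epsilon$) surjective with a right inverse bounded independently of $\epsilon$ — this uses that the Morse tree is cut out transversally, which transfers to surjectivity of the disc operator in the limit — and then invoke the Newton--Picard iteration to produce a genuine $J_g$-holomorphic disc near $u_\epsilon^{\mathrm{app}}$, unique in an $\epsilon$-ball. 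Injectivity of the resulting correspondence follows from the uniqueness clause together with the Step~1 compactness: two discs limiting to the same tree lie in the same Newton-ball for $\epsilon$ small, hence coincide.

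\emph{Step 3 (orientations).} Finally, match orientations: both moduli spaces are oriented by determinant lines of the relevant (disc, resp.\ tree) linearised operators, and the gluing/linearisation isomorphism of Step~2 is continuous in $\epsilon$ down to $\epsilon=0$, where it identifies the disc determinant line with the Morse-tree determinant line; one checks the identification is orientation-preserving by reducing to the local models at the interior vertices. Combining Steps 1--3 gives, for all sufficiently small $\epsilon>0$, an orientation-preserving diffeomorphism between the two moduli spaces.

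\medskip

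\noindent The hard part is Step~2, specifically the $\epsilon$-uniform quadratic estimate and the uniform bound on the right inverse of the linearised $\bar\partial$-operator across the adiabatic neck regions: one must choose the Banach norms (weighted Sobolev norms with weights adapted to the neck lengths) so that both the approximate solution's error and the inverse are controlled simultaneously, and handle the fact that the ``width'' of the problem in the fibre direction shrinks like $\epsilon$ while the neck lengths grow. This is exactly the technical content of \cite{Fukaya}, and I would follow that analysis rather than reproduce it in full.
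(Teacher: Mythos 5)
The paper does not prove Theorem~\ref{FOthm}; it is stated as a citation to \cite{Fukaya} and used as a black box (the only remark the paper adds is that the Fukaya--Oh proof is local and rests on $C^1$-closeness of the graphs to the zero section, which is what justifies adapting it to directed sequences of partial graphs $(\tilde U_i,\tilde f_i)$). You correctly recognise this and offer a reconstruction rather than a proof-from-the-paper, so there is nothing internal to compare against.

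As a reconstruction of \cite{Fukaya}, your three-step outline is faithful to the adiabatic-limit strategy of that reference: rescale the fibre, establish Gromov-type convergence of low-energy discs to gradient trees as $\epsilon\to 0$; preglue gradient trajectories and local vertex models into an approximate disc and run Newton--Picard with $\epsilon$-uniform estimates to produce and isolate a genuine disc; and match determinant-line orientations through the gluing isomorphism. You also single out the right technical crux, namely the choice of weighted norms that simultaneously control the shrinking fibre scale and the lengthening necks so that the approximate solution error and the right inverse of the linearised operator are bounded uniformly in $\epsilon$. One small point worth making explicit if you were to carry this out in the present paper's setting: the version actually invoked here is not the compact-manifold statement but its adaptation to partial graphs over open submanifolds with corners, where one must in addition verify that the energy-monotonicity and perturbation scheme of Sections~\ref{df, L_V}--\ref{Comp. mod.} confine all discs to the region where the Fukaya--Oh local analysis applies; the paper handles this via Proposition~\ref{EB} and Remark~\ref{U.E.B}, and your Step~1 compactness would need to be anchored to those a~priori bounds rather than to compactness of $X$.
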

Since the proof of the theorem is local and essentially relies on the $C^1$-closeness of the graphs to the zero section, one could adapt it to the directed sequence $(\tilde{U}_i, \tilde{f}_i), i=1,...,k$ and conclude that the moduli space of discs bounding $\epsilon\cdot L_i, i=1,...,k$ is diffeomorphic (as oriented manifolds) to the moduli space of Morse trees for the sequence $(\tilde{U}_i, \tilde{f}_i), i=1,...,k$. Therefore we get the quasi-embedding $i: Mor(X)\hookrightarrow Fuk(T^*X)$.

Next one composes $i$ with the quasi-equivalence $\mathcal{P}: Open(X)\rightarrow Mor(X)$ from Section \ref{Op.eq.Mor}, and get a quasi-embedding $i\circ\mathcal{P}: Open(X)\rightarrow Fuk(T^*X)$. Then taking twisted complexes on both sides, we get the microlocal functor $\mu_X: Sh(X)\rightarrow Tw Fuk(T^*X)$. To simplify notation, we will denote $Tw Fuk(T^*X)$ by $F(T^*X)$. The main idea in \cite{Nadler2} of proving that $\mu_X$ is a quasi-equivalence is to resolve the conormal to the diagonal in $T^*(X\times X)$ using product of standard branes in $T^*X$. Since we will only use the statement, we refer interested readers to \cite{Nadler2} for details.

For a fixed stratification $\mathcal{S}$, let $Fuk_{\mathcal{S}}(T^*X)$ be the full subcategory of $Fuk(T^*X)$ consisting of branes $L$ with $L^\infty\subset T^\infty_\mathcal{S}X$, and let $F_\mathcal{S}(T^*X)$ denote its twisted complexes. Then we also have 
\begin{equation}\label{micro_S}
\mu_X|_{Sh_\mathcal{S}(X)}: Sh_{\mathcal{S}}(X)\overset{\sim}{\rightarrow} F_{\mathcal{S}}(T^*X). 
\end{equation}

\section{Quasi-representing $M_{x,F}$ on $Fuk_\mathcal{S}(T^*X)$ by the local Morse brane $L_{x,F}$}\label{Repr M_{x,F}}

Continuing the convention from Section \ref{LMG}, for a complex stratification $\mathcal{S}$, we refine it to have each stratum a cell, and denote the resulting stratification by $\mathcal{S}$ as well. The test triples $(x,\xi, F)$ we are considering for $\Lambda_\mathcal{S}$ are always away from the newly added strata. 

Given a test triple $(x,\xi, F)$, we will construct a Lagrangian brane $L_{x,F}$
supported on a neighborhood of $x$, such that the functor $Hom_{F(T^*X)}(L_{x,F},-): F_\mathcal{S}(T^*X)
\rightarrow Ch$ under pull-back by $\mu_X$ is quasi-isomorphic to the local Morse group functor $M_{x,F}$. 
 
\subsection{Construction of $L_{x,F}$}\label{constr}
Consider the function $\text{r}\times \Re(F):
B_{2\epsilon}(x)\rightarrow\mathbb{R}^2$, where $\mathrm{r}(\mathbf{z})=\|\mathbf{z}\|^2$ as before. Let
$R$ be an open subset of $\Lambda_\mathcal{S}\text{-regular values of }\text{r}\times\Re(F)$ in $\mathbb{R}^2$, such that it contains $(0, \delta)\times\{0\}$ for some $\delta>0$, and if $(a,b), (a,c)\in R$ for $b<c$, then $\{a\}\times [b,c]\subset R$ (here we have used Lemma \ref{critical}). There exists a $0<\tilde{r}_2<\delta$ for which the function $\mathrm{r}$ has no $\Lambda_\mathcal{S}$-critical value in $(0,\tilde{r}_2)$ and the following Lemma \ref{r'_2} holds. Fixing such a $\tilde{r}_2$, choose
$0<\tilde{r}_1<\tilde{r}_2$ and $\eta>0$ small enough so that $R$ contains $(\tilde{r}_1,\tilde{r}_2)\times
(-2\eta,2\eta)$, and $\Re(F)$ has no $\Lambda_\mathcal{S}$-critical value in $(-2\eta, 0)$ or $(0,2\eta)$. Also choose $\tilde{r}_1<r_1<r_2<\tilde{r}_2$.

\begin{lemma}\label{r'_2}
There exists $0<\tilde{r}_2<\delta$ such that if $d\Re(F)\in \lambda d\text{r}+\Lambda_\mathcal{S}$ at a point $y\in B_{\tilde{r}_2}(x)\cap \{\Re(F)>0\}$, then $\lambda>0$.
\end{lemma}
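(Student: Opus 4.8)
The statement to prove is Lemma \ref{r'_2}: near $x$, on the region where $\Re(F)>0$, if the differential $d\Re(F)$ can be written as $\lambda\, d\mathrm{r}$ plus a covector in $\Lambda_\mathcal{S}$, then necessarily $\lambda>0$. The intuition is that $\Re(F)$ vanishes at $x$ with $d\Re(F)_x=\xi$ nonzero, so on a small ball $\Re(F)$ is governed by its linear part, and the zero locus $\{\Re(F)=0\}$ separates $\{\Re(F)>0\}$ from $x$; on the positive side, moving in the $-d\mathrm{r}$ direction (toward $x$) must decrease $\Re(F)$, which pins down the sign. Making this precise is essentially a compactness-plus-continuity argument combined with the transversality built into the notion of a test triple.

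First I would set up the contradiction: suppose no such $\tilde{r}_2$ exists, so there is a sequence $y_k\to x$ with $y_k\in\{\Re(F)>0\}$, a stratum $S_{\alpha_k}$ containing $y_k$, a covector $\zeta_k\in T^*_{S_{\alpha_k}}X$ at $y_k$, and a scalar $\lambda_k\le 0$ with $d\Re(F)_{y_k}=\lambda_k\, d\mathrm{r}_{y_k}+\zeta_k$. Since there are finitely many strata, after passing to a subsequence we may assume all $y_k$ lie in a fixed stratum $S_\alpha$ with $x\in\overline{S_\alpha}$. Next I would analyze the limit. Because $d\Re(F)_{y_k}$ and $d\mathrm{r}_{y_k}$ converge (to $\xi$ and to $d\mathrm{r}_x=0$ respectively, since $\mathrm{r}$ has a critical point at $x$), we get $\zeta_k=d\Re(F)_{y_k}-\lambda_k\,d\mathrm{r}_{y_k}$. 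Here one must control $\lambda_k$: if $\lambda_k$ stays bounded, then $\zeta_k\to\xi$ and by Whitney conditions (closedness of $\bigcup_\beta \overline{T^*_{S_\beta}X}$, which is $\Lambda_\mathcal{S}$) the limit $\xi$ lies in a conormal $T^*_{S_\beta}X$ over $x$ for some $\beta$ with $x\in S_\beta$ — combined with $\xi\in\Lambda_\mathcal{S}$ already, and with the $\mathrm{r}$-critical behavior, this should contradict the transversality of $\Gamma_{d\Re(F)}$ to $\Lambda_\mathcal{S}$ at $(x,\xi)$, or force $\xi$ into a position incompatible with $d\Re(F)_x=\xi$ being a genuine (nonzero, non-conormal-in-the-wrong-way) covector. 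If instead $|\lambda_k|\to\infty$, divide through by $\lambda_k$: then $d\mathrm{r}_{y_k}+\lambda_k^{-1}\zeta_k=\lambda_k^{-1}d\Re(F)_{y_k}\to 0$, so $\lambda_k^{-1}\zeta_k\to -\lim d\mathrm{r}_{y_k}=0$; rescaling $\zeta_k$ to the unit sphere and passing to a further subsequence yields a nonzero limiting conormal direction that is a limit of multiples of $d\mathrm{r}_{y_k}$, i.e.\ proportional to a covector conormal to the level sets of $\mathrm{r}$ in the limit — but over $x$ one has $d\mathrm{r}_x=0$, so one must instead track the direction of $d\mathrm{r}_{y_k}$ along $S_\alpha$ and use the Whitney condition (a) relating limits of conormals of $S_\alpha$ to conormals of the boundary stratum through $x$, again contradicting transversality. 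To get the \emph{sign} (not just nonvanishing) I would use that on $\{\Re(F)>0\}$ near $x$, the function $\mathrm{r}$ restricted to a gradient-like trajectory of $\Re(F)$ must be increasing away from $x$: concretely, $\langle d\Re(F)_y, \nabla \mathrm{r}_y\rangle>0$ for $y$ near $x$ with $\Re(F)(y)>0$, since $\Re(F)(y)\approx \Re(\xi)\cdot y$ to first order and $\nabla\mathrm{r}_y\approx 2y$, giving $\langle d\Re(F)_y,\nabla\mathrm{r}_y\rangle\approx 2\,d\Re(F)(y)>0$; pairing the hypothesized identity with a tangent-to-$S_\alpha$ component of $\nabla\mathrm{r}_y$ and using $\zeta_k\perp T_{y_k}S_\alpha$ then forces $\lambda_k>0$ once $y_k$ is close enough, the desired contradiction.

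The main obstacle, as I see it, is the case $|\lambda_k|\to\infty$ together with the degeneracy $d\mathrm{r}_x=0$: one cannot simply take limits of the covectors on the nose, and the honest argument requires invoking the Whitney regularity of $\mathcal{S}$ (via Lemma \ref{critical} and the analytic-geometric framework of the appendix) to control how conormals of $S_\alpha$ degenerate as $y_k\to x$, then feeding that into the transversality clause $\Gamma_{d\Re(F)}\pitchfork\Lambda_\mathcal{S}$ at $(x,\xi)$. I would handle this by working on the blow-up of the $\mathrm{r}$-critical point, or equivalently by parametrizing $y_k$ in normalized coordinates $y_k=s_k u_k$ with $|u_k|=1$, $s_k\to 0$, so that $d\mathrm{r}_{y_k}= 2 s_k u_k^{\flat}+o(s_k)$ and the equation becomes, after dividing by $s_k$, a statement about the \emph{radial} components that survives in the limit. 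Once the sign is extracted, the existence of $\tilde{r}_2$ follows immediately, and I would then record that the same $\tilde{r}_2$ can be shrunk further so that the subsequent constructions (the choice of $R$, $\tilde r_1$, $\eta$, $r_1$, $r_2$, and the orientation-of-gradient statements needed for $L_{x,F}$) all go through — but that bookkeeping is routine and I would not belabor it.
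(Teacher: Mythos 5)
Your overall strategy (proof by contradiction with a sequence $y_k\to x$ in a fixed stratum $S_\alpha$) starts out parallel to the paper's, but the paper then invokes the curve selection lemma (Proposition \ref{CSL}) to replace the sequence by a $C^1$ $\mathcal{C}$-curve $\rho:[0,1)\to X$ landing at $x$ with image in $\{\Re(F)>0\}\cap S_\alpha$ and lying in the ``bad'' locus. Pairing the hypothesized covector identity with $\rho'(t)\in T_{\rho(t)}S_\alpha$ kills the conormal term and yields the one-dimensional relation $\frac{d}{dt}\Re(F)(\rho(t))=\lambda(t)\,\frac{d}{dt}\mathrm{r}(\rho(t))$ with $\lambda(t)<0$. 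Since both $\Re(F)\circ\rho$ and $\mathrm{r}\circ\rho$ are positive on $(0,1)$ and tend to $0$ at $0$, and since $\mathcal{C}$-functions of one variable are eventually monotone, both derivatives are eventually nonnegative, contradicting the sign of $\lambda$. Your proposal never reduces to a one-dimensional problem and instead tries to extract the sign pointwise, which fails.

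Concretely, your key sign claim --- that $\langle d\Re(F)_y,\nabla\mathrm{r}_y\rangle>0$ whenever $y$ is near $x$ with $\Re(F)(y)>0$ --- is false. Working in local coordinates with $x$ at the origin and writing $H$ for the Hessian of $\Re(F)$ at $x$ (which is traceless, since $\Re(F)$ is harmonic), one has
\[
\langle d\Re(F)_y,\nabla\mathrm{r}_y\rangle = 2\Re(F)(y) + \langle Hy,y\rangle + O(|y|^3),
\]
so the error is of the \emph{same} order $|y|^2$ as $\Re(F)(y)$ can be near its zero locus; the inequality can fail. For example $X=\mathbb{C}$, $F(z)=z+z^2$, $\Re(F)=q_x+q_x^2-q_y^2$: on $\{\Re(F)>0\}$ near the boundary $q_y^2=q_x+q_x^2$ (with $q_x>0$ small) one gets $\langle d\Re(F),\nabla\mathrm{r}\rangle\approx -2q_x<0$. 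Thus the ``pairing with the tangential component of $\nabla\mathrm{r}$'' step does not produce $\lambda_k>0$. Similarly, your bounded-$\lambda_k$ analysis does not yield a contradiction: if $\lambda_k$ is bounded, then $\lambda_k\,d\mathrm{r}_{y_k}\to 0$ (since $d\mathrm{r}_x=0$) and $\zeta_k\to\xi$, and the identity just degenerates to $\xi=\xi$, which is perfectly consistent with $\xi\in\Lambda_\mathcal{S}$; transversality of $\Gamma_{d\Re(F)}$ with $\Lambda_\mathcal{S}$ at $(x,\xi)$ is not contradicted. The essential input that your approach is missing --- and that the curve selection lemma delivers --- is the o-minimal monotonicity of $\Re(F)\circ\rho$ and $\mathrm{r}\circ\rho$ near $t=0$, which is what actually forces the sign. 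Without reducing to a curve, the pointwise sign statement you rely on is simply not available.
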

\begin{proof}
Suppose the contrary, there is a sequence of points $y_n$ approaching to $x$ such that $d\Re(F)\in\lambda_n d\text{r}+\Lambda_\mathcal{S}$ at $y_n$ for $\lambda_n<0$ (note that $d\Re(F)\notin\Lambda_\mathcal{S}$ except at $x$). Since the stratification is locally finite, we can assume that those points are in a fixed stratum $S_\alpha$ with $x\in\overline{S}_\alpha$. Then by curve selection lemma (Proposition \ref{CSL}), there is a $\mathcal{C}$-curve of $C^1$-class $\rho: [0,1)\rightarrow X$ with $\rho(0)=x$, $\rho((0,1))\subset \{\Re(F)>0\}\cap S_\alpha$, and $\frac{d\Re(F)(\rho(t))}{dt}=\lambda(t)\frac{d\text{r}(\rho(t))}{dt}<0$, which is impossible.
\end{proof}

Let $U^{pre}=\{|\Re(F)|<\eta\}\cap B_{r_2}(x)$. 
Also let 
$$\mu=-\frac{1}{2}\eta, \delta_1=\frac{1}{2}(r_2-r_1), \delta_2=\frac{1}{4}\eta.$$
and 
$$u(\mathbf{z})=\mathrm{r}(\mathbf{z})-(r_2-\delta_1), v(\mathbf{z})=\Re(F)(\mathbf{z})-(\mu-\delta_2).$$
Near $\{u=v=0\}$, we smooth the corners in  
$$W_1:=\{u=0, v\leq 0\}\cup \{u\leq 0, v=0\}$$
as follows. Let $\bar{\epsilon}_1=\frac{1}{2}\min(\delta_1, \delta_2, 1)$. We remove the portion $\{u^2+v^2\leq \bar{\epsilon}_1^2\}$ from $W_1$ and glue in $3/4$ of the cylinder $\{u^2+v^2=\bar{\epsilon}_1^2\}$, i.e. the part where $u, v$ are not both negative. Then we smooth the connecting region so that its (outward) unit conormal vector is always a linear combination of $d\mathrm{r}$ and $d\Re(F)$, in which at least one of the coefficients is positive. This can be achieved by looking at the local picture in the leftmost corner of Figure \ref{figure}, where we complete $u,v$ to be the coordinates of a local chart. We will denote the resulting hypersurface by $\widetilde{W}_1$.

Now we choose a semi-defining function $m_{\widetilde{W}_1}$ for $\widetilde{W}_1$ such that \\
(i) in an open neighborhood $\mathcal{U}_1$ of $\widetilde{W}_1$, $m_{\widetilde{W}_1}$ is a function of $u,v$, and  $dm_{\widetilde{W}_1}\neq 0$,\\
(ii) $m_{\widetilde{W}_1}=u$ on $\{v\leq -\frac{4}{3}\bar{\epsilon}_1, |u|\leq \frac{1}{2}\bar{\epsilon}_1\}$, $m_{\widetilde{W}_1}=v$ on $\{u\leq-\frac{4}{3}\bar{\epsilon}_1, |v|\leq \frac{1}{2}\bar{\epsilon}_1\}$. 

Then there exists $0<\epsilon_1<\frac{1}{2}\bar{\epsilon}_1$ so that $\{0\leq m_{\widetilde{W}}\leq \epsilon_1\}$ is contained in $\mathcal{U}_1$, and over it $dm_{\widetilde{W}_1}$ is a linear combination of $d\mathrm{r}$ and $d\Re(F)$ in which at least one of the coefficients is positive.

The upshot is that the smoothing is within a $\Lambda_\mathcal{S}$-regular locus of $\mathrm{r}\times\Re(F)$, and $\widetilde{W}_1$ and $m_{\widetilde{W}_1}$ are chosen in a way that any positive linear combination of $dm_{\widetilde{W}_1}$, $d\mathrm{r}$ and $d\Re(F)$ will never be zero on $\{0\leq m_{\widetilde{W}_1}\leq \epsilon_1\}$, and hence will not lie in $\Lambda_\mathcal{S}$.

Similarly, we can smooth the corner in $\{\mathrm{r}=r_2, \Re(F)\leq \eta\}\cup \{\mathrm{r}\leq r_2, \Re(F)=\eta\}$, but in the way illustrated on the right hand side of Figure \ref{figure}. We will denote the resulting hypersurface $\widetilde{W}_0$. We choose a semi-defining function $m_{\widetilde{W}_0}$ and $\epsilon_0>0$ in the same fashion as for $m_{\widetilde{W}_1}$ and $\epsilon_1$, making sure that any positive linear combination of $dm_{\widetilde{W}_0}$, $d\mathrm{r}$ and $d\Re(F)$ will never lie in $\Lambda_\mathcal{S}$ over the region $\{-\epsilon_0\leq m_{\widetilde{W}_0}\leq 0\}$.

Let $b:
(-\infty,\eta)\rightarrow\mathbb{R}$ be a nondecreasing $C^1$-function
such that $b(x)=x \text{ for }x\in (\frac{-\eta}{4},\frac{\eta}{2})$, $\lim\limits_{x\rightarrow\eta^-}b(x)=+\infty$, and 
the derivative $b'=0$ exactly on $(-\infty, \mu-\delta_2]$. Let $c:(0, r_2)\rightarrow\mathbb{R}_{\geq 0}$ be a nondecreasing $C^1$-function
such that $\lim\limits_{x\rightarrow
r_2^-}c(x)=+\infty$, and $c'=0$ exactly on $(0, r_2-\delta_1]$. Let $d: (-\infty, 0)\rightarrow \mathbb{R}_{\geq 0}$ be a nondecreasing $C^1$-function with $\lim\limits_{x\rightarrow 0^-}d(x)=+\infty$, and $d'=0$ exactly on $(-\infty, -\epsilon_0]$. Let
$e: (0,+\infty)\rightarrow \mathbb{R}_{\leq 0}$ be a nondecreasing $C^1$-function with $\lim\limits_{x\rightarrow 0^+}e(x)=-\infty$ and 
$e'=0$ exactly on $[\epsilon_1,+\infty)$.

Now define
$$f=b\circ\Re(F)+c\circ \text{r}+d\circ m_{\widetilde{W}_0}+e\circ m_{\widetilde{W}_1}$$ on 
$$U=\text{ the domain bounded by the }\widetilde{W}_0\text{ and }\widetilde{W}_1.$$ 
The construction of $U$ is interpreted in Figure \ref{figure}, where $U$ is the shaded area.

\begin{figure}
\centering
  \begin{overpic}[height=8cm]{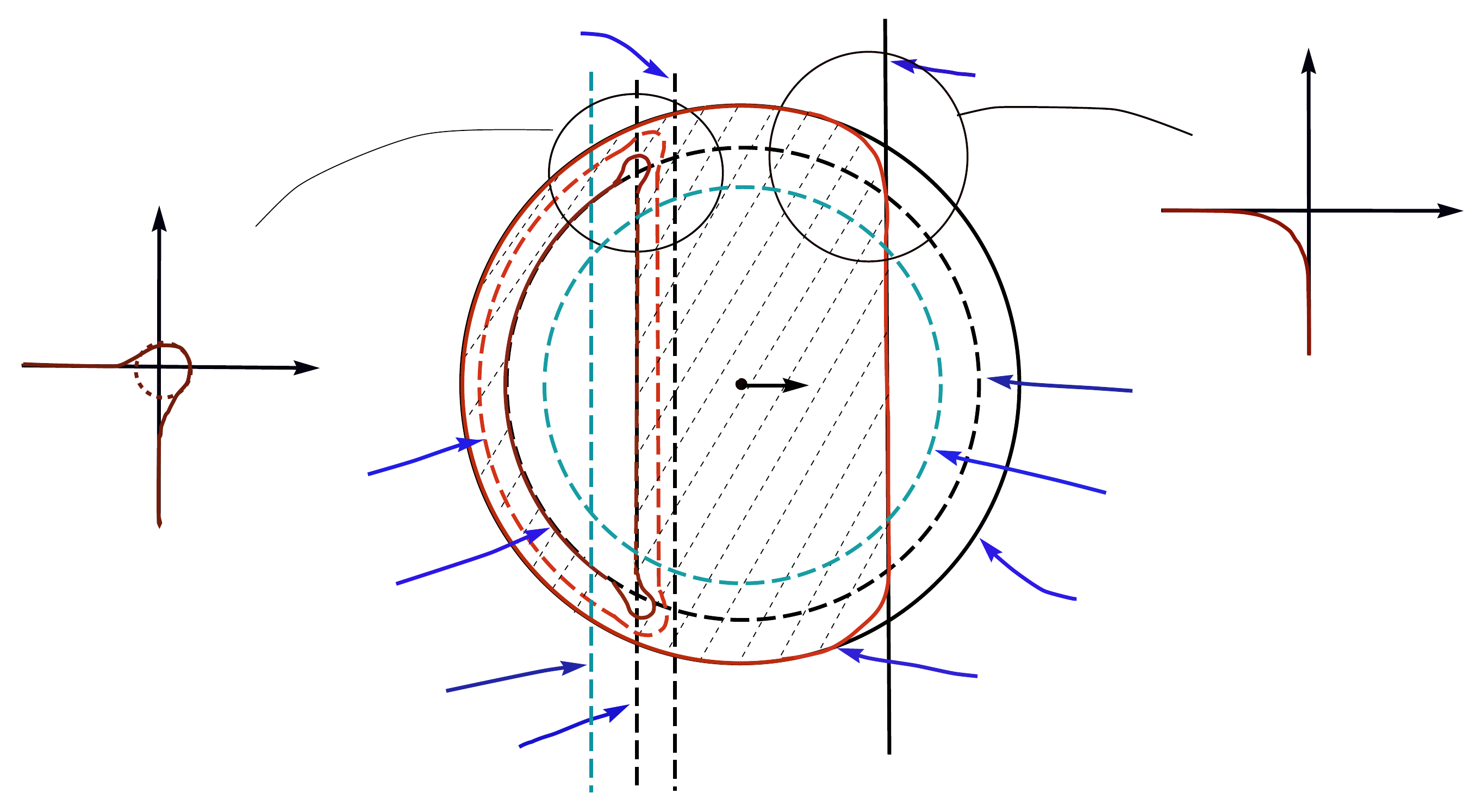}
  \put(78,28){$\mathrm{r}=\|z\|^2=r_2-\delta_1$}
  \put(12,22){$\color{red}  m_{\widetilde{W}_1}=\epsilon_1$}  
  \put(15,15){$\color{red} m_{\widetilde{W}_1}=0$}
  \put(17,7){$\color{cyan} \Re F=-\eta$}
  \put(16,3){$\Re F=\mu-\delta_2$}
  \put(27,52){$\Re F=\mu$}
  \put(66,49){$\Re F=\eta$}
  \put(48,27){$x$}
  \put(54,30){$\xi$}
  \put(74,14){$\mathrm{r}=r_2$}
  \put(76,20){$\color{cyan} \mathrm{r}=r_1$}
  \put(67,8){$\color{red} m_{\widetilde{W}_0}=0$}  
  \end{overpic}
\caption{Construction of $U$: $U$ is the shaded area, and the leftmost and rightmost pictures are illustrating the local smoothing process for the corners.}
\label{figure}
\end{figure}

\begin{lemma}\label{local morse}
$\Gamma_{df}$ is a closed, properly embedded Lagrangian submanifold in
$T^*X$ satisfying
$\overline{\Gamma_{df}}\pitchfork\overline{\Lambda_\mathcal{S}}=\{(x,\xi)\}$
in $\overline{T^*X}$.
\end{lemma}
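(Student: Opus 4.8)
The plan is to verify the three assertions separately, exploiting the explicit form of $f$ as a sum $f = b\circ\Re(F) + c\circ\mathrm{r} + d\circ m_{\widetilde{W}_0} + e\circ m_{\widetilde{W}_1}$ and the fact that, by construction, $U$ lies in a $\Lambda_\mathcal{S}$-regular locus of $\mathrm{r}\times\Re(F)$ with the additional monotonicity property from Lemma \ref{r'_2}.

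First I would address closedness and proper embeddedness. The graph $\Gamma_{df}$ of any smooth function on $U$ is automatically a properly embedded Lagrangian in $T^*U$; the issue is properness inside $T^*X$, i.e.\ that $\Gamma_{df}$ does not accumulate anywhere over $\partial U$. This is where the poles of $b,c,d,e$ enter: as a point of $U$ approaches $\widetilde{W}_0$ (respectively $\widetilde{W}_1$, the locus $\{\mathrm{r}=r_2\}$, the locus $\{\Re(F)=\eta\}$) the relevant summand's derivative blows up, so $\|df\|\to\infty$, and I would check that $\Gamma_{df}$ leaves every compact subset of $T^*X$. Hence $\Gamma_{df}$ is closed in $T^*X$ and $U$ is exactly the image of its projection. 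Simultaneously this identifies the only boundary points of the closure $\overline{\Gamma_{df}}$ in $\overline{T^*X}$ (the fiberwise radial compactification) as lying over $\partial U$, and tells me the asymptotic directions: over $\widetilde{W}_1$ the conormal direction is that of $dm_{\widetilde{W}_1}$ (a positive combination of $d\mathrm{r}$ and $d\Re(F)$ with at least one positive coefficient), over $\widetilde{W}_0$ similarly for $dm_{\widetilde{W}_0}$, and over the ``interior'' walls $\{\Re(F)=\eta\}$, $\{\mathrm{r}=r_2\}$ the directions $d\Re(F)$, $d\mathrm{r}$ respectively (with the appropriate sign).

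Next, the transversality and intersection claim $\overline{\Gamma_{df}}\pitchfork\overline{\Lambda_\mathcal{S}}=\{(x,\xi)\}$. I would split this into the finite part and the boundary-at-infinity part. \emph{Finite part:} a point $(y,df_y)\in\Gamma_{df}\cap\Lambda_\mathcal{S}$ forces $df_y\in\Lambda_\mathcal{S}$; writing $df_y = b'\,d\Re(F) + c'\,d\mathrm{r} + d'\,dm_{\widetilde{W}_0} + e'\,dm_{\widetilde{W}_1}$ with all coefficients $\ge 0$, and expanding $dm_{\widetilde{W}_i}$ as a positive combination of $d\mathrm{r},d\Re(F)$ in the smoothing collars, $df_y$ becomes a nonnegative combination $\lambda\,d\mathrm{r} + \nu\,d\Re(F)$. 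On $\{\Re(F)>0\}$ Lemma \ref{r'_2} rules out the sign pattern needed for membership in $\Lambda_\mathcal{S}$ unless everything degenerates; the choices of $\tilde r_1,\tilde r_2,\eta$ (no $\Lambda_\mathcal{S}$-critical values of $\mathrm{r}$ or $\Re(F)$ in the relevant intervals, and the transversality of $\Gamma_{d\Re(F)}$ to $\Lambda_\mathcal{S}$ at $(x,\xi)$) then pin the only solution to $y=x$, where $df_x = d\Re(F)_x = \xi$ and the transversality of $\Gamma_{d\Re(F)}$ to $\Lambda_\mathcal{S}$ gives transversality of $\Gamma_{df}$ there as well (since $b\equiv\mathrm{id}$ and $c\equiv d\equiv e\equiv$ const near $x$, so $f\equiv\Re(F)$ near $x$). \emph{Boundary part:} over each wall the asymptotic direction of $\overline{\Gamma_{df}}$ is a positive combination of $d\mathrm{r}$ and $d\Re(F)$ that by the ``upshot'' paragraphs never lies in $\Lambda_\mathcal{S}$ on the regions $\{0\le m_{\widetilde{W}_1}\le\epsilon_1\}$, $\{-\epsilon_0\le m_{\widetilde{W}_0}\le 0\}$, and on $\{\Re(F)=\eta\}$, $\{\mathrm{r}=r_2\}$ the regularity of $\mathrm{r}\times\Re(F)$ on $R$ excludes it; hence $\overline{\Gamma_{df}}$ and $\overline{\Lambda_\mathcal{S}}$ are disjoint at infinity, which also makes the transversality at infinity vacuous.

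The main obstacle I expect is the bookkeeping in the \emph{finite} intersection argument inside the smoothing collars $\widetilde{W}_0,\widetilde{W}_1$: there $df$ is a genuine four-term nonnegative combination, and one must carefully use the specially arranged property that any positive combination of $dm_{\widetilde{W}_i}$, $d\mathrm{r}$, $d\Re(F)$ avoids $\Lambda_\mathcal{S}$, together with the monotonicity Lemma \ref{r'_2}, to exclude spurious intersections — whereas everything away from the collars reduces cleanly to the regularity of $\mathrm{r}\times\Re(F)$ on $R$ and to basic Morse-type arguments. Establishing properness carefully (controlling the rate of blow-up of $\|df\|$ so that no sequence in $\Gamma_{df}$ converges in $T^*X$) is routine but must be done to justify that $\overline{\Gamma_{df}}$ is a well-defined legendrian-at-infinity object; I would handle it by a direct estimate using that each of $b',c',d',e'$ tends to $+\infty$ at its respective pole while the differentials $d\mathrm{r}, d\Re(F), dm_{\widetilde{W}_i}$ stay bounded and nonvanishing there.
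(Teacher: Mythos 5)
Your proposal is correct and takes essentially the same route as the paper: the paper's own proof is the single sentence ``It's a direct consequence of the construction,'' and your argument is precisely the intended unpacking — properness from the blow-up of $b',c',d',e'$ at the respective poles, absence of finite intersections away from $(x,\xi)$ from the nonnegativity of the coefficients in $df$ together with the regularity of $\mathrm{r}\times\Re(F)$ on $R$ and the arranged properties of $dm_{\widetilde{W}_0},dm_{\widetilde{W}_1}$ on the collars, and disjointness of $\overline{\Gamma_{df}}$ from $\overline{\Lambda_\mathcal{S}}$ at infinity for the same reason.
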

\begin{proof}
It's a direct consequence of the construction.
\end{proof}

By Proposition \ref{grade_st} and \ref{Pin_st}, $\Gamma_{df}$ can be equipped with a canonical brane structure $b$. Let $L_{x,F}$ denote  $(\Gamma_{df}, \mathcal{E}, b,\Phi)$ as an object in $Fuk(T^*X)$, where $\mathcal{E}$ is a trivial local system of rank 1 on $\Gamma_{df}$ and the perturbation $\Phi=\{L_{x,F}^s\}$ is defined as follows. 

For sufficiently small $s>0$, let 
\begin{equation}\label{U_s}
U_s=\{x\in U: |f(x)|<-\log s\}, U_0=U
\end{equation} and $L^s_{x,F}$ be a Lagrangian over $\overline{U}$ satisfying:  \\
(1) $L^s_{x,F}|_{ \overline{U}_s}=\Gamma_{df}|_{\overline{U}_s}$; \\
(2) $L^s_{x,F}|_{\partial U}=T^*_{\partial U}X|_{|\xi|\geq\beta_s}$, where $\beta_s\rightarrow \infty$ as $s\rightarrow 0$;\\ 
(3) $L^s_{x,F}|_{U}=\Gamma_{df_s}|_{U}$ for a function $f_s$ on $U$;\\ 
(4) $f_s|_{U_s}=f|_{U_s}$, and $df_s|_z=\lambda(z)df|_z$ for some $0<\lambda(z)\leq 1$ for $z\in U-U_s$;\\
(5) let $K^s_{\bar{\eta}}=\min\{|\xi|: \xi\in L_{x,F}^s|_{\partial U_{\bar{\eta}}}\}$, we require $K^s_{\bar{\eta}_1}>K^s_{\bar{\eta}_2}$ for $0<\bar{\eta}_1<\bar{\eta}_2<s$. The notation $L|_{W}$ for a Lagrangian $L$ and a subset $W\subset X$ means the set $L\cap \pi^{-1}(W)$, where $\pi: T^*X\rightarrow X$ is the standard projection.

\subsection{Computation of $Hom_{Fuk(T^*X)}(L_{x,F},
L_{V})$ for $V\in\mathcal{U}_{\mathcal{S}, x}$}\label{df, L_V}

Let $V\in\mathcal{U}_{\mathcal{S}, x}$ (recall the notation is defined in (\ref{U_S_x})), then $\partial V$ is stratified by a subset
$\mathcal{S}_{\partial V}$ of $\mathcal{S}$. Fix a semi-defining function
$m$ for $V$ (see Appendix \ref{def function}). We have the standard Lagrangian brane $L_{V,m}$ associated to
$V$ as defined in Section \ref{basT*X} (b), for which we will simply denote by $L_V$.
Let 
$$V_t=X_{m>t}\text{ for } t\geq 0.$$ 

Let $(\partial U)_{out}$, $(\partial U)_{in}$ denote $\widetilde{W}_0$ and $\widetilde{W}_1$ respectively.
Let $A$ denote the annulus enclosed by $(\partial U)_{in}$, $\mathrm{r}=r_2$ and $\Re(F)=\mu$, including only the boundary component $(\partial U)_{in}$.

\begin{lemma}\label{pairs}
For $t>0$ sufficiently small, there is a compatible collection of quasi-isomorphisms of complexes
\begin{align}
&(\Omega((\overline{U}-(\partial U)_{out})\cap V_t, (\partial U)_{in}\cap V_t),d)\nonumber\\
\simeq& (\Omega((\overline{U}-(\partial U)_{out})\cap V_t, A\cap V_t),d)\label{identity 1}\\
\simeq&(\Omega(B_{r_2}(x)\cap V_t, B_{r_2}(x)\cap \{\Re(F)<\mu\}\cap V_t)
,d)\label{identity 2}\\
\simeq &(\Omega(B_{r_2}(x)\cap V, B_{r_2}(x)\cap \{\Re(F)<\mu\}\cap V),d)\label{identity 3}
\end{align}
\end{lemma}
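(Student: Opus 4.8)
The plan is to obtain each of \eqref{identity 1}--\eqref{identity 3} from an excision, or a deformation retraction, of a pair of spaces, the retractions being integrated from gradient-like vector fields built out of the functions $\mathrm r$, $\Re(F)$, $m_{\widetilde{W}_0}$, $m_{\widetilde{W}_1}$ and $m$ that already occur in Section~\ref{constr}. The only general principles invoked are: the relative de Rham complex $(\Omega(P,Q),d)$ (smooth forms with support away from $Q$, in the sense of Lemma~\ref{wedge}) computes $H^{\bullet}(P,Q;\mathbb C)$, is invariant under deformation retractions of the pair, and is unchanged up to quasi-isomorphism if $Q$ is replaced by a neighborhood deformation-retracting onto it in $P$, or if $P$ is enlarged by gluing on a region that collapses onto $Q$ (excision). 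For generic values of the parameters all sets in sight are manifolds with corners, so these are elementary; and since the vector fields are determined canonically by the auxiliary data, the three identifications are natural in $t$ (and in $V$), which gives the asserted compatibility.

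\eqref{identity 1}: by inspection of the construction, $A$ is exactly a collar of $(\partial U)_{in}=\widetilde{W}_1$ inside $\overline U-(\partial U)_{out}$, namely $A=(\overline U-(\partial U)_{out})\cap\{\Re(F)<\mu\}$, whose far boundary is cut out by $\{\mathrm r=r_2\}$ and $\{\Re(F)=\mu\}$ and is disjoint from $\widetilde{W}_1$. On a neighborhood of $A$ one builds, using property (ii) of $m_{\widetilde{W}_1}$ and the positivity of the coefficients of $d\mathrm r$, $d\Re(F)$ in $dm_{\widetilde{W}_1}$ over $\{0\le m_{\widetilde{W}_1}\le\epsilon_1\}$, a vector field transverse to the level sets of $m_{\widetilde{W}_1}$ pointing toward $\widetilde{W}_1$; one cuts it off to vanish outside a slightly larger collar and, crucially, arranges it tangent to $V_t=X_{m>t}$. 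That the latter is possible --- i.e.\ that $X_{m=t}$ is transverse to $\widetilde{W}_1$, $\{\mathrm r=r_2\}$, $\{\Re(F)=\mu\}$ for $t$ in a fringed set --- is supplied by Corollary~\ref{fringed_cor} applied to $m\times m_{\widetilde{W}_1}$ and $m\times\mathrm r\times\Re(F)$, and this is exactly the place where the ``assumptions on $\partial V$'' of the main theorem enter. The flow retracts $A\cap V_t$ onto $(\partial U)_{in}\cap V_t$ and is stationary away from $A$, so the inclusion of subcomplexes $\Omega(\,\cdot\,,A\cap V_t)\hookrightarrow\Omega(\,\cdot\,,(\partial U)_{in}\cap V_t)$ is a quasi-isomorphism.

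\eqref{identity 2}: after the previous step the pair reads $((\overline U-(\partial U)_{out})\cap V_t,\ (\overline U-(\partial U)_{out})\cap\{\Re(F)<\mu\}\cap V_t)$, so it has the form $(Y,Y\cap\{\Re(F)<\mu\})$ with $Y=(\overline U-(\partial U)_{out})\cap V_t$, and the target pair has the same form with $Y=B_{r_2}(x)\cap V_t$. Inside $B_{r_2}(x)$, the complement of $\overline U-(\partial U)_{out}$ is the ``inner chunk'' $\{\mathrm r<r_2-\delta_1\}\cap\{\Re(F)<\mu-\delta_2\}$ bounded by $\widetilde{W}_1$, together with the tail $\{\Re(F)>\eta\}$; the first lies in $\{\Re(F)<\mu\}$ because $\mu-\delta_2<\mu$, and the second lies in $\{\Re(F)\ge\mu\}$ and can be collapsed by the gradient flow of $\Re(F)$, which near $x$ has no singularity off $x$ by the choice of $\epsilon$ --- alternatively one shrinks $B_{r_2}(x)$ so that $|\Re(F)|<\eta$ on it, which does not change the relative cohomology by the stabilization behind \eqref{SMT}. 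Excising the inner chunk against $B_{r_2}(x)\cap\{\Re(F)<\mu\}\cap V_t$ (the chunk's only boundary is $\widetilde{W}_1$, which is interior to the region excised against, and stays so after intersecting with $V_t$ by transversality of $X_{m=t}$ to $\widetilde{W}_1$) identifies the two pairs, giving \eqref{identity 2}; as before all auxiliary flows are kept tangent to $V_t$ via Corollary~\ref{fringed_cor}, now also applied to $m\times(\mathrm r-r_2)$ controlling the smoothing $(\partial U)_{out}=\widetilde{W}_0$.

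\eqref{identity 3}: choose $t$ in a fringed set so that $X_{m=t}$ is transverse to $B_{r_2}(x)$ and $\{\Re(F)=\mu\}$ (Corollary~\ref{fringed_cor} for $m\times\mathrm r\times\Re(F)$); since $m$ has no $\Lambda_\mathcal{S}$-critical value on some $(0,\bar t)$, the gradient flow of $m$ deformation-retracts $V\cap B_{r_2}(x)$ onto $\overline{V_t}\cap B_{r_2}(x)$ compatibly with $\{\Re(F)<\mu\}$, yielding the final quasi-isomorphism (the passage between $V_t$ and $\overline{V_t}$ being absorbed into the ``support away from $Q$'' convention). The genuinely delicate point throughout is not the homotopy-theoretic bookkeeping --- which is the standard manifold-with-corners version of excision and of homotopy invariance of relative de Rham cohomology --- but verifying that a single choice of $t$ (and of the cut-off profiles $b,c,d,e$) makes $X_{m=t}$ simultaneously transverse to $\widetilde{W}_0$, $\widetilde{W}_1$ and the relevant level sets of $\mathrm r$ and $\Re(F)$ while remaining in a $\Lambda_\mathcal{S}$-regular region, so that every retraction above can be taken tangent to $V_t$ at once. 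This is where the hypotheses on $\partial V$, the fringed-set formalism of Definition~\ref{fringed} and Corollary~\ref{fringed_cor}, and the curve selection lemma (Proposition~\ref{CSL}, just as in Lemma~\ref{r'_2}) do the real work.
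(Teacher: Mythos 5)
Your overall strategy -- deformation retractions built from explicit vector fields tangent to $V_t$, together with excision, with the fringed-set machinery supplying the needed transversality of $X_{m=t}$ to all the auxiliary hypersurfaces -- is the same as the paper's, and your observation that $A=(\overline{U}-(\partial U)_{out})\cap\{\Re(F)<\mu\}$, so that \eqref{identity 1}--\eqref{identity 2} can be read as changing only the ambient space of a pair of the form $(Y,\ Y\cap\{\Re(F)<\mu\})$, is a clean reorganization.

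However, your argument for \eqref{identity 1} has a gap. You retract $A\cap V_t$ onto $(\partial U)_{in}\cap V_t$ by a single flow gradient-like for $m_{\widetilde W_1}$, cut off to vanish outside a ``slightly larger collar.'' But the construction in Section~\ref{constr} only guarantees that $m_{\widetilde W_1}$ is a function of $(u,v)$ with $dm_{\widetilde W_1}\ne 0$ and with the sign conditions on $\{0\le m_{\widetilde W_1}\le\epsilon_1\}$, a thin neighborhood of $\widetilde W_1$; away from that neighborhood it is an arbitrary semi-defining function and may have critical points, so its level sets need not foliate $A$ as a collar. Since $A$ reaches out to $\{\mathrm r=r_2\}$ and $\{\Re(F)=\mu\}$, which are at distance $\delta_1$ and $\delta_2$ from $\widetilde W_1$ and there is no reason these are within $\{m_{\widetilde W_1}\le\epsilon_1\}$, a flow that vanishes outside the small collar leaves part of $A$ fixed and does \emph{not} retract $A\cap V_t$ onto $\widetilde W_1\cap V_t$. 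The paper avoids this by retracting in two stages, first with a vector field $\mathbf v$ satisfying $\mathbf v(\Re(F))=-1$, $\mathbf v(\mathrm r)=0$ near $\partial B_{r_2}(x)$, $\mathbf v(m)=0$ near $\partial V_t$, $\mathbf v(m_{\widetilde W_1})\ne 0$ on $\widetilde W_1$, pushing $A$ down to $U\cap\{\Re(F)\le\mu-\delta_2\}\cap\{m_{\widetilde W_1}\ge 0\}\cup(\partial U)_{in}$, and then collapsing that set onto $(\partial U)_{in}\cap V_t$ by a second flow; these two vector fields are controlled by $\mathrm r$, $\Re(F)$ and $m$ directly, not by global properties of $m_{\widetilde W_1}$. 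You could repair your version either by extending $m_{\widetilde W_1}$ to a collar coordinate on all of $A$ (an extra construction) or by adopting the two-stage retraction. Your treatments of \eqref{identity 2} (excise the chunk bounded by $\widetilde W_1$, collapse the part outside $\widetilde W_0$) and \eqref{identity 3} (gradient flow of $m$) match the paper's.
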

\begin{proof}
For (\ref{identity 1}), we only need to prove that $A\cap V_t$
deformation retracts onto $(\partial U)_{in}\cap V_t$. First, we can construct
a smooth vector field on an open neighborhood of $(\overline{U}-(\partial U)_{out})\cap\{\mu-\delta_2<\Re(F)<\mu\}$
integrating along which gives a deformation retraction from $A$ onto $U\cap \{\Re(F)\leq\mu-\delta_2\}\cap\{m_{\widetilde{W}_1}\geq 0\}\cup (\partial U)_{in}$. For example, we can choose the vector field $\mathbf{v}$ such that
$$\mathbf{v}(\Re(F))=-1; \mathbf{v}(\mathrm{r})=0 \text{ near }\partial B_{r_2}(x); \mathbf{v}(m)=0\text{ near }\partial V_t; \mathbf{v}(m_{\widetilde{W}_1})\neq 0\text{ on }\widetilde{W}_1.$$
Similarly, we can construct a deformation retraction from $U\cap \{\Re(F)\leq\mu-\delta_2\}\cap\{m_{\widetilde{W}_1}\geq 0\}\cup (\partial U)_{in}$ onto $(\partial U)_{in}\cap V_t$.

The identification in (\ref{identity 2}) is by excision on the triple $(V_t\cap \{m_{\widetilde{W}_1}< 0\})
\subset V_t\cap B_{r_2}(x)\cap \{\Re(F)<\mu\}\subset V_t\cap B_{r_2}(x)$,
 and a deformation retraction from $V_t\cap B_{r_2}(x)$ onto $V_t\cap \{m_{\widetilde{W}_0}< 0\}$. 
One can construct a similar vector field for this and we omit the details. The quasi-isomorphism (\ref{identity 3}) can be also obtained in a similar way.
 \end{proof}

Let $L^t_{V}, t>0$ small, be a family of perturbations of $L_V$ supported over 
$\overline{V_t}$ satisfying:\\
(1) $L^t_{V}|_{\overline{V}_{2t}}=L_V|_{\overline{V}_{2t}}$,\\
(2) $L^t_{V}|_{\partial V_t}=T^*_{\partial V_t}X|_{|\xi|\geq \lambda_t}$, for some $\lambda_t>0$;\\
(3) $L^t_{V}|_{V_t}=\Gamma_{dh_{V_t}}$, where $h_{V_t}$ is a function on $V_t$ such that $h_{V_t}|_{V_{2t}}=\log m|_{V_{2t}}$, $dh_{V_t}$ and $d\log m$ are colinear on $V_t$
 and $1\leq \frac{dh_{V_t}}{d\log m}\leq 1.2$.

Again by by Proposition \ref{grade_st} and \ref{Pin_st}, $L_{V}$ carries a canonical brane structure.
Let $L_V$ also denote the object in $Fuk(T^*X)$ consisting of the canonical brane structure,
a trivial local system of rank 1 on it and the above perturbation $\{L_{V}^t\}_{t\geq 0}$. The proof of the following lemma is essentially the same as in Section 6 of \cite{NZ}. The only difference is that we use the above conical perturbations and avoid geodesic flows.
\begin{lemma}\label{2branes}
There is a fringed set $R\subset \mathbb{R}_+^2$,
such that for $(t,s)\in R$, there is a compatible collection of
quasi-isomorphisms
$$Hom_{Fuk(T^*X)}(L^s_{x,F}, L^t_V)\simeq\Omega (V\cap B_{r_2}(x), V\cap B_{r_2}(x)\cap\{\Re(F)<\mu\}).$$
\end{lemma}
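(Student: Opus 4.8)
The plan is to follow the strategy of Section 6 of \cite{NZ}, reducing the Floer-theoretic computation to the Morse-theoretic one already set up in Lemma \ref{pairs}, and then identifying the Morse complex with the relative de Rham complex. First I would choose the perturbation data: the brane $L^s_{x,F}$ is built from the function $f$ of Section \ref{constr}, and $L^t_V$ from $h_{V_t}$; over the region where both branes are graphs of differentials, namely over $(\overline{U}-(\partial U)_{out})\cap V_t$, the Floer complex $CF(L^s_{x,F},L^t_V)$ is computed by the critical points of $h_{V_t}-f$ (more precisely $\log m$-type function minus $f$) with the Sasaki almost complex structure. The key input is Fukaya--Oh's theorem (Theorem \ref{FOthm}) in the manifold-with-corners version indicated after it, which says the moduli of holomorphic strips is diffeomorphic as oriented manifolds to the moduli of Morse trajectories, once one has arranged $C^1$-closeness to the zero section via the dilation $\epsilon\cdot L$ and the monotonicity estimates (Proposition \ref{EB}, Remark \ref{U.E.B}).

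The heart of the argument is a confinement step: I must show that for $(t,s)$ in a suitable fringed set $R\subset\mathbb{R}_+^2$, every holomorphic disc bounding the dilated pair $\epsilon\cdot L^s_{x,F}$, $\epsilon\cdot L^t_V$ has boundary contained in the two partial graphs over $(\overline{U}-(\partial U)_{out})\cap V_t$, so that no disc escapes into the conical pieces $L^s_{x,F}|_{\partial U}$ or $L^t_V|_{\partial V_t}$ near infinity. This is exactly where the careful choices in Section \ref{constr} pay off: the defining functions $m_{\widetilde{W}_0}$, $m_{\widetilde{W}_1}$, and the growth functions $b,c,d,e$ were designed so that $\overline{\Gamma_{df}}\pitchfork\overline{\Lambda_{\mathcal S}}=\{(x,\xi)\}$ (Lemma \ref{local morse}), the smoothings lie in a $\Lambda_{\mathcal S}$-regular locus, and the conormal directions along $\widetilde{W}_0$, $\widetilde{W}_1$ are positive combinations of $d\mathrm{r}$, $d\Re(F)$ that never lie in $\Lambda_{\mathcal S}$; combined with the properties (1)--(5) of $L^s_{x,F}$ (the condition $K^s_{\bar\eta_1}>K^s_{\bar\eta_2}$ forcing the cone to ``open outward'') and (1)--(3) of $L^t_V$, a standard action/energy and monotonicity argument rules out discs touching the infinite parts. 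Once confinement holds, the disc count reduces by Fukaya--Oh to the Morse tree count for the directed sequence, giving $HF(L^s_{x,F},L^t_V)\simeq Mor^*\bigl((\overline{U}-(\partial U)_{out})\cap V_t,\ h_{V_t}-f\bigr)$.

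Next I would invoke the Morse theory for manifolds with corners, equation (\ref{MTMC}): the directed pair $(h_{V_t}-f,g)$ on the manifold with corners $(\overline{U}-(\partial U)_{out})\cap \overline{V_t}$ has its Morse complex quasi-isomorphic to $(\Omega(X-H_1,H_0),d)$, where $H_0$ is the outflow boundary and $H_1$ the inflow boundary of $\nabla(h_{V_t}-f)$. By the construction of $f$ (the functions $c\circ\mathrm r$ and $d\circ m_{\widetilde W_0}$ blow up toward $(\partial U)_{out}$, while $e\circ m_{\widetilde W_1}$ blows down toward $(\partial U)_{in}$) and of $h_{V_t}$ ($\log m\to-\infty$ toward $\partial V_t$), the gradient points outward along $(\partial U)_{out}$ and $\partial V_t$ and inward along $(\partial U)_{in}=\widetilde W_1$, so $H_0=(\partial U)_{out}\cup\partial V_t$ (removed) and $H_1=(\partial U)_{in}\cap V_t$. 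Hence $Mor^*\simeq(\Omega((\overline U-(\partial U)_{out})\cap V_t,\ (\partial U)_{in}\cap V_t),d)$, which is precisely the first complex in Lemma \ref{pairs}; chaining through (\ref{identity 1})--(\ref{identity 3}) of that lemma yields $\Omega(V\cap B_{r_2}(x),\ V\cap B_{r_2}(x)\cap\{\Re(F)<\mu\})$.

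Finally I would check compatibility: all the fringed-set and retraction choices can be made coherently in the parameters $(t,s,\epsilon)$ so that the quasi-isomorphisms intertwine the continuation maps on both sides, giving a \emph{compatible collection}; this is routine given that each intermediate equivalence (Fukaya--Oh, Morse-de Rham (\ref{MTMC}), and the retractions of Lemma \ref{pairs}) is natural in the defining data. The main obstacle, as in \cite{NZ}, is the confinement step for the noncompact branes near infinity in $\overline{T^*X}$: one must verify that the asymptotic conical behavior of $L^s_{x,F}$ built from the smoothed corners $\widetilde W_0$, $\widetilde W_1$, together with $L^t_V$'s conical end over $\partial V_t$, is ``positioned'' correctly so that monotonicity (Proposition \ref{EB}) excludes stray discs — this is the place where Lemma \ref{r'_2}, Lemma \ref{local morse}, and the sign conditions on $dm_{\widetilde W_i}$ are essential, and where the proof departs from \cite{NZ} by using conical rather than geodesic perturbations.
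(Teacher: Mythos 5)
Your outline reproduces the paper's two-step proof: (1) perturbations and dilations to set up a directed pair on a manifold with corners and a fringed set of parameters; (2) the energy/monotonicity confinement (Proposition \ref{EB}, Remark \ref{U.E.B}) so Fukaya--Oh applies, then Morse theory for manifolds with corners (\ref{MTMC}), then Lemma \ref{pairs}. So the route is the same, and the observation that the construction uses conical rather than geodesic perturbations is also correct.

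One technical slip in your gradient analysis, however, should be fixed because as written it would contradict (\ref{MTMC}): since $f\to+\infty$ toward $(\partial U)_{out}$ (via $d\circ m_{\widetilde W_0}$), $f\to-\infty$ toward $(\partial U)_{in}$ (via $e\circ m_{\widetilde W_1}$), and $\log m\to-\infty$ toward $\partial V_t$, the function $h_{V_t}-\epsilon_t f$ tends to $-\infty$ toward $(\partial U)_{out}\cup\partial V_t$ and to $+\infty$ toward $(\partial U)_{in}$. Hence $\nabla(h_{V_t}-\epsilon_t f)$ points \emph{inward} along $(\partial U)_{out}$ and $\partial V_t$ (these form $H_1$) and \emph{outward} along $(\partial U)_{in}$ (this is $H_0$) --- the reverse of what you wrote. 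Plugging these into the paper's convention $Mor^*(X,f)\simeq\Omega(X-H_1,H_0)$ then gives $\Omega\bigl((\overline U-(\partial U)_{out})\cap V_t,\ (\partial U)_{in}\cap V_t\bigr)$ directly; as stated, your version applies the convention as $\Omega(X-H_0,H_1)$, and the two sign errors cancel to yield the right answer. You also omit the scale factor $\epsilon_t$ in front of $f$, which is precisely what makes $(h_{V_t}-\epsilon_t f,g)$ a directed pair on the corners; it does not change the Morse chain complex up to quasi-isomorphism, but it should be recorded since directedness is a hypothesis in (\ref{MTMC}).
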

\begin{proof}
Step 1. \emph{Perturbations and dilations.} \\
This step is essentially the same as the perturbation process in $Mor(X)$ stated at the beginning of Section \ref{OpenMor}. Nevertheless, we repeat it to set up notations.
There is an (nonempty) open interval $(0,\eta_0)$ such that for all $t\in (0,\eta_0)$, $\partial V_t\cap \partial U$
transversally. Fixing any $t\in (0,\eta_0)$, there is an open neighborhood $W_t$ of $\partial V_t
\cap \partial U$ such that the covectors $d\log m|_z$ and $L_{x,F}^s|_z$ are linearly independent for all $s>0$ and $z\in W_t\cap \overline{V_t}\cap \overline{U}$. Choose $t<\bar{t}<\eta_0$ such that $X_{t\leq m\leq \bar{t}}\cap \partial U\subset W_t$. Since $X_{t\leq m\leq\bar{t}}- W_t$ is compact, we can find $\epsilon_t>0$ such that $|dh_{V_t}|>\epsilon_t|df|$ on $(X_{t\leq m_{V}\leq\bar{t}}- W_t)\cap\overline{U}$. There is a small $\eta_1>0$ such that $(\overline{U}-U_{\eta_1})\cap X_{t\leq m_V\leq \bar{t}}\subset W_t$ and on $(\overline{U}-U_{\eta_1})\cap \overline{V}_t-W_t$ we have $|dh_{V_t}|$ bounded above by some $M_t$, so we can find $\eta_1>\bar{s}>s>0$ small enough so that $\{\epsilon_t|\xi|: \xi\in L_{x,F}^s|_{\overline{U}-U_{\bar{s}}}\}$ is bounded below by $2M_t$. In summary, we first choose $t, \bar{t}$, then $\epsilon_t$ and lastly $s, \bar{s}$, and it's clear that the collection of such $(t,s)$ forms a fringed set in $\mathbb{R}_+^2$. It is also clear that we can choose $\bar{s}$ small so that $(t,\bar{t})\times(s,\bar{s})\subset R$. 

There is a Riemannian metric $g$ on $X$
such that after a small perturbation, $( h_{V_t}-\epsilon_tf_s, g)$ is a directed pair on the manifold with corners $\overline{V_{\bar{t}}}\cap \overline{U_{\bar{s}}}$, and choices of such metric form
an open convex subset. And this also holds for any $(\tilde{s},\tilde{t})\in (t,\bar{t})\times(s,\bar{s})$.

Step 2. \emph{Energy bound.}\\
Choose $t<t_1<t_2<t_3<\bar{t}$ and $s<s_1<s_2<s_3<\bar{s}$.

Let $G_1=L_V^t|_{X_{t_2<m<t_3 }}$ and $G_2=\epsilon_t\cdot L_{x,F}^s|_{U_{s_2}-\overline{U}_{s_3}}$. Choose some very small $\delta_{i,v}>0$ and define the tube-like open set
$$T_i=\bigcup\limits_{(x,\xi)\in G_i}B^v_{\delta_{i,v}}(x,\xi),$$
where $B^v_{\delta_{i,v}}(x,\xi)$ means the vertical ball in the cotangent fiber $T^*_xX$ of radius $\delta_{i,v}$ centered at $(x,\xi)$.  With small enough $\delta_{i,v}$, we have $T_1\cap T_2=\emptyset$. 

Let $L_{V}^{t,\ell}=\varphi_{D^{\log m}_{t_3,\bar{t}}}^\ell(L_V^t)$ and $L_{x,F}^{s,\ell}=\varphi_{D^f_{s_3,\bar{s}}}^{\epsilon_t\ell}(\epsilon_t\cdot L_{x,F}^s)$ for $0<\ell<1$, where $\varphi_{D^{\log m}_{t_3,\bar{t}}}^\ell$ and $\varphi_{D^f_{s_3,\bar{s}}}^{\epsilon_t\ell}$ are the variable dilations defined in Section \ref{basT*X} (c). Note that the variable dilations fix $G_1, G_2$ and $\overline{L_V^{t,\ell}}\cap \overline{L_{x,F}^{s,\ell}}=(1-\ell)\cdot (\overline{L_V^t}\cap \epsilon_t\cdot\overline{L_{x,F}^s})$.

By Proposition \ref{EB} and Remark \ref{U.E.B}, for $\ell$ sufficiently close to 1,  all the discs bounding $L_V^{t,\ell}$ and $L_{x,F}^{s,\ell}$ have boundaries lying in $L_V^{t,\ell}|_{X_{m>t_2}}\cup L_{x,F}^{s,\ell}|_{U_{s_2}}$. Fixing such an $\ell$, the same holds for the family of uniform dilations $\epsilon\cdot L_V^{t,\ell}$ and $\epsilon\cdot L_{x,F}^{s,\ell}$, $0<\epsilon\leq 1$. 

It is easy to see that $L_{x,F}^{s,\ell}|_{U_{s_1}}$ and $L_V^{t,\ell}|_{V_{t_1}}$ are the graph of differentials of a directed sequence $(U_{s_1}, f_1), (V_{t_1}, f_2)$, and by Fukaya-Oh's theorem (Theorem \ref{FOthm}) and Morse theory for manifolds with corners (\ref{MTMC}), we have for $\epsilon>0$ sufficiently small, 
\begin{align*}
&Hom_{Fuk(T^*X)}(L_{x,F}^s, L_V^t)\simeq Hom_{Fuk(T^*X)}(\epsilon\cdot L_{x,F}^{s,\ell},\epsilon\cdot L_V^{t,\ell})\\
\simeq&Mor^*(\overline{U}_{s_1}\cap\overline{V}_{t_1}, \epsilon(f_2-f_1))\simeq (\Omega((\overline{U}-(\partial U)_{out})\cap V_{t_1}, (\partial U)_{in}\cap V_{t_1}),d)\\
\simeq &(\Omega(B_{r_2}(x)\cap V, B_{r_2}(x)\cap \{\Re(F)<\mu\}\cap V),d).
\end{align*}
The last identity is from Lemma \ref{pairs}.
\end{proof}

\subsection{$H(M_{x,F})\cong H(\mu_X^*Hom_{F(T^*X)}(L_{x,F}, -)$)}\label{simeq}
Given a sequence of open submanifolds with semi-defining functions $(V_i, m_i), i=1,...,k$, there is a fringed set $R\subset\mathbb{R}^{k+1}_{+}$ such that for all $(t_k,...,t_1,t_0)\in R$, there exist $\epsilon_k,...,\epsilon_1,\epsilon_0>0$ and $(t_k,...,t_1,t_0)<(\bar{t}_k,...,\bar{t}_1,\bar{t}_0)\in R$, satisfying \\
(1) $\partial U (\text{resp.} \partial U_{t_0}), \partial V_{t_1},...,\partial V_{t_k}$ intersect transversally, i.e. the unit conormal vectors to them are linearly independent at any intersection point; \\
(2) Let $\Gamma_i^{t_i}=\epsilon_i\cdot L^{t_i}_{V_i}|_{V_{i, \bar{t}_i}}$ for $i=1,...,k$, and $\Gamma_0^{t_0}=\epsilon_0\cdot L^{t_0}_{x,F}|_{U_{\bar{t}_0}}$. Then $\epsilon_i\cdot L^{t_i}_{V_i}\cap \epsilon_j\cdot L^{t_j}_{V_j}=\Gamma_i^{t_i}\cap \Gamma_j^{t_j}$ for $0<i<j$ and $\epsilon_0\cdot L^{t_0}_{x,F}\cap \epsilon_i\cdot L^{t_i}_{V_i}=\Gamma_0^{t_0}\cap \Gamma_i^{t_i}$, for $i>0$;\\
(3) For all $(p_i)_{i=1}^k\in R$ with $(t_i)_{i=0}^k<(p_i)_{i=0}^k<(\bar{t}_i)_{i=0}^k$, $(U_{\bar{t}_0}, \epsilon_0 f_{t_0}), (V_1, \epsilon_1 h_{V_{1,t_1}}), ..., (V_k, \epsilon_k h_{V_{k, t_k}})$ is a directed sequence. \\
The notation $(t_i)_{i=0}^k<(s_i)_{i=0}^k$ means $t_i<s_i$ for all $0\leq i\leq k$. The procedure of choosing the constants is by induction and we put the details at the end of Section \ref{Comp. mod.}.

Again, we do variable
dilations to $\epsilon_0\cdot L_{x,F}^{t_0}, \epsilon_i\cdot L_{V_i}^{t_i}$ and run 
the energy bound argument on holomorphic discs. Choose $(t_i)_{i=0}^k<(p_i)_{i=0}^k<(q_i)_{i=0}^k<(s_i)_{i=0}^k<(\bar{t}_i)_{i=0}^k$ in $R$. Let 
$$\tilde{L}_{x,F}^{t_0,
\ell}=\varphi_{D^f_{s_0,\bar{t}_0}}^{\epsilon_0\ell}(\epsilon_0\cdot L^{t_0}_{x,F}),
\ \tilde{L}_{V_i}^{t_i,\ell}=\varphi_{D^{\log m_i}_{s_i,\bar{t}_i}}^{\epsilon_i\ell}(\epsilon_i\cdot L_{V_i}^{t_i})\text{ for }i>1, $$
and
$$\tilde{L}_{x,F}^{t_0,
\ell}|_{U_{p_0}}=\Gamma_{d\tilde{f}_{t_0,\ell}},\ \tilde{L}_{V_i}^{t_i,\ell}|_{V_{i,p_i}}=\Gamma_{d\tilde{h}_{i,t_i}}$$
for some function $\tilde{f}_{t_0,\ell}$ on $U_{p_0}$, and $\tilde{h}_{i,t_i,\ell}$
on $V_{i,p_i}$ for $1\leq i\leq k$.

For $\ell$ sufficiently close to $1$, all holomorphic discs bounding these Lagrangians have boundaries lying in $\tilde{L}_{x,F}^{t_0,
\ell}|_{U_{q_0}}\cup\bigcup\limits_{i=1}^k \tilde{L}_{V_i}^{t_i,\ell}|_{V_{i,q_i}}$. Since the sequence $(U_{p_0}, \tilde{f}_{t_0,\ell}), (V_{1,p_1}, \tilde{h}_{1,t_1,\ell}),...,(V_{k,p_k}, \tilde{h}_{k,t_k,\ell})$ is directed, using Fukaya-Oh's theorem, we get the following. 

\begin{lemma}\label{tree}
For $\ell$ sufficiently close to 1, 
\begin{align*}
m_{Fuk(T^*X)}^k: &Hom_{Fuk(T^*X)}(\tilde{L}^{t_{k-1},\ell}_{V_{k-1}},
\tilde{L}^{t_k,\ell}_{V_k})\otimes\cdots\otimes Hom_{Fuk(T^*X)}(\tilde{L}_{x,F}^{t_0,\ell},
\tilde{L}_{V_1}^{t_1,\ell})\\
&\rightarrow Hom_{Fuk(T^*X)}(\tilde{L}_{x,F}^{t_0,\ell}, \tilde{L}_{V_k}^{t_k,\ell})[2-k]
\end{align*} is
given by counting Morse trees:
$$m_{Fuk(T^*X)}^k(a_{k-1},\cdots,a_0)=\sum\limits_T\sum\limits_{a_k\in \epsilon\cdot(\Gamma_{d\tilde{f}_{t_0,\ell}}\cap \Gamma_{d\tilde{h}_{k,t_k,\ell}})}\#\mathcal{M}(T; \epsilon \tilde{f}_{t_0,\ell},\epsilon
\tilde{h}_{1,t_1,\ell},\cdots,\epsilon \tilde{h}_{k,t_k,\ell};
\pi(a_0),
\cdots, \pi(a_k))^{0\text{-d}}\cdot a_k,
$$ for all $\epsilon$ sufficiently close to 0, where $\pi: T^*X\rightarrow X$
is the standard projection.
\end{lemma}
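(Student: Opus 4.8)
The plan is to prove Lemma~\ref{tree} by adapting the argument that establishes $Open(X)\simeq Mor(X)$ (Section~\ref{Op.eq.Mor}) and the microlocalization $\mu_X$ (Section~\ref{microlocal}) to the setting of a mixed sequence of branes where the first brane is the local Morse brane $L_{x,F}$ and the rest are standard branes $L_{V_i}$. The key observation, already set up in the paragraph preceding the lemma, is that after the perturbation, uniform dilation, and \emph{variable} dilation procedure, all holomorphic discs bounding the family $\tilde L_{x,F}^{t_0,\ell}, \tilde L_{V_1}^{t_1,\ell},\dots,\tilde L_{V_k}^{t_k,\ell}$ have their boundaries confined to the partial graphs over $U_{q_0}$ and $V_{i,q_i}$. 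On these regions the branes are honestly graphs $\Gamma_{d\tilde f_{t_0,\ell}}, \Gamma_{d\tilde h_{i,t_i,\ell}}$ of a directed sequence, so the problem is reduced to a statement about holomorphic discs bounding $C^1$-small graphs over a manifold with corners.

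First I would invoke the energy-bound machinery: by Proposition~\ref{EB} and Remark~\ref{U.E.B}, applied exactly as in the proof of Lemma~\ref{2branes} but now with $k+1$ Lagrangians, for $\ell$ sufficiently close to $1$ every holomorphic disc has boundary in $\tilde L_{x,F}^{t_0,\ell}|_{U_{q_0}}\cup\bigcup_i\tilde L_{V_i}^{t_i,\ell}|_{V_{i,q_i}}$; then the same persists for the uniform dilations $\epsilon\cdot(-)$ with $0<\epsilon\le 1$. Having localized the discs to the graph regions, I would appeal to Fukaya--Oh's theorem (Theorem~\ref{FOthm}): its proof is local and uses only $C^1$-closeness of the graphs to the zero section, so it adapts to the directed sequence $(U_{p_0},\tilde f_{t_0,\ell}),(V_{1,p_1},\tilde h_{1,t_1,\ell}),\dots,(V_{k,p_k},\tilde h_{k,t_k,\ell})$ on the manifold with corners $\overline{U}_{p_0}\cap\bigcap_i\overline{V}_{i,p_i}$, just as in Section~\ref{microlocal}. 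This gives an orientation-preserving diffeomorphism between the relevant moduli of holomorphic discs and the moduli $\mathcal{M}(T;\epsilon\tilde f_{t_0,\ell},\epsilon\tilde h_{1,t_1,\ell},\dots,\epsilon\tilde h_{k,t_k,\ell};\pi(a_0),\dots,\pi(a_k))$ of Morse trees for all $\epsilon$ near $0$, whence the stated formula for $m^k_{Fuk(T^*X)}$ by signed counting of the $0$-dimensional part.

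The subtle points, which I would treat carefully rather than as routine, are two. One is that the domains are manifolds \emph{with corners} (the boundaries $\partial U_{t_0}$ and $\partial V_{i,t_i}$ meet, transversally by condition (1)), so one needs the version of Fukaya--Oh adapted to directed pairs/sequences on manifolds with corners together with the Morse theory for manifolds with corners recorded in (\ref{MTMC}); this is the same adaptation already used for $Mor(X)$ and for Lemma~\ref{2branes}, so I would cite those and indicate that the $k$-fold case is identical. The other, more genuine obstacle, is bookkeeping of the \emph{order of limits}: the fringed set $R\subset\mathbb{R}^{k+1}_+$ and the choice of $\epsilon_i$, $\bar t_i$, and then $\ell\to 1$ and $\epsilon\to 0$ must be arranged so that all the energy-bound and $C^1$-closeness estimates hold simultaneously and compatibly as the parameters vary. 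I expect this nested choice of constants --- ensuring conditions (1)--(3) can be met along an honest fringed set and that the identifications are compatible under refinement of the parameters --- to be the main technical burden; the paper defers precisely this inductive construction to the end of Section~\ref{Comp. mod.}, and I would carry it out there and simply reference it here. Once the constants are fixed, the identification of disc-counts with tree-counts and the resulting formula for $m^k$ follow formally from the cited theorems.
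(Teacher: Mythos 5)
Your proposal is correct and takes essentially the same route as the paper: energy bounds via Proposition~\ref{EB} and Remark~\ref{U.E.B}, combined with the variable-dilation trick, confine disc boundaries to the graph regions, after which Fukaya--Oh's theorem in the directed-sequence/manifold-with-corners form identifies disc counts with Morse tree counts, with the inductive choice of constants deferred to Section~\ref{Comp. mod.}. This is exactly the argument the paper runs in the paragraph preceding the lemma.
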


Consider the following diagram:
\[
\xymatrix{
\mathcal{B}=F_{\mathcal{S}}(T^*X) \ar@/^/[drr]^{\qquad\mathcal{F}=Hom_{F(T^*X)}(L_{x,F},-)} \\
& \widetilde{\mathcal{B}}=Mor_\mathcal{S}(X) \ar@{_{(}->}[ul]_{i} \ar@/^/[d]^{\mathcal{I}} \ar[r]_{\qquad\mathcal{F}|_{\widetilde{B}}} & Ch \\
& \widetilde{\mathcal{A}}=Open_\mathcal{S}(X) \ar@/^/[u]^{\mathcal{P}} \ar@{^{(}->}[d]^{j} & \\
& \mathcal{A}=Sh_\mathcal{S}(X) \ar@/^/[uuul]^{\mu_X} \ar@/_/[uur]_{M_{x,F}} & }
\] 
Here $i: \widetilde{\mathcal{B}}\hookrightarrow \mathcal{B}$ and $j: \widetilde{\mathcal{A}}\hookrightarrow \mathcal{A}$ are embeddings into triangulated envelopes; the functors $\mathcal{I}, \mathcal{P}$ are from applying
the homological perturbation formalism to the functor $P$ in (\ref{Proj}); $\mu_X: \mathcal{A}\rightarrow \mathcal{B}$ is the microlocal functor in Section \ref{microlocal}.

In Remark \ref{HPL}, putting $G$ and $\tilde{G}$ to be the idempotent $P$ in (\ref{Proj}) on corresponding complexes,  $\mathcal{M}$ to be $M_{x,F}|_{\widetilde{\mathcal{A}}}$ and $\mathcal{F}$ to be $\mathcal{I}$, gives us the $\mathcal{N}$ exactly the same as $\mathcal{F}|_{\widetilde{\mathcal{B}}}$. This follows directly from Lemma \ref{tree}, and we have 
$$H(j^*M_{x,F})\cong
 H(\mathcal{P}^*\mathcal{F}|_{\mathcal{B}})\cong H(j^*\mu_X^*\mathcal{F}).$$
Therefore,
\begin{equation}\label{functor_corresp}
H(\mu_X^*\mathcal{F})\cong H(M_{x,F}).
\end{equation}

\section{Computation of $Hom_{Fuk(T^*X)}(L_{x,F},-)$ on holomorphic branes in $Fuk_\mathcal{S}(T^*X)$}\label{SectionHF}
\subsection{Holomorphic Lagrangian Branes.}

Let $X$ be a compact 
complex manifold of dimension $n$.
Let $T^*X_{\mathbb{C}}$ denote the holomorphic cotangent bundle of $X$ equipped
with the standard holomorphic symplectic form $\omega_{\mathbb{C}}$. 
Like in the real case, there is a complex projectivization of $T^*X_{\mathbb{C}}$,
namely 
$$\overline{T^*X_\mathbb{C}}=(T^*X_\mathbb{C}\times \mathbb{C}-T_X^*X_\mathbb{C}\times\{0\})/\mathbb{C}^*.$$
For a holomorphic (complex analytic) Lagrangian $L$ in $T^*X_\mathbb{C}$ which is by assumption a $\mathcal{C}$-set in $\overline{T^*X}$, using Theorem 4.4 in \cite{PS}, one sees that $\overline{L}$ is complex analytic in $\overline{T^*X}_\mathbb{C}$. Note if $X$ is a proper algebraic variety, then $L$ is algebraic in $T^*X_{\mathbb{C}}$. 

There
is the standard identification (of real vector bundles) $\phi:T^*
X_\mathbb{C}\rightarrow T^*X$ as follows. In local coordinates
$(q_{z_j}, p_{z_j})$ on $T^*X_\mathbb{C}$ and $(q_{x_j}, q_{y_j},
p_{x_j}, p_{y_j})$ on $T^*X$, where $z_j=x_j+\sqrt{-1}y_j$ on $X$, 
we have $q_{x_j}=\Re q_{z_j}, q_{y_j}=\Im q_{z_j},
p_{x_j}=\Re p_{z_j}, p_{y_j}=-\Im p_{z_j}$. It's easy to check that
$\phi^*\omega=\Re \omega_\mathbb{C}$, so $\phi$ sends every holomorphic
Lagrangian to a Lagrangian. In the following, by a holomorphic
Lagrangian in $T^*X$, we mean an \emph{exact} Lagrangian which is the image $\phi(L)$ of a 
holomorphic Lagrangian $L$ in $T^*X_\mathbb{C}$ under the identification $\phi$. 
We will write $L$ instead of $\phi(L)$ when there is no cause of confusion.

Equip $T^*X$ with the Sasaki almost complex structure $J_{Sas}$ and let
$\eta$ be the canonical trivialization of the bicanonical bundle
$\kappa$ (See Appendix \ref{grading_L}).

First we have the following lemma on the flat case $X=\mathbb{C}^n$ (we don't need
$X$ to be compact here), where $\eta$ is the volume form $\Omega=\bigwedge_{i=1}^n (dq_{x^i}+\sqrt{-1}dp_{x^i})\wedge
(dq_{y^i}+\sqrt{-1}dp_{y^i})$ up to a positive scalar. 

\begin{prop}\label{grading}
Every holomorphic Lagrangian brane in $T^*X$ ($X=\mathbb{C}^n$) has an integer grading with respect to $J_{Sas}$.
\end{prop}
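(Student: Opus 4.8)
The grading of a Lagrangian brane in $T^*X$ with respect to $J_{Sas}$ is governed by the squared phase function $\theta_L\colon L\to S^1$ obtained by comparing the tangent plane field of $L$ against the canonical trivialization $\eta$ of the bicanonical bundle $\kappa$; a grading exists (and is then canonically an integer-valued lift) precisely when this phase function lifts to a real-valued function, equivalently when $[\theta_L]=0$ in $H^1(L;\mathbb{Z})$, and the integrality of the resulting grading is automatic once $L$ is Lagrangian and we fix the branch at one point. So the plan is: (1) on $X=\mathbb{C}^n$, write down $\eta$ explicitly — by the remark in the statement it agrees up to positive scalar with $\Omega=\bigwedge_{i=1}^n(dq_{x^i}+\sqrt{-1}\,dp_{x^i})\wedge(dq_{y^i}+\sqrt{-1}\,dp_{y^i})$ — and (2) show that for a holomorphic Lagrangian $L=\phi(L_{\mathbb C})$ the restriction of $\Omega$ to $TL$, as a section of $\kappa|_L$, is \emph{nowhere vanishing with locally constant argument}, in fact can be arranged to be a positive multiple of a fixed constant, so $\theta_L$ is literally constant and the grading lift exists with integer value $-\dim_{\mathbb C}X=-n$.

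The key computation is step (2), and I would do it linearly-algebraically fiberwise. Fix a point and choose holomorphic Darboux coordinates $(z_1,\dots,z_n,w_1,\dots,w_n)$ on $T^*X_{\mathbb C}$ with $w_j=p_{z_j}$; since $L_{\mathbb C}$ is a holomorphic Lagrangian, its tangent space at any point is a complex Lagrangian subspace of $(\mathbb{C}^{2n},\omega_{\mathbb C})$, hence (generically, and in general after a complex-linear symplectic change of coordinates) the graph of a complex symmetric matrix $A$ in the $z$-variables. Under the identification $\phi$ the real tangent plane $T_{\phi(p)}\phi(L)$ is then the image under $\phi_*$ of $\{(\zeta,A\zeta):\zeta\in\mathbb{C}^n\}$, a real $2n$-plane inside $T^*X=\mathbb{R}^{4n}$. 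Evaluating the complex $2n$-form $\Omega$ on a real basis of this plane — e.g. $\phi_*(e_j,Ae_j)$ and $\phi_*(\sqrt{-1}e_j,\sqrt{-1}Ae_j)$ — reduces to computing a determinant of the form $\det\bigl(I+\text{(something built from }A\text{ and }\bar A)\bigr)$; the Sasaki almost complex structure enters only through how $\phi$ intertwines the fiber and base directions, and because $\Omega$ is the top exterior power of a $(1,0)$-type form adapted to $J_{Sas}$, the pairing turns out to be $\det(\,\cdot\,)$ times a manifestly positive real quantity (a norm-squared, coming from $\zeta\wedge\bar\zeta$ pairings). The upshot is $\Omega|_{T_pL}=c(p)\cdot(\text{unit})$ with $c(p)>0$, so $\arg c(p)\equiv 0$; this is the heart of the matter.

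The main obstacle I anticipate is precisely controlling the \emph{argument}, not the nonvanishing: one must check that the determinant factor has a \emph{fixed} argument independent of $A$ and of $p$, rather than merely being nonzero. The cleanest route is to observe that the set of complex Lagrangian planes (the Lagrangian Grassmannian $U(n)/O(n)$ in its complex incarnation) is connected, so $\arg c$ is locally constant on $L$ and globally constant on each connected component of the relevant moduli of planes; combined with continuity of $p\mapsto T_pL$ this forces $\theta_L$ to be constant on all of $L$. Computing the value at one convenient plane — say $A=0$, the conormal to the zero section, i.e. a cotangent fiber $T^*_0\mathbb{C}^n$, whose grading is known — pins the constant and yields the integer $-n$. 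A secondary technical point is that $\Omega$ only equals $\eta$ up to a positive scalar function on $T^*X$; but since that scalar is positive it does not affect the argument, so $\theta_L$ computed with $\Omega$ and with $\eta$ agree, and the integrality conclusion is unaffected. I would therefore organize the proof as: explicit form of $\eta$ on $\mathbb{C}^n$; reduction of $T_pL$ to a graph of a complex symmetric matrix via the holomorphic Lagrangian condition; the fiberwise determinant computation showing $\arg(\Omega|_{T_pL})$ is constant; connectedness of the Lagrangian Grassmannian to upgrade ``locally constant'' to ``constant''; and evaluation at $A=0$ to identify the grading as the integer $-n$.
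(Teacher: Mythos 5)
Your proposal takes essentially the same fiberwise linear-algebraic route as the paper: evaluate the $J_{Sas}$-holomorphic volume form $\Omega$ on a basis of $T_p\phi(L)$, reduce to a determinant, and show its square is a positive real number. But the step you label ``the cleanest route'' is a genuine gap. Connectedness of the complex Lagrangian Grassmannian does \emph{not} imply that $\arg c$ is locally constant; connectedness only upgrades a function already known to be locally constant to a globally constant one. Local constancy is exactly the content of the determinant positivity you are trying to circumvent, and it cannot be obtained from topology alone: without it, $\theta\vert_{\mathrm{LGr}_{\mathbb C}}$ could a priori vary continuously, and simple connectedness of $\mathrm{LGr}_{\mathbb C}$ would only hand you an $\mathbb R$-lift (some grading), not an \emph{integer} grading.

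The reassuring news is that the ``norm-squared'' observation you gesture at is correct and is precisely what closes the gap, so it should be made explicit rather than hedged. If $T_pL_{\mathbb C}=\{(\zeta,A\zeta)\}$ with $A$ complex symmetric, then after separating holomorphic and antiholomorphic directions by a suitable unitary change of real basis, the pairing with $\Omega$ reduces to $\det(I_n+\bar A A)$, and since $v^{\ast}\bar A A v=\lvert Av\rvert^{2}\geq 0$ (using $A^{T}=A$), the Hermitian matrix $\bar A A$ is positive semidefinite and $\det(I_n+\bar A A)>0$. That \emph{is} local constancy of the phase, and with it your density-plus-continuity extension to the non-graph strata works. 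The paper's proof reaches the same positivity by a slightly different computation: it expresses the real tangent plane via a real symmetric $2k\times 2k$ matrix $A$ satisfying $AJ_k=-J_kA$ (the real incarnation of complex symmetry), observes that $J_k$ sends $\lambda$-eigenvectors to $(-\lambda)$-eigenvectors so eigenvalues pair up, and concludes $\det(I_k+\sqrt{-1}A)=\prod(1+\lambda_j^2)>0$. The paper also avoids your ``reduce to a graph after a complex-linear symplectic change of coordinates,'' which is delicate since $Sp(2n,\mathbb C)$ preserves neither $J_{Sas}$ nor $\Omega$, by handling the partly-vertical case directly with the block $K$, whose contribution satisfies $\det(\sqrt{-1}K)^2=1$. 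In short: keep the computation, carry it out explicitly, and drop the claim that connectedness alone yields constancy of the argument.
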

\begin{proof}
Let $L$ be a holomorphic Lagrangian in $T^*X_{\mathbb{C}}$. For any
$(x,\xi)\in L$, let $v_1,...,v_k,w_1,...,w_{n-k}$ be a basis of
$T_{(x,\xi)}L$. After a change of coordinate and basis, we can assume that
$v_i=\partial_{q_{z^i}}+\sum\limits_{\mu=1}^n
v_{i}^\mu\partial_{p_{z^\mu}}$ and $w_j=\sum\limits_{\mu=1}^n
w_j^\mu\partial_{p_{z^\mu}}$ for $i=1,...,k$, and $j=1,...,n-k$.

Then the condition of $L$ being a Lagrangian implies that
$w_1,...,w_{n-k}$ generate
$\langle\partial_{p_{z^i}}\rangle_{i=k+1,...,n}$ and after another change
of basis, we could get $v_i=\partial_{q_{z^i}}+\sum\limits_{\mu=1}^k
v_{i}^\mu\partial_{p_{z^\mu}}$ with
$(v_i^\mu)_{i,\mu\in\{1,...,k\}}$ a symmetric $k\times k$
matrix.

Let $J_1=\left(\begin{array}{cc}0&-1\\1&0\end{array}\right)$ and let
$J_{m}=\left(\begin{array}{cccccc}J_1&\ &\ &\ \\ \ &J_1&\ &\ \\
\ &\ &\ddots&\ \\  \ &\ &\ &J_1
\end{array}\right)$ of size $2m\times 2m$. Then $T_{\phi(x,\xi)} \phi(L)$ has the form
$\left(\begin{array}{cc|cc}I_{k}&\mathbf{0}&A&\mathbf{0}\\
\mathbf{0}&\mathbf{0}&\mathbf{0}&K
\end{array}\right)$ (by this, we mean $T_{\phi(x,\xi)}\phi(L)$ is spanned by the row
vectors of the matrix under the basis $\partial_{q_{x^1}},
\partial_{q_{y^1}},...,\partial_{q_{x^n}},
\partial_{q_{y^n}},\partial_{p_{x^1}},
\partial_{p_{y^1}},...,\partial_{p_{x^n}},
\partial_{p_{y^n}}$) where $I_k$ is the identity matrix of size $2k\times 2k$,
$A$ is a symmetric matrix satisfying
$AJ_{k}=-J_{k}A$, and
$K=\left(\begin{array}{ccccc}1&\ &\ &\ &\ \\ \ &-1&\ &\ &\ \\ \ &\
&\ddots &\ &\ \\ \ &\ &\ &1 &\ \\ \ &\ &\ &\ &-1
\end{array}\right)$ of size $2(n-k)\times 2(n-k)$. In particular, $KJ_{n-k}=-J_{n-k}K$.

Let $\Omega=\bigwedge_{i=1}^n (dq_{x^i}+\sqrt{-1}dp_{x^i})\wedge
(dq_{y^i}+\sqrt{-1}dp_{y^i})$ be a holomorphic volume form on
$T^*X$ with respect to $J_{Sas}$. Then for any basis $u_1,...,u_{2n}$ of
$T_{\phi(x,\xi)}\phi(L)$, $(\Omega(u_1\wedge\cdots\wedge
u_{2n}))^2=C\cdot(\det(I_k+\sqrt{-1}A)\det(\sqrt{-1}K))^2$ where $C>0$. Since $AJ_k=-J_kA$, for any eigenvector $v$ of $A$ with eigenvalue $\lambda$, we have $A(J_{k} v)=-\lambda(J_{k} v)$. In particular, if $1+\lambda\sqrt{-1}$ is an eigenvalue of
$I_{k}+\sqrt{-1}A$ then $1-\lambda\sqrt{-1}$ is an
eigenvalue of it as well, and they are of the same multiplicity. So
$(\Omega(u_1\wedge\cdots\wedge u_{2n}))^2$ is always a positive
number, which implies that $L$ has integer grading. 
\end{proof}

In the general case of $X$, for any small disc
$D=\{\sum\limits_i|z_i|^2<\epsilon\}\subset X$, let $J_D$ be the Sasaki almost complex
structure induced from a metric on $X$ which is flat on $D$. Given a
graded holomorphic Lagrangian $L$, deform $J_{con}$ (relative to infinity) to agree with $J_D$ on a relatively compact subset of $T^*X|_{D}$. Lemma \ref{grading} says that it gives a new grading on $L$ which has integer value
on that subset. Since the space of compatible almost complex structures which agree with $J_{con}$ near infinity is
contractible and $X$ is connected, the integer on each connected component of $L$ is 
independent of $D$, and this constant has the same amount of
information as the original grading of $L$.  Because of this, we
will by some abuse of language say that every holomorphic Lagrangian
has integer grading.
\begin{prop}\label{degree}
Let $L_0, L_1$ be two holomorphic Lagrangians in $T^*X$ with integer
gradings $\theta_0, \theta_1$ respectively. Assume that $L_0$ and
$L_1$ intersect transversally. Then $\text{HF}^\bullet(L_0,L_1)$ is
concentrated in degree $\theta_1-\theta_0+n$.
\end{prop}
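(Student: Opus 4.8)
The plan is to prove the stronger pointwise statement that \emph{every} transverse intersection point $p\in L_0\cap L_1$ contributes a generator to $CF^\bullet(L_0,L_1)$ in one and the same degree $\theta_1-\theta_0+n$. Since the Floer differential raises degree by $1$, this forces $m^1=0$ (there are no rigid strips between generators of equal degree), so $HF^\bullet(L_0,L_1)=CF^\bullet(L_0,L_1)$ is concentrated in that single degree. The degree of a transverse intersection point is a local invariant: it depends only on the two grading values $\theta_0,\theta_1$ at $p$ (which are the constant integers furnished by Proposition \ref{grading} and the discussion following it, since the integer grading is locally constant along each $L_i$) and on the pair of tangent Lagrangian planes $(T_pL_0,T_pL_1)$ together with $J_{Sas}$ near $p$. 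Using the local-flatness remark preceding the proposition, I would first deform $J_{con}$ on a relatively compact neighborhood of $\pi(p)$ to the flat structure $J_D$ without changing the integer gradings, reducing everything to $X=\mathbb{C}^n$ with $J_{Sas}$ the standard complex structure and $L_0,L_1$ the $\phi$-images of transverse holomorphic Lagrangian planes at $p$.

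The heart of the argument is then a short piece of linear algebra that is a direct continuation of the computation in Proposition \ref{grading}. After a complex-linear symplectic change of coordinates I may take $T_pL_0$ to be the horizontal plane $\phi(\{p=0\})$; since $L_1$ is a holomorphic Lagrangian transverse to it, $T_pL_1$ is the graph of the complex Hessian of a holomorphic function $F$ with nondegenerate Hessian at $p$, and in the real presentation of Proposition \ref{grading} (the case $k=n$, so no ``vertical'' block) this says $T_p\phi(L_1)=\{(v,Av)\}$ for a real symmetric $2n\times 2n$ matrix $A$ with $\det A\neq 0$ and $AJ_n=-J_nA$. The anticommutation with $J_n$ pairs the eigenvalues of $A$ into $\pm\lambda$ with equal multiplicities, so $A$ has signature $(n,n)$; equivalently, by the holomorphic Morse lemma one may write $F\sim\sum_{j=1}^n w_j^2$, whence $\Re F\sim\sum_{j=1}^n\big((\Re w_j)^2-(\Im w_j)^2\big)$ has Morse index exactly $n$. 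Feeding this into the standard degree formula for the Fukaya category of a cotangent bundle --- namely, after presenting $L_0$ in a Weinstein chart as a zero section, a transverse intersection point is a nondegenerate critical point of the generating function $\Re F$, and its Floer degree is $\theta_1-\theta_0$ plus the Morse index of that function --- gives $\deg(p)=\theta_1-\theta_0+n$, and this is independent of $p$.

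The step I expect to be the main obstacle is the normalization in this last passage: matching the grading conventions of the Fukaya category set up in the appendix and in \cite{NZ} (the phase being read off from the trivialization $\eta$ of the bicanonical bundle) so that the abstract Maslov-type index of a transverse pair of tangent planes is pinned exactly to the Morse index $n$ with the correct sign, and so that the two constant gradings enter precisely as the shift $\theta_1-\theta_0$ with no further additive correction. A secondary technical point is the behavior near infinity: one should either include in ``$L_0\pitchfork L_1$'' transversality of $\overline{L}_0$ and $\overline{L}_1$ in $\overline{T^*X}$, or observe that any perturbation needed to define $HF$ near infinity can be chosen to preserve the holomorphic local model at interior intersections and to preserve the integer gradings, so that all surviving generators still lie in degree $\theta_1-\theta_0+n$ and the concentration conclusion is unaffected.
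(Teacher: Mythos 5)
Your plan is close in spirit to the paper's proof: both reduce to a linear-algebraic statement about a single transverse intersection point and both hinge on the eigenvalue pairing $\lambda\mapsto -\lambda$ coming from the anticommutation $AJ=-JA$ that holomorphicity imposes. But you organize the computation differently — the paper does not reduce to $T_pL_0$ being horizontal. Instead it normalizes both $T_pL_0$ and $T_pL_1$ simultaneously into the block forms of Proposition~\ref{grading} (allowing either of them to have a vertical $K$-block), then builds the explicit unitary $U\in S'=\{B: BJ_n=-J_n\overline{B}\}$ linking the two tangent planes, and reads off the angle-sum directly from the eigenvalue pairing $e^{2\pi\sqrt{-1}\alpha_j}, e^{2\pi\sqrt{-1}(-\frac{1}{2}-\alpha_j)}$.

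There is a genuine gap in the step where you pass from transversality to the graph/Hessian picture. Transversality of $T_pL_1$ to the horizontal $T_pL_0$ only gives $T_pL_1\cap T_pL_0=0$; it does \emph{not} preclude $T_pL_1$ from containing vertical vectors, so $T_pL_1$ need not be a graph over the base (equivalently, you may not assume $k=n$ in the normal form of Proposition~\ref{grading}). The cotangent fiber $T^*_{\pi(p)}X$ itself is a holomorphic Lagrangian transverse to the zero section but is entirely vertical, so there is no $F$ whose $\Re F$-Hessian realizes it, and ``nondegenerate complex Hessian'' has no meaning there. This is precisely why the paper's block forms carry the extra $K_0, K_1$ blocks. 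To rescue your argument you would need either a further holomorphic shear $(z,w)\mapsto(z+Cw,w)$ to push $T_pL_1$ into graph position, or a connectivity argument on the space of transverse pairs of complex Lagrangian planes to reduce to a model case; either way, an extra step is required.

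A secondary issue worth making explicit: the ``complex-linear symplectic change of coordinates'' you use to bring $T_pL_0$ to horizontal position is a $T^*X_{\mathbb{C}}$-holomorphic symplectomorphism, which in general does \emph{not} preserve $J_{Sas}$ or the chosen trivialization $\eta$ of $\kappa^{\otimes 2}$. The angles $\alpha_i$ in the formula (\ref{def_deg}) are defined via $J_{Sas}$ and $\eta$, so they are not literally preserved under your coordinate change. The degree itself is a $J$-independent symplectic invariant and $\theta_1-\theta_0$ is unchanged by pulling back the trivialization, so this can be made to work, but the invariance must be invoked explicitly rather than assumed. The paper sidesteps both of these issues by not privileging $L_0$ and computing the angles directly from the simultaneous block normal forms. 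Your remark about matching the Morse-index normalization against the Fukaya grading conventions is indeed the remaining bookkeeping to check, but it is the two points above — the possible vertical block in $T_pL_1$, and the incompatibility of the holomorphic coordinate change with $J_{Sas}$ — that need to be addressed before the argument is complete.
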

\begin{proof}
Let $p\in L_0\cap L_1$. By the proof of Proposition  \ref{grading} and
transversality, under
one coordinate system $T_pL_0$ has the form $\left(\begin{array}{cc|cc}I_{k}&\mathbf{0}&A_0&\mathbf{0}\\
\mathbf{0}&\mathbf{0}&\mathbf{0}&K_0
\end{array}\right)$ and $T_pL_1$ has the form $\left(\begin{array}{cc|cc}\mathbf{0}&\mathbf{0}&K_1&\mathbf{0}\\
\mathbf{0}&I_{l}&\mathbf{0}&A_1
\end{array}\right)$ where $k+l\geq n$, $A_i, i=0,1$ is of the same
type as $A$ and $K_i, i=0,1$ is of the same type as $K$ in the proof
of Proposition \ref{grading}.

We find the degree of $p$ using (\ref{def_deg}). First, let 
$$M_0=\left(\begin{array}{cc}
I+\sqrt{-1}A_0&\mathbf{0}\\ \mathbf{0}&\sqrt{-1}K_0
\end{array}\right),\ M_1=\left(\begin{array}{cc}\sqrt{-1}K_1&\mathbf{0}\\
\mathbf{0}& I+\sqrt{-1}A_1\end{array}\right),$$
and 
$$U_i=M_i(\Re M_i^2+\Im M_i^2)^{-\frac{1}{2}}, i=0,1, \ \widetilde{U}=U_0U_1U_0^{-1},$$
$$C=\Re\widetilde{U}\Im\widetilde{U}^{-1}, U=(C+\sqrt{-1}I)(C^2+I)^{-\frac{1}{2}}.$$
It is easy to see that $U_i, U\in U(2n)$, under an orthonormal basis of $T_pL_0$, we have
$$T_pL_1=U\cdot T_pL_0,$$
and the eigenvalues together with eigenvectors of $U$ will give the canonical short path from $T_pL_0$ to $T_pL_1$.

Let $S'=\{B\in
M_{2n\times 2n}(\mathbb{C}): BJ_{n}=-J_{n}\overline{B}\}$. 
Then for a matrix in $S'$, its eigenvalues
are of the form $\lambda_i,-\overline{\lambda}_i,i=1,...,n$. It's straightforward to
check that $U\in S'$ (since $U_i\in S'$), so the
eigenvalues of $U$ are $e^{2\pi\sqrt{-1}\alpha_i},
e^{2\pi\sqrt{-1}(-\frac{1}{2}-\alpha_i)}, i=1,...,n$, for some $\alpha_i\in(-\frac{1}{2},0)$. Therefore  
$$\text{deg}(p)=
\theta_1-\theta_0-2\sum\limits_{i=1}^n(\alpha_i-\frac{1}{2}-\alpha_i))=\theta_1-\theta_0+n.$$
\end{proof}

Let $Lag(T^*X)$ be the set of all Lagrangian submanifolds in $T^*X$.
Let $Lag_\mathcal{S}(T^*X)=\{L\in Lag(T^*X):
L^\infty\subset
\Lambda_\mathcal{S}^\infty\}$.

\subsection{Transversality of $L_{x,F}$ with $t\cdot L$ for $L\in Lag_{\mathcal{S}}(T^*X)$ and $t>0$ sufficiently small}

For any $L\in Lag(T^*X)$, consider
$\mathcal{L}_{t>0}=\{((x,\xi),t): (x,\xi)\in t\cdot L, t>0\}\subset
T^*X\times\mathbb{R}$ and denote each fiber over $t$ as
$\mathcal{L}_t$. Define
$\text{Conic}(L)=\overline{\mathcal{L}_{t>0}}-\mathcal{L}_{t>0}\subset
T^*X\times \{0\}$, and we also view it inside $T^*X$. Similarly, if $X$ is a proper
algebraic variety and 
$L$ is a holomorphic Lagrangian (hence algebraic) in $T^*X_{\mathbb{C}}$, consider
$\mathcal{L}_{w\in\mathbb{C}^*}=\{((x,\xi),w): (x,\xi)\in w\cdot L, w\in\mathbb{C}^*\}
\subset T^*X_{\mathbb{C}}\times \mathbb{C}^*$. Define $\text{Conic}(L^{alg})$ to be the
fiber at $0$ of the algebraic closure of $\mathcal{L}_{w\in\mathbb{C}^*}$ in $T^*X_{\mathbb{C}}\times \mathbb{P}^1$. 

Let $$Cone(L^\infty)=\text{Cl}{\{(x,\xi)\in T^*X: \lim\limits_{s\in \mathbb{R}_+, s\rightarrow \infty} (x,s\xi)\in L^\infty\text{ in }\overline{T^*X}\}}\subset T^*X.$$
For a holomorphic Lagrangian $L$,  
$L^\infty_\mathbb{C}$ will denote $\overline{L}\cap T^\infty X_{\mathbb{C}}
\subset \overline{T^*X}_{\mathbb{C}}$, and let 
$$Cone_{\mathbb{C}^*}(L^\infty_{\mathbb{C}})=\text{Cl}{\{(x,\xi)\in T^*X_{\mathbb{C}}: \lim\limits_{\lambda\in \mathbb{C}^*, \lambda\rightarrow \infty} (x, \lambda\xi)\in 
L^\infty_{\mathbb{C}}\text{ in }\overline{T^*X}_{\mathbb{C}}\}}\subset T^*X_{\mathbb{C}},$$
where Cl means taking closure.
In particular, $Cone(L^\infty)\subset Cone_{\mathbb{C}^*}(L_{\mathbb{C}}^\infty)$, so $L\in Lag_{\mathcal{S}}(T^*X)$ for a complex stratification $\mathcal{S}$. 

In the following, $\mathcal{S}$ is a refinement of a complex stratification, with each stratum a cell, and $L\in Lag_{\mathcal{S}}(T^*X)$.

\begin{lemma}
$\text{Conic}(L)$ is a closed (possibly singular) conical Lagrangian
in $T^*X$. 
\end{lemma}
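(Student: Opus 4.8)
The statement asserts that $\mathrm{Conic}(L) = \lim_{t\to 0+} t\cdot L$ is a closed conical Lagrangian in $T^*X$. I would argue in three stages: closedness, conicity, and isotropy/dimension.

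\emph{Closedness.} By definition $\mathrm{Conic}(L) = \overline{\mathcal{L}_{t>0}} - \mathcal{L}_{t>0}$ inside $T^*X\times\{0\}$, so it is automatically closed in $T^*X\times\{0\}$ as the difference of a closed set and an open set whose complement in the closure is what we want — more precisely, $\overline{\mathcal{L}_{t>0}}$ is closed in $T^*X\times\mathbb{R}$, its intersection with the closed slice $T^*X\times\{0\}$ is closed, and $\mathrm{Conic}(L)$ equals exactly that intersection (since $\mathcal{L}_{t>0}$ lives over $t>0$). So closedness is essentially formal. The one thing to check is that $\mathrm{Conic}(L)$ is nonempty and that the limit description $\lim_{t\to 0+} t\cdot L$ agrees with this boundary-fiber definition; this follows because $L$ is a $\mathcal{C}$-set in $\overline{T^*X}$, hence $\mathcal{L}_{t>0}$ is a $\mathcal{C}$-set in $\overline{T^*X}\times\mathbb{R}$, and the fibers of a $\mathcal{C}$-family degenerate in a controlled way (curve selection / the structure theory of the analytic-geometric category in the Appendix), so the fiber at $t=0$ is the Hausdorff limit of the fibers $t\cdot L$.

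\emph{Conicity.} The set $\mathcal{L}_{t>0}$ carries the $\mathbb{R}_+$-action $s\cdot((x,\xi),t) = ((x,s\xi), st)$ on the cotangent fibers paired with scaling of the $t$-coordinate; this action preserves $\mathcal{L}_{t>0}$ since $s\cdot(t\cdot L) = (st)\cdot L$. Passing to the closure, the induced action fixes the slice $t=0$ and there restricts to the ordinary fiberwise $\mathbb{R}_+$-scaling on $T^*X$, which therefore preserves $\mathrm{Conic}(L)$. Hence $\mathrm{Conic}(L)$ is conical.

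\emph{Lagrangian.} For dimension: each $t\cdot L$ is a Lagrangian, so has dimension $n = \dim_\mathbb{C} X$ (real dimension); $\mathcal{L}_{t>0}$ then has dimension $n+1$, its closure in the analytic-geometric category has the same dimension, and the boundary fiber $\mathrm{Conic}(L)$ has dimension $\leq n$. For isotropy: on the smooth locus of $\mathrm{Conic}(L)$, one shows $\omega$ vanishes by a limiting argument — take a smooth point $(x_0,\xi_0)$ of $\mathrm{Conic}(L)$ and two tangent vectors there, approximate them by tangent vectors to $t\cdot L$ at nearby points $(x_t,\xi_t)\to(x_0,\xi_0)$ (possible because the tangent spaces vary $\mathcal{C}$-tamely, so the limit of tangent planes along a $\mathcal{C}$-curve into $\mathcal{L}_{t>0}$ contains the tangent plane at the smooth limit point), and use $\omega|_{t\cdot L} = 0$ together with continuity of $\omega$. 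Combined with $\dim \geq n$ on the smooth locus (which forces $=n$ there, since an isotropic of dimension $n$ is Lagrangian) this gives that $\mathrm{Conic}(L)$ is Lagrangian; the possible singularities are harmless since "conical Lagrangian" here means isotropic of dimension $n$ with the smooth locus Lagrangian in the usual sense.

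\emph{Main obstacle.} The genuinely delicate step is the limiting argument for isotropy — controlling the Gauss map (the family of tangent planes) of $\mathcal{L}_{t>0}$ as $t\to 0$ so that tangent vectors at smooth points of the limit are realized as limits of tangent vectors along the family. This is where the analytic-geometric / Whitney-stratification machinery from the Appendix does the work: one stratifies $\overline{\mathcal{L}_{t>0}}$ compatibly with the slice $t=0$, and uses that near a smooth point of the boundary fiber lying in a top-dimensional stratum of the boundary, the incident stratum from $t>0$ has the expected tangent behavior (Whitney condition (a)). Everything else is bookkeeping with the $\mathbb{R}_+$-action and dimension count.
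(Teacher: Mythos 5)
Your closedness and conicity steps are fine (and are more explicit than the paper, which doesn't discuss them). The gap is in the isotropy step, and it is a real one. Whitney condition (a) applied to a stratification of $\overline{\mathcal{L}_{t>0}}$ compatible with the slice $t=0$ shows that a tangent vector $v\in T_p\text{Conic}(L)$ at a smooth point $p=(x_0,\xi_0)$ with $\xi_0\neq 0$ is a limit of vectors $v_n$ tangent to the \emph{total space} $\mathcal{L}_{t>0}$, not to the Lagrangian fibers $t_n\cdot L$. But $\mathcal{L}_{t>0}$ is not isotropic for the pulled-back form: the transverse ``dilation'' direction at $p_n=((x_n,\xi_n),t_n)$ projects in $T^*X$ to $(1/t_n)\xi_n\partial_\xi$, whose $\omega$-pairing with a fiber-tangent vector $b$ is $(1/t_n)\theta(b)$. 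These cross-terms are not controlled as $t_n\to 0$, so ``use $\omega|_{t\cdot L}=0$ together with continuity of $\omega$'' does not follow just from the Whitney limit. To make the direct argument go through, one needs $\ker\bigl(dt|_{\lim T_{p_n}\mathcal{L}_{t>0}}\bigr)=\lim T_{p_n}(t_n\cdot L)$, and this in turn requires the Euler vector field to be asymptotically tangent to $L$ near infinity; that is exactly the content of the standing hypothesis $L\in Lag_{\mathcal{S}}(T^*X)$, i.e.\ $L^\infty\subset\Lambda_\mathcal{S}^\infty$, which your proof never invokes.

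The paper avoids the Gauss-map analysis entirely by establishing the decomposition $\text{Conic}(L)=Cone(L^\infty)\cup\overline{\pi(L)}$: the second piece lies in the zero section, and the first lies in $\Lambda_\mathcal{S}$ precisely because $L^\infty\subset\Lambda_\mathcal{S}^\infty$, so $\text{Conic}(L)$ is a closed conical $\mathcal{C}$-subset of the conical Lagrangian $\Lambda_\mathcal{S}$, which immediately gives the claim. You should either adopt that decomposition, or, if you want to keep the tangent-plane approach, explicitly bring in the Legendrian condition on $L^\infty$ to control $\lim T_{p_n}(t_n\cdot L)$. (A minor side remark: your dimension count writes ``$n=\dim_\mathbb{C} X$ (real dimension)'' for the Lagrangian; the real dimension of a Lagrangian in $T^*X$ for $X$ a complex $n$-fold is $2n$, not $n$.)
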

\begin{proof}
$\text{Conic}(L)=Cone(L^\infty)\cup \overline{\pi(L)} $. In fact, $\text{Conic}(L)\cap
T^*_XX=\overline{\pi(L)}$ and $(x,\xi)\in\text{Conic}(L)-
T^*_XX\Leftrightarrow \exists (x_n,\xi_n)\in L$ and $t_n\rightarrow
0^+$ such that
$\lim\limits_{n\rightarrow\infty}(x_n,t_n\xi_n)=(x,\xi)\Leftrightarrow
\lim\limits_{t\rightarrow\infty}(x,t\xi)=\lim\limits_{n\rightarrow\infty}(x_n,\xi_n)\in
L^\infty$.
\end{proof}

Since $\mathcal{L}_{t>0}=\{((x,\xi),t): (x,\xi)\in t\cdot L,
t>0\}$ is a $\mathcal{C}$-set in $T^*X\times\mathbb{R}$, 
$\text{Conic}(L)$ is a $\mathcal{C}$-set. Note that $\text{Conic}(L)\subset\Lambda_{\mathcal{S}}$, so we can take
a stratification of $\Lambda_{\mathcal{S}}$ which is compatible with $\text{Conic}(L)$.  
Then choose a stratification $\mathcal{T}$ of $\mathcal{L}_{t\geq 0}:=\overline{\mathcal{L}_{t>0}}$ compatible with the above stratification
restricted to $\text{Conic}(L)$. It is clear that for any covector
$(x,\xi)$ in an open stratum in $\Lambda_{\mathcal{S}}$ away from $\text{Conic}(L)$,
$L_{x,F}\cap t\cdot L=\emptyset$ for $t>0$ sufficiently small, for any test triple $(x,\xi,F)$. So we only need to
look at $(x,\xi)$ in an open stratum of $\text{Conic}(L)$.

Let $T_\alpha$ be an open stratum of $\text{Conic}(L)$. For any
$((x,\xi),0)\in T_\alpha$, there is some open neighborhood of
it that only intersects open strata in $\mathcal{L}_{t>0}$, and let $(x,\xi,F)$ be a test triple for $\Lambda_{\mathcal{S}}$.
Denote $\mathcal{L}_{x,F}=L_{x,F}\times \mathbb{R}\subset
T^*X\times\mathbb{R}$.
\begin{lemma}\label{transverse1}
In a neighborhood
of $((x,\xi),0)$, $\mathcal{L}_{x,F}$ intersects $\mathcal{L}_{t>0}$
transversally.
\end{lemma}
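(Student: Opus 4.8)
The plan is to deduce this from the transversality built into the notion of a test triple together with the Whitney regularity of the stratification $\mathcal T$ of $\mathcal L_{t\ge 0}$, the only genuinely geometric input being a local identification of $\mathrm{Conic}(L)$ with $\Lambda_{\mathcal S}$.

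First I would record two elementary observations. Near $x$ the coordinate $\mathrm r$ is small and the summands $c\circ\mathrm r$, $d\circ m_{\widetilde W_0}$, $e\circ m_{\widetilde W_1}$ of $f$ are locally constant there, while $b\circ\Re(F)=\Re(F)$ on the relevant range; hence $f=\Re(F)+\mathrm{const}$ on a neighbourhood of $x$, so $L_{x,F}=\Gamma_{df}$ agrees with $\Gamma_{d\Re(F)}$ there. In particular $(x,\xi)\in L_{x,F}$, and $T_{(x,\xi)}L_{x,F}$ is a Lagrangian plane projecting isomorphically onto $T_xX$ under $d\pi$. Secondly, $\mathcal L_{x,F}=L_{x,F}\times\mathbb R$, and by the hypothesis recalled just before the statement a sufficiently small neighbourhood $\mathcal V$ of $((x,\xi),0)$ in $T^*X\times\mathbb R$ meets only the open (i.e.\ $(2n+1)$-dimensional) strata of $\mathcal L_{t>0}$; so it suffices to prove that $\mathcal L_{x,F}$ is transverse to each such open stratum at every point of $\mathcal V$.

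The key step is transversality at the single point $((x,\xi),0)$ to the stratum $T_\alpha\subset\{t=0\}$. Since $\mathrm{Conic}(L)$ is a conical Lagrangian its open strata are $2n$-dimensional, so $\dim T_\alpha=2n$; and since $(x,\xi)$ is a smooth point of $\Lambda_{\mathcal S}$, a neighbourhood of $(x,\xi)$ in $\Lambda_{\mathcal S}$ is a $2n$-dimensional manifold containing the $2n$-dimensional set $\mathrm{Conic}(L)$, so $\mathrm{Conic}(L)$ coincides with $\Lambda_{\mathcal S}$ near $(x,\xi)$ and $T_{(x,\xi)}T_\alpha=T_{(x,\xi)}\Lambda_{\mathcal S}$. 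The test-triple condition $\Gamma_{d\Re(F)}\pitchfork\Lambda_{\mathcal S}$ at $(x,\xi)$ then reads $T_{(x,\xi)}L_{x,F}+T_{(x,\xi)}T_\alpha=T_{(x,\xi)}T^*X$, and taking the product with the $\partial_t$-line gives $T_{((x,\xi),0)}\mathcal L_{x,F}+T_{((x,\xi),0)}(T_\alpha\times\{0\})=T_{((x,\xi),0)}(T^*X\times\mathbb R)$, i.e.\ $\mathcal L_{x,F}\pitchfork T_\alpha$ at $((x,\xi),0)$.

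To conclude I would invoke the standard openness of transversality along a Whitney stratified set: if a submanifold $N$ is transverse at $p$ to a stratum $S$ of a Whitney $(a)$-regular stratification, then $N$ is transverse near $p$ to every stratum having $S$ in its closure (a unit obstruction covector along a sequence of bad points tending to $p$ would annihilate $T_pN$ and, by condition $(a)$, also $T_pS$, contradicting $N\pitchfork S$ at $p$). Applying this with $N=\mathcal L_{x,F}$, the stratified set $\mathcal L_{t\ge 0}$ with its stratification $\mathcal T$ (which contains $T_\alpha$ since $\mathcal T$ was chosen compatibly with $\mathrm{Conic}(L)$), $S=T_\alpha$ and $p=((x,\xi),0)$, and shrinking $\mathcal V$ if needed, one gets that $\mathcal L_{x,F}$ is transverse to every open stratum of $\mathcal L_{t>0}$ throughout $\mathcal V$, which together with the second observation is the assertion. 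The step I expect to need the most care is the local identification $\mathrm{Conic}(L)=\Lambda_{\mathcal S}$ near $(x,\xi)$, where one genuinely uses both that $\mathrm{Conic}(L)$ is a full $2n$-dimensional conical Lagrangian and that $(x,\xi)$ lies in the smooth locus of $\Lambda_{\mathcal S}$; should that identification prove delicate, one can instead argue by curve selection on the closed, $\mathcal C$-definable non-transversality locus, extracting along a $\mathcal C^1$-curve into it a unit vector $u_0\in T_{(x,\xi)}L_{x,F}$ that also lies in $T_{(x,\xi)}\mathrm{Conic}(L)\subset T_{(x,\xi)}\Lambda_{\mathcal S}$, again contradicting the test-triple transversality.
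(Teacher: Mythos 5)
Your argument is correct and is essentially the paper's own proof: the paper also argues by contradiction, using compactness of the non‑transversality locus (viewed in a product of Grassmannians) together with Whitney condition (a) applied to a sequence of bad points approaching $((x,\xi),0)$, which is precisely the ``openness of transversality along a Whitney stratified set'' that you invoke, and it reduces to the base‑point transversality of $\Gamma_{d\Re(F)}$ with $\Lambda_{\mathcal S}$ provided by the test‑triple hypothesis. Your explicit observation that $T_{(x,\xi)}T_\alpha=T_{(x,\xi)}\Lambda_{\mathcal S}$, so that the test‑triple transversality transfers to the stratum $T_\alpha$ of $\mathrm{Conic}(L)$, is a useful clarification of a step the paper leaves implicit when it concludes the contradiction.
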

\begin{proof}
For a small (open) ball $B_r(x)$ with center $x$, $\pi^{-1}(B_r(x))\subset T^*X$ is diffeomorphic to $D^n\times \mathbb{R}^n$, where $D^n$ is the (open) unit disc in $\mathbb{R}^n$. So we have two $\mathcal{C}$-maps by taking tangent spaces: \\
$$f_1: \mathcal{L}_{t>0}\rightarrow \text{Gr}_{n+1}(\mathbb{R}^{2n+1}), f_2: \mathcal{L}_{x,F}\rightarrow \text{Gr}_{n+1}(\mathbb{R}^{2n+1})$$
and by restriction, these give the following map
$$f=(f_1,f_2): \mathcal{L}_{t>0}\cap \mathcal{L}_{x,F}\rightarrow \text{Gr}_{n+1}(\mathbb{R}^{2n+1})
\times \text{Gr}_{n+1}(\mathbb{R}^{2n+1}).$$ 
Let 
$N=\{(A,B)\in \text{Gr}_{n+1}(\mathbb{R}^{2n+1}) \times
\text{Gr}_{n+1}(\mathbb{R}^{2n+1}): A+B\neq\mathbb{R}^{2n+1}\}$. It is
clear that $N$ is a closed $\mathcal{C}$-set.

Suppose there is a sequence of points $((x_n, \xi_n), t_n)\in T_\beta,
t_n>0$ approaching $((x,\xi), 0)$, where $T_\beta$ is an open stratum in $\mathcal{L}_{t>0}$,
on which $\mathcal{L}_{x,F}$ and $\mathcal{L}_{t>0}$ intersect
nontransversally, then $f((x_n,\xi_n), t_n)\in N$.
Since $N$ is compact, there exists a subsequence
$((x_{n_k}, \xi_{n_k}), t_{n_k})$ such that
 $f((x_{n_k},\xi_{n_k}), t_{n_k})$ converges to a point in $N$ and $\lim\limits_{k\rightarrow\infty}T_{((x_{n_k},
\xi_{n_k}), t_{n_k})}\mathcal{L}_{t>0}$ exists.

By the Whitney property,
$T_{(x,\xi)}\text{Conic}(L)\subset\lim\limits_{k\rightarrow\infty}T_{(x_{n_k},
\xi_{n_k}, t_{n_k})}\mathcal{L}_{t>0}$. This implies that $L$ is not
transverse to $L_{x,F}$ at $(x,\xi)$, which is a contradiction.
\end{proof}

\begin{lemma}\label{transverse2}
Let $((x,\xi),0)\in T_\alpha$ as in Lemma \ref{transverse1}. Then $L_{x,F}$ intersects $t\cdot L$ transversally for all sufficiently small
$t>0$, and the intersections are within the holomorphic portion of $L_{x,F}$.
\end{lemma}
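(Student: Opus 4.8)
The plan is to deduce Lemma \ref{transverse2} from Lemma \ref{transverse1} by the standard ``transversality in a family implies transversality in almost every fiber'' argument, together with a dimension/compactness count that forces the intersection to move into the holomorphic part of $L_{x,F}$ as $t\to 0^+$. First I would record the ambient set-up: we work inside $\pi^{-1}(B_r(x))\times(-\epsilon,\epsilon)\cong D^n\times\mathbb{R}^n\times\mathbb{R}$, and we have the two $\mathcal{C}$-submanifolds $\mathcal{L}_{t>0}$ and $\mathcal{L}_{x,F}=L_{x,F}\times\mathbb{R}$, with $\mathcal{L}_{x,F}$ transverse to (each open stratum of) $\mathcal{L}_{t>0}$ near $((x,\xi),0)$ by Lemma \ref{transverse1}. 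Note $\dim\mathcal{L}_{t>0}=n+1$ and $\dim\mathcal{L}_{x,F}=n+1$ inside the $(2n+1)$-manifold, so transverse intersection is $1$-dimensional; projecting to the $t$-coordinate, the fiber over a fixed small $t$ is generically $0$-dimensional, i.e.\ $L_{x,F}\cap(t\cdot L)$ is a finite transverse intersection.

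The key step is to make ``generic $t$'' into ``all sufficiently small $t$''. Here I would invoke the analytic-geometric (o-minimal) structure rather than Sard: the set $Z$ of $t>0$ for which $L_{x,F}$ fails to meet $t\cdot L$ transversally (within the relevant neighborhood) is a $\mathcal{C}$-subset of $\mathbb{R}_+$ — it is the image under projection of the non-transverse locus, which is $\mathcal{C}$ by the same Grassmannian construction $f=(f_1,f_2)$ and the closed $\mathcal{C}$-set $N$ used in the proof of Lemma \ref{transverse1}. A $\mathcal{C}$-subset of $\mathbb{R}$ is a finite union of points and intervals; since Lemma \ref{transverse1} tells us $0\notin\overline{Z}$ would follow if $Z$ did not accumulate at $0$, I need to rule out an interval $(0,\delta)\subset Z$. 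But an interval in $Z$ would produce, by curve selection (Proposition \ref{CSL}), a $C^1$ curve of non-transverse intersection points limiting to $((x,\xi),0)$, contradicting Lemma \ref{transverse1} exactly as in its proof. Hence $0$ is isolated from $Z$, so $L_{x,F}\pitchfork(t\cdot L)$ for all small $t>0$.

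Next, the localization-to-the-holomorphic-portion claim: I would argue that any intersection point of $L_{x,F}$ with $t\cdot L$, for $t$ small, must lie over the region where $L_{x,F}$ coincides with the holomorphic Lagrangian $\Gamma_{df}$ near $x$ — concretely over the inner region $\overline{U}_{s}$ in the notation of Section \ref{constr}, where $L_{x,F}|_{\overline{U}_s}=\Gamma_{df}|_{\overline{U}_s}$. The point is that $\mathrm{Conic}(L)=Cone(L^\infty)\cup\overline{\pi(L)}$ meets $\overline{\Gamma_{df}}$ transversally only at $(x,\xi)$ in $\overline{T^*X}$ (Lemma \ref{local morse}); as $t\to0^+$ the branes $t\cdot L$ converge (on compacta, away from the zero section's complement) to $\mathrm{Conic}(L)$, so their intersections with the fixed brane $L_{x,F}$ must converge to points of $\overline{\Gamma_{df}}\cap\overline{\mathrm{Conic}(L)}=\{(x,\xi)\}$. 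A small neighborhood of $(x,\xi)$ lies entirely inside the holomorphic region of $L_{x,F}$ by construction, so for $t$ small enough every intersection point is holomorphic. I would make this precise by the same compactness/Whitney-condition argument: if intersection points stayed outside a neighborhood of $(x,\xi)$, passing to a convergent subsequence produces a limit point in $L_{x,F}\cap\mathrm{Conic}(L)$ away from $(x,\xi)$, contradicting transversality of $\overline{\Gamma_{df}}$ with $\overline{\Lambda_{\mathcal{S}}}$.

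The main obstacle I anticipate is the convergence claim ``$L_{x,F}\cap(t\cdot L)\to L_{x,F}\cap\mathrm{Conic}(L)$'' — one has to be careful that no intersection escapes to infinity in the cotangent fibers or runs off the edge of the neighborhood $\pi^{-1}(B_r(x))$ as $t\to0^+$. Controlling the behavior near fiber-infinity is precisely why the brane $L_{x,F}$ was built with prescribed asymptotics $\overline{\Gamma_{df}}\pitchfork\overline{\Lambda_\mathcal{S}}=\{(x,\xi)\}$ in $\overline{T^*X}$ (Lemma \ref{local morse}); I would use that, together with the fact that $\overline{t\cdot L}$ also has controlled boundary at infinity lying in $\Lambda_\mathcal{S}^\infty$, to run the compactness argument in the compactified space $\overline{T^*X}$ rather than $T^*X$ itself, which is the standard device in \cite{NZ}. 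Everything else is a routine application of the $\mathcal{C}$-set formalism (curve selection lemma, finiteness of $\mathcal{C}$-subsets of $\mathbb{R}$) already set up in the appendices.
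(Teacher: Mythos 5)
Your overall architecture — reduce to showing the bad set $Z\subset\mathbb{R}_+$ of non-transverse $t$ is bounded away from $0$, exploit that $Z$ is a $\mathcal{C}$-subset of $\mathbb{R}$ so a finite union of points and intervals, and then rule out an interval $(0,\delta)\subset Z$ — is close in spirit to the paper's (which instead shows that $t$, viewed as a $\mathcal{C}$-function on the $1$-manifold $\mathcal{L}_{t>0}\cap\mathcal{L}_{x,F}$ near $((x,\xi),0)$, has no critical value in $(0,\eta)$). The second half of your argument (intersections move into the holomorphic portion of $L_{x,F}$) is correct and matches the paper's curve-selection/compactness reasoning.

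However, the key interval-ruling-out step has a genuine gap. You claim that a curve of \emph{fiber-wise} non-transverse intersection points limiting to $((x,\xi),0)$ ``contradicts Lemma \ref{transverse1} exactly as in its proof.'' But Lemma \ref{transverse1} and its proof are about transversality of $\mathcal{L}_{x,F}$ and $\mathcal{L}_{t>0}$ \emph{in the total space} $T^*X\times\mathbb{R}$, which is a strictly weaker condition than transversality of the fibers $L_{x,F}$ and $t\cdot L$ in $T^*X$. A curve on which $T_{(x_t,\xi_t)}L_{x,F}+T_{(x_t,\xi_t)}(t\cdot L)\neq T_{(x_t,\xi_t)}T^*X$ does \emph{not} consist of points where $\mathcal{L}_{x,F}$ and $\mathcal{L}_{t>0}$ fail to be transverse, so the Whitney-property argument from that proof does not directly apply. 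If you try to rerun the Whitney argument for the fiber-wise condition, you run into the problem that the limit $n$-plane $\tau=\lim T_{(x_n,\xi_n)}(t_n\cdot L)$ sits inside $\Theta=\lim T_{(x_n,\xi_n,t_n)}\mathcal{L}_{t>0}$ along with $T_{(x,\xi)}T_\alpha$, but when $\Theta$ degenerates to a horizontal $(n+1)$-plane (which generically happens, since the $\partial_t$-component of the generating vector $(0,\xi_n/t_n,1)$ is eventually dominated as $t_n\to 0$), one cannot conclude $\tau=T_{(x,\xi)}T_\alpha$, and the contradiction with $L_{x,F}\pitchfork\text{Conic}(L)$ at $(x,\xi)$ evaporates.

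What repairs the argument is precisely the observation the paper makes and you omit: given the \emph{total-space} transversality from Lemma \ref{transverse1}, the fiber $L_{x,F}$ is transverse to $t\cdot L$ at $(x_t,\xi_t)$ if and only if $\pi_*T_{(x_t,\xi_t,t)}(\mathcal{L}_{t>0}\cap\mathcal{L}_{x,F})\neq\{0\}$, where $\pi$ is the projection to $t$ (this is a simple linear-algebra statement about a codimension-one hyperplane $T^*X\times\{0\}$). With this in hand, your hypothetical curve of fiber-wise non-transverse points with $t\to 0^+$ lies inside the $1$-manifold $\mathcal{L}_{t>0}\cap\mathcal{L}_{x,F}$ and has $\pi_*=0$ along it, so $t$ is constant on the curve, contradicting $t_s\to 0^+$. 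Equivalently, one invokes the discreteness of $\Lambda$-critical values of a $\mathcal{C}$-map (Lemma \ref{critical}) applied to $\pi$ on the $1$-manifold, which is exactly how the paper closes the proof. Without this intermediate lemma, your argument does not go through.
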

\begin{proof}
First, by curve selection lemma, given any neighborhood $W$ of $((x,\xi),0)$, we have $\mathcal{L}_{x,F}\cap \mathcal{L}_{t_0>t>0}\subset W$ for $t_0$ 
sufficiently small. 

We only need to prove that there is a neighborhood of $((x,\xi),0)$ contained in 
$W$, such that for any $((x_t,\xi_t),t)\in
\mathcal{L}_t\cap\mathcal{L}_{x,F}$, $0<t<t_0$, we have
$\pi_*T_{(x_t,\xi_t,t)}(\mathcal{L}_{t>0}\cap\mathcal{L}_{x,F})\neq \{0\}$
where $\pi:\mathcal{L}_{t>0}\cap\mathcal{L}_{x,F}\rightarrow\mathbb{R}$ is
the projection to $t$. In fact, since 
$\mathcal{L}_{t>0}\pitchfork\mathcal{L}_{x,F}$ in $W$, 
$\pi_*T_{(x_t,\xi_t,t)}(\mathcal{L}_{t>0}\cap\mathcal{L}_{x,F})\neq
\{0\}$ implies the transversality of
$L_t$ and $L_{x,F}$.

The assertion is true because the function $t$ on $\mathcal{L}_{t>0}\cap\mathcal{L}_{x,F}$ has no critical value in
$(0,\eta)$ for some $\eta>0$ sufficiently small.
\end{proof}

Now we are ready to prove the main theorem. 
\begin{thm}\label{main thm}
If $L$ is a holomorphic Lagrangian brane of constant grading $-n$ and $\mathcal{F}\in Sh(X)$ quasi-represents $L$, i.e. $\mu_X(\mathcal{F})\simeq L$, then $\mathcal{F}$ is a perverse sheaf.
\end{thm}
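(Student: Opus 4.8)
The plan is to derive the Floer vanishing $HF^\bullet(L_{x,F},L)=0$ for $\bullet\neq 0$ (equation (\ref{HF2})) from Lemma \ref{transverse2} and Proposition \ref{degree}, transport it through (\ref{functor_corresp}), and conclude with the microlocal characterization of perverse sheaves (Proposition \ref{M_{x,F}-Test}).

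First I would record that $\mathcal{F}$ is constructible with respect to a complex stratification: since $L$ is holomorphic, $Cone(L^\infty)\subset Cone_{\mathbb{C}^*}(L^\infty_{\mathbb{C}})$, and the latter is a conical complex-analytic Lagrangian, so a Whitney stratification $\mathcal{S}$ (refined so that each stratum is a cell) subordinate to $\overline{\pi(L)}\cup\pi\big(Cone_{\mathbb{C}^*}(L^\infty_{\mathbb{C}})\big)$ has $L\in Lag_{\mathcal{S}}(T^*X)$, hence $\mathcal{F}=\mu_X^{-1}(L)\in Sh_{\mathcal{S}}(X)$. By Proposition \ref{M_{x,F}-Test} it then suffices to show that, for a test triple $(x_\alpha,\xi_\alpha,F_\alpha)$ of each stratum $S_\alpha$, the complex $M_{x_\alpha,F_\alpha}(\mathcal{F})$ has cohomology concentrated in degree $0$.

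Fix such a test triple $(x,\xi,F)$. By (\ref{functor_corresp}), $H^\bullet(M_{x,F}(\mathcal{F}))\cong H^\bullet\big(Hom_{F(T^*X)}(L_{x,F},\mu_X\mathcal{F})\big)=HF^\bullet(L_{x,F},L)$, so it remains to prove (\ref{HF2}). I would compute $HF^\bullet(L_{x,F},L)$ using the dilated representative $t\cdot L$ for $t>0$ small, which is legitimate since $t\cdot L$ and $L$ are isomorphic in $Fuk(T^*X)$ (cotangent-fiber dilation being an allowed perturbation) and $t\cdot L$ is again a holomorphic brane with the same constant grading $-n$ as $L$ (the grading is integer-valued along the dilation family by Proposition \ref{grading}, hence constant). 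By Lemma \ref{transverse2}, for small $t$ the intersection $L_{x,F}\cap(t\cdot L)$ is finite and transverse and lies inside the holomorphic portion of $L_{x,F}$; near each intersection point $p$ both branes are holomorphic Lagrangians --- the holomorphic portion of $L_{x,F}$ is $\Gamma_{d\Re(F)}$, which is $\phi$ of the holomorphic graph $\Gamma_{dF}\subset T^*X_{\mathbb{C}}$, and $t\cdot L$ is holomorphic because $L$ is --- so Proposition \ref{degree} applies at $p$. The grading of $t\cdot L$ near $p$ is $-n$, while the canonical brane structure on $L_{x,F}$ (Propositions \ref{grade_st}, \ref{Pin_st}), pulled back from the zero section along the isotopy $\{\Gamma_{s\,df}\}_{s\in[0,1]}$ of graphs of exact $1$-forms, has grading $0$ on the holomorphic portion of $L_{x,F}$: along that isotopy, near $x$ every member $\Gamma_{s\,df}=\Gamma_{d\Re(sF)}$ is a holomorphic Lagrangian, so by Proposition \ref{grading} its grading is integer-valued, hence constant in $s$, hence equal at $s=0$ to the grading $0$ of the zero section. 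Proposition \ref{degree} then gives that each point of $L_{x,F}\cap(t\cdot L)$ has Floer degree $\theta_1-\theta_0+n=-n-0+n=0$, so $HF^\bullet(L_{x,F},L)$, and therefore $M_{x,F}(\mathcal{F})$, is concentrated in degree $0$.

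Since this holds for a test triple of every stratum, $\mathcal{F}$ is perverse by Proposition \ref{M_{x,F}-Test}. I expect the main obstacle to be the grading bookkeeping in the previous paragraph: one has to verify carefully that the canonical brane structure on $L_{x,F}$ restricts on its holomorphic locus to the integer grading $0$ in exactly the normalization in which $L$ carries grading $-n$, since any nonzero discrepancy here would instead yield ``$\mathcal{F}$ perverse up to a shift''. A secondary technical point is justifying the passage from $L$ to $t\cdot L$ and the finiteness of the resulting transverse intersection, which follow from properness of $L_{x,F}$ (Lemma \ref{local morse}) and the controlled behavior of $L$ near infinity.
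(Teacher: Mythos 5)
Your proof is correct and takes the same route as the paper: establish $\mathcal{F}\in Sh_{\mathcal{S}}(X)$ for a complex stratification via (\ref{micro_S}), reduce to the microlocal criterion of Proposition~\ref{M_{x,F}-Test}, pass $M_{x,F}(\mathcal{F})$ to $HF^\bullet(L_{x,F},L)$ using (\ref{functor_corresp}), and conclude from Lemma~\ref{transverse2} and Proposition~\ref{degree}. The paper's own proof is a single sentence that leaves the grading bookkeeping entirely implicit; you correctly make explicit the key normalization --- that the canonical grading of $L_{x,F}$ is $0$ on its holomorphic locus, via the isotopy $\Gamma_{s\,df}$ to the zero section being holomorphic near $x$ --- which is exactly what is needed for $\theta_1-\theta_0+n=-n-0+n=0$.
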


\begin{proof}
From (\ref{micro_S}),
$\mathcal{F}\in Sh_{\mathcal{S}}(X)$ for a complex analytic stratification $\mathcal{S}$. Let $\tilde{\mathcal{S}}$ be a refinement of $\mathcal{S}$ with each stratum a cell. By Proposition \ref{degree}, for generic choices of test triple $(x,\xi, F)$ of 
$\Lambda_{\tilde{\mathcal{S}}}$, we have the cohomology of $M_{x,F}(\mathcal{F})\simeq Hom_{Fuk(T^*X)}(L_{x,F},L)$ concentrate in degree $0$, so $\mathcal{F}$ is perverse.
\end{proof}
\begin{remark}
One could easily deduce from the above discussions that if $\mathcal{F}\in Perv(X)$ quasi-represents a holomorphic
brane $L$, then $Conic(L)=SS(\mathcal{F})$ and in particular $Cone(L^\infty)=\phi(Cone_{\mathbb{C}^*}(L_{\mathbb{C}}^\infty))$.
\end{remark}

Recall the Morse-theoretic definition of the characteristic cycle $CC(\mathcal{F})$
for $\mathcal{F}\in Sh_{\mathcal{S}}(X)$ (see e.g. \cite{SV}; in general $X$ only needs to 
be a real oriented analytic manifold). 
Consider $\bigcup\limits_{S_\alpha\in\mathcal{S}}T^*_{S_\alpha}X-D^*_{S_\alpha}X
=\bigcup\limits_{i\in I}\Lambda_i$, where $\Lambda_i, i\in I$ are disjoint connected 
components.
\begin{definition}
The \emph{characteristic cycle $CC(\mathcal{F})$} of a sheaf $\mathcal{F}\in Sh_{\mathcal{S}}(X)$ is the Lagrangian
cycle with values in the orientation sheaf of $X$ defined as follows:\\
(1) The orientation on $\Lambda_i, i\in I$ are induced from the canonical orientation of
$T^*_{S_\alpha}X$;\\
(2) The multiplicity of $\Lambda_i$ is equal to $\chi(M_{x,F}(\mathcal{F}))$, where 
$(x,dF_x)\in\Lambda_i$ and $(x,dF_x,F)$ is a test triple for $\Lambda_{\mathcal{S}}$. 
\end{definition}

\begin{cor}
If $X$ is a proper algebraic variety, $\mathcal{F}$ and $L$ are as in Theorem \ref{main thm}, and $L$ is equipped with a vector bundle of rank $d$ with flat connection, then $CC(\mathcal{F})=d\cdot Conic(L^{alg})^{sm}$.
\end{cor}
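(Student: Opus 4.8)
The plan is to read off each multiplicity of $CC(\mathcal F)$ from the Floer-theoretic formula for the local Morse group $M_{x,F}(\mathcal F)$, and then to recognise the resulting intersection numbers as the multiplicities of the algebraic limit cycle $Conic(L^{alg})$.

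First I would fix a refinement (still called $\mathcal S$) of the complex stratification adapted to $\mathcal F$ which has cellular strata and is compatible with $\mathrm{Conic}(L)$, and write $\bigcup_\alpha\big(T^*_{S_\alpha}X-D^*_{S_\alpha}X\big)=\bigsqcup_i\Lambda_i$; for each $i$ choose a test triple $(x_i,\xi_i,F_i)$ with $(x_i,\xi_i)\in\Lambda_i$. By definition $CC(\mathcal F)=\sum_i\chi\big(M_{x_i,F_i}(\mathcal F)\big)\,\Lambda_i$, and by the Remark after Theorem \ref{main thm} this cycle is supported on $\mathrm{Conic}(L)=SS(\mathcal F)$. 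From (\ref{functor_corresp}) and $\mu_X(\mathcal F)\simeq L$ we get $M_{x_i,F_i}(\mathcal F)\simeq\mathrm{Hom}_{Fuk(T^*X)}(L_{x_i,F_i},L)$. As in the proof of Theorem \ref{main thm}, this Floer group is computed from the transverse intersection $L_{x_i,F_i}\cap(t\cdot L)$, $t>0$ small, which by Lemma \ref{transverse2} lies entirely in the holomorphic portion of $L_{x_i,F_i}$, i.e. the graph $\Gamma_{d\Re F_i}=\phi(\Gamma_{dF_i})$. Hence the pointwise index computation in the proof of Proposition \ref{degree} applies at every intersection point and yields the same Maslov degree, so the Floer complex is concentrated in a single degree, which must be $0$ since the complex is quasi-isomorphic to $M_{x_i,F_i}(\mathcal F)$ and $\mathcal F$ is perverse. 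As $L_{x_i,F_i}$ carries the trivial rank one local system and $L$ a flat bundle of rank $d$, each transverse point contributes $\mathrm{Hom}(\mathbb C,\mathbb C^d)\cong\mathbb C^d$, the differential vanishes for degree reasons, and all local intersection signs are $+1$ because everything is complex, so
\[
\chi\big(M_{x_i,F_i}(\mathcal F)\big)=\dim H^0\,\mathrm{Hom}_{Fuk(T^*X)}(L_{x_i,F_i},L)=d\cdot\#\big(L_{x_i,F_i}\cap(t\cdot L)\big),
\]
a finite number independent of all sufficiently small $t>0$.

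It then remains to identify $\#\big(L_{x_i,F_i}\cap(t\cdot L)\big)$ with the multiplicity $m_i$ of $\Lambda_i$ in $Conic(L^{alg})^{sm}$. Near $(x_i,\xi_i)$ the brane $L_{x_i,F_i}$ is the germ $\phi(\Gamma_{dF_i})$ of a complex analytic Lagrangian transverse to $\Lambda_{\mathcal S}\supseteq\mathrm{Conic}(L)$ at $(x_i,\xi_i)$, and by the curve-selection argument used in the proof of Lemma \ref{transverse2} the intersection points accumulate, as $t\to 0^+$, only at $(x_i,\xi_i)$, while the energy-bound perturbation analysis of Section \ref{SectionHF} prevents any of them from escaping to infinity or onto other strata. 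Since $X$ is proper algebraic, $L$ is algebraic, $w\cdot L$ is complex analytic for $w\in\mathbb C^*$, and the family $\mathcal L_{w\in\mathbb C^*}$ is flat over a punctured disc about $0$ with flat limit $Conic(L^{alg})$; by conservation of intersection numbers under specialisation, the multiplicity of $Conic(L^{alg})$ along the component through the generic smooth point $(x_i,\xi_i)$ equals the local intersection number of that flat limit with the transverse complex analytic slice $\phi(\Gamma_{dF_i})$, that is $\lim_{t\to 0^+}\#\big(\phi(\Gamma_{dF_i})\cap(t\cdot L)\big)=m_i$. Together with the fact that the canonical orientation of $\Lambda_i$ coming from the complex conormal $T^*_{S_\alpha}X$ matches the complex orientation on the corresponding component of $Conic(L^{alg})^{sm}$, this gives $CC(\mathcal F)=\sum_i d\,m_i\,\Lambda_i=d\cdot Conic(L^{alg})^{sm}$.

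I expect the last step to be the main obstacle: one must check that the Fukaya-side count $\#\big(L_{x_i,F_i}\cap(t\cdot L)\big)$ really is the algebraic multiplicity of the flat limit, i.e. that it is stable as $t\to 0^+$ with no loss of intersection points (which is exactly what the monotonicity, energy estimates and curve-selection lemma of Section \ref{SectionHF} are needed for), and that the $\mathbb R_+$-degeneration $t\cdot L$ implicit in the Floer computation agrees, with multiplicities, with the algebraic $\mathbb C^*$-degeneration defining $Conic(L^{alg})$ — which uses the algebraicity of $L$, the fact that a generic transverse complex analytic slice computes the length of a flat limit, and the set-level identity $\mathrm{Conic}(L)=\phi\big(Conic(L^{alg})\big)$ noted in the preceding remark.
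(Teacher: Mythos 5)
Your proof is correct and follows essentially the same route as the paper's own (very terse) argument: identify the multiplicity of $CC(\mathcal{F})$ at a generic point with $\chi(M_{x,F}(\mathcal{F}))$, transport this via $\mu_X$ to the Floer complex $\mathrm{Hom}_{Fuk}(L_{x,F},L)$, use concentration in a single degree to reduce the Euler characteristic to $d$ times a transverse intersection count, and match that count with the multiplicity of $\mathrm{Conic}(L^{alg})$. The paper simply asserts the last two equalities; you fill in the degree-concentration step and the flat-limit/specialization argument that the $\mathbb{R}_+$-limit intersection count agrees with the algebraic multiplicity, which is a reasonable and faithful expansion of what the paper leaves implicit.
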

\begin{proof}
If $X$ is a proper algebraic variety, then $Conic(L^{alg})$ is an algebraic cycle whose multiplicity
at a smooth point $(x,\xi)$ is equal to the intersection number of $L_{x,F}$ with $\mathcal{L}_t$ for $t>0$ sufficiently small, and this is equal to the Euler characteristic of the local Morse group 
at $(x,\xi)$ quotient by the rank of the vector bundle.
\end{proof}

\subsection{Some generalization}
Holomorphic branes are very restrictive. First, they have strong conditions on $CC(\mathcal{F})$ for the sheaf $\mathcal{F}$ it represents or equivalently $Conic(L^{alg})$ if $X$ is proper algebraic. For example, on $T^*\mathbb{P}^1$, we cannot have
a connected $L$ with $Conic(L^{alg})$ equal to the sum of the zero section and one cotangent fiber, each of which has multiplicity 1. Second, fixing $Conic(L^{alg})$, they cannot produce all the perverse sheaves with this characteristic cycle. For example, on $T^*\mathbb{P}^1$, let $Conic(L^{alg})=T^*_XX+T^*_{z=0}X+T^*_{z=\infty}X$, then the only candidates for connected $L$ are meromorphic sections of the holomorphic bundle $T^*\mathbb{P}^1\rightarrow \mathbb{P}^1$ which have simple poles at $0$ and $\infty$.  One can show that up to a positive multiple, only $\Gamma_{\frac{dz}{z}}$ and $\Gamma_{-\frac{dz}{z}}$ are exact Lagrangians. So there are only two kinds of perverse sheaves coming in this way: one is $i_*\mathcal{L}_U[1]$ on $\mathbb{P}^1-\{\infty\}$ and $i_!\mathcal{L}_U[1]$ on $\mathbb{P}^1-\{0\}$, where $\mathcal{L}_U$ is a rank 1 local system on $U=\mathbb{P}^1-(\{0\}\cup \{\infty\})$, and the other is its Verdier dual. 

For this reason, we consider a broader class of branes which may produce more perverse sheaves, namely, the branes which are holomorphic near infinity, and are multi-graphs near the zero section. 

\begin{prop}
Let $L$ be a connected Lagrangian brane in $T^*X$. Assume that $L^\infty\neq \emptyset$,  there is $r>0$ such that $L\cap T^*X|_{|\xi|>r}$ is complex analytic on which it has grading $-n$, and $L\cap T^*X|_{|\xi|\leq r+\epsilon}$ is a multi-graph if $\overline{\pi(L)}=T^*_XX$, i.e. $\pi|_{L\cap T^*X|_{|\xi|\leq r}}$ is a submersion. Then $L$ quasi-represents a perverse sheaf $\mathcal{F}$.
\end{prop}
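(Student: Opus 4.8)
The plan is to follow the proof of Theorem \ref{main thm}, supplying the extra input near the zero section that is needed because $L$ is holomorphic only on $\{|\xi|>r\}$. Since $L$ is complex analytic near infinity, $L^\infty$ is complex analytic in $\overline{T^*X}_\mathbb{C}$ (\cite{PS}), so $L\in Lag_{\mathcal S}(T^*X)$ for a complex stratification $\mathcal S$, and by the equivalence (\ref{micro_S}) the sheaf $\mathcal F:=\mu_X^{-1}(L)$ lies in $Sh_{\mathcal S}(X)$; refine $\mathcal S$ so that each stratum is a cell. By the microlocal characterization of perverse sheaves (Proposition \ref{M_{x,F}-Test}) it suffices, for one test triple $(x_\alpha,\xi_\alpha,F_\alpha)$ in each stratum $S_\alpha$, to show $M_{x_\alpha,F_\alpha}(\mathcal F)$ has cohomology concentrated in degree $0$; by (\ref{functor_corresp}) together with $\mu_X(\mathcal F)\simeq L$ this says $Hom_{F(T^*X)}(L_{x_\alpha,F_\alpha},L)$ is concentrated in degree $0$.

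I would split on whether $\xi_\alpha=0$. For $S_\alpha$ not the open stratum, smooth points of $\Lambda_{S_\alpha}$ have $\xi_\alpha\neq0$, and since $M_{x,F}(\mathcal F)$ depends only on the connected locus $T^*_{S_\alpha}X-D^*_{S_\alpha}X$ I may rescale so that $|\xi_\alpha|>r$. By Lemma \ref{local morse}, $L_{x_\alpha,F_\alpha}$ meets $\Lambda_{\mathcal S}\supset\text{Conic}(L)$ only at $(x_\alpha,\xi_\alpha)$, so if $(x_\alpha,\xi_\alpha)\notin\text{Conic}(L)$ then $L_{x_\alpha,F_\alpha}\cap(t\cdot L)=\emptyset$ for $t>0$ small and the group vanishes. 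If $(x_\alpha,\xi_\alpha)\in\text{Conic}(L)$, then (as $\xi_\alpha\neq0$ it lies on $Cone(L^\infty)$) the argument of Lemma \ref{transverse2} makes $L_{x_\alpha,F_\alpha}$ transverse to $t\cdot L$ for $t>0$ small; the intersection points converge to $(x_\alpha,\xi_\alpha)$, so their $L$-preimages have covector norm tending to $\infty$ and all lie in $\{|\xi|>r\}$, where $L_{x_\alpha,F_\alpha}$ agrees with the holomorphic graph $\Gamma_{d\Re F_\alpha}$ near $x_\alpha$ and $t\cdot L$ is the image of a holomorphic Lagrangian. The local degree computation of Proposition \ref{degree} then applies at each such point, and with the grading $-n$ on $L$ it yields degree $0$, exactly as in Theorem \ref{main thm}. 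This covers every proper stratum, including those inside $\overline{\pi(L)}$ where $L$ fails to be holomorphic at bounded $|\xi|$, since the relevant intersections only occur for $|\xi|>r$.

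The remaining stratum is the open one, with a test triple $(x_\alpha,0,F_\alpha)$, and here there are two sub-cases. If $\overline{\pi(L)}\neq T^*_XX$, then $X\setminus\overline{\pi(L)}$ is a nonempty open set over which $L$ has no points; choosing $x_\alpha$ there (the open stratum being dense) makes $L_{x_\alpha,F_\alpha}$ supported over a neighborhood disjoint from $\pi(L)$, so $L_{x_\alpha,F_\alpha}\cap(t\cdot L)=\emptyset$ and $M_{x_\alpha,F_\alpha}(\mathcal F)\simeq0$. If $\overline{\pi(L)}=T^*_XX$, I invoke the multi-graph hypothesis: $\Re F_\alpha$ has a nondegenerate critical point at $x_\alpha$, of Morse index $n$ since $F_\alpha$ is holomorphic, and for $t>0$ small $(t\cdot L)\cap\pi^{-1}(B_\epsilon(x_\alpha))\cap\{|\xi|\le tr\}$ is a finite disjoint union of Lagrangian sheets, each an exact graph $\Gamma_{t\,dg_j}$ over $B_\epsilon(x_\alpha)$, meeting $\Gamma_{d\Re F_\alpha}\subset L_{x_\alpha,F_\alpha}$ in a single transverse point that is a nondegenerate index-$n$ critical point of $\Re F_\alpha-t\,g_j$. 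Running the confinement argument of Sections \ref{df, L_V} and \ref{simeq} — variable dilations and the energy bounds of Proposition \ref{EB} and Remark \ref{U.E.B}, applied to the perturbation family $\Phi=\{L^s_{x_\alpha,F_\alpha}\}$ and to conical perturbations of $L$ near the zero section as in Lemma \ref{2branes} — shows these are the only generators, and by Fukaya--Oh (Theorem \ref{FOthm}) and Morse theory for manifolds with corners (\ref{MTMC}) they all sit in the same degree, which the grading bookkeeping (grading $0$ on the holomorphic part of $L_{x_\alpha,F_\alpha}$, grading $-n$ on $L$, Morse index $n$) identifies with $0$. Hence $M_{x_\alpha,F_\alpha}(\mathcal F)$ is concentrated in degree $0$ for every stratum and $\mathcal F$ is perverse.

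I expect the main obstacle to be this last sub-case: one must check that a connected exact Lagrangian which is a multi-graph over $\{|\xi|\le r\}$ decomposes, locally near the zero section, into exact graph sheets, and — the actual work — that the Floer complex of $L_{x,F}$ against $L$ localizes to precisely the resulting index-$n$ Morse generators. The latter amounts to re-running the energy-bound and fringed-set estimates of Sections \ref{df, L_V} and \ref{simeq} for a Lagrangian that is holomorphic only near infinity, and then tracking the grading shifts carefully enough to see every generator land in degree $0$. The strata with $\xi_\alpha\neq0$, by contrast, follow almost immediately from Lemmas \ref{local morse}, \ref{transverse2} and the local index formula in the proof of Proposition \ref{degree}, and the open-stratum case with $\overline{\pi(L)}\neq T^*_XX$ is an emptiness argument.
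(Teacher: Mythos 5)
Your proposal follows the paper's proof structure closely: reduce to the zero section (for strata with $\xi_\alpha\neq 0$ the intersections occur in the holomorphic locus of $L$, so Proposition~\ref{degree} applies verbatim), dispose of the case $\overline{\pi(L)}\neq T^*_XX$ by emptiness, and in the remaining case localize the Floer complex to the sheets of the multi-graph. Your Morse-theoretic unpacking of the last case (sheets as exact graphs $\Gamma_{t\,dg_j}$, each meeting $\Gamma_{d\Re F_\alpha}$ in a nondegenerate index-$n$ critical point of $\Re F_\alpha - t g_j$, and the appeal to Fukaya--Oh and (\ref{MTMC}) to identify the generators) is a correct and in fact more explicit elaboration of what the paper states in one line, and your handling of the $\xi_\alpha\neq 0$ strata is also more detailed than the paper's single sentence.

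The genuine gap is exactly the step you flag at the end and then do not close. In your ``grading bookkeeping'' you use ``grading $-n$ on $L$'' as an input, but the hypothesis only fixes the grading of $L$ to be $-n$ on its holomorphic part $L\cap T^*X|_{|\xi|>r}$; near the zero section $L$ is merely a multi-graph, and the phase there is close to that of the zero section rather than being constant $\pm 1$. The value of the grading of $L$ on each sheet is thus determined only by continuing the lift of the phase map continuously from the holomorphic part, and one must rule out that the phase winds along the way — otherwise the generator you found could land in a nonzero degree. The paper closes exactly this gap by observing that $L$ is connected and that ``each sheet in the covering connects to a holomorphic part of grading $-n$ by a path along which there is no critical change of the grading''; this pins down the grading over the sheets and forces every generator into degree $0$. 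Without some version of this path-lifting/no-winding argument, the assertion ``Morse index $n$ plus grading $-n$ on $L$ gives degree $0$'' is not yet justified, because the $-n$ has not been propagated from $\{|\xi|>r\}$ to the sheets where the intersections actually sit.
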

\begin{proof}
We only need to check the local Morse group on the zero section.

If $\overline{\pi(L)}\neq T^*_XX$, then $M_{x,F}(\mathcal{F})\simeq 0$ for $(x,dF_x=0)$ a generic point
on the zero section.

If $\overline{\pi(L)}= T^*_XX$, take a generic point $(x,0)$ on the zero section and construct a local Morse brane $L_{x,F}$. Over a small ball $B_r(x)$ of $x$ in $X$, $\pi^{-1}(B_r(x))\cap  L$ is a finite covering plus some holomorphic portion of $L$. Consider $HF(L_{x,F}, t\cdot L)$ for $t>0$ sufficiently small. Since each sheet in the covering connects to a holomorphic part of grading $-n$ by a path along which there is no critical change of the grading, $HF^\bullet (L_{x,F}, t\cdot L)$ is concentrated in degree 0.
\end{proof}
Although one could not represent every perverse sheaf by a holomorphic Lagrangian brane, it is speculative that locally every indecomposable perverse sheaf can be represented by a holomorphic brane.

\section{Appendices}

\appendix

\section{Analytic-Geometric Categories.}\label{agc}
Analytic-Geometric Categories provide a setting on subsets of manifolds and maps between manifolds, where one can always expect reasonable geometry to happen after standard operations. A typical example is if a $C^1$-function $f:X\rightarrow\mathbb{R}$ is in an analytic-geometric category $\mathcal{C}$ and it is proper,  then its critical values form a discrete set in $\mathbb{R}$. For more general and precise statement, see Lemma \ref{critical}. This tells us that the map $f=x^2\sin(\frac{1}{x}):\mathbb{R}\rightarrow\mathbb{R}$ does not belong to any $\mathcal{C}$, and gives us a sense that certain pathological behavior of arbitrary functions and subsets of manifolds are ruled out by the analytic-geometric setting.

The following is a brief recollection of background results from \cite{Miller}. All manifolds here are assumed to be real analytic, unless otherwise specified.

\subsection{Definition} \label{agcdef}
An \emph{analytic-geometric category} $\mathcal{C}$ assigns every analytic manifold $M$ a collection of subsets in $M$, denoted as $\mathcal{C}(M)$, satisfying the following axioms:\\
(1) $M\in \mathcal{C}(M)$ and $\mathcal{C}(M)$ is a Boolean algebra, namely, it is closed under the standard operations $\cap, \cup, (-)^{c}$ (taking complement);\\
(2) If $A\in \mathcal{C}(M)$, then $A\times\mathbb{R}\in\mathcal{C}(M\times\mathbb{R})$;\\
(3) For any proper analytic map $f: M\rightarrow N$, $f(A)\in \mathcal{C}(N)$ for all $A\in \mathcal{C}(M)$;\\
(4) If $\{U_i\}_{i\in I}$ is an open covering of $M$, then $A\in\mathcal{C}(M)$ if and only if $A\cap U_i\in \mathcal{C}(U_i)$ for all $i\in I$;\\
(5) Any bounded set in $\mathcal{C}(\mathbb{R})$ has finite boundary.

It is easy to construct a category $\mathcal{C}$ from these data. Namely, define objects as all pairs $(A,M)$ with $A\in \mathcal{C}(M)$, and a morphism $f: (A, M)\rightarrow (B, N)$ to be a continuous map $f: A\rightarrow B$, such that the graph $\Gamma_f\subset A\times B$ is lying in $\mathcal{C}(M\times N)$. We will always omit the ambient manifolds, and will call $A$ a $\mathcal{C}$-set and $f:A\rightarrow B$ a $\mathcal{C}$-map.

The smallest analytic-geometric category is the \emph{subanalytic category }$\mathcal{C}_{an}$ consisting of subanalytic subsets and continuous subanalytic maps. It is enough to assume that $\mathcal{C}=\mathcal{C}_{an}$ thoughout the paper, but we work in more generality.

\subsection{Basic facts}\label{AGBR}
Here we list several basic facts on analytic-geometric categories that are used in the main content without proof. 

\subsubsection{Derivatives}\label{tangent}
 Let $A$ be a $(C^1,\mathcal{C})$-submanifold of $M$. If $A\in\mathcal{C}(M)$, then its tangent bundle $TA$ is a $\mathcal{C}$-set of $TM$, and its conormal bundle $T^*_AM$ is a $\mathcal{C}$-set in $T^*M$.

\subsubsection{Curve Selection Lemma.}
\begin{lemma}\label{CSL}
Let $A\in\mathcal{C}(M)$. For any $x\in \overline{A}-A$ and $p\in\mathbb{Z}_{>0}$, there is a $\mathcal{C}$-curve, i.e. a $\mathcal{C}$-map $\rho: [0,1)\rightarrow \overline{A}$, of class $C^p$ with $\rho(0)=x$ and $\rho((0,1))\subset A$. 
\end{lemma}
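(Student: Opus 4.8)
The plan is to reduce to a local statement in Euclidean space and then build the curve in two stages: first a merely continuous $\mathcal{C}$-curve landing at $x$, then a reparametrization that promotes it to class $C^p$. Since $\mathcal{C}$ is local (axiom (4)) and tangent data transform correctly under charts (\S\ref{tangent}), I would first choose a smooth chart around $x$, so that $M=\mathbb{R}^n$, $x=0$, and, after intersecting with a small closed ball, $A$ is bounded with $0\in\overline A\setminus A$; because $x\notin A$ but $x\in\overline A$, the set $A$ genuinely accumulates at $0$. The final normalization of the domain to $[0,1)$ is a trivial linear reparametrization, so I may work with any domain $[0,\epsilon)$.

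For the continuous curve I would invoke the local conic structure of $\mathcal{C}$-sets, a consequence of the $\mathcal{C}$-triangulation theorem of \cite{Miller}: for small $\epsilon>0$ the triple $(\overline{B_\epsilon},\overline{B_\epsilon}\cap\overline A,\overline{B_\epsilon}\cap A)$ is $\mathcal{C}$-homeomorphic, via a map fixing the sphere $S_\epsilon$, to the cone with vertex $0$ over $(S_\epsilon, S_\epsilon\cap\overline A, S_\epsilon\cap A)$. As $A$ accumulates at $0$, the link $S_\epsilon\cap A$ is nonempty; picking $y$ in it, the image of the cone segment from the vertex to $y$ is a continuous $\mathcal{C}$-map $\gamma_0\colon[0,1]\to\overline A$ with $\gamma_0(0)=0$ and $\gamma_0((0,1])\subset A$. (Alternatively, one can run an induction on $n$ via a $C^p$-cell decomposition of $\overline A$: pick a cell $C\subseteq A$ with $0\in\overline C$, necessarily $\dim C\ge 1$, realize $C$ as the graph of a $\mathcal{C}$-map over a lower-dimensional cell, and lift the curve furnished by the inductive hypothesis; lifting through a graph preserves continuity.)

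To upgrade $\gamma_0$ to class $C^p$, after discarding a relatively closed lower-dimensional subset I may assume $\gamma_0$ restricted to some $(0,\delta)$ is injective with image a one-dimensional $C^p$ $\mathcal{C}$-submanifold $\Gamma$. The Gauss map $\Gamma\to\mathbb{P}^{n-1}$, $z\mapsto T_z\Gamma$, is a $\mathcal{C}$-map of a curve into a compact space, so by the monotonicity theorem for one-variable $\mathcal{C}$-maps it extends continuously to $0$; rotating coordinates I may take this limiting tangent line to be the $x_1$-axis, so that near $0$ one has $\Gamma=\{(x_1,g(x_1)):0<x_1<\delta\}$ for a $\mathcal{C}$-map $g$ with $g(0^+)=0$ and $g'(0^+)=0$. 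I would then reparametrize: replace $x_1$ by $\phi(u)$ for a suitable $\mathcal{C}$-homeomorphism $\phi\colon[0,\epsilon')\to[0,\delta)$ with $\phi(0)=0$, chosen so that $\rho(u)=(\phi(u),g(\phi(u)))$ extends to a $C^p$ map up to $u=0$, and declare $\rho$ the desired curve.

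The real content, and the step I expect to be the main obstacle, is this last reparametrization: a $\mathcal{C}$-parametrization of a curve through $x$ may have derivatives blowing up as the parameter tends to $0$, so one must produce $\phi$ that kills this blow-up. In the subanalytic category $\mathcal{C}_{an}$ — which the paper notes is enough — this is immediate from the Puiseux expansion: each component of $g$ becomes a convergent power series in $u^{1/b}$ for a suitable common denominator $b$, and $\phi(u)=u^{bN}$ for $N$ large makes every component a $C^p$ function of $u$. For a general analytic-geometric category the derivative bounds $u\mapsto\sup_{k\le p}\lvert(g\circ\phi)^{(k)}(u)\rvert$ are themselves $\mathcal{C}$-functions, and monotonicity lets one choose $\phi$ decaying fast enough to force each of these bounds to remain finite at $0$; alternatively one simply cites the $C^p$ form of this statement from \cite{Miller}. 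Everything else used — the conic structure, cell decomposition, monotonicity, and the existence of limiting tangent lines for $\mathcal{C}$-curves — is standard $\mathcal{C}$-geometry from the same reference.
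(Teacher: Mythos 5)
The paper gives no proof of this lemma: it appears in Appendix~\ref{AGBR} under the heading ``basic facts on analytic-geometric categories that are used in the main content without proof,'' with \cite{Miller} as the reference. So there is no in-paper argument to compare against.

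Your sketch is a sound outline of the standard proof, and it identifies the right pressure point. The reduction to a local Euclidean picture, the use of cell decomposition (or the local conic structure theorem) to extract a continuous $\mathcal{C}$-curve, and the appeal to the monotonicity theorem and to limiting tangent lines to put the curve in graph form near $0$ are all correct and standard $\mathcal{C}$-geometry. The only place where the argument is genuinely incomplete is the reparametrization that promotes continuity to $C^p$. Your Puiseux argument is airtight for $\mathcal{C}_{an}$, and in a polynomially bounded category the substitution $\phi(u)=u^N$ does work (because $g$ bounded and definable on $(0,\delta)$ forces $g'(u)=O(u^{-\alpha})$ with $\alpha<1$, and similarly for higher derivatives, so a single large $N$ kills all blow-up). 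But in a non-polynomially-bounded structure such as $\mathcal{C}_{\mathrm{an},\exp}$, which the paper explicitly allows, derivatives can blow up faster than any power, so a monomial substitution need not suffice and the ``choose $\phi$ decaying fast enough'' step is not as routine as the phrasing suggests; one really needs the $C^p$ cell decomposition theorem of \cite{Miller}, which you do cite as the fallback. Given that the paper itself treats the lemma as a citation, deferring to \cite{Miller} at exactly that step is the right move, and the rest of your sketch is correct.
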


\subsubsection{Defining functions}\label{def function}

For any closed set $A$ in $M$,  a \emph{defining function} for $A$ is a function $f:M\rightarrow \mathbb{R}$ satisfying $A=\{f=0\}$. 

\begin{prop}
For any closed $\mathcal{C}$-set $A$ and any positive integer $p$, there exists a $(C^p, \mathcal{C})$-defining function for $A$.
\end{prop}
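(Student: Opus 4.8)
The plan is to build a \emph{non-negative} function $f\colon M\to[0,\infty)$ of class $C^{p}$ lying in $\mathcal{C}$ with $\{f>0\}=U:=M\setminus A$; then $A=f^{-1}(0)$ automatically, so this suffices. The cases $A=M$ (take $f\equiv0$) and $A=\varnothing$ (take $f\equiv1$) are trivial, so assume $U$ is a nonempty proper open $\mathcal{C}$-set. First I would reduce to $M=\mathbb{R}^{N}$: embed $M$ properly real-analytically as a closed submanifold of some $\mathbb{R}^{N}$ (Grauert), so that axiom~(3) makes $A$ a closed $\mathcal{C}$-set of $\mathbb{R}^{N}$ and the restriction to $M$ of a $(C^{p},\mathcal{C})$-defining function for $A\subset\mathbb{R}^{N}$ is one for $A\subset M$.

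The construction then runs in two stages. In \textbf{Stage~1} I produce a merely \emph{continuous} $\mathcal{C}$-defining function: on $\mathbb{R}^{N}$ take $\delta(x)=\operatorname{dist}(x,A)$, which is Lipschitz, vanishes exactly on $\overline{A}=A$, and lies in $\mathcal{C}$ because the distance to a closed $\mathcal{C}$-set is a $\mathcal{C}$-function — writing $\delta$ as the fibrewise minimum over the $\mathcal{C}$-set $\{(x,a):a\in A,\ |x-a|\le|x-a_{0}|\}$, for a fixed $a_{0}\in A$, of the $\mathcal{C}$-function $(x,a)\mapsto|x-a|$, and invoking axioms (2)--(3). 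In \textbf{Stage~2} I upgrade to class $C^{p}$. Using the $\mathcal{C}$-analogue of cell decomposition compatible with $A$, choose a locally finite cover $\{O_{j}\}$ of $M$ by open $\mathcal{C}$-sets with a subordinate $(C^{p},\mathcal{C})$-partition of unity $\{\psi_{j}\}$ such that each $O_{j}$ either misses $A$ — set $g_{j}\equiv1$ — or carries a $(C^{p},\mathcal{C})$-function $g_{j}\ge0$ with zero set $A\cap O_{j}$, obtained locally by trivializing the cells of $A$ in a chart and summing squares of the transverse coordinates. Put $f=\sum_{j}\psi_{j}\,g_{j}^{N_{j}}$, with exponents $N_{j}$ fixed below; this is a non-negative $\mathcal{C}$-function with $\{f>0\}=U$, manifestly $C^{p}$ on the open set $U$.

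The hard part, which I expect to be the main obstacle, is $C^{p}$-regularity of $f$ \emph{at points of $A$} — including singular points — where every $g_{j}$ degenerates. This is handled by the tameness of $\mathcal{C}$: on each relatively compact piece the \L{}ojasiewicz inequality for $\mathcal{C}$-sets gives $g_{j}\lesssim\delta^{c_{j}}$ and $|\partial^{\alpha}g_{j}|\lesssim\delta^{c_{j}-|\alpha|}$ for $|\alpha|\le p$, so choosing $N_{j}$ large (which leaves the positivity set unchanged) makes every partial of $\psi_{j}g_{j}^{N_{j}}$ of order $\le p$ an $O(\delta^{N_{j}c_{j}-p})$, hence continuously extendable by $0$ across $A$; since the family is locally finite this yields $f\in C^{p}(M)$. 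The delicate bookkeeping is keeping the \L{}ojasiewicz exponents and the $N_{j}$ coherent over the finitely many charts meeting each point — precisely where the finiteness axioms are indispensable, and why an arbitrary closed set, such as the zero set of $x^{2}\sin(1/x)$, admits no $C^{p}$ defining function. One could alternatively organize the argument as an induction on $\dim A$, peeling off the interior $A^{\circ}$ and its boundary via cell decomposition; the details are in \cite{Miller}.
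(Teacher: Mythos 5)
The paper states this result without proof: Appendix~\ref{agc} opens by declaring itself ``a brief recollection of background results from \cite{Miller}'', so there is no internal argument to compare yours against. Taken on its own terms, your outline has the right global shape and matches the spirit of the cited reference: Grauert embedding to reduce to $\mathbb{R}^N$, the distance function as a continuous $\mathcal{C}$-defining function, and patching local $C^p$ models with a $(C^p,\mathcal{C})$-partition of unity, with a Łojasiewicz-type bound and large exponents to force $C^p$ extendability across $A$.

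The genuine gap is the local construction in Stage~2. ``Trivializing the cells of $A$ in a chart and summing squares of the transverse coordinates'' produces a function whose zero set is a \emph{single} cell, and it is $C^p$ only on the open tube over that cell's (open) base; it does not yield a $C^p$ function with zero set $A\cap O_j$ on a full neighbourhood of a frontier point of the cell. But frontier points are exactly the singular locus of $A$, where cells of several dimensions accumulate and the graph parametrization of a cell need not extend even continuously, so no chart of the kind you describe exists there, and no sum or product of such quadratic factors will be $C^p$ with the right zero set near such a point. For $A$ a closed $C^p$ submanifold your argument is correct as it stands; the whole content of the proposition is the singular case. Relatedly, the derivative bound $|\partial^{\alpha}g_j|\lesssim\delta^{c_j-|\alpha|}$ you invoke is not the Łojasiewicz inequality --- it is an additional tempered-growth estimate on all partial derivatives, which requires a separate argument in the $\mathcal{C}$-category. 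The repair is the induction on $\dim A$ that you mention only in passing at the end: one must first produce a $C^p$ defining function for the lower-dimensional skeleton of $A$, then feed it back in as a weight near the singular strata before using the transverse-coordinate model on the top-dimensional cells. That inductive step is what your two-stage presentation elides, and it is where \cite{Miller}'s actual proof does its work.
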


\begin{remark}\label{def fcn}
In the main content, we frequently use the notion of a function $f$ satisfying $\{f>0\}=V$ for a given open $\mathcal{C}$-set $V$, and we will call $f$ a \emph{semi-defining function }of $V$. 
\end{remark}

\subsubsection{Whitney statifications.}

(1) Let $M=\mathbb{R}^N$.  A pair of $C^p$ submanifolds $(X,Y)$ in $M$ ($\text{dim} X=n, \text{dim} Y=m$) is said to satisfy the \emph{Whitney property} if \\
(a) (Whitney property A) For any point $y\in Y$ and any sequence $\{x_k\}_{k\in \mathbb{N}}\subset X$ approaching $y$, if $\lim\limits_{k\rightarrow \infty}T_{x_k}X$ exists and equal to $\tau$ in $Gr_n(\mathbb{R}^N)$, then $T_yY\subset \tau$;\\
(b) (Whitney property B) In addition to the assumptions in (a), let $\{y_k\}_{k\in\mathbb{N}}\subset Y$ be any sequence approaching $y$. If the limit of the secant lines $\lim\limits_{k\rightarrow\infty}\overline{x_ky_k}$ exists and equal to $\ell$, then $\ell\subset\tau$.

It is easy to see that Whitney property B implies Whitney property A. 
The Whitney property obviously extends for any manifold $M$, just by covering $M$ with local charts.

(2) A $C^p$ \emph{stratification} of a closed subset $P$ is a locally finite partition by $C^p$-submanifolds $\mathcal{S}=\{S_\alpha\}_{\alpha\in\Lambda}$ satisfying $$S_\alpha\cap\overline{S}_\beta\neq \emptyset, \alpha\neq \beta\Rightarrow S_\alpha\subset\overline{S}_\beta-S_\beta.$$

A \emph{Whitney stratification} of $P$ in class $C^p$ is a $C^p$ stratification $\mathcal{S}=\{S_\alpha\}_{\alpha\in\Lambda}$ such that every pair $(S_\alpha, S_\beta)$ 
satisfies the Whitney property. 

We will also need the following notions:

(i) We say that a collection of subsets in $M$, $\mathcal{A}$, is \emph{compatible} with another collection of subsets $\mathcal{B}$, if for any $A\in \mathcal{A}$ and $B\in\mathcal{B}$, we have either $A\cap B=\emptyset$ or $A\subset B$.

(ii) Two stratifications $\mathcal{S}$ and $\mathcal{T}$ are said to be \emph{transverse}, if for any $S_\alpha\in\mathcal{S}$ and $T_\beta\in\mathcal{T}$, we have $S_\alpha\pitchfork T_\beta$. It is clear that 
$$\mathcal{S}\cap\mathcal{T}:=\{S_\alpha\cap T_\beta: S_\alpha\in\mathcal{S}, T_\beta\in\mathcal{T}\}$$
is also a stratification.

(iii) Let $f: P\rightarrow N$ be a $C^1$-map, and $\mathcal{S}, \mathcal{T}$ be $C^p$-stratifications of $P$ and $N$ respectively. The pair $(\mathcal{S},\mathcal{T})$ is called a $C^p$-\emph{stratification of }$f$ if $f(S_\alpha)\in\mathcal{T}$ for all $S_\alpha\in\mathcal{S}$ and $df|_{S_\alpha}$ is a submersion. 

Now assume $\mathcal{C}=\mathcal{C}_{\text{an}}$, $\mathcal{C}_{\text{an}}^{\mathbb{R}}$ or $\mathcal{C}_{\text{an}, \exp}$ (see the definitions in \cite{Miller}).

\begin{prop}\label{Whitney}
Let $P$ be a closed $\mathcal{C}$-set in $M$. Let $\mathcal{A},\mathcal{B}$ be collections of $\mathcal{C}$-sets in $M, N$ respectively. \\
(a) There is a $C^p$-Whitney stratification $\mathcal{S}\subset \mathcal{C}(M)$ of $P$ that is compatible with $\mathcal{A}$, and has connected and relatively compact strata. \\
(b) Let $f: P\rightarrow N$ be a proper $(C^1,\mathcal{C})$-map. Then there exists a $C^p$-Whitney stratification $(\mathcal{S}, \mathcal{T})\subset \mathcal{C}(M)\times\mathcal{C}(N)$ of $f$ such that $\mathcal{S}$ and $\mathcal{T}$ are compatible with $\mathcal{A}$ and $\mathcal{B}$ respectively, and have connected and relatively compact strata.

One can make the strata in (a), (b) to be all cells. 
\end{prop}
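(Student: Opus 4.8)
\emph{Overall plan.} I would prove (a) and (b) by the classical two-step scheme for Whitney stratification in a tame category: first produce an ordinary $C^p$-stratification compatible with the prescribed families, then upgrade it to a Whitney stratification by a finite downward induction on dimension, using that for any pair of strata the locus where the Whitney condition fails is a $\mathcal{C}$-set of strictly smaller dimension. The deep inputs --- $C^p$-cell decomposition compatible with a finite family, the dimension theory of $\mathcal{C}$-sets, and the definable Sard (generic submersivity) theorem --- are the structure results for $\mathcal{C}\in\{\mathcal{C}_{\text{an}},\mathcal{C}_{\text{an}}^{\mathbb{R}},\mathcal{C}_{\text{an},\exp}\}$ recalled from \cite{Miller}, and I would quote them rather than reprove them.

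\emph{Coarse stratification.} By axiom (4) both assertions are local on $M$, so by exhausting $M$ with relatively compact open $\mathcal{C}$-sets and patching I may assume $\mathcal{A}$ and $\mathcal{B}$ finite and all sets in sight relatively compact; this exhaustion is also what eventually forces relatively compact strata. The $C^p$-cell decomposition theorem then gives a finite partition of $M$ into connected $C^p$-submanifolds, each a cell, refining $\{P\}\cup\mathcal{A}$ and satisfying the frontier condition; restricting to $P$ yields a $C^p$-stratification $\mathcal{S}_0$ of $P$ with cell strata, compatible with $\mathcal{A}$ but not yet Whitney.

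\emph{Upgrading to Whitney via the bad-set lemma.} For a pair $(X,Y)$ of $C^p$ $\mathcal{C}$-submanifolds with $Y\subset\overline X\setminus X$, let $W(X,Y)\subset Y$ be the locus where Whitney condition B for $(X,Y)$ fails. By the same device used in the proof of Lemma \ref{transverse1}, the Gauss map $x\mapsto T_xX$ and the secant-line map $(x,y)\mapsto\overline{xy}$ are $\mathcal{C}$-maps into compact Grassmannians (by \ref{tangent} and closure of $\mathcal{C}$ under composition), so $W(X,Y)$ is cut out inside $Y$ by a closed $\mathcal{C}$-condition on limits and is a $\mathcal{C}$-set; a dimension count using the curve selection lemma (\ref{CSL}) shows $\dim W(X,Y)<\dim Y$, for otherwise $W(X,Y)$ would contain a nonempty relatively open subset of $Y$ and a generic $\mathcal{C}$-curve in $X$ converging to such a point would contradict the limiting-tangent structure along it. Letting $B$ be the union of the finitely many $W(S_\alpha,S_\beta)$ over pairs of strata of $\mathcal{S}_0$, one has $\dim B<\dim P$; decomposing $\{P\}\cup\mathcal{A}\cup\{B\}$ into cells pushes the Whitney failures strictly downward in dimension, and iterating terminates after at most $\dim P$ steps. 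The iteration is legitimate because subdividing strata into cells preserves the frontier condition (take the cell decomposition compatible with the closures of all strata) and preserves Whitney A and B (shrinking $X$ to an open $X'\subset X$, or $Y$ to $Y'\subset Y$, only shrinks the relevant sets of limit tangent planes and secant lines). This gives the Whitney stratification $\mathcal{S}$ with cell strata compatible with $\mathcal{A}$, which is (a).

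\emph{Stratification of the map, and the main obstacle.} For (b) I would first apply (a) to $N$ and $\mathcal{B}$ to obtain a Whitney stratification $\mathcal{T}$ of $N$ with cell strata, then take a $C^p$-stratification of $P$ compatible with $\mathcal{A}$ and with $\{f^{-1}(\overline T):T\in\mathcal{T}\}$, and finally, by downward induction on $\dim T$, refine $P$ over each $f^{-1}(T)$ so that $f$ restricts to a submersion onto $T$: generic submersivity says the critical values of $f$ on a given stratum form a $\mathcal{C}$-set of dimension $<\dim N$, which I absorb into the lower strata of $N$, re-Whitney-stratifying $N$ each time; since these corrections only ever enlarge strictly lower-dimensional strata the induction terminates. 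This produces $(\mathcal{S}_1,\mathcal{T}_1)$, a $C^p$-stratification of $f$ with $\mathcal{T}_1$ Whitney; I then upgrade $\mathcal{S}_1$ to Whitney by the argument above --- subdividing a stratum on which $f$ is already a submersion onto a stratum of $N$ leaves it such --- and subdivide into cells, refining $\mathcal{T}$ correspondingly only when a cell decomposition of a domain stratum forces it, so the pair stays a stratification of $f$. The main obstacle is the bad-set lemma $\dim W(X,Y)<\dim Y$: for $\mathcal{C}_{\text{an}}$ this is the classical Whitney--\L{}ojasiewicz--Hironaka fact, and for the other two categories it is part of the geometric structure theory of \cite{Miller}, so the cleanest honest route is to invoke it; everything else --- the frontier condition, compatibility with $\mathcal{A}$ and $\mathcal{B}$, relative compactness via the exhaustion, and submersivity of $f$ surviving refinement --- is bookkeeping that must nonetheless be threaded through each induction step.
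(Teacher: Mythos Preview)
The paper does not prove this proposition at all: it is stated in Appendix~\ref{AGBR} under the heading ``Basic facts'' with the explicit preamble ``Here we list several basic facts on analytic-geometric categories that are used in the main content without proof,'' and is attributed to \cite{Miller}. So there is no proof in the paper to compare your proposal against.

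That said, your sketch is a reasonable outline of the standard argument and is broadly correct in spirit. A few comments. First, you rightly identify the bad-set lemma $\dim W(X,Y)<\dim Y$ as the crux, and you are honest that for the categories beyond $\mathcal{C}_{\text{an}}$ you would invoke \cite{Miller} rather than reprove it; this is appropriate, since that is exactly what the paper does implicitly. Second, your localization step (``both assertions are local on $M$'') needs a little care: Whitney stratifications do not in general glue na\"ively across an open cover, so the exhaustion has to be organized as a single global downward induction on dimension rather than a patching of local stratifications---this is standard but worth flagging. Third, in (b) your back-and-forth refinement of $\mathcal{T}$ and $\mathcal{S}$ is the usual one, but you should be explicit that properness of $f$ is what guarantees the preimages $f^{-1}(\overline T)$ are closed $\mathcal{C}$-sets (via axiom (3) applied to the graph), so that the cell decomposition theorem applies to them. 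None of these are genuine gaps; they are the bookkeeping you allude to in your last sentence.
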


(iv) For any $C^p$ Whitney stratification $\mathcal{S}$ of $M$, define its associated cornomal 
$$\Lambda_\mathcal{S}:=\bigcup\limits_{S_\alpha\in\mathcal{S}}T^*_{S_\alpha}X.$$
Let $f: X\rightarrow\mathbb{R}$ be a $C^1$-map. We say $x\in X$ is a $\Lambda_\mathcal{S}$-\emph{critical point of }$f$ if $df_x\in\Lambda_\mathcal{S}$. We say $w\in\mathbb{R}$ is a $\Lambda_\mathcal{S}$-\emph{critical value of }$f$ if $f^{-1}(w)$ contains a $\Lambda_\mathcal{S}$-critical point. More generally, let $\mathbf{f}=(f_1,...,f_n): M\rightarrow \mathbb{R}^n$ be a proper $(C^1, \mathcal{C})$-map. We say that $x$ is a \emph{critical point} of $\mathbf{f}$ if there is a nontrivial linear combination of $(df_i)_x, i=1,...,n$ contained in $\Lambda_{\mathcal{S}}$. Similarly, $\mathbf{w}\in\mathbb{R}^n$ is called a \emph{critical value of} $\mathbf{f}$ if $\mathbf{f}^{-1}(\mathbf{w})$ contains a critical point. Otherwise, $\mathbf{w}$ is called a \emph{regular value of} $\mathbf{f}$.

If in addition $\mathcal{S}\subset\mathcal{C}(M)$ and $f: X\rightarrow\mathbb{R}$ is a proper $\mathcal{C}$-map, then we apply Curve Selection Lemma (Lemma \ref{CSL}) and have
\begin{lemma}\label{critical}
The $\Lambda_\mathcal{S}$-critical values of $f$ form a discrete subset of $\mathbb{R}$. 
\end{lemma}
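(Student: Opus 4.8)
The plan is to identify the set of $\Lambda_\mathcal{S}$-critical values with $f(C)$, where $C=\{x\in X:\ df_x\in\Lambda_\mathcal{S}\}$ is the critical locus, to check that $f(C)$ is a $\mathcal{C}$-subset of $\mathbb{R}$, and then to rule out that $f(C)$ contains an open interval; this last point is where Curve Selection enters.

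First I would observe that $C\in\mathcal{C}(X)$. Since $f$ is $(C^1,\mathcal{C})$, its graph $\Gamma_f$ is a $(C^1,\mathcal{C})$-submanifold, so by \S\ref{tangent} the tangent bundle $T\Gamma_f$ is a $\mathcal{C}$-set, and from it the differential $df\colon X\to T^*X$ is recovered by a $\mathcal{C}$-operation; thus $df$ is a continuous $\mathcal{C}$-map. Likewise $\Lambda_\mathcal{S}=\bigcup_\alpha T^*_{S_\alpha}X$ is a locally finite union of $\mathcal{C}$-sets, again by \S\ref{tangent}. Hence $C=(df)^{-1}(\Lambda_\mathcal{S})\in\mathcal{C}(X)$, and, $f$ being a proper $\mathcal{C}$-map, $V:=f(C)\in\mathcal{C}(\mathbb{R})$; by definition $V$ is precisely the set of $\Lambda_\mathcal{S}$-critical values of $f$. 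By axiom~(5) (a bounded $\mathcal{C}$-subset of $\mathbb{R}$ has finite boundary) together with axiom~(4), $V$ is a locally finite union of points and open intervals, hence closed and discrete provided it contains no open interval.

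So suppose $(a,b)\subseteq V$, put $c=\tfrac{a+b}{2}$ and $W:=C\cap f^{-1}\big((a,c)\big)\in\mathcal{C}(X)$, which is nonempty as $(a,c)\subseteq V$. Choosing $t_n\uparrow c$ and $x_n\in C$ with $f(x_n)=t_n$, the $x_n$ lie in the compact set $f^{-1}([t_1,c])$, so after passing to a subsequence $x_n\to x_0$ with $f(x_0)=c$; since $c\notin(a,c)$ we get $x_0\in\overline{W}\setminus W$. By Lemma \ref{CSL} there is a $C^1$ $\mathcal{C}$-curve $\rho\colon[0,1)\to\overline{W}$ with $\rho(0)=x_0$ and $\rho\big((0,1)\big)\subseteq W\subseteq C$. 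As $\mathcal{S}$ is locally finite near $x_0$ and each $\rho^{-1}(S_\alpha)$ is a $\mathcal{C}$-subset of $(0,1)$ — hence near $0$ a finite union of points and intervals — after shrinking we may assume $\rho\big((0,\delta)\big)\subseteq S_{\alpha_0}$ for a single stratum $S_{\alpha_0}$. Then for $t\in(0,\delta)$ we have $\rho'(t)\in T_{\rho(t)}S_{\alpha_0}$, while $\rho(t)\in C$ forces $df_{\rho(t)}\in T^*_{S_{\alpha_0}}X$, i.e.\ $df_{\rho(t)}\big|_{T_{\rho(t)}S_{\alpha_0}}=0$; hence $(f\circ\rho)'\equiv 0$ on $(0,\delta)$ and, by continuity, $f\circ\rho\equiv f(x_0)=c$ on $[0,\delta)$. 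This contradicts $f(\rho(t))\in(a,c)$ for $t>0$, and the lemma follows.

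The two steps I expect to require the most care are the verification that the derivative data $C=(df)^{-1}(\Lambda_\mathcal{S})$ is genuinely a $\mathcal{C}$-set — this is exactly the reason the finiteness statement must be made inside an analytic-geometric category — and the stratum-stabilization along $\rho$, i.e.\ that the selected curve eventually runs inside one stratum of $\mathcal{S}$, which is what converts the conormal condition $df\in\Lambda_\mathcal{S}$ into the pointwise identity $(f\circ\rho)'=0$. The small device that makes the contradiction work is to start the curve at the value $c$, which lies \emph{outside} the interval $(a,c)$ swept out by the rest of the curve, forcing $f\circ\rho$ to be non-constant a priori.
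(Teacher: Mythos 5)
Your argument is correct and carries out exactly the strategy the paper leaves implicit: it reduces everything to showing the $\mathcal{C}$-set $V=f(C)$ of critical values contains no interval, and then invokes the Curve Selection Lemma at a boundary point of $C\cap f^{-1}\bigl((a,c)\bigr)$ to derive a contradiction via a curve along which $(f\circ\rho)'\equiv 0$. The paper states the lemma only as an application of Lemma \ref{CSL} without spelling out the details, and your proof supplies precisely those details (the $\mathcal{C}$-set bookkeeping for $C=(df)^{-1}(\Lambda_\mathcal{S})$, the passage to a limit point with $f$-value outside the interval, the stratum-stabilization along $\rho$, and the vanishing of $(f\circ\rho)'$ from the conormal condition).
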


We will need the following variant of the notion of a fringed set from \cite{Goresky}, which is also used in \cite{NZ}.
\begin{definition}\label{fringed}
A \emph{fringed set} $R$ in $\mathbb{R}_+^n$ is an open subset satisfying the following properties.  
For $n=1$, $R=(0,r)$ for some $r>0$.
For  $n>1$, the image of $R$ under the projection $\mathbb{R}_+^n\rightarrow \mathbb{R}_+^{n-1}$ to the first $n-1$ entries is a fringed set in $\mathbb{R}_+^{n-1}$, and 
if $(r_1,...,r_{n-1},r_n)\in R$, then $(r_1,...,r_{n-1},r'_n)\in R$ for all $r'_n\in (0, r_n)$.
\end{definition}

\begin{cor}\label{fringed_cor}
Let $\mathbf{f}=(f_1,...,f_n): M\rightarrow \mathbb{R}^n$ be a proper $(C^1, \mathcal{C})$-map. Then there is a fringed set $R\subset \mathbb{R}^n_+$ consisting of $\Lambda_\mathcal{S}$-regular values of $\mathbf{f}$.
\end{cor}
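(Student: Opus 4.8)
The plan is to prove this by induction on $n$, using Lemma \ref{critical} as the base of the recursion. For $n=1$, Lemma \ref{critical} already gives that the $\Lambda_\mathcal{S}$-critical values of $f_1$ form a discrete subset of $\mathbb{R}$; it is moreover closed, since $\mathbf{f}$ is proper and $\Lambda_\mathcal{S}$ is closed by Whitney property A, so it is locally finite, has a positive infimum on $(0,\infty)$, and one takes $R=(0,r)$ for $r$ smaller than that infimum.

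For the inductive step I would first observe that the $\Lambda_\mathcal{S}$-critical locus of $\mathbf{f}$ equals $\bigcup_{S_\alpha\in\mathcal{S}}\{x\in S_\alpha:\operatorname{rank}d(\mathbf{f}|_{S_\alpha})_x<n\}$: over a point $x\in S_\alpha$ a covector in $\Lambda_\mathcal{S}=\bigcup_\beta T^*_{S_\beta}X$ must already lie in $T^*_{S_\alpha}X$ (the strata are disjoint), and a nontrivial combination $\sum_i c_i(df_i)_x$ lies there precisely when the $d(f_i|_{S_\alpha})_x$ are linearly dependent. Taking $C^p$-representatives of the $f_i$ with $p$ large (available by the existence of $(C^p,\mathcal{C})$-defining functions, together with the fact that conormal bundles of $\mathcal{C}$-submanifolds are $\mathcal{C}$-sets, see \cite{Miller}), this locus is a closed $\mathcal{C}$-set; since $\mathbf{f}$ is proper, its image $Z\subset\mathbb{R}^n$ is a closed $\mathcal{C}$-set, and by Sard's theorem applied separately on each of the (locally finitely many) strata $S_\alpha$ the set $Z$ has Lebesgue measure zero, hence empty interior, hence $\dim Z<n$. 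This reduces the corollary to the following tame-topology claim — a variant of what is used in \cite{Goresky} and \cite{NZ}: \emph{if $Z\subset\mathbb{R}^n$ is a closed $\mathcal{C}$-set with $\dim Z<n$, then $\mathbb{R}^n_+\setminus Z$ contains a fringed set.}

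To prove this claim I would induct on $n$ again, the case $n=1$ being immediate from axiom (5) (a bounded $\mathcal{C}$-subset of $\mathbb{R}$ with empty interior is finite). For $n>1$, let $\pi:\mathbb{R}^n\to\mathbb{R}^{n-1}$ drop the last coordinate, put $Z^+=\overline{Z\cap\mathbb{R}^n_+}$, and form the ``lower shadow'' $Z'=\{r'\in\mathbb{R}^{n-1}_+:(r',0)\in Z^+\}$. Since $Z^+\cap\{r_n=0\}$ is contained in the frontier $Z^+\setminus(Z\cap\mathbb{R}^n_+)$, the frontier dimension inequality $\dim(\overline A\setminus A)<\dim A$ for $\mathcal{C}$-sets (see \cite{Miller}) gives $\dim Z'<n-1$, so the inductive hypothesis produces a fringed set $R'\subset\mathbb{R}^{n-1}_+\setminus Z'$. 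Over $R'$ the threshold function $\rho(r')=\sup\{s>0:(\{r'\}\times(0,s))\cap Z=\emptyset\}$ is everywhere positive — this is exactly what $r'\notin Z'$ provides — and I would set $R=\{(r',r_n):r'\in R',\ 0<r_n<\rho(r')\}$; this is automatically downward-closed in the last coordinate, projects onto $R'$, and misses $Z$, so the only point requiring argument is that $R$ is open, equivalently that $\rho$ is lower semicontinuous on $R'$.

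That lower semicontinuity is the step I expect to be the main obstacle to present cleanly, and it is where the recursion ``pays off''. The mechanism: if $r'_k\to r'_0\in R'$ with $\rho(r'_k)\to\ell<\infty$, then $(r'_k,\rho(r'_k))\in Z$ (because $Z$ is closed, $\rho(r'_k)$ is attained whenever finite), so $(r'_0,\ell)\in Z$; the value $\ell$ cannot be $0$, since that would force $(r'_0,0)\in Z^+$, i.e. $r'_0\in Z'$, contradicting $r'_0\in R'$; hence $\ell>0$ and $\rho(r'_0)\le\ell$. In other words, passing from ``a fringed set of regular values of $(f_1,\dots,f_{n-1})$'' to ``a fringed set avoiding $Z'$'' is precisely what prevents $Z$ from creeping down onto the hyperplane $\{r_n=0\}$ over points of $R'$, which is the only way $\rho$ could drop discontinuously. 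With $\rho$ lower semicontinuous, $R$ is open and hence a fringed set, and combined with the reduction above this gives the corollary. The routine bookkeeping I would leave to the write-up is the verification that the critical locus is genuinely a $\mathcal{C}$-set (not merely closed), the exact form of the frontier inequality, and the elementary observation that $(r',\rho(r'))\in Z$ whenever $\rho(r')<\infty$.
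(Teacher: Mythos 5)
The paper does not actually prove Corollary~\ref{fringed_cor}; it simply states it as a variant of the fringed-set machinery from Goresky--MacPherson and Nadler--Zaslow, so there is no in-text argument to compare with. Your proof supplies the standard tame-topology argument behind that citation, and the main engine --- reduce to ``the complement of a closed $\mathcal{C}$-set of dimension $<n$ in $\mathbb{R}^n_+$ contains a fringed set,'' then induct using the frontier inequality $\dim(\overline A\setminus A)<\dim A$ and lower semicontinuity of the threshold $\rho$ --- is the right one, and the lower-semicontinuity argument is correct.

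Two places need repair, though. First, the appeal to Sard's theorem ``applied separately on each stratum'' does not work as stated: the classical $C^1$ Sard theorem fails for maps from manifolds of dimension $>n$ into $\mathbb{R}^n$ (Whitney's counterexample), and the $f_i$ are given only as $(C^1,\mathcal{C})$-maps --- you cannot replace them by $C^p$-representatives, since they are data of the statement, not auxiliary choices. The correct route is to invoke Proposition~\ref{Whitney}(b): choose a $(C^1,\mathcal{C})$-stratification $(\mathcal{S}',\mathcal{T}')$ of $\mathbf{f}$ with $\mathcal{S}'$ compatible with $\mathcal{S}$; on any $S'\in\mathcal{S}'$ whose image $T'\in\mathcal{T}'$ has $\dim T'=n$, $d(\mathbf{f}|_{S'})$ is surjective, so no point of $S'$ is $\Lambda_\mathcal{S}$-critical, hence the critical values lie in $Z:=\overline{\bigcup_{\dim T'<n}T'}$, a closed $\mathcal{C}$-set of dimension $<n$. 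This bypasses Sard entirely and also sidesteps the question of whether the critical locus itself is a $\mathcal{C}$-set. Second, your $Z'$ is the intersection of a closed slice with the open orthant $\mathbb{R}^{n-1}_+$, so it is not closed in $\mathbb{R}^{n-1}$ and the inductive hypothesis does not literally apply; you should instead take the full closed slice $Z''=\{r'\in\mathbb{R}^{n-1}:(r',0)\in Z^+\}$, apply induction to $Z''$, and note $Z'\subset Z''$. With these two fixes the argument is complete.
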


\subsection{Assumptions on $X$ and Lagrangian submanifolds in $T^*X$}
\label{assumption on Lag}
Throughout the paper, $X$ is assumed to be a compact real analytic
manifold or compact complex manifold. Then $T^*X$ is real analytic.
The projectivization $\overline{T^*X}=(T^*X\times\mathbb{R}_{\geq
0}-T^*_XX\times\{0\})/\mathbb{R}^+$ is a semianalytic subset in
the manifold $\mathbb{P}_+(T^*X\times\mathbb{R})=(T^*X\times\mathbb{R}-T^*_XX\times\{0\})/\mathbb{R}^+$.

Fix an analytic-geometric category $\mathcal{C}$. Define
$\mathcal{C}$-sets in $\overline{T^*X}$ to be $\mathcal{C}$-sets in
$\mathbb{P}_+(T^*X\times \mathbb{R})$ intersecting
$\overline{T^*X}$. All Lagrangian submanifolds $L$ in $T^*X$ are
assumed to satisfy $\overline{L}\subset \overline{T^*X}$ a
$\mathcal{C}$-set in $\overline{T^*X}$. All subsets of $X$ are
assumed to be $\mathcal{C}$-sets unless otherwise specified.

\section{$A_\infty$-categories}\label{A_infty-categories}

Roughly speaking, $A_\infty$-cateogory is a form of category whose structure is
more complicated but more flexible than classical notion of category: composition of morphisms are
not strictly associative but only associative up to higher homotopies, and there are also 
successive homotopies between homotopies. In this section, we will briefly recall the definition of $A_\infty$-category, left and right $A_\infty$-modules and $A_\infty$-triangulation. The materials are from Chapter 1 \cite{Seidel}.

\subsection{$A_\infty$-categories and $A_\infty$-functors}
A \emph{non-unital $A_\infty$-category} $\mathcal{A}$ consists of the following data:\\
(1) a collection of objects $X\in Ob\mathcal{A}$,\\
(2) for each pair of objects $X,Y$, a morphism space $Hom_\mathcal{A}(X,Y)$ which is
a cochain complex of vector spaces over $\mathbb{C}$,\\
(3) for each $d\geq 1$ and sequence of objects $X_0, ..., X_d$, a linear morphism
$$m_{\mathcal{A}}^d: Hom_{\mathcal{A}}(X_{d-1}, X_d)\otimes\cdots\otimes Hom_{\mathcal{A}}(X_0,X_1)
\rightarrow Hom_{\mathcal{A}}(X_0, X_d)[2-d]$$
satisfying the following identities
\begin{equation}\label{A_infty_eq}
\sum\limits_{\substack{k+l=d+1, k,l\geq 1\\ 0\leq i\leq d-l}}(-1)^{\dagger_{i}} m_{\mathcal{A}}^k(a_{d},...,a_{i+l+1},m_{\mathcal{A}}^l(a_{i+l},...,a_{i+1}), a_{i},...,a_1)=0,
\end{equation}
where $\dagger_{i}=|a_1|+\cdots+|a_{i}|-i$ and $a_j\in Hom_\mathcal{A}(X_{j-1}, X_{j})$ for 
$1\leq j\leq d$.

A special case of an $A_\infty$-category is a dg-category where all the higher compositions
$m_{\mathcal{A}}^d, d\geq 3$ vanish.

From the above definition, at the cohomological level for $[a_1]\in H(Hom_{\mathcal{A}}(X_0, X_1),m^1_{\mathcal{A}})$ and $[a_2]\in H(Hom_{\mathcal{A}}(X_1, X_2), m^1_{\mathcal{A}})$, their composition
$[a_2]\cdot [a_1]:=(-1)^{|a_1|}[m^2_{\mathcal{A}}(a_2,a_1)]\in H(Hom_{\mathcal{A}}(X_0, X_2)), m_{\mathcal{A}}^1)$ is well defined, and it is easy to check that the product is associative. We will let $H(\mathcal{A})$ denote the non-unital graded category arising in this way. There is also a subcategory
$H^0(\mathcal{A})\subset H(\mathcal{A})$ which only has morphisms in degree 0.

An $A_\infty$-category is called \emph{$c$-unital} if $H(\mathcal{A})$ is unital. All the $A_\infty$-categories we encounter throughout this paper are $c$-unital unless otherwise specified. We will always omit the prefix $c$-unital at those places. One major benefit of dealing with $c$-unital $A_\infty$-categories is that one can talk about quasi-equivalence between categories, see below. 

Given two non-unital $A_\infty$-categories $\mathcal{A}$ and $\mathcal{B}$, 
a \emph{non-unital $A_\infty$-functor }$\mathcal{F}:\mathcal{A}\rightarrow \mathcal{B}$
assigns each $X\in Ob\mathcal{A}$ an object $\mathcal{F}(X)$ in $\mathcal{B}$, and it consists for every $d\geq 1$ and sequence of objects $X_0, ..., X_d\in Ob\mathcal{A}$, a linear morphism 
$$\mathcal{F}^d: Hom_{\mathcal{A}}(X_{d-1}, X_d)\otimes\cdots\otimes Hom_{\mathcal{A}}(X_0,X_1)
\rightarrow Hom_{\mathcal{B}}(\mathcal{F}(X_0), \mathcal{F}(X_d))[1-d],$$
satisfying the identities
\begin{align*}
&\sum\limits_{k\geq 1}\sum\limits_{\substack{s_1+...+s_k=d,\\ s_i\geq 1}}m_{\mathcal{B}}^k(\mathcal{F}^{s_k}(a_d,...,a_{d-s_k+1}),...,
\mathcal{F}^{s_1}(a_{s_1},...,a_1))\\
=&\sum\limits_{\substack{k+l=d+1, k,l\geq 1\\ 0\leq i\leq d-l}}(-1)^{\dagger_{i}}\mathcal{F}^k(a_d,...,a_{i+l+1},m_{\mathcal{A}}^l(a_{i+l},...,a_{i+1}), a_{i},...,a_1)).
\end{align*}
The composition of two $A_\infty$-functors $\mathcal{F}: \mathcal{A}\rightarrow \mathcal{B}$ and $\mathcal{G}:\mathcal{B}\rightarrow \mathcal{C}$ is defined as
$$(\mathcal{G}\circ \mathcal{F})^d(a_d,...,a_1)=\sum\limits_{k\geq 1}\sum\limits_{\substack{s_1+...+s_k=d,\\ s_i\geq 1}}\mathcal{G}^k(\mathcal{F}^{s_k}(a_d,...,a_{d-s_k+1}),...,
\mathcal{F}^{s_1}(a_{s_1},...,a_1)).$$

 It is clear that $\mathcal{F}$ descends on the cohomological level to a functor from $H(\mathcal{A})$ to $H(\mathcal{B})$, which we will denote by $H(\mathcal{F})$.
One easy example of a functor from $\mathcal{A}$ to itself is the identity functor $id_\mathcal{A}$, which is identity on objects and hom spaces and $id_{\mathcal{A}}^k=0$ 
for $k\geq 2$.

Let $\mathcal{Q}=nu\text{-}fun(\mathcal{A},\mathcal{B})$ be the $A_\infty$-category of non-unital $A_\infty$-functors
from $\mathcal{A}$ to $\mathcal{B}$ defined as follows. An element $T=(T^0, T^1,...)$ of degree $|T|=g$, called a \emph{pre-module homomorphism}, in $hom_{\mathcal{Q}}(\mathcal{F}, \mathcal{G})$ is a sequence of linear maps
$$T^d: Hom_{\mathcal{A}}(X_{d-1}, X_d)\otimes\cdots\otimes Hom_{\mathcal{A}}(X_0,X_1)
\rightarrow Hom_{\mathcal{B}}(\mathcal{F}(X_0),\mathcal{G}(X_d))[g-d],$$
in particular, $T^0$ is an element in $Hom_{\mathcal{B}}(\mathcal{F}(X),\mathcal{G}(X))$ of degree $g$ for each $X$.

We also have the following structures
\begin{align*}
&(m^1_{\mathcal{Q}}(T))^d(a_d,...,a_1)\\
&=\sum\limits_{1\leq i\leq k}\sum\limits_{\substack{s_1+\cdots+s_k=d,\\ s_i\geq 0, s_j\geq 1, j\neq i}}(-1)^\dagger m^k_{\mathcal{B}}(\mathcal{G}^{s_k}(a_d,...,a_{d-s_k+1}),...,\mathcal{G}^{s_{i+1}}(a_{s_1+\cdots+s_{i+1}},...,a_{s_1+\cdots+s_{i}+1}),\\
&T^{s_i}(a_{s_1+\cdots+s_{i}},...,a_{s_1+\cdots+s_{i-1}+1}),\mathcal{F}^{s_{i-1}}(a_{s_1+\cdots+s_{i-1}},...,a_{s_1+\cdots+s_{i-2}+1}),...,
\mathcal{F}^{s_1}(a_{s_1},\cdots,a_1))\\
&-\sum\limits_{\substack{r+l=d+1, r,l\geq 1\\ 1\leq i\leq d-l}}(-1)^{\dagger_{i}+|T|-1}\sum T^r(a_d,...,a_{i+l+1},m_{\mathcal{A}}^l(a_{i+l},...,a_{i+1}), a_{i},...,a_1))
\end{align*}

If we write the right hand side of the above formula for short as 
$$\sum m_{\mathcal{B}}(\mathcal{G},...,\mathcal{G}, T,\mathcal{F},...,\mathcal{F})-\sum T(id,...,id,m_{\mathcal{A}}, id,...,id),$$ then for 
$T_0\in Hom_{\mathcal{Q}}(\mathcal{F}_0,\mathcal{F}_1)$ and $T_1\in Hom_{\mathcal{Q}}(\mathcal{F}_1,\mathcal{F}_2)$, we have
$$m^2_{\mathcal{Q}}(T_1, T_0)=\sum m_{\mathcal{B}}(\mathcal{F}_2,...,\mathcal{F}_2, T_1,\mathcal{F}_1,...,\mathcal{F}_1,T_0,\mathcal{F}_0,...,\mathcal{F}_0),$$ and similar
formulas apply to higher differentials $m_{\mathcal{Q}}^d$ for $d>2$. Note that 
there is no $m_{\mathcal{A}}$ involved in $m_{\mathcal{Q}}^d$ for $d\geq 2$.

Those $T$ for which $m^1_{\mathcal{Q}}(T)=0$ are the \emph{module homomorphisms}, and $H(T)$ in $H(\mathcal{Q})$ descends to a natural transformation between $H(\mathcal{F})$ and $H(\mathcal{G})$ under the map
$$H(nu\text{-}fun(\mathcal{A},\mathcal{B}))\rightarrow Nu\text{-}fun(H(\mathcal{A}),H(\mathcal{B})).$$ 

Assume $\mathcal{F},\mathcal{G}:\mathcal{A}\rightarrow\mathcal{B}$ are two $A_\infty$-functors such that $\mathcal{F}(X)=\mathcal{G}(X)$ for every $X\in \text{Ob}(\mathcal{A})$. Then $\mathcal{F}$ and $\mathcal{G}$ is called \emph{homotopic} if there is $T\in hom^{-1}_{\mathcal{Q}}(\mathcal{F},\mathcal{G})$ such $m^1_{\mathcal{Q}}(T)^d=\mathcal{G}^d-\mathcal{F}^d$. We have $H(\mathcal{F})=H(\mathcal{G})$ if $\mathcal{F}$ and $\mathcal{G}$ are homotopic.

Let $\mathcal{A}$, $\mathcal{B}$ be $c$-unital $A_\infty$-categories, a functor $\mathcal{F}: \mathcal{A}\rightarrow \mathcal{B}$ is called $c$-unital if $H(\mathcal{F})$ is unital. Then the full subcategory $fun(\mathcal{A},\mathcal{B})\subset\mathcal{Q}$ consisting of $c$-unital functors is a $c$-unital $A_\infty$-category. 

A $c$-unital functor $\mathcal{F}:\mathcal{A}\rightarrow \mathcal{B}$ is a \emph{quasi-equivalence} if $H(\mathcal{F}): H(\mathcal{A})\rightarrow H(\mathcal{B})$ is an equivalence of categories.

\subsection{$A_\infty$-modules and Yoneda embedding}
In this subsection, we will assume all $A_\infty$-categories to be $c$-unital.

Define the $A_\infty$-category of left $\mathcal{A}$-modules as
$$l\text{-}mod(\mathcal{A})=fun(\mathcal{A},Ch).$$
Explicitly, any $\mathcal{M}\in l\text{-}mod(\mathcal{A})$ assigns a cochain complex
$\mathcal{M}(X)$ to each object $X$ and we have
$$m_{\mathcal{M}}^d: 
Hom_{\mathcal{A}}(X_{d-1}, X_d)\cdots\otimes Hom_{\mathcal{A}}(X_0,X_1)\otimes\mathcal{M}(X_0)\rightarrow \mathcal{M}(X_d)[2-d]$$ 
with the property that
$$\sum m_{\mathcal{M}}(id,..,id, m_{\mathcal{M}})+\sum m_{\mathcal{M}}(id,...,id,m_{\mathcal{A}}, id,...,id)=0,$$
where there is at least one $id$ after $m_{\mathcal{A}}$ in the second term.

An important example of a left $\mathcal{A}$-module is $\tilde{\mathcal{Y}}_{X_0}$ for $X_0\in Ob\mathcal{A}$ defined
as $\tilde{\mathcal{Y}}_{X_0}(X)=Hom_{\mathcal{A}}(X_0,X)$ and $\tilde{\mathcal{Y}}^d_{X_0}$ coincides
with $m^d_{\mathcal{A}}$.

The category of right $\mathcal{A}$-modules $mod(\mathcal{A})$ (following usual convention, we don't denote it by $r\text{-}mod(\mathcal{A})$) can be defined similarly as $fun(\mathcal{A}^{opp}, Ch)$. An important example is $\mathcal{Y}_{X_0}$ defined as
$\mathcal{Y}_{X_0}(X)=Hom_{\mathcal{A}}(X, X_0)$ and this gives the \emph{Yoneda embedding}
\begin{align*}
\mathcal{Y}:&\mathcal{A}\rightarrow mod(\mathcal{A})\\
&X\mapsto \mathcal{Y}_X.
\end{align*}
For $c_i\in Hom_{\mathcal{A}}(Y_{i-1}, Y_i)$, $1\leq i\leq d$,
$$\mathcal{Y}(c_d,...,c_1)^k: \mathcal{Y}_{Y_0}(X_k)\otimes Hom_{\mathcal{A}}(X_{k-1}, X_k)
\otimes\cdots\otimes Hom_{\mathcal{A}}(X_0,X_1)\rightarrow \mathcal{Y}_{Y_d}(X_0)$$
is $m_{\mathcal{A}}^{k+d+1}(c_d,...,c_1,b,a_k,...,a_1)$ for $b\in \mathcal{Y}_{Y_0}(X_k) $ and $a_i\in Hom_{\mathcal{A}}(X_{i-1}, X_i)$.

Note that $mod(\mathcal{A})$ is a dg-category, and the Yoneda embedding $\mathcal{Y}$
is cohomologically full and faithful. This gives a construction showing that every $A_\infty$-category is quasi-equivalent to a (strictly unital) dg-category, i.e. its image under $\mathcal{Y}$.

For $\mathcal{F}: \mathcal{A}\rightarrow \mathcal{B}$, we can define the associated pull-back
functor
\begin{align*}
\mathcal{F}^*:mod(\mathcal{B}) (\text{resp. }l\text{-}mod(\mathcal{B}))&\rightarrow mod(\mathcal{A}) (\text{resp. }l\text{-}mod(\mathcal{A}))\\
\mathcal{M}&\mapsto \mathcal{M}\circ\mathcal{F}.
\end{align*}

\subsection{$A_\infty$-triangulation}
Recall that a triangulated envelope of an $A_\infty$-category $\mathcal{A}$ 
is a pair $(\mathcal{B},\mathcal{F})$ of a triangluated $A_\infty$-category and 
a quasi-embedding $\mathcal{F}: \mathcal{A}\rightarrow \mathcal{B}$ such that
$\mathcal{B}$ is generated by the image of objects in $\mathcal{A}$. We refer the reader to Section 3, Chapter 1 in \cite{Seidel} for the definition of triangulated $A_\infty$-categories. Any two
triangulated envelopes of $\mathcal{A}$ are quasi-equivalent.

There are basically two ways of constructing $A_\infty$-triangulated envelope.
One is to take the usual triangulated closure of the image of $\mathcal{A}$ under the
Yoneda embedding in $mod(\mathcal{A})$, since $mod(\mathcal{A})$ is triangulated.
The other is by taking twisted complexes of $\mathcal{A}$ which we denote by
$Tw(\mathcal{A})$. The formulation in the definition is a little bit long and messy, which we don't 
really need in this paper, so we refer the reader to consult Seidel \cite{Seidel} Section 3 for a
detailed description.

\section{Infinitesimal Fukaya Categories}\label{Fuk}

In this section, we review the definition of infinitesimal Fukaya category on a Liouville manifold, originated from \cite{NZ}. This section is by no means a complete or rigorous exposition of Fukaya categories. One could consult \cite{Auroux} for a comprehensive introduction, and Seidel's book \cite{Seidel} for a complete and rigorous treatment. 

Our goal here is to give a rough idea of how Fukaya category  (in the exact setting) is defined, and what kind of extra structures one should put on the ambient symplectic manifold and on the Lagrangian submanifolds so to give a coherent definition of the $A_\infty$-structure. We also include several specific facts about $Fuk(T^*X)$, which will supplement the main content.

\subsection{Assumptions on the ambient symplectic manifold}\label{Liouville}
Let $(M, \omega=d\theta)$ be a $2n$-dimensional Liouville manifold. By definition, $M$ is obtained by gluing a compact symplectic manifold with contact boundary $(M_0, \omega_0=d\theta_0)$ with an infinite cone $(\partial M_0\times [1,\infty), d(r\theta_0|_{\partial M_0}) )$ along $\partial M_0$, where $r$ is the coordinate on $[1,\infty)$. We require that the the \emph{Liouville vector field} $Z$, defined by the property $\iota_Z\omega=\theta$, is pointing outward along $\partial M_0$, and the gluing is by identifying $Z$ with $r\partial_r$.

Let $J$ be a $\omega$-compatible almost complex structure on $(M,\omega)$,  whose restriction to the cone $\partial M_0\times [S,\infty)$ for $S>>0$, satisfies that $J\partial_r=R$, where $R$ is the Reeb vector field of $r\theta_0|_{\partial M_0\times \{r\}}$, and $J$ preserves $\text{ker}(r\theta_0|_{\partial M_0\times \{r\}})$, on which it is induced from $J|_{\partial M_0\times\{S\}}$. We will call such a $J$ as a \emph{conical} almost complex structure. It is a basic fact that the space of all such almost complex structures is contractible. The compactible metric $g$ will be conical near infinity, i.e. $g=r^{-1}dr^2+S^{-1}rds^2$, where $ds^2=\omega(\cdot, J\cdot)|_{\partial M_0\times \{S\}}$. Let $\mathcal{H}$ be the set of Hamiltonian functions whose restriction to $\partial M_0\times [S,\infty)$ is $r$ for $S>>0$. Note that the Hamiltonian vector field $X_H$ of $H\in\mathcal{H}$ near infinity is $-rR$.

One can compactify $M$ using the cone structure, i.e. $\overline{M}=M_0\cup\{[t_0x: t_1]| x\in\partial M_0, t_0, t_1\in\mathbb{R}^+, t_0^2+t_1^2\neq 0\}$, here $[t_0x: t_1]$ denotes the equivalence class of the relation $(t_0x, t_1)\sim (\lambda t_0 x, \lambda t_1)$ for $\lambda>0$. It is easy to see that $\overline{M}=M\cup M^\infty$, where we think of an element $(x,r)$ in the cone as $[rx:1]$ and the points in $M^\infty$ are of the form $[x:0]$.

\subsection{Floer theory with $\mathbb{Z}/2\mathbb{Z}$-coefficients and gradings}\label{FLTH}
 To obtain well defined Floer theory for noncompact Lagrangian submanifolds, we should be more careful about their behavior near infinity. First we restrict ourselves in some fixed analytic-geometric setting $\mathcal{C}$, and require that the Lagrangians $L$ we are considering satisfy $\overline{L}$ is a $\mathcal{C}$-set in $\overline{M}$ (see \ref{assumption on Lag}). Second, we need to ensure compactness of holomorphic discs with Lagrangian boundary conditions. A sufficient condition for this is the \emph{tameness} condition following \cite{Sikorav}. We will discuss this in more detail in the next section. 

Recall the Floer theory defines for each pair of Lagrangians $L_1, L_2$ in $M$ a $\mathbb{Z}/2\mathbb{Z}$-graded cochain complex 
\begin{align}
\label{CF}&CF^*(L_0, L_1):=(\bigoplus\limits_{p\in L_0\cap L_1}\mathbb{Z}/2\mathbb{Z} \langle p \rangle, \partial_{CF})\\
&\partial_{CF}(p)=\sum\limits_{q\in L_1\cap L_2}\sharp\mathcal{M}(p,q; L_0, L_1)^{0\text{-}d}\cdot q,
\end{align}
where $\mathcal{M}(p,q; L_0, L_1)^{k\text{-d}}$ is the quotient (by $\mathbb{R}$-symmetry) of the $(k+1)$-dimensional locus of the moduli space $\hat{\mathcal{M}}(p,q; L_0, L_1)$ of holomorphic strips, starting from $q$, ending at $p$ and bounding $L_0, L_1$, i.e. a map
\begin{align}
\nonumber&u: \mathbb{R}\times [0,1]\rightarrow M, \text{ such that }\\
\label{limit cond}&\lim\limits_{s\rightarrow -\infty}u(s,t)=q, \lim\limits_{s\rightarrow +\infty}u(s,t)=p\\
\label{bdry cond}&u(\mathbb{R}\times\{0\})\subset L_0, u(\mathbb{R}\times\{1\})\subset L_1\\
\label{d_bar}&(du)^{0,1}=0 (\Leftrightarrow \frac{\partial u}{\partial s}+J(u)\frac{\partial u}{\partial t}=0).
\end{align}
There are always several technical issues to be clarified in the above definition.

(a) \emph{Transverse intersections.} Implicit in (\ref{CF}) is the step of Hamiltonian perturbation to make $L_0$ and $L_1$ transverse. Let $L_i^\infty$ denote $\overline{L}_i\cap M^\infty$.  If $L_0^\infty\cap L_1^\infty=\emptyset$, then one chooses a generic Hamiltonian function $\widetilde{H}$, whose Hamiltonian vector field vanishes on $L_1$ outside a compact region, and replaces $L_1$ by $\phi_{\widetilde{H}}^t(L_1)$ for small $t>0$. If $L_0^\infty\cap L_1^\infty\neq\emptyset$, then one replaces $L_1$ by $\phi_H^t(L_1)$ for a generic $H\in\mathcal{H}$. It can be shown that $\phi_H^t(L_1^\infty)$ will be apart from $L_0^\infty$ for sufficiently small $t>0$. The invariance of Floer theory under Hamiltonian perturbations ensures that the complex $CF^*(L_0, L_1)$ is well defined up to quasi-isomorphisms.

(b) \emph{Regularity of Moduli space of strips.} One views the $\overline{\partial}$-operator on $u$, i.e. $(du)^{0,1}$, as a section of a natural Banach vector bundle over a suitable space of maps $u$ satisfying (\ref{limit cond}) and (\ref{bdry cond}). Then  $\hat{\mathcal{M}}(p,q; L_0, L_1)$ becomes the intersection of $\overline{\partial}$ with the zero section. We need the intersection to be transverse, and this is equivalent to the linearized operator $D_u$ (a Fredholm operator) of $\overline{\partial}$ at any $u\in \overline{\partial}^{-1}(0)$ being surjective. In many good settings (including the cases in $Fuk(T^*X)$ ), this is true for a generic choice of $J$, which we will refer as a \emph{regular} (compatible) almost complex structure. Then by Gromov's compactness theorem, $\mathcal{M}(p,q; L_0, L_1)^{0\text{-d}}$ is a compact manifold, so $\sharp\mathcal{M}(p,q; L_0, L_1)^{0\text{-d}}$ is finite. Different choices of regular $J$'s give cobordant moduli spaces, therefore the number doesn't depend on such choices (note that we are working over $\mathbb{Z}/2\mathbb{Z}$, so we don't need any orientation on $\mathcal{M}(p,q; L_0, L_1)$ to conclude this). More generally, one would need to introduce time-dependent almost complex structures and Hamiltonian perturbations to achieve transversality. 

(c) \emph{$\partial^2_{CF}=0$.} This is ensured when no sphere or disc bubbling occurs, and it holds for a pair of \emph{exact} Lagrangians $L_0, L_1$, i.e. $\theta|_{L_j}$ is an exact 1-form for $j=0,1$. To verify this, one studies the boundary of the 1-dimensional moduli space $\mathcal{M}(p,q; L_0, L_1)^{1\text{-d}}$ of holomorphic strips starting at $q$ and ending at $p$, and realizes that they are broken trajectories corresponding exactly to the terms involving $q$ in $\partial^2_{CF}(p)$. Since the number of boundary points is even, $\partial^2_{CF}=0$.
 
(d) \emph{Gradings.} For any holomorphic strip $u$ connecting $q$ to $p$, the Fredholm index of the linearized Cauchy-Riemann operator $D_u$ ``in principle" gives the relative grading between $p$ and $q$. The index can be calculated by the \emph{Maslov index} of $u$ defined as follows. A strip $\mathbb{R}\times [0,1]$ is conformally identified with the closed unit disc $D$, with two  punctures on the boundary. Then one can trivialize the symplectic vector bundle $u^*TM$ over the closed unit disc, and think of $T_pL_j, T_qL_j$ for $j=0,1$ as elements in the Lagrangian Grassmannian $LGr(\mathbb{R}^{2n}, \omega_0)$, where $\omega_0$ is the standard symplectic form on $\mathbb{R}^{2n}$. 

By a standard fact from linear symplectic geometry, there is a unique set of numbers $\{\alpha_k\in(-\frac{1}{2},0)\}_{k=1,...,n}$ such that relative to an orthonormal basis $\{v_1,...,v_n\}$ of $T_pL_0$, $T_pL_1$ is spanned by $e^{2\pi\sqrt{-1}\alpha_k}v_k$ for $k=1,...,n$. One could consult Lemma 3.3 in \cite{Alston} for a proof.  Since we will use it in Proposition \ref{degree}, we discuss this in a little more detail. First, this property is invariant under $U(n)$-transformation, so we can assume $T_pL_0=\mathbb{R}^n\subset \mathbb{R}^n\oplus \sqrt{-1}\mathbb{R}^n$. There is a standard way to produce a unitary matrix $U$ such that $T_pL_1=U\cdot T_pL_0$, namely choose a symmetric matrix $A$ in $GL_n(\mathbb{R})$ for which $T_pL_1=(A+\sqrt{-1} I)\cdot T_pL_0$, then let $U=(A+\sqrt{-1}I)(A^2+I^2)^{-\frac{1}{2}}$. Also for any $B+\sqrt{-1}C\in U(n)$ statisfying 
$T_pL_1=U\cdot T_pL_0$, we have $B+\sqrt{-1}C=(A+\sqrt{-1}I)(A^2+I^2)^{-\frac{1}{2}} O$ for some $O\in O(n)$, and $BC^{-1}=A$. Now let $\{v_1,...,v_n\}$ be an orthonormal collection of eigenvectors of $A$, hence of $U$ as well, and $e^{2\pi\sqrt{-1}\alpha_1},...,e^{2\pi\sqrt{-1}\alpha_k}$, $\alpha_j\in(-\frac{1}{2},0)$ be their corresponding eigenvalues of $U$. Then $\{\alpha_j\}_{j=1,...,n}$ is the desired collection of numbers. 

Then $\lambda_p(t):=\text{Span}\{e^{2\pi\sqrt{-1}\alpha_j t}v_j\}\in LGr(\mathbb{R}^{2n}, \omega_0), t\in[0,1]$ is the so called \emph{canonical short path} from $T_pL_0$ to $T_pL_1$. Let $\lambda_q$ be the canonical short path from $T_qL_0$ to $T_qL_1$, and $\ell_j, j=0,1$ denote the path of tangent spaces to $L_j$ from $q$ to $p$ in $u^*TM|_{\partial D}$. Then the \emph{Maslov index} of $u$, denoted as $\mu(u)$, is defined to be the Maslov number of the loop by concatenating the paths $\ell_0, \lambda_p, -\ell_1, -\lambda_q$.

In general, $\mu(u)$ depends on the homotopy class of $u$, so wouldn't give well defined relative degree between $p$ and $q$. But if $L_0, L_1$ are both oriented, we have a well defined grading, namely, $\deg(p)=0$ if $\lambda_p$ takes the orientation of $L_0$ into the orientation of $L_1$, otherwise, $\deg{p}=0$. In the next section, we will see that under certain assumptions, we will not only get $\mathbb{Z}/2\mathbb{Z}$-gradings on the Floer complex, but $\mathbb{Z}$-gradings.

(e) \emph{Product structure.} Consider three Lagrangians $L_0, L_1, L_2$, then one can define a linear map
\begin{align*}
&m: CF^*(L_1, L_2)\otimes CF^*(L_0, L_1)\rightarrow CF^*(L_0, L_2)\\
&m(a_1, a_0)=\sum\limits_{a_2\in L_0\cap L_2} \sharp\mathcal{M}(a_0, a_1,a_2; L_0, L_1, L_2)^{0\text{-d}}\cdot a_2.
\end{align*}
$\mathcal{M}(a_0, a_1,a_2; L_0, L_1, L_2)^{0\text{-d}}$ is the 0-dimensional locus of the moduli space of equivalence class of holomorphic maps 
$$u: (D, \{0,1,2\})\rightarrow (M, \{a_0,a_1,a_2\}), u(\overline{i(i+1)})\subset L_i, i\in\mathbb{Z}/(3\mathbb{Z})$$
where $0,1,2$ are three (counterclockwise) marked points on $\partial D$, and $\overline{i(i+1)}$ denotes the arc in $\partial D$ connecting $i$ and $i+1$. The equivalence relation is composition with conformal maps of the domain. Since the conformal structure of a disc with three marked points on the boundary is unique (and there is no nontrivial  automorphism), we can just fix a conformal structure once for all.  

As before one needs to separate $L_0, L_1, L_2$ near infinity if necessary, and the separation process obeys a principle called \emph{propagating forward in time}. Namely one replaces $L_i$ by $\phi_{H_i}^{t_i}(L_i)$, for some $H_i\in\mathcal{H}, i=0,1,2$, and the choices of $(t_2,t_1,t_0)\in\mathbb{R}^3_+$ should be in a fringed set (see Definition \ref{fringed}). The regularity issue about $\mathcal{M}(a_0, a_1,a_2; L_0, L_1, L_2)$ is similar to that of (b).

Similarly to (c), by looking at the boundary of $\mathcal{M}(a_0, a_1,a_2; L_0, L_1, L_2)^{1\text{-d}}$, one concludes the following equation
$$m(\partial_{CF}\cdot, \cdot)+m(\cdot, \partial_{CF}\cdot)+\partial_{CF} m(\cdot, \cdot)=0.$$
This means that $m$ induces a multiplication on the cohomological level $HF^*$. We will see later that $m$ is not strictly associative, but associative up to homotopy.

\subsection{(infinitesimal) Fukaya cateogry of $M$}
The preliminary version of Fukaya category (with $\mathbb{Z}/2\mathbb{Z}$-grading, and over $\mathbb{Z}/2\mathbb{Z}$-coefficients), is an upgrade of the Floer theory, which uncovers much richer structure, the $A_\infty$-structure, of Lagrangian intersection theory. One not only studies $\partial_{CF}$ and $m$, but also studies for each sequence of $n+1$ Lagrangians the \emph{higher compositions }$\mu^n$
\begin{align*}
&\mu^d: CF^*(L_{d-1}, L_{d})\otimes\cdots CF^*(L_1, L_2)\otimes CF^*(L_0, L_1)\rightarrow CF^*(L_0, L_d)[2-d]\\
&\mu^d(a_{d-1},\cdots, a_1, a_0)=\sum\limits_{a_d\in L_0\cap L_d} \sharp\mathcal{M}(a_0, a_1,\cdots, a_d; L_0, L_1, \cdots, L_d)^{0\text{-d}}\cdot a_d,
\end{align*}
where the moduli space $\mathcal{M}(a_0, a_1,\cdots, a_d; L_0, L_1, \cdots, L_d)^{0\text{-d}}$ is defined similarly as before. Assuming regularity of the moduli spaces and no bubblings (ensured by Lagrangians being exact), the boundary of $\mathcal{M}(a_0, a_1,\cdots, a_d; L_0, L_1, \cdots, L_d)^{1\text{-d}}$ gives us the identity \ref{A_infty_eq}. 

Now let's discuss the (final) version of Fukaya category with $\mathbb{Z}$-gradings, and  $\mathbb{C}$-coeffients. We first collect several basic notions about $T^*X$ which we will use in later discussions.

\subsubsection{Some basic notions about $T^*X$}\label{basT*X}

(a) \emph{Almost complex structures.}
Given any Riemannian metric on $X$, there is a $\omega$-compatible almost complex structure,  called \emph{Sasaki almost complex structure}, $J_{Sas}$ on $T^*X$ defined as follows. For any point $(x,\xi)\in T^*X$, there is a canonical splitting 
$$T_{(x,\xi)}T^*X=T_b\oplus T_f,$$
using the dual Levi-Civita connection on $T^*X$, where $T_f$ denotes the fiber direction and $T_b$ denotes the horizontal base direction. The metric also gives an identification $j: T_b\rightarrow T_f$ and it induces a unique almost complex structure, $J_{Sas}$, by requiring $J_{Sas}(v)=-j(v)$ for $v\in T_b$.

Since $T^*X$ is a Liouville manifold, one can use the construction in Section \ref{Liouville} to get a conical almost complex structure, by requiring $J|_{|\xi|=r}=J_{Sas}|_{|\xi|=r}$ for $r$ sufficiently large, and $J=J_{Sas}$ near the zero section. We will denote any of these almost complex structures by $J_{con}$.

(b) \emph{Standard Lagrangians.}
Given a smooth submanifold $Y\subset X$ and a defining function $f$ for $\partial Y$ which is positive on $Y$, we define the \emph{standard Lagrangian}
\begin{equation}\label{L_{Y,f}}
L_{Y,f}=T_Y^*X+\Gamma_{d\log f}\subset T^*X|_Y.
\end{equation} 
It is easy to check that $L_{Y,f}$ is determined by $f|_Y$. 

In the main content, we often restrict ourselves to standard Lagrangians defined by an open submanifold $V$ and a semi-defining function of $V$ (see Remark \ref{def fcn}).

(c) \emph{Variable dilations.}
Consider the class of Lagrangians of the form $L=\Gamma_{df}$, where $f$ is a function on an open submanifold $U$ with smooth boundary, and $\partial U$ decomposes into two components $(\partial U)_{in}$ and $(\partial U)_{out}$ such that $\lim\limits_{x\rightarrow \partial U_{in}} f(x)=-\infty$ and $\lim\limits_{x\rightarrow \partial U_{out}} f(x)=+\infty$.

The \emph{variable dilation} is defined by the following Hamiltonian flow. Choose $0<A<B<1$ and a bump function $b_{A,B}: \mathbb{R}\rightarrow\mathbb{R}$, such that $b_{A,B}(s)=s$ on $[\log B,-\log B]$ and $|b_{A,B}(s)|=-\log\sqrt{AB}$ outside $[\log A,-\log A]$. We assume that $b_{A,B}$ is odd and nondecreasing. Take a function $D^f_{A,B}$ which extends $b_{A,B}\circ \pi^*f$ to the whole $T^*X$. The Hamiltonian flow $\varphi_{D^f_{A,B}}^t$ fixes $L|_{X_{|f|>-\log A}}$, dilates $L|_{X_{|f|<-\log B}}$ by the factor $1-t$, and sends $L$ to a new graph.

\subsubsection{Compactness of moduli space of holomorphic discs: tame condition and perturbations}\label{Comp. mod.}

As we mentioned in the last section,  we need certain tameness condition to ensure the compactess of the moduli space of holomorphic discs bounding a sequence of Lagrangians. The tameness condition adopted here is from Definition 4.1.1 and 4.7.1 in \cite{Sikorav}. $(M, J)$ is certainly a tame almost complex manifold in that sense. For a smooth submanifold $N$, let $d_N(\cdot, \cdot)$ denote the distance function of the metric on $N$ induced from $M$. The tameness requirement on a Lagrangian submanifold $L$ is the existence of two positive numbers $\delta_L, C_L$, such that within any $\delta_L$-ball in $M$ centered at a point $x\in L$, we have $d_L(x, y)\leq C_L d_M(x, y), y\in L$, and the portion of $L$ in that ball is contractible. 

The main consequence of these is the monotonicity property on holomorphic discs from  Proposition 4.7.2 (iii) in \cite{Sikorav}.

\begin{prop}\label{EB}
There exist two positive constants $R_L, a_L$, such that for all $\mathfrak{r}<R_L, x\in M$, and any compact $J$-holomorphic curve $u: (C,\partial C)\rightarrow (B_\mathfrak{r}(x), \partial B_\mathfrak{r}(x)\cup L)$ with $x\in u(C)$, we have $\text{Area}(u)\geq a_L\mathfrak{r}^2$. 
\end{prop}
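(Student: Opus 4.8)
The plan is to prove Proposition \ref{EB} by the classical monotonicity argument for $J$-holomorphic curves (this is essentially Sikorav's Prop.~4.7.2(iii), and the scheme below is the one behind it): control the area of $u$ over the concentric metric balls $B_\rho(x)$, $0<\rho\le\mathfrak r$, by a Gronwall-type differential inequality coming from an isoperimetric estimate, and then integrate. The decisive point is that on very small balls the ambient metric is $C^1$-close to Euclidean, so the isoperimetric constant can be taken arbitrarily close to the sharp Euclidean value $\tfrac12$; this is exactly what forces the exponent $2$ in $a_L\mathfrak r^2$.

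First I would fix $x\in M$ and set, for $0<\rho\le\mathfrak r$,
\[
g(\rho)=\int_{u^{-1}(B_\rho(x))}u^*\omega ,
\]
which by $\omega$-compatibility of $J$ equals $\mathrm{Area}\big(u|_{u^{-1}(B_\rho(x))}\big)$; write $L(\rho)$ for the length, in the pulled-back metric, of the slice $u^{-1}(\partial B_\rho(x))$. Two estimates are needed. (i) A coarea inequality on the surface $\Sigma=u(C)$: for a.e.\ $\rho$ one has $g'(\rho)\ge L(\rho)$, since the tangential gradient of $r=d_M(x,\cdot)$ satisfies $|\nabla^{\Sigma}r|\le|\nabla^{M}r|=1$. (ii) A Stokes/isoperimetric step: choose a primitive $\theta$ of $\omega$ on $B_\mathfrak r(x)$ that vanishes on $L$ over the portion of the ball meeting $L$ (possible because $L$ is Lagrangian and, by tameness, $L\cap B_\mathfrak r(x)$ is contractible, so $\omega|_L=0$ admits a primitive vanishing on $L$) and satisfies $|\theta|\le(\tfrac12+\varepsilon(\rho))\rho$ on $B_\rho(x)$ with $\varepsilon(\rho)\to0$ as $\rho\to0$. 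Since $\partial\big(u^{-1}(B_\rho(x))\big)$ is the slice $u^{-1}(\partial B_\rho(x))$ together with arcs of $\partial C$ mapped into $L$, on which $u^*\theta=0$, Stokes gives
\[
g(\rho)=\int_{\partial(u^{-1}(B_\rho(x)))}u^*\theta=\int_{u^{-1}(\partial B_\rho(x))}u^*\theta\ \le\ \Big(\tfrac12+\varepsilon(\rho)\Big)\rho\,L(\rho).
\]

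Combining (i) and (ii) yields $g(\rho)\le(\tfrac12+\varepsilon(\rho))\rho\,g'(\rho)$, hence
\[
\frac{d}{d\rho}\log\!\big(\rho^{-2}g(\rho)\big)\ \ge\ -\,\frac{4\varepsilon(\rho)}{\rho},
\]
whose right-hand side is integrable on $(0,\mathfrak r]$ because $\varepsilon(\rho)=O(\rho)$ for a smooth metric. Integrating from $\rho\to0^+$ to $\mathfrak r$, and using that $x\in u(C)$ with $u$ nonconstant forces $\liminf_{\rho\to0^+}\rho^{-2}g(\rho)\ge\pi$ (local structure of $J$-holomorphic curves: near a point of $u(C)$, $u$ is a branched cover of a nearly flat disc), I would conclude $\mathrm{Area}(u)\ge g(\mathfrak r)\ge\pi e^{-K}\mathfrak r^2$, where $K=\int_0^{R_L}\tfrac{4\varepsilon(t)}{t}\,dt<\infty$. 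Taking $R_L$ to be the minimum of the tameness scale $\delta_L$ of $L$ and the near-flatness (and injectivity-radius) scale of the metric, and $a_L=\pi e^{-K}$, completes the argument; uniformity of $R_L,a_L$ in $x\in M$ follows from the bounded geometry of the conical metric on $M=T^*X$ (with $X$ compact), which makes $\varepsilon(\rho)$ and $K$ uniform.

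The hard part will be step (ii): producing a single primitive $\theta$ that is simultaneously $O(\rho)$-small on all of $B_\mathfrak r(x)$ and vanishes identically on $L\cap B_\mathfrak r(x)$, with all constants uniform in $x$. This is where the hypotheses on $L$ (Lagrangian, together with the tameness conditions $d_L\le C_L d_M$ and contractibility of small pieces) are genuinely used, and it needs a small separate argument when $x$ lies near $L$ but not on it — one patches a primitive vanishing on $L$ with one vanishing at $x$, using that $B_\rho(x)\cap L=\emptyset$ once $\rho\ll d_M(x,L)$. The remaining ingredients — the coarea inequality (i), the local area lower bound, and compactness of $u$ — are standard consequences of the regularity theory of $J$-holomorphic curves and of the tameness of $(M,J)$.
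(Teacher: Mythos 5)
The paper does not give its own proof of this proposition: it is quoted verbatim as a consequence of Proposition 4.7.2(iii) in Sikorav, with the preceding paragraph pointing to the tameness conditions of Definitions 4.1.1 and 4.7.1 of that reference. Your reconstruction is the standard Gronwall/isoperimetric monotonicity argument that underlies Sikorav's result, so you and the paper are ultimately invoking the same argument.

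Two remarks on the details, since you flag the delicate step yourself. First, the claim $\liminf_{\rho\to 0^+}\rho^{-2}g(\rho)\ge \pi$ should read $\ge \tfrac{\pi}{2}$ when $x\in L$ (the local model near a boundary point is a half-disc rather than a disc); the exact constant is irrelevant for the statement, which only needs some positive $a_L$. Second, the justification you give for the existence of a primitive $\theta$ that both vanishes on $L$ and satisfies $|\theta|\le(\tfrac12+\varepsilon(\rho))\rho$ — contractibility of $L\cap B_{\mathfrak r}(x)$ together with $\omega|_L=0$ — produces a primitive vanishing on $L$ but does not by itself yield the quantitative $O(\rho)$ bound with constant close to $\tfrac12$. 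The clean way to get it is the explicit radially symmetric Darboux primitive $\theta=\tfrac12\sum_i(p_i\,dq_i-q_i\,dp_i)$ in coordinates adapted to $L$ at the nearest point $y\in L$ to $x$ (noting that $\iota_L^*\theta=0$ in this model and that $B_\rho(x)\subset B_{2\rho}(y)$ when $d(x,L)<\rho$), with the flat-metric comparison giving the $\varepsilon(\rho)\to 0$ correction; the tameness constants $\delta_L,C_L$ enter precisely to make this near-flat Darboux chart available uniformly. With those two points tightened, the argument matches the cited source.
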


\begin{remark}\label{U.E.B}
As indicated in \cite{NZ}, the argument of this proposition is entirely local, one could replace the pair $(M, L)$ by an open submanifold $U\subset M$ together with a properly embedded Lagrangian submanifold $W$ in $U$ satisfying the tame condition. In particular, if $M=T^*X$, and $W$ is the graph of differential of a function $f$ over an open set which is $C^1$-\emph{close to the zero section}, i.e. the norm of the partial derivatives of $f$ has uniform bound, then one can dilate $W$ towards the zero section, and get a uniform bound for the family $(\epsilon\cdot U, \epsilon\cdot W)$. More precisely, one could find $R_{\epsilon\cdot W}=\epsilon R_W$ and $a_{\epsilon\cdot W}=a_W$.
\end{remark}

With the monotonicity property, one can show the compactness of moduli of discs bounding a sequence of exact Lagrangians $L_1,...,L_k$ using standard argument. Moreover, assume $M=T^*X$, and consider the class of Lagrangians in Section \ref{basT*X} (c), then we have better control of where holomorphic discs can go bounding a sequence of such Lagrangians, see the proof of Lemma \ref{2branes} and Section 6.5 in \cite{NZ} for more details. 

In the rest of this section, we supplement the details of choosing appropriate $(t_k,...,t_0)$, $\epsilon_k,...,\epsilon_0$ and $(\bar{t}_k,...,\bar{t}_0)$ at the beginning of Section \ref{simeq}.

We start by choosing appropriate $t_k$ and $\epsilon_k$ and then do induction. First, let $$\Lambda_{\neq k}=\bigcup\limits_{i<k}\Lambda_i.$$ 
There is $\eta_k>0$ such that on $(0,\eta_k)$, $m_{k}$ has no $\Lambda_{\neq k}$-critical value. Pick any small $t_k\in (0,\eta_k)$, form $L_{V_k}^{t_k}$. Let $\epsilon_k=1$.

Suppose we have chosen $t_k,...,t_{i+1}$ and
$\epsilon_k,...,\epsilon_{i+1}$ for $i>0$, let $\Lambda_{j,t_j}$ be the
associated conical Lagrangian of the stratification compatible with 
$\{X_{m_j=t_j}\}$, for $j=i+1,...,k$. Let 
$$\Lambda_{\neq
i}=(\bigcup\limits_{j<i}\Lambda_{j})\bigcup
(\bigcup\limits_{j>i}\Lambda_{j,t_j}).$$
There is $\eta_i>0$ so
that $m_i$ has no $\Lambda_{\neq i}$-critical value in
$(0,\eta_i)$.  On $V_i\cap X_{m_j\geq t_j}$ for each $j>i$, there is
an open neighborhood $W_{ij}$ of $X_{m_i=0}\cap X_{m_j=t_j}$ on which $d\log m_i$ and $d\log m_j$ are everywhere linearly independent. Choose $t_j<\eta'_{ji}<\eta_j$ such that $X_{t_j\leq
m_j\leq \eta'_{ji}}\cap \partial V_i\subset W_{ij}$ for $j>i$. On $X_{t_j\leq
m_j\leq \eta'_{ji}}\cap V_i-W_{ij}$, we have the covectors in
$\epsilon_j\cdot L_{V_j}^{t_j}$ bounded from below by some
$N_{ij}>0$. Choose $\epsilon_i>0$ such that on this region, the
covectors in $\epsilon_i\cdot L_{V_i}$ are bounded above by
$\frac{1}{2}N_{ij}$ for all $j>i$. Next, choose
$0<\eta'_{ij}<\eta_i$ so that $X_{m_i\leq \eta'_{ij}}\cap X_{m_j\leq
\eta'_{ji}}\subset W_{ij}$ for all $j>i$. Then covectors in
$\epsilon_j\cdot L_{V_j}^{t_j}$ over $X_{m_i\leq \eta'_{ij}}\cap
X_{m_j\geq t_j}-W_{ij}$ are bounded above by some $M_{ij}$. Choose
$0<t_i<\eta'_{ij}$ for all $j>i$ so that the covectors on the graph
$\epsilon_i\cdot d\log m_i$ over $X_{m_i=t_i}\cap X_{m_j\geq t_j}$
are bounded below by $2M_{ij}$. Now we have $t_i, \epsilon_i$ and
$L_{V_i}^{t_i}$. 

Finally, having chosen $t_k,...,t_1$, and $\epsilon_k,...,\epsilon_1$, to choose $\epsilon_0$, we do the same thing as before. However, to choose $t_0$, we don't shrink $U$. Instead, we find $t_0$ small enough so that on $U-U_{t_0}\cap X_{m_j\geq t_j}$, $\epsilon_0\cdot L_{x,F}^{t_0}$  is bounded below by $2M_{0j}$ for all $j>0$. Clearly, the choices of $(t_k,...,t_0)$ form a fringed set $R\subset \mathbb{R}_+^{k+1}$.

The choices of $\bar{t}_k,...,\bar{t}_0$ can be made as follows. First, $\bar{t}_k$ can be anything satisfying $t_k<\bar{t}_k<\eta'_{ki}$ for all $i<k$. Once we have chosen $\bar{t}_k,...,\bar{t}_{i+1}$ for $i>0$, $\bar{t}_i$ should satisfy $t_i<\bar{t}_i<\eta'_{ij}$ for all $j\neq i$ and on $X_{t_i\leq m_i\leq \bar{t}_i}$, the covectors $d\log m_i$ are bounded below by $1.5M_{ij}$ for all $j>i$. Similar choice can be made for $\bar{t}_0$. Also we can make $(\bar{t}_k,...,\bar{t}_0)$ belong to $R$.

\subsubsection{Gradings on Lagrangians and $\mathbb{Z}$-grading on $CF^*$}\label{grading_L}

Let $\mathcal{L}Gr(TM)=\bigcup\limits_{x\in M}LGr(T_xM, \omega_x)$ be the Lagrangian Grassmannian bundle over $M$. To obtain gradings on Lagrangian vector spaces in $TM$, we need a \emph{universal} Lagrangian Grassmannian bundle $\widetilde{\mathcal{L}Gr(TM)}$, and this amounts to the condition that $2c_1(TM)=0$.  Choose a trivialization $\alpha$ of the bicanonical bundle $\kappa^{\otimes 2}$, and a grading to $\gamma\in LGr(T_xM, \omega_x)$ is a lifting of the \emph{phase map} $\phi(\gamma)=\frac{\alpha(\Lambda^n\gamma)}{|\alpha(\Lambda^n\gamma)|}\in S^1$ to $\mathbb{R}$. 

The condition $2c_1(TM)=0$ holds if $M=T^*X$ for an $n$-dimensional compact manifold $X$. Because the pull back of $\Lambda^n TT^*X$ to the zero section $X$ is just $\mathfrak{o}\mathfrak{r}_X\otimes \mathbb{C}$, where $\mathfrak{o}\mathfrak{r}_X$ is the orientation sheaf on $X$. Since $\mathfrak{o}\mathfrak{r}_X^{\otimes 2}$ is always trivial, and $X$ is a deformation retract of $T^*X$, we get $c_1(TT^*X)$ is 2-torsion. In fact, given a Riemannian metric on $X$, $\mathfrak{o}\mathfrak{r}_X^{\otimes 2}$ is canonically trivialized, and the same for $\kappa^{\otimes 2}$.

For a Lagrangian submanifold $L$ in $M$, we define a \emph{grading} of $L$ to be a continuous lifting $L\rightarrow\mathbb{R}$ to the phase map $\phi_L: L\rightarrow S^1$. The obstruction to this is the \emph{Maslov class} $\mu_L=\phi_L^*\beta\in H^1(L,\mathbb{Z})$, where $\beta$ is the class representing the $1\in H^1(S^1,\mathbb{Z})$. 

\begin{prop}\label{grade_st}
Standard Lagrangians and the local Morse brane $L_{x,F}$ in $T^*X$ both admit canonical  gradings. 
\end{prop}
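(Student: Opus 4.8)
The plan is to reduce the existence of a grading to the vanishing of the Maslov class and then single out a preferred lift, using that both families of Lagrangians are Lagrangian isotopic to very simple model Lagrangians whose phase is constant. Recall that a grading of a Lagrangian $L\subset T^*X$ is a continuous lift of the phase map $\phi_L:L\to S^1$ to $\mathbb{R}$; it exists precisely when the Maslov class $\mu_L=\phi_L^*\beta\in H^1(L;\mathbb{Z})$ vanishes, and the set of gradings is then a torsor over $H^0(L;\mathbb{Z})$, so ``canonical'' means exhibiting a preferred element. Two facts will be used throughout. First, if $L_t$, $t\in[0,1]$, is a smooth family of Lagrangians satisfying the standing assumptions of Appendix \ref{assumption on Lag}, then the phase maps $\phi_{L_t}$, pulled back to $L_0$ along the ambient isotopy, form a homotopy; hence $\mu_{L_0}=0\Leftrightarrow\mu_{L_1}=0$ and a grading of $L_0$ transports canonically to one of $L_1$. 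Second, with respect to $J_{Sas}$ and the metric-induced trivialization $\eta$ of the bicanonical bundle, the phase of the zero section $T^*_XX$ is identically $1$, and more generally the phase of a conormal bundle $T^*_ZX$ of a $(C^1,\mathcal{C})$-submanifold $Z$ is the locally constant function $(-1)^{\operatorname{codim}Z}\in\{\pm1\}\subset S^1$; this is a short computation in a unitary frame of $(T^*X,J_{Sas})$ adapted to the splitting $TX|_Z=TZ\oplus NZ$, together with the fact that $\eta$ is positive on $(\Lambda^n T^*_XX)^{\otimes2}$. In particular conormal bundles and the zero section have vanishing Maslov class and carry a canonical grading, namely the constant lift of their phase (value $0$ for the zero section).

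For a standard Lagrangian $L_{Y,f}=T^*_YX+\Gamma_{d\log f}$ I would use the linear family $L_t:=T^*_YX+\Gamma_{t\,d\log f}$, $t\in[0,1]$: each $L_t$ is the image of the fibrewise-translation embedding of $T^*_YX$ into $T^*X$, hence a properly embedded Lagrangian whose closure at infinity is the fixed $\mathcal{C}$-set $\overline{T^*_YX}$, so this is an admissible Lagrangian isotopy from $L_0=T^*_YX$ to $L_1=L_{Y,f}$. By the two facts above $L_{Y,f}$ then has vanishing Maslov class and inherits a canonical grading, the transport of the constant grading of $T^*_YX$; when $Y=V$ is open this is just the graph $\Gamma_{d\log m}$ over $V$ with grading $0$. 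For the local Morse brane $L_{x,F}=\Gamma_{df}$ of Section \ref{constr}, the point is that it is the graph of an \emph{exact} one-form over the open set $U$; the family $\Gamma_{t\,df}$, $t\in[0,1]$, is an admissible Lagrangian isotopy to the zero section over $U$, which has constant phase $1$, so $\mu_{L_{x,F}}=0$ and $L_{x,F}$ receives the canonical grading $0$ transported from the zero section. (This is the normalization that makes the index count of Proposition \ref{degree} compatible with (\ref{HF2}).)

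Finally, the perturbations $L^t_V$ and $L^s_{x,F}$ appearing in Sections \ref{df, L_V}--\ref{simeq} must also be graded: each is the graph of an exact one-form over the relevant open set in its interior and coincides with a conormal bundle, $T^*_{\partial V_t}X$ resp.\ $T^*_{\partial U}X$, near the boundary, including across the smoothed corners $\widetilde{W}_0,\widetilde{W}_1$ of Section \ref{constr}; the same isotopy argument produces gradings on each piece, and one checks they agree on the overlap because both are obtained by transport along isotopies lying in a single connected family. The only step requiring genuine care is the conical boundary region of $L^s_{x,F}$, where one must interpolate between $\Gamma_{df_s}$ and $T^*_{\partial U}X|_{|\xi|\ge\beta_s}$ through admissible Lagrangians without introducing monodromy in the phase; this is the main obstacle, and it is handled exactly as the analogous interpolation for standard branes in Section~6 of \cite{NZ}. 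The conormal phase computation itself I expect to be routine, so the real content of the argument is the identification of the models and their isotopies, plus this boundary bookkeeping.
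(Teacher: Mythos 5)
Your proof is correct and follows essentially the same approach as the paper: both arguments contract the Lagrangian by dilation in the fiber direction to the underlying conormal/zero section, observe that conormals have constant phase (hence vanishing Maslov class and a canonical integer-valued grading), and transport that grading back along the dilation using homotopy invariance of the phase map. The paper phrases this as a contradiction for a hypothetical nontrivial loop rather than directly invoking Maslov-class vanishing, and does not spell out the grading of the perturbed branes $L^t_V$, $L^s_{x,F}$ that you address at the end, but the underlying mechanism is the same.
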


The reason that all these Lagrangians admit canonical grading is that they are all constructed by (properly embedded) partial graphs over smooth submanifolds. Suppose there is a loop $\Omega\subset L$ such that $(\phi_L)|_{\Omega}: \Omega\rightarrow S^1$ is homotopically nontrivial. Since $\Omega$ is contained in a compact subset of $L$, one can dilate $L$ so that when $\epsilon\rightarrow 0$, $T(\epsilon\cdot L)|_\Omega$ is uniformly close to the tangent planes to the zero section if $L=L_{x,F}$ or to $T_Y^*X$ if $L=L_{Y,f}$. It is easy to check that $T_Y^*X$ has constant phase 1 (resp. $-1$) if $Y$ has even (resp. odd) codimension, so admit canonical grading $0$ (resp. $1$). Then we get a contradiction, because the homotopy type of the map $(\phi_{\epsilon\cdot L})|_{\epsilon\cdot \Omega}: \epsilon\cdot \Omega\rightarrow S^1$ is unchanged under dilation, and $L$ has a canonical grading.

Given two graded Lagrangians $L_i, \theta_i: L_i\rightarrow\mathbb{R}$, $i=0,1$, then for any $p\in L_0\cap L_1$ (assuming transverse intersection), we can define an \emph{absolute $\mathbb{Z}$-grading }of $p$:
\begin{equation}\label{def_deg}
\deg{p}=\theta_1-\theta_0-\sum\limits_{i=1}^n \alpha_i,
\end{equation}
where $\alpha_i, i=1,...,n$ are constants defining the canonical short path from $L_0$ to $L_1$ in Section \ref{FLTH} (d). 

It is easy to check that $\text{ind} (u)=\deg q-\deg p$  for any holomorphic strip $u$ connecting $q$ to $p$ for $q,p\in L_0\cap L_1$, and the absolute $\mathbb{Z}$-grading gives the $\mathbb{Z}$-grading of $CF^*(L_0, L_1)$ for two graded Lagrangians. For more details, see  Section 4 in \cite{Alston}.

\subsubsection{$Pin$-structures.} Recall that $Pin^+(n)$ is a double cover of $O(n)$ with center $\mathbb{Z}/2\mathbb{Z}\times\mathbb{Z}/2\mathbb{Z}$. A \emph{$Pin$-structure} on a manifold $M$ of dimension $n$, is a lifting of the classifying map $M\rightarrow BO(n)$ of $TM$ to a map $M\rightarrow BPin^+(n)$. The obstruction to the existence of a $Pin$-structure is the second Stiefel-Whitney class $w_2\in H^2(M, \mathbb{Z}/2\mathbb{Z})$. The choices of $Pin$-structures form a torsor over $H^1(M, \mathbb{Z}/2\mathbb{Z})$. 

For any class $[w]\in H^2(M, \mathbb{Z}/2\mathbb{Z})$, one could define the notion of a $[w]$-twisted $Pin$-structure on $M$. Fix a  \v{C}ech representative $w$ of $[w]$, and a \v{C}ech cocycle $\tau\in \text{\v{C}}^1(X, O(n))$ representing the principal $O(n)$-bundle associated to $TM$. Then choose a \v{C}ech cochain $\tilde{w}\in \text{\v{C}}^1(X, Pin^+(n))$ which is a lifting of $\tau$ under the exact sequence
$$0\rightarrow \text{\v{C}}^1(X, \mathbb{Z}/2\mathbb{Z})\rightarrow \text{\v{C}}^1(X, Pin^+(n))\rightarrow \text{\v{C}}^1(X, O(n))\rightarrow 0.$$
We say $\tilde{w}$ defines a $[w]$-twisted $Pin$-structure if the \v{C}ech-coboundary of $\tilde{w}$, which obviously lies in the subset $\text{\v{C}}^2(X, \mathbb{Z}/2\mathbb{Z})$, is equal to $w$. It is clear that the definition doesn't essentially depend on the choice of cocycle representatives, and the set of $[w]$-twisted $Pin$-structures, if nonempty, forms a torsor over
$H^1(M, \mathbb{Z}/2\mathbb{Z})$. 

Fix background class $[w]\in H^2(M, \mathbb{Z}/2\mathbb{Z})$, for any submanifold $L\subset M$, define a \emph{relative $Pin$-structure} on $L$ to be a $[w]|_L$-twisted $Pin$-structure. Here we fix a \v{C}ech-representive of $[w]$, and use it for all $L$. Note that the existence of a relative $Pin$-structure only depends on the homotopy class of the inclusion $L\hookrightarrow M$. 

Now let $M=T^*X$ and fix $\pi^*w_2(X)$ as the background class in $H^2(M,\mathbb{Z}/2\mathbb{Z})$ and a relative $Pin$-structure on the zero section. For any smooth submanifold $Y\subset X$, the metric on $X$ gives a canonical way (up to homotopy) to identify $T^*_YX$ near the zero section with a tubular neighborhood of $Y$ in $X$, hence there is a canonical relative $Pin$-structure on $T^*_YX$ by pulling back the fixed relative $Pin$-structure on $X$.  Since the inclusion $L_{Y,f}\hookrightarrow M$ in (\ref{L_{Y,f}}) is canonically homotopic to the inclusion $T_Y^*X\hookrightarrow M$ by dilation, and similarly for $L_{x,F}\hookrightarrow M$ with $T_U^*X\hookrightarrow M$, we have the following
 \begin{prop}\label{Pin_st}
The Lagrangians $L_{Y,f}$ and $L_{x,F}$ have canonical $Pin$-structures.
\end{prop}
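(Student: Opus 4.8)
The plan is to reduce the statement to two facts recalled just above: that the existence of a relative $Pin$-structure on a submanifold $L\subset T^*X$ — i.e. a $\pi^*w_2(X)|_L$-twisted $Pin$-structure — depends only on the homotopy class of the inclusion $L\hookrightarrow T^*X$, and that the metric on $X$ produces a canonical (up to homotopy) relative $Pin$-structure on each conormal bundle $T^*_YX$ by pulling back the fixed one on the zero section $X$. The point is simply that both $L_{Y,f}$ and $L_{x,F}$ are properly embedded partial graphs of differentials over smooth submanifolds of $X$, so their inclusions can be deformed through embeddings to the inclusions of the relevant conormal bundles, and relative $Pin$-structures transport along such deformations.

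First I would treat $L_{Y,f}$. By (\ref{L_{Y,f}}), $L_{Y,f}=T^*_YX+\Gamma_{d\log f}\subset T^*X|_Y$, and the fiberwise rescaling $(y,\eta+d\log f(y))\mapsto(y,\eta+t\,d\log f(y))$, $t\in[0,1]$, gives a canonical isotopy from the inclusion of $L_{Y,f}$ to the inclusion of $T^*_YX$; hence a relative $Pin$-structure on $L_{Y,f}$ is the same data as one on $T^*_YX$. Since $T^*_YX$ already carries the canonical relative $Pin$-structure obtained from the metric tubular-neighborhood identification with $Y\subset X$ and the fixed relative $Pin$-structure on the zero section, transporting it back along the rescaling isotopy produces the canonical relative $Pin$-structure on $L_{Y,f}$.

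For $L_{x,F}$ the argument is identical, starting from the construction of Section \ref{constr}: by Lemma \ref{local morse} the underlying Lagrangian $\Gamma_{df}$ is a properly embedded graph of $df$ over the open set $U$ enclosed by $\widetilde{W}_0$ and $\widetilde{W}_1$, and rescaling $df$ to $0$ exhibits a canonical isotopy from the inclusion of $\Gamma_{df}$ to the inclusion of the zero section over $U$, which is exactly $T^*_UX$ since $U$ is open in $X$. One then pulls back the restriction to $U$ of the fixed relative $Pin$-structure on $X$, as before.

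The work here is purely bookkeeping: the one thing to verify carefully — rather than a genuine obstacle — is that each ingredient (the dilation isotopy of the partial graph and the metric identification of $T^*_YX$ with a tubular neighborhood) is well-defined up to a contractible space of choices, so that the resulting element of the $H^1(L,\mathbb{Z}/2\mathbb{Z})$-torsor of relative $Pin$-structures does not depend on auxiliary data and genuinely deserves to be called canonical. This runs exactly parallel to the dilation argument already used for the canonical gradings in Proposition \ref{grade_st}, and no new difficulty appears.
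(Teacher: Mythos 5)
Your proof is correct and follows essentially the same route as the paper, which presents the argument in the paragraph immediately preceding the proposition: fix the background class $\pi^*w_2(X)$ and a relative $Pin$-structure on the zero section, observe that the metric yields a canonical relative $Pin$-structure on each conormal bundle $T^*_YX$ (resp.\ $T^*_UX$), and then note that the inclusions of $L_{Y,f}$ and $L_{x,F}$ are canonically homotopic to those of $T^*_YX$ and $T^*_UX$ by dilation, so the relative $Pin$-structures transport over. Your explicit fiberwise-rescaling isotopies $(y,\eta+d\log f(y))\mapsto(y,\eta+t\,d\log f(y))$ and $(u,df(u))\mapsto(u,t\,df(u))$ make the dilation homotopy concrete but add nothing beyond what the paper intends.
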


\subsubsection{Final definition of $Fuk(M)$}
Fix a background class in $H^2(M,\mathbb{Z}/2\mathbb{Z})$.

\begin{definition}
A \emph{brane structure} $b$ on a Lagrangian submanifold $L\subset M$ is a pair $(\tilde{\alpha}, P)$, where $\tilde{\alpha}$ is a grading on $L$ and $P$ is a relative $Pin$-structure on $L$.
\end{definition}

Recall that we need tame Lagrangians to ensure compactness of moduli of discs, but there are many Lagriangians, e.g. many standard Lagrangians in $T^*X$, which are not tame, but admit appropriate perturbations by tame Lagrangians. Therefore the following is introduced in \cite{NZ}.

\begin{definition}
A \emph{tame perturbation} of $L$ is a smooth family of tame Lagrangians $L_t, t\in\mathbb{R}$, with $L_0=L$ such that\\
(1) Ristricted to the cone $\partial M_0\times [1,\infty)$, the map $t\times r: L_t|_{r>S}\rightarrow\mathbb{R}\times (S,\infty)$ is a submersion for $S>>0$;\\
(2) Fix a defining function $m_{\overline{L}}$ for $\overline{L}\subset\overline{M}$, we require that for any $\epsilon>0$, there exists $t_\epsilon>0$ such that $L_t\subset N_\epsilon(L):=\{m_{\overline{L}}<\epsilon\}$ for $|t|<t_\epsilon$.
\end{definition}
Note it is enough to define the family over an open interval of $0$ in $\mathbb{R}$. 

Now we define $Fuk(M)$. An object in $Fuk(M)$ is a triple $(L,b,\mathcal{E})$ together with a tame perturbation $\{L_t\}_{t\in\mathbb{R}}$ of $L$, where $(L,b)$ is an exact Lagrangian brane, $\mathcal{E}$ is a vector bundle with flat connection on $L$. It is clear that any element in the perturbation famility $L_t$ canonically inherits a brane structure, and a vector bundle with flat connection from $L$. In the following, we still use $L$ to denote an object.

It is proved in Lemma 5.4.5 of \cite{NZ} that every standard Lagrangian admits a tame perturbation.  So for each pair $(U,m)$ of an open submanifold $U\subset X$ and a semi-defining function $m$ of $U$, there is a \emph{standard object} in $Fuk(T^*X)$, which is the standard Lagrangian $L_{U,m}$ equipped with the canonical brane structures, a trivial rank 1 local system and the perturbation in Lemma 5.4.5 of \cite{NZ}.

The morphism space between $L_0$ and $L_1$ is the $\mathbb{Z}$-graded Floer complex enriched by the vector bundles
$$Hom_{Fuk(M)}(L_0, L_1)=: \bigoplus\limits_{p\in L_0\cap L_1}Hom(\mathcal{E}_0|_p, \mathcal{E}_1|_p)\otimes_{\mathbb{C}}\mathbb{C}\langle p \rangle[-\deg (p)].$$
Implicit in the formula, is to first do Hamiltonian perturbations to $L_0$ and $L_1$ (as objects in $Fuk(M)$) as in Section \ref{FLTH} (a), and then replace the resulting Lagrangians by their sufficiently small tame perturbations. In Section\ref{df, L_V}, we choose certain conical perturbations to $L_{x,F}$ and $L_{V}$, which combines the Hamiltonian and tame perturbations together.
 
The relative $Pin$-structures on the Lagrangian branes enable us to define orientations on the moduli space of discs, and gives the (higher) compositions over $\mathbb{C}$.
\begin{align*}
\mu^d: &Hom_{Fuk(M)}(L_{d-1}, L_{d})\otimes\cdots Hom_{Fuk(M)}(L_1, L_2)\otimes Hom_{Fuk(M)}(L_0, L_1)\\
&\rightarrow Hom_{Fuk(M)}(L_0, L_{d})[2-d]\\
&\mu^d(\phi_{d-1}\otimes a_{d-1},\cdots, \phi_1\otimes a_1, \phi_0\otimes a_0)=\sum\limits_{a_d\in L_0\cap L_d} \sum\limits_{u\in\mathcal{M}}\text{sgn}(u)\cdot  \phi_u\otimes a_d,
\end{align*}
where $\mathcal{M}= \mathcal{M}(a_0, a_1,\cdots, a_d; L_0, L_1, \cdots, L_d)^{0\text{-d}}$, $\phi_i\in Hom(\mathcal{E}_i|_{a_i}, \mathcal{E}_{i+1}|_{a_i})$ for $i=0,...,d-1$, and $\phi_u\in Hom(\mathcal{E}_0|_{a_d}, \mathcal{E}_d|_{a_d})$ associated to a holomorphic disc $u$ is the composition of successive parallel transport along the edges of $u$ and $\phi_i$ on the corresponding vertices.

\end{document}